\renewcommand\subsubsection{\@secnumfont}{\bfseries\itshape}%
\renewcommand\subsubsection{\@startsection{subsubsection}{3}
	\z@{.5\linespacing\@plus.7\linespacing}{-.5em}%
	{\normalfont\bfseries\itshape}}
\patchcmd{\@setaddresses}{\indent}{\noindent}{}{}
\patchcmd{\@setaddresses}{\indent}{\noindent}{}{}
\patchcmd{\@setaddresses}{\indent}{\noindent}{}{}
\patchcmd{\@setaddresses}{\indent}{\noindent}{}{}
\DeclareRobustCommand*{\bfseries}{%
	\not@math@alphabet\bfseries\mathbf
	\fontseries\bfdefault\selectfont
	\boldmath
}
\newcommandx{\change}[2][1=]{\todo[linecolor=red,backgroundcolor=red!25,bordercolor=red,#1]{#2}}
\newcommandx{\unsure}[2][1=]{\todo[linecolor=blue,backgroundcolor=blue!25,bordercolor=blue,#1]{#2}}
\newcommandx{\info}[2][1=]{\todo[linecolor=OliveGreen,backgroundcolor=OliveGreen!25,bordercolor=OliveGreen,#1]{#2}}
\newcommandx{\improvement}[2][1=]{\todo[linecolor=Plum,backgroundcolor=Plum!25,bordercolor=Plum,#1]{#2}}
\newcommandx{\thiswillnotshow}[2][1=]{\todo[disable,#1]{#2}}
\providecommand\@dotsep{5}
\renewcommand{\listoftodos}[1][\@todonotes@todolistname]{%
	\@starttoc{tdo}{#1}}
\numberwithin{equation}{section} 
\newtheorem{theorem}[equation]{Theorem}
\newtheorem*{theorem:correct:manifold}{Theorem \ref{thm:mutant:correct:manifold}}
\newtheorem*{theorem:veering:mutant}{Theorem \ref{thm:mutant:veering}}
\newtheorem*{theorem:veering:general}{Theorem \ref{thm:generalized}}
\newtheorem*{fact:summary}{Fact \ref{thm:mutation:properties}}
\newtheorem*{fact:monodromies}{Fact \ref{fact:monodromies:not:conjugate}}
\newtheorem*{theorem:many:flows}{Theorem \ref{thm:many:flows}}
\newtheorem{fact}[equation]{Fact}
\newtheorem{proposition}[equation]{Proposition}
\newtheorem*{prop:tautness}{Proposition \ref{prop:taut:sufficient:necessary}}
\newtheorem*{fact:polys}{Fact \ref{fact:different:polys}}
\newtheorem{lemma}[equation]{Lemma}
\newtheorem{corollary}[equation]{Corollary}
\theoremstyle{definition}
\newtheorem{definition}[equation]{Definition}
\newtheorem{remark}[equation]{Remark}
\newtheorem*{remark*}{Remark}
\newtheorem{question}{Question}
\newcommand{\bez}{-}
\newcommand{\ro}{\varrho}
\newcommand{\zz}{\mathbb{Z}}
\newcommand{\rr}{\mathbb{R}}
\newcommand{\face}[1]{\raisebox{.03em}{\large{$\mathtt{#1}$}}}
\newcommand{\Thnorm}[1]{\left\lVert #1 \right\rVert_{\mathrm{Th}}}
\newcommand{\Aut}{\mathrm{Aut}}
\newcommand{\V}{\mathcal{V}}
\newcommand{\B}{\mathcal{B}}
\newcommand{\T}{\mathcal{T}}
\newcommand{\C}{\mathcal{C}}
\newcommand{\ab}{\mathfrak{a}}
\newcommand{\Q}{\mathcal{Q}}
\newcommand{\D}{\mathcal{D}}
\newcommand{\HH}{\mathcal{H}}
\newcommand{\Semb}{S_w^{\epsilon}}
\newcommand{\Sembp}{S_w^{\epsilon +}}
\newcommand{\Sembm}{S_w^{\epsilon -}}
\newcommand{\QVw}{\mathcal{Q}_{\V,w}}
\newcommand{\cone}[1]{\rr_+\hspace{-0.1cm}\cdot \raisebox{.05em}{$\mathtt{#1}$}} 
\newcommand{\coll}{\mathrm{coll}}
\algrenewcommand{\algorithmiccomment}[1]{\hspace*{\fill}
	\color{gray}\small $\#$  #1 \color{black}\normalsize}
\def\maketitle{\par
	\begingroup
	\def\thefootnote{\fnsymbol{footnote}}%
	\setcounter{footnote}\z@
	\def\@makefnmark{\hbox to\z@{$\m@th^{\@thefnmark}$\hss}}%
	\long\def\@makefntext##1{\noindent
		\ifnum\c@footnote>\z@\relax
		\hbox to1.8em{\hss$\m@th^{\@thefnmark}$}##1%
		\else
		\hbox to1.8em{\hfill}%
		\parbox{\dimexpr\linewidth-1.8em}{\raggedright ##1}%
		\fi}
	\if@twocolumn\twocolumn[\@maketitle]%
	\else\newpage\global\@topnum\z@\@maketitle\fi
	\thispagestyle{titlepage}\@thanks\endgroup
	\setcounter{footnote}\z@
	\gdef\@date{\today}\gdef\@thanks{}%
	\gdef\@author{}\gdef\@title{}}
\begin{document}

	\numberwithin{equation}{section}
	\title[Mutations and faces of the Thurston norm ball dynamically represented by multiple distinct flows]{Mutations and faces of the Thurston norm ball \\dynamically represented by multiple \\distinct flows}
	
	\author{Anna Parlak}
	\address{Mathematical Sciences Building \\ University of California, One Shields Avenue, Davis, CA 95616, United States}
	\email{anna.parlak@gmail.com}

	\thanks{This work was partially supported by the Simons Investigator Award No. 409745 of Vladimir Marković.}

	\keywords{3-manifolds, sutured manifolds, veering triangulations, pseudo-Anosov flows, mutations, Thurston norm} 
	\subjclass[2020]{Primary 57Q15; Secondary 37D20, 57K30, 57K32}

	\begin{abstract}
		A pseudo-Anosov flow on a hyperbolic 3-manifold dynamically represents a face~$\face{F}$ of the Thurston norm ball if the cone on $\face{F}$ is dual to the cone spanned by homology classes of closed orbits of the flow. Fried showed that for every fibered face of the Thurston norm ball there is a unique, up to isotopy and reparametrization, flow which dynamically represents the face. Using veering triangulations we have found that there are non-fibered faces of the Thurston norm ball which are dynamically represented by multiple topologically inequivalent flows. This raises a question of how distinct flows representing the same face are related. 
		
		We define combinatorial mutations of veering triangulations along surfaces that they carry. We give sufficient and necessary conditions for the mutant triangulation to be veering. After appropriate Dehn filling these veering mutations correspond to transforming one 3-manifold $M$ with a pseudo-Anosov flow transverse to an embedded surface $S$ into another 3-manifold admitting a pseudo-Anosov flow  transverse to a surface homeomorphic to $S$. We show that a non-fibered face of the Thurston norm ball can be dynamically represented by two distinct flows that differ by a veering mutation.
		
	\end{abstract}
	
	\maketitle%
	\setcounter{tocdepth}{1}
	
	\vspace{2cm}
	\tableofcontents
	
		\newpage 
	\section{Introduction}\label{sec:intro}
	Let $M$ be a compact, oriented 3-manifold $M$ whose interior admits complete hyperbolic structure. The \emph{Thurston norm} on $H_2(M, \partial M;\rr)$ measures the minimal topological complexity of  surfaces that represent a homology class \cite{Thur_norm}. It has been intensively studied in various different contexts. It is related to finite depth foliations~\cite{Gabai-foliations83}, the Alexander polynomial \cite{Dunfield_Alex, McMullen_Alex},  the $L^2$-torsion function \cite{L2torsion}, Floer homology \cite{Floer_Thurston},  and many other aspects of 3-dimensional topology. In this paper we focus on the connection between the Thurston norm on $H_2(M, \partial M;\rr)$ and nonsingular flows on $M$. Originally this connection was drawn by  Fried \cite{Fried_cross-sections, Fried_suspension} and Mosher \cite{Mosher-dynamical1992, Mosher_manuscipt}. The topic has reemerged recently in the work of Landry \cite{Landry_stable, Landry_branched, Landry_homology_isotopy}, and Landry-Minsky-Taylor \cite{LMT_flow, LMT}, where they relate the Thurston norm with veering triangulations.
	
	Since the unit norm ball $\mathbb{B}_{\mathrm{Th}}$ of the Thurston norm is a compact polytope \cite[Theorem~2]{Thur_norm}, we can speak about its \emph{faces}. Thurston proved that all ways in which $M$ fibers over the circle are encoded by finitely many, potentially zero, top-dimensional faces of $\mathbb{B}_{\mathrm{Th}}$, called \emph{fibered faces} \cite[Theorem~3]{Thur_norm}. The first known  connection between pseudo-Anosov flows and the Thurston norm concerned only these faces. Assuming that~$M$ is closed, Fried proved that associated to a fibered face $\face{F}$ there is a unique, up to isotopy and reparametrization, pseudo-Anosov flow $\Psi$ on $M$ with the property that a class $\eta \in H_2(M;\zz)$ can be represented by a cross-section to $\Psi$ if and only if $\eta$ is in the interior of the cone $\cone{\face{F}}$ \cite[Theorem~7]{Fried_cross-sections}. 
	Mosher extended Fried's result by showing that $\eta \in H_2(M; \zz)$ can be represented by a surface that is \emph{almost transverse} to $\Psi$ if and only if $\eta$ is  in $\cone{\face{F}}$ \cite[Theorem 1.4]{Mosher_branched}. 
	Results of Fried and Mosher are stated for closed manifolds, but they can be generalized to the case of flows on 3-manifolds with toroidal boundary whose interior admits a complete hyperbolic structure; see \cite[Theorem 3.5]{Landry_stable}. In this case, the relevant flows are obtained from pseudo-Anosov flows by \emph{blowing-up} finitely many closed orbits into toroidal boundary components; see \cite[Section~3.2]{Mosher_manuscipt} and \cite[Section 3.6]{Bonatti-surgeries}.
	
	The relationship between pseudo-Anosov flows and the Thurston norm extends beyond the fibered case. In this more general setup we consider flows which do not admit cross-sections. Such flows are called \emph{non-circular}.
	Given a potentially non-circular flow $\Psi$ on $M$ denote by $\C(\Psi)$ the cone in $H_2(M, \partial M;\rr)$ spanned by  homology classes whose algebraic intersection with homology classes of closed orbits of $\Psi$ is nonnegative. We say that~$\Psi$ \emph{dynamically represents} a  (not necessarily fibered, not necessarily top-dimensional) face  $\face{F}$ of the Thurston norm ball in $H_2(M, \partial M;\rr)$ if $\mathcal{C}(\Psi) = \cone{F}$. This is a slight modification of Mosher's terminology from ~\cite{Mosher-dynamical1992}; see Definition \ref{defn:dynamical_rep} and the discussion below it.
	From the results of Fried and Mosher mentioned in the last paragraph it follows that every fibered face is dynamically represented by a flow which is unique up to isotopy and reparametrization. In the non-fibered case, Mosher found sufficient conditions on a non-circular flow to dynamically represent a face of the Thurston norm ball \cite[Theorem~2.7]{Mosher-dynamical1992} and showed that there are non-circular flows representing non-fibered faces \cite[Section 4]{Mosher-dynamical1992}.  However, the question whether for every non-fibered face $\face{F}$ of the Thurston norm ball in $H_2(M, \partial M;\rr)$ there is a (blown-up) pseudo-Anosov flow $\Psi$ which dynamically represents $\face{F}$ remains open.
	
	In this paper we answer two  closely related questions. First, if there is a flow which dynamically represents~a non-fibered face, is this flow necessarily unique, up to isotopy and reparametrization?  In Section \ref{sec:distinct:flows} we give explicit examples of flows which represent the same non-fibered face of the Thurston norm ball but are not even topologically equivalent, thus showing that the answer to this question is negative; see Theorem \ref{thm:many:flows}. These examples have been found using \emph{veering triangulations}, a combinatorial tool to study pseudo-Anosov flows. We refer the reader to Subsection \ref{sec:veering:and:flows} for an outline of the connection between veering triangulations and pseudo-Anosov flows.
	
	Once we know that a non-fibered face can be dynamically represented by two topologically inequivalent flows we may ask how the two distinct flows which dynamically represent the same face are related. Veering triangulations can be helpful in solving this problem as well. In particular, the  Veering Census \cite{VeeringCensus} and computational tools to study  triangulations \cite{regina, snappea, VeeringGitHub} can be used to find many examples of veering triangulations that \emph{combinatorially represent} the same face of the Thurston norm ball. At the beginning of Section \ref{sec:faces} and in  Subsection \ref{subsubsec:higher:betti} we briefly outline what the search for appropriate examples boils down to. Furthermore, veering triangulations are finite objects that satisfy very restrictive conditions; see Definition~\ref{def:veering}. This makes comparing two veering triangulations an easier task than comparing their underlying flows. An analysis of certain examples of veering triangulations which combinatorially represent the same face of the Thurston norm ball led us to define \emph{combinatorial mutations of veering triangulations} along surfaces that they carry. The main goal of this paper is to carefully study these operations and demonstrate that in special cases they can yield distinct flows representing the same face of the Thurston norm ball.

	\subsection{Combinatorial mutations of veering triangulations}
	A veering triangulation $\V$ is determined by three pieces of combinatorial data: an ideal triangulation $\T$, a taut structure $\alpha$ on $\T$, and a smoothening of the dual spine of~$\T$ into a branched surface~$\B$ with certain properties; see Definitions \ref{def:taut} and \ref{def:veering}. 
	Associated to $(\T, \alpha)$ there is a finite system of \emph{branch equations} such that if $w$ is a nonzero, nonnegative, integral solution to this system then $w$ determines a surface $S_w$ which is \emph{carried} by $(\T, \alpha)$; see Subsection~\ref{subsec:carried:surfaces}. We also say that $S_w$ is carried by $\V$. This surface is naturally equipped with an ideal triangulation $\QVw$ induced from $\V$. Let $\Aut^+(\QVw)$ be the group of orientation-preserving combinatorial automorphisms of $\QVw$. Associated to  $\varphi \in \Aut^+(\QVw)$ there always is a \emph{mutant manifold} $M^\varphi$, obtained from~$M$ by cutting it open along a properly embedded surface~$\Semb$ isotopic to $S_w$ and then identifying the two copies of~$\Semb$ in the boundary of the resulting sutured manifold $M|\Semb$ via $\varphi$. Our goal is to mimic this construction in the combinatorial setup of triangulations.  Unfortunately, it is not as straightforward as it may sound. The main difficulty is the fact that~$S_w$ is often not embedded. Thus we may view cutting $\T$ along~$S_w$ as equivalent to cutting it along a certain branched surface~$F_w$ which fully carries $S_w$; see Subsection \ref{subsec:cut:tri:versus:cut:manifold}. This in turn causes the problem of not being able to use~$\varphi$ directly to reglue the top boundary $F_w^+$ of $\T|F_w$ to its bottom boundary~$F_w^-$. 
	In Subsection~\ref {subsec:mutating} we define a \emph{regluing map} $r(\varphi):F_w^+ \rightarrow F_w^-$ determined by
	$\varphi$  and use it to define a \emph{mutant triangulation} $\T^\varphi$. Without further assumptions on $\varphi$ not only can this triangulation fail to be veering, but it also might not be a triangulation of~$M^\varphi$.  
	We deal with these issues in Subsections~\ref{subsec:mutant:homeomorphism} and \ref{subsec:veeringness:of:mutant}.

	Studying mutations has a long history, particularly in knot theory; see for instance \cite{Dunfield_mutation, Kirk-Livingston-concordance, Millichap-mutations, Morton-Traczyk}. 
	Mutant knots share many properties, and much work on mutations concentrates on establishing which knot invariants distinguish mutants. Another thread in the theory is finding sufficient conditions on a surface $S$ and its homeomorphism $\varphi$ so that $M$ and $M^\varphi$ share some property. For instance, in \cite[Theorem 4.4]{Ruberman-mutations} Ruberman considered mutations of hyperbolic 3-manifolds and found sufficient conditions for the mutant manifold $M^\varphi$ to be hyperbolic and have the same hyperbolic volume as $M$. 	 Our goal to find conditions under which $\T^\varphi$ is a veering triangulation of $M^\varphi$ work fits into this second framework. 
	
	\subsection{Properties of the mutant triangulation}
	To analyze the homeomorphism type of the manifold underlying $\T^\varphi$ we introduce the notion of  \emph{edge product disks}, a special type of product disks in the sutured manifold~$M|\Semb$; see Subsection \ref{subsec:cut:tri:versus:cut:manifold}.  Then we define what does it mean for $\varphi \in \Aut^+(\QVw)$ to \emph{misalign edge product disks}; see Definition \ref{defn:aligns:product:disks}. 
	Using this we prove:	
	\begin{theorem:correct:manifold}
		The mutant triangulation $\T^\varphi$ is an ideal triangulation of $M^\varphi$ if and only if $\varphi$ misaligns  edge product disks.
	\end{theorem:correct:manifold}
To find sufficient conditions for the mutant triangulation to be veering we first need to  ensure that it admits a taut structure. It turns out that for this it also  suffices to assume that $\varphi$ misaligns edge product disks. However, we prove a slightly stronger result.
	
	\begin{prop:tautness}
		Ideal triangulation $\T^\varphi$ admits a taut structure if and only if every vertical annulus or M\"obius band in $M^\varphi$ lies in a prismatic region of $M^\varphi$. 
	\end{prop:tautness}

The backward direction of Proposition \ref{prop:taut:sufficient:necessary} is proved by explicitly constructing a taut structure $\alpha^\varphi$ on $\T^\varphi$ from the taut structure $\alpha$ on $\T$. We say that $(\T, \alpha)$ and $(\T^\varphi, \alpha^\varphi)$ are \emph{taut mutants}. 	
	
	Intuitively, the condition that appears in the above proposition means that $\varphi$ might align edge product disks but it does so in a way which is not visible from the perspective of~$\T^\varphi$; see Lemma \ref{lem:prismatic:vertical}. Nonetheless, in light of Theorem \ref{thm:mutant:correct:manifold} it is convenient to assume that $\varphi$ misalign edge product disks, so that we deal only with triangulations of~$M^\varphi$. This assumption is further justified by the fact that in Proposition \ref{prop:not:hyperbolic} we prove that when $\varphi$ aligns edge product disks $M^\varphi$ cannot admit a veering triangulation.

	To obtain sufficient conditions on the taut triangulation $(\T^\varphi, \alpha^\varphi)$ to admit a veering structure, we  make use of the branched surface $\B$ defining the veering structure on $\V = (\T, \alpha, \B)$. This branched surface intersects the 2-skeleton of $\T$ in a train track; see  Figure \ref{fig:stable:track}. Therefore any surface $S_w$ carried by $\V$ inherits a train track $\tau_{\V, w}$ which is dual to its ideal triangulation $\QVw$. By $\Aut^+(\Q_{\V, w} \ | \ \tau_{\V, w})$ we denote the subgroup of  $\Aut^+(\QVw)$  consisting of orientation-preserving combinatorial automorphisms of $\QVw$ which preserve $\tau_{\V, w}$. 
	
	\begin{theorem:veering:mutant}
		Let $S_w$ be a surface carried by a veering triangulation $\V = (\T, \alpha, \B)$ of $M$.  Suppose that $\varphi \in \Aut^+(\QVw)$ misaligns edge product disks. If additionally \linebreak $\varphi \in \Aut^+(\QVw \ | \ \tau_{\V, w})$ then $(\T^\varphi, \alpha^\varphi)$ admits a veering structure.
	\end{theorem:veering:mutant}

	Under the assumptions of this theorem,  the branched surface $\B$ dual to $\T$ mutates into a branched surface $\B^\varphi$ that is dual to $\T^\varphi$ and satisfies Definition \ref{def:veering}. We say that $\V^\varphi = (\T^\varphi, \alpha^\varphi, \B^\varphi)$ is obtained from $\V = (\T, \alpha, \B)$ by a \emph{veering mutation} or that $\V^\varphi, \V$ are \emph{veering mutants}.
	
	Observe that Theorem \ref{thm:mutant:veering} gives a sufficient condition  for a taut mutant $(T^\varphi, \alpha^\varphi)$ to be veering. It is, however, possible that $(\T^\varphi, \alpha^\varphi)$ admits a veering structure even when $\varphi \notin \Aut^+(\QVw \ | \ \tau_{\V, w})$. This can happen whenever after cutting~$\T$ along  $F_w$  the cut triangulation $\T|F_w$ admits a veering structure $\B^\ast|F_w$ which mutates into a branched surface that is dual to $\T^\varphi$ and satisfies Definition~\ref{def:veering}. If $\B^\ast|F_w \neq \B|F_w$ we do not consider such triangulations to be veering mutants. This construction can be used to prove a generalization of Theorem~\ref{thm:mutant:veering} giving a sufficient and necessary conditions on a taut mutant of a veering triangulation to be veering.
	
	\begin{theorem:veering:general}
		Let $S_w$ be a surface carried by a veering triangulation $\V = (\T, \alpha, \B)$ of~$M$.  Suppose that $\varphi \in \Aut^+(\QVw)$ misaligns edge product disks. The taut triangulation $(\T^\varphi, \alpha^\varphi)$ admits a veering structure if and only if there is a veering structure $\B^\ast|F_w$ on $(\T|F_w, \alpha|F_w)$ such that the isomorphism $\varphi: \QVw^+ \rightarrow \QVw^-$ sends $\tau^{\ast \ +}_{\V, w}$ to $\tau_{\V, w}^{\ast \ -}$.
		
	\end{theorem:veering:general}

	In Subsection~\ref{subsec:generalization} we give an example of a pair of veering triangulations which are taut mutants but not veering mutants. This proves that the generalization appearing in Theorem \ref{thm:generalized} is not just theoretical, but actually arises in practice. In the same subsection we also define a \emph{veering mutation with insertion}, a certain generalization of a veering mutation where the related triangulations have different number of tetrahedra.

		\subsection{Homeomorphic veering mutants}
	In Section \ref{sec:faces} we analyze a few examples of homeomorphic veering mutants. Apart from illustrating the constructions that are defined in earlier parts of the paper, we use these examples to  establish the following facts connecting veering mutations and faces of the Thurston norm ball.
	\begin{fact:summary} \emph{(Veering mutations and faces of the Thurston norm ball)}
		\begin{enumerate}
		\item A non-fibered face $\face{F}$ of the Thurston norm ball of a compact, oriented, hyperbolic 3-manifold with boundary can be represented by two combinatorially non-isomorphic veering mutants.
		\item Performing a veering mutation along a surface representing a class lying at the boundary of a fibered face may yield a veering triangulation representing a non-fibered face of the Thurston norm ball of the mutant manifold.
		\end{enumerate}
	\end{fact:summary}
	
Analyzing two veering mutants of the complement of the $10^3_{12}$ link leads to the following discovery.
	\begin{fact:monodromies}
		The complement of the $10^3_{12}$ link admits two fibrations over the circle such that
		\begin{itemize} \item The fiber is a genus two surface with four punctures.
			\item The monodromy of one fibration is obtained from the monodromy of the other fibration by postcomposing it with an involution. In particular, the stretch factors of monodromies are equal.
			\item The monodromies are not conjugate in the mapping class group of a genus two surface with four punctures.
		\end{itemize}
	\end{fact:monodromies}
	The last part of Fact \ref{fact:monodromies:not:conjugate} follows from an observation that the Euler classes of the two fibrations lie in different orbits under the action of $\mathrm{Homeo}(M)$ on $H^2(M,\partial M;\rr)$. 
	Examples of such fibrations of the same manifold have been known before; see for instance \cite[Theorem 1.2]{McMullen-Taubes}. What is new here is that we get fiber bundles which are not isomorphic, even though both their total spaces and fibers are homeomorphic, and the stretch factors of monodromies are the same. 
	
	\subsection{Multiple distinct flows dynamically representing the same face of the Thurston norm ball}
	
Given a veering triangulation $\V$ of $M$ it is possible to construct a transitive pseudo-Anosov flow $\Psi$ on a closed Dehn filling $N$ of $M$, provided that a certain natural condition on the Dehn filling slopes is satisfied \cite[Theorem 5.1 (stated here as Theorem~\ref{thm:AT})]{Tsang-Agol}. Let $\Psi^\circ$ be the blown-up flow on $M$. If $\V$ combinatorially represents a face $\face{F}$ of the Thurston norm ball in $H_2(M, \partial M;\rr)$ then $\Psi^\circ$ dynamically represents~$\face{F}$  \cite[Theorem 6.1 (stated here as Theorem \ref{thm:LMT:cones})]{LMT_flow}. Under additional assumptions on the Dehn filling slopes, there is also a face $\face{F}_N$ of the Thurston norm ball in $H_2(N;\rr)$ which is dynamically represented by $\Psi$ \cite[Theorem A (stated here as Theorem \ref{thm:L:cones})]{Landry_homology_isotopy}. These results are the main ingredients to  prove the following theorem.
	
\begin{theorem:many:flows}
		A non-fibered face $\face{F}$ of the Thurston norm ball can be dynamically represented by two topologically inequivalent flows.
\end{theorem:many:flows}

In the case of manifolds with nonempty boundary we show that a non-fibered face can be dynamically represented by two topologically inequivalent blown-up Anosov flows constructed from a pair of homeomorphic veering mutants. Unfortunately, the corresponding Anosov flows on a closed manifold cannot be used to prove the theorem in the closed case because the manifold is toroidal. For this reason, we refer to a different pair of veering triangulations which represent the same face of the Thurston norm ball and after appropriate Dehn filling yield transitive pseudo-Anosov flows on a hyperbolic 3-manifold.
In particular, it is important to note that even though the focus of this paper is on veering mutants, not all pairs of veering triangulations combinatorially representing the same face of the Thurston norm ball are related by a veering mutation or even a veering mutation with insertion; see Fact \ref{fact:not:mutant}.
	
	\begin{remark*}
		Although Anosov flows underlying homeomorphic veering mutants $\V$, $\V^{\ro\sigma}$ discussed in Subsection~\ref{subsec:same:face} cannot be used to prove Theorem \ref{thm:many:flows} in the closed case, they have another interesting feature. The closed manifold $N$ obtained by Dehn filling $M \cong M^{\ro\sigma}$ along the boundary of the mutating surface  is a graph manifold constructed from the orientable circle bundle over a 2-holed $\rr P^2$ by identifying its two toroidal boundary components. Such manifolds are called BL-manifolds in \cite{Barbot-BL}. The Anosov flows $\Psi$, $\Psi^{\ro\sigma}$ on $N$ built from $\V, \V^{\ro \sigma}$, respectively, are counterexamples to the claim, which appears as Theorem B(2) of \cite{Barbot-BL}, that all non $\rr$-covered Anosov flows on a  fixed BL-manifold are topologically equivalent; see Remark \ref{remark:counterexamples:to:Barbot}.
	\end{remark*}

	\subsection{Polynomial invariants of veering triangulations representing the same face of the Thurston norm ball}
	
	In \cite{LMT} Landry, Minsky, and Taylor introduced two polynomial invariants of veering triangulations: the \emph{taut polynomial} and the \emph{veering polynomial}. They proved that the taut polynomial generalizes the \emph{Teichm\"uller polynomial}, an invariant of a fibered face of the Thurston norm ball defined by McMullen in \cite{McMullen_Teich}, to  faces of the Thurston norm ball combinatorially represented by veering triangulations \cite[Theorem 7.1]{LMT}. In Section \ref{sec:polys} we compute the taut and veering polynomials of veering triangulations representing the same face of the Thurston norm ball discussed in Sections \ref{sec:faces} and \ref{sec:distinct:flows}. We deduce that in the non-fibered case the taut and veering polynomials are not invariants of faces of the Thurston norm ball combinatorially represented by veering triangulations.
	\begin{fact:polys}
		A non-fibered face of the Thurston norm ball can be combinatorially represented by two distinct veering triangulations with different taut polynomials, and different veering polynomials. 
	\end{fact:polys}

	\subsection{Further questions}
	In Section \ref{sec:questions} we speculate about what happens on the level of flows when we perform a veering mutation. We also ask a few questions concerning veering mutations, faces of the Thurston norm ball dynamically represented by multiple distinct flows, connections between this work and a recent result of Barthelm\'e-Frankel-Mann \cite{pA-classification} characterizing topologically inequivalent pseudo-Anosov flows on a fixed manifold, 
	and hyperbolic volumes of veering mutants.

	\subsection*{Acknowledgements}
	I am grateful to Michael Landry and Chi Cheuk Tsang for answering  my questions about  their work connecting veering triangulations and pseudo-Anosov flows.  I thank Saul Schleimer for discussions on veering triangulations analyzed in Section \ref{subsec:same:face}. I also thank Thomas Barthelm\'e and Chi Cheuk Tsang for discussions that led to Remark \ref{remark:counterexamples:to:Barbot}. 
	
	\noindent This project was partially supported by the Simons Investigator Award No. 409745 of Vladimir Marković.
	
	\section{Veering triangulations and pseudo-Anosov flows}\label{sec:veering}
	
	Let $M$ be a compact, oriented 3-manifold. By an \emph{ideal triangulation} of $M$ we mean an expression of $M \bez \partial M$ as a collection of finitely many ideal tetrahedra with triangular faces identified in pairs by homeomorphisms which send vertices to vertices. 
	Links of ideal vertices of the triangulation correspond to boundary components of $M$. 

	Let $\T$ be a finite ideal triangulation of $M$. Every triangular face of $\T$ has two \emph{embeddings} into two, not necessarily distinct, tetrahedra. 
	Every edge of $\T$ has finitely many  embeddings into tetrahedra of $\T$ and the same number of embeddings into faces of $\T$.
	By \emph{edges of a triangle/tetrahedron} or \emph{triangles of a tetrahedron} we mean embeddings of these ideal simplices into the boundary of a higher dimensional ideal simplex.  Similarly, by \emph{triangles/tetrahedra attached to an edge} we mean triangles/tetrahedra in which the edge is embedded, together with this embedding. Observe that triangles/tetrahedra attached to an edge can be circularly ordered, hence we can speak about consecutive triangles/tetrahedra attached to an edge.
	
	
	Every ideal triangulation  $\T$ determines a 2-dimensional complex $\mathcal{D}$ dual to $\T$, called the \emph{dual spine} of $\T$. 
	For every tetrahedron $t$ of $\T$ there is a vertex $v = v(t)$ of $\mathcal{D}$. If tetrahedra $t_1, t_2$ of $\T$ admit faces $f_1, f_2$, respectively, which are identified in $\mathcal{T}$, then in~$\D$ there is an edge  joining their dual vertices $v_1, v_2$.
	Finally, each edge  $e$ of $\T$ gives a 2-cell of $\D$ which is glued along the edges of $\D$ which are dual to the consecutive triangles attached to~$e$. Since there are no higher dimensional cells in $\D$, and 0- and 1-cells have special names, we will often refer to the 2-cells of $\D$ as just `cells'.
	
	Translating between properties of an ideal triangulation and properties of its dual spine is  straightforward. 
	Throughout the paper we freely alternate between these two perspectives depending on which one is more useful in a given context.
	
	\subsection{Taut triangulations}\label{subsec:taut}
	In \cite[Introduction]{Lack_taut} Lackenby introduced \emph{taut ideal triangulations} of 3-manifolds. Using the duality between an ideal triangulation and its dual spine we define tautness of an ideal triangulation in terms of properties of its dual spine.

	\begin{definition}\label{def:taut}
		A \emph{taut structure} $\alpha$ on an ideal triangulation $\T$ is a choice of orientations on the edges of its dual spine $\D$ such that 
		\begin{enumerate}
			\item every vertex $v$ of $\D$ has two incoming edges and two outgoing edges,
			\item every cell $s$ of $\D$ has exactly one vertex $b_s$ such that the two edges of $s$ adjacent to~$v$ both point out of $b_s$,
			\item every cell $s$ of $\D$ has exactly one vertex $t_s$ such that the two edges of~$s$ adjacent to~$v$ both point into $t_s$.
		\end{enumerate}
	\end{definition}
	A \emph{taut triangulation} is a pair $(\T, \alpha)$, where $\T$ is an ideal triangulation, and $\alpha$ is a taut structure on $\T$. If $(\T, \alpha)$ is taut then for every cell $s$ of the dual spine of $\T$ the vertex from Definition \ref{def:taut}(2) is called the \emph{bottom vertex} of~$s$, and the vertex from Definition \ref{def:taut}(3) is called the \emph{top vertex} of $s$.
	
	\begin{remark}
		Taut triangulations are often called \emph{transverse taut triangulations}; see for instance \cite{Parlak-computation, taut_alex, SchleimSegLinks}.
	\end{remark}

	Intuitively, tautness of an ideal triangulation gives an upwards direction which is consistent throughout the whole triangulation. 
	Under the duality, orientations on the edges of $\D$ translate into coorientations on the faces of $\T$. If $(\T, \alpha)$ is taut then, by Definition \ref{def:taut}(1), every  tetrahedron $t$ of $\T$ has two faces whose coorientations point into $t$, and two faces whose coorientations point out of $t$. We call the pair of faces whose coorientations point out of $t$ the \emph{top faces} of~$t$ and the pair of faces whose coorientations point into~$t$ the \emph{bottom faces} of $t$. 
	We also define the \emph{top diagonal} of $t$ to be the common edge of the two top faces of~$t$ and the \emph{bottom diagonal} of $t$ to be the common edge of the two bottom faces of $t$.  By Definition \ref{def:taut}(2), every edge of $\T$ is embedded as the top diagonal in precisely one tetrahedron of $\T$. Similarly, Definition~\ref{def:taut}(3) implies that every edge of $\T$ is embedded as the bottom diagonal in precisely one tetrahedron of $\T$.
	We encode a taut structure on a tetrahedron by drawing it as a quadrilateral with two diagonals --- one on top of the other; see Figure~\ref{fig:taut_tet}. Then the convention is that coorientations on all faces point towards the reader. In other words, we view the tetrahedron \emph{from above}.
	
	\begin{figure}[h]
		\includegraphics[scale=1]{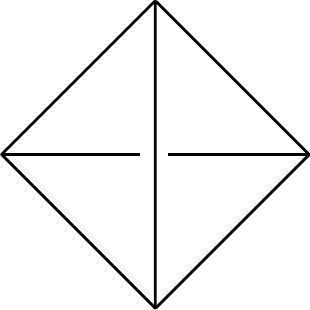}
		\caption{Taut tetrahedron.} 
	\label{fig:taut_tet}
\end{figure}

Let $e$ be an edge of a taut triangulation $(\T, \alpha)$. To every embedding $\epsilon(e)$ of $e$ into a tetrahedron $t$ of $\T$ we assign a dihedral angle 0 or $\pi$ in the following way. If $\epsilon(e)$ is either the top or the bottom diagonal of $t$ we assign to $\epsilon(e)$ angle $\pi$. Otherwise we assign to~$\epsilon(e)$ angle $0$. 
This equips the 2-skeleton of $\T$ with a structure of a branched surface with branch locus equal to the 1-skeleton of $\T$; see Figure \ref{fig:horizontal}. 
We call it the \emph{horizontal branched surface} associated to $\T$, and denote it by $\HH$. 

\begin{figure}[h]
	\includegraphics[scale=0.7]{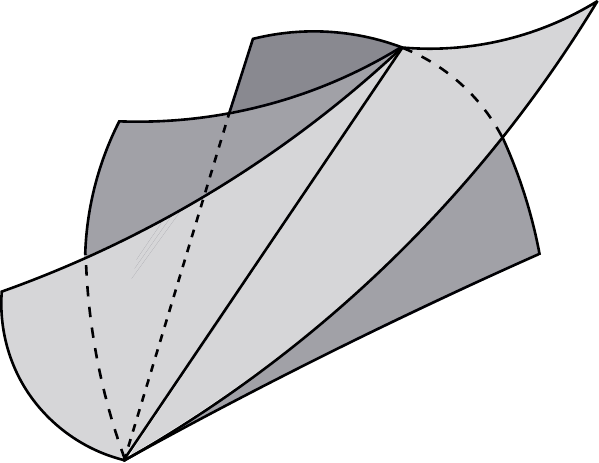}
	\caption{The horizontal branched surface associated to a taut triangulation.}
	\label{fig:horizontal}
\end{figure}

Recall that boundary components of $M$ correspond to links of vertices of $\T$. An ideal vertex of tetrahedron $t$ of $\T$ meets three faces of $t$. Thus an ideal triangulation $\T$ of $M$ determines a triangulation $\partial \T$ of  $\partial M$. If $\T$ is additionally taut, the smoothening of the 2-skeleton $\T^{(2)}$ into the horizontal branched surface determines a smoothening of $\partial \T$ into a train track. We call this train track the \emph{boundary track} of~$\T$ and denote it by $\beta$. 
Suppose that an ideal vertex of $t$ meets faces $f_1, f_2, f_3$ of $t$. Exactly one pair $(f_i, f_j)$, $i\neq j$, is adjacent either along the top or along the bottom diagonal of~$t$. In the construction of the horizontal branched surface of $\T$ we assign to such a pair dihedral angle $\pi$, and to the remaining pairs we assign dihedral angle 0. Thus every complementary region of $\beta$ is a bigon. This has important implications for the topology of $\partial M$. Recall that if $\tau$ is a train track in a surface~$S$ without boundary then the Euler characteristic of $S$ is equal to the sum of indices of all complementary regions of $\tau$ in $S$, where the index of a complementary region $C$ is the quantity
\[\mathrm{index}(C) = 2\chi(C) - \#\text{cusps in $\partial C$}. \] 
It follows that any surface admitting a bigon train track has zero Euler characteristic. Among closed orientable surfaces only torus satisfies this condition. Below we state this observation as a lemma, because we will refer to it in the proof of Proposition~\ref{prop:taut:sufficient:necessary} to show that in some situations the mutant triangulation does not admit a taut structure.

\begin{lemma}\label{lem:taut:torus:boundary}
	Suppose that an oriented 3-manifold $M$ admits a taut ideal triangulation. Then the boundary of $M$ is nonempty and consists of tori.\qed
\end{lemma}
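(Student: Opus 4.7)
The plan is to use the boundary train track $\beta$ induced by the taut structure, combined with the index formula for train tracks. Essentially all the ingredients are already assembled in the paragraph preceding the lemma — the proof is a short assembly.

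First I would observe that $\partial M$ is nonempty: any ideal triangulation $\T$ contains at least one ideal tetrahedron, and its ideal vertices contribute to links which form components of $\partial M$. So the first clause of the lemma is immediate once we know $\T$ is nonempty (which is tacit — $M$ carries a triangulation).

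Next, since $M$ is oriented, $\partial M$ is a closed oriented surface, so each component is a $\Sigma_g$ for some $g \geq 0$. To conclude that each has genus $1$, I would invoke the boundary track $\beta$ of $(\T, \alpha)$ defined just above the statement. The key combinatorial fact, already noted in the excerpt, is that every complementary region of $\beta$ in $\partial M$ is a bigon: at each ideal vertex of a tetrahedron $t$, exactly one of the three pairs of adjacent faces meets along the top or bottom diagonal (dihedral angle $\pi$) and contributes no cusp to the boundary track, while the other two pairs contribute one cusp each. Thus each triangle of $\partial \T$ smoothens into a region that matches up with one from an adjacent triangle, producing a bigon with two cusps.

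Then I would apply the index formula
\[
\chi(S) \;=\; \sum_{C} \mathrm{index}(C) \;=\; \sum_C \bigl(2\chi(C) - \#\text{cusps in }\partial C\bigr),
\]
summed over complementary regions of a train track $\tau$ in a closed surface $S$. Applied to each component of $\partial M$ with its bigon decomposition, every region contributes $2\cdot 1 - 2 = 0$, so $\chi$ of every boundary component of $M$ is zero. A closed orientable surface of Euler characteristic zero is a torus, completing the proof.

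There is no real obstacle here; the only subtlety worth flagging is the cusp count at each ideal vertex, which requires one to check the smoothening picture of Figure \ref{fig:horizontal} locally inside a tetrahedron — the pair of faces meeting along the top or bottom diagonal is smoothed through (no cusp), and the other two pairs are smoothed into cusps, giving exactly two cusps per triangle and hence bigon complementary regions globally.
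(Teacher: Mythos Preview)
Your proposal is correct and follows exactly the approach the paper takes: the argument is laid out in the paragraph immediately preceding the lemma (bigon complementary regions of the boundary track, index formula forcing $\chi=0$, orientability giving tori), and the lemma is stated with a \qed for precisely this reason. Your only addition is the explicit remark that $\partial M \neq \emptyset$ because ideal tetrahedra have ideal vertices, which is implicit in the paper.
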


Suppose that $(\T, \alpha)$ is a taut triangulation. Let $-\alpha$ denote the  taut structure on~$\T$ obtained by reversing orientations of all edges of the dual spine $\D$ of $\T$. Taut triangulations $(\T, \alpha)$ and $(\T, -\alpha)$ determine the same dihedral angles between consecutive faces attached to edges of $\T$ and thus the same horizontal branched surface. We call $(\T, \pm \alpha)$ a \emph{taut angle structure} on $\T$.
\subsection{Veering triangulations}\label{subsec:veering}
Taut triangulations are abundant in 3-manifolds \cite[Theorem 1]{Lack_taut}. In contrast, veering triangulations, a subclass of taut triangulations defined below, are very rare. It is conjectured that any hyperbolic 3-manifold with toroidal boundary admits only finitely many, potentially zero. 
\begin{definition}\label{def:veering}
	A \emph{veering structure} on a taut ideal triangulation is a smoothening of its dual spine into a branched surface $\B$ which locally around every vertex looks either as in Figure \ref{fig:veering_branched_surface} (a) or Figure \ref{fig:veering_branched_surface} (b). 
\end{definition} 
\begin{figure}[h]
	\includegraphics[scale=0.6]{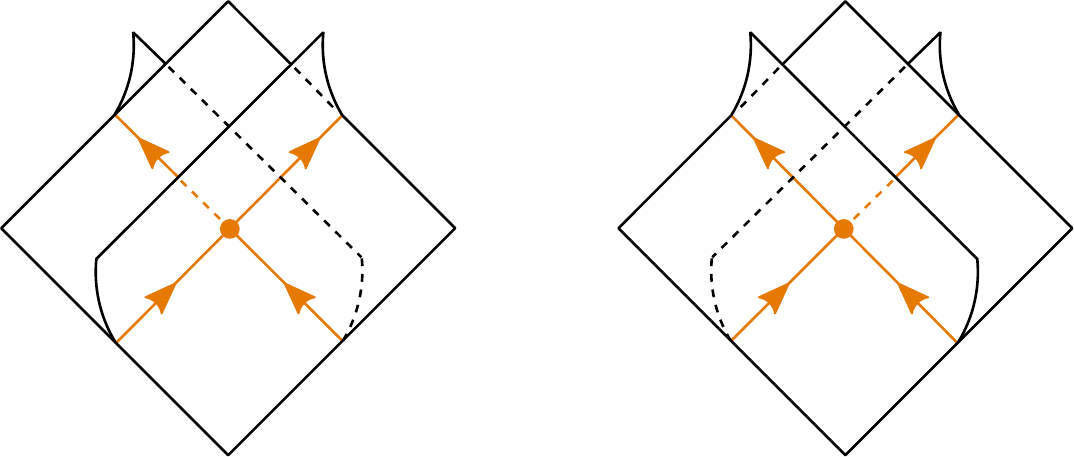}
	\put(-195,0){(a)}
	\put(-90, 0){(b)}
	\caption{The dual spine of a veering triangulation can be smoothened into a branched surface which around every vertex looks either as in (a) or as in~(b). Orientation on the edges of the branch locus is relevant.}
	\label{fig:veering_branched_surface}
\end{figure}

A \emph{veering triangulation} is a taut ideal triangulation with a veering structure. We call the branched surface from Definition \ref{def:veering} the \emph{stable branched surface} of a veering triangulation. We emphasize that its branch locus is oriented by the taut structure on the triangulation. 
\begin{remark}
	In \cite{Tsang-branched-surfaces} a branched surface which locally around every vertex looks like in Figure \ref{fig:veering_branched_surface}, and has only solid tori or solid shells as complementary regions, is called a \emph{veering branched surface}. However, the author of \cite{Tsang-branched-surfaces} orients the branch locus of this branched surface in the opposite direction.
\end{remark} 

From Definition \ref{def:veering} it follows that a veering triangulation is determined by three pieces of combinatorial data:
\begin{enumerate}
	\item ideal triangulation $\T$, 
	\item taut structure $\alpha$; see Definition \ref{def:taut},
	\item veering structure $\B$; see Definition \ref{def:veering}.
\end{enumerate}
For brevity, we typically denote a veering triangulation by a caligraphic letter $\V$, potentially with some sub- or superscript, by which we mean $\V = (\T, \alpha, \B)$.

Definition \ref{def:veering} is `dual' to the classical definition of a veering triangulation; see \cite[Definition 5.1]{SchleimSegLinks}. Below we explain how to translate between the two definitions. When viewing the dual spine of a veering triangulation as a branched surface we call its 2-cells \emph{sectors}.
Every edge $d$ of $\B$ is adjacent to three sectors of $\B$. The structure of a branched surface on $\B$  determines the \emph{one-sheeted side} of $d$ and the \emph{two-sheeted side} of $d$; see Figure \ref{fig:branched_surface_sides}. 
We say that a sector $s$ adjacent to $d$ is \emph{large} relative to $d$ if it is on the one-sheeted side of $d$. Otherwise, we say that $s$ is \emph{small} relative to $d$. Thus, two out of three sectors adjacent to $d$ are small relative to $d$. Since $d$ is oriented, and the manifold underlying~$\V$ is oriented, we can detect in which direction (right/left) each of these small sectors veers. One of them veers to the right of $d$, and the other --- to the left of~$d$. These directions are marked in Figure \ref{fig:branched_surface_sides}. 

\begin{figure}[h]
	\includegraphics[scale=1]{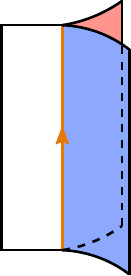}
	\put(-130,35){one-sheeted side}
	\put(20, 35){two-sheeted side}
	\put(5, 0){right}
	\put(5, 75){left}
	\caption{One of the sectors on the two-sheeted side veers to the right, the other veers to the left.} 
\label{fig:branched_surface_sides}
\end{figure}

\begin{lemma}\label{lem:stable:branched:surface}
Let $s$ be a sector of the stable branched surface of a veering triangulation. Let $d_1, d_2$ be two consecutive edges of $s$. Let $v$ be the common vertex of $d_1, d_2$.
\begin{enumerate}
	\item If orientations of $d_1, d_2$ both point into $v$, then $s$ is
	large relative to both $d_1$ and~$d_2$. 
	\item If orientations of $d_1, d_2$ both point out of $v$, then $s$ is small relative to both~$d_1$ and $d_2$, and if it veers right (left) of $d_1$, then it veers right (left) of $d_2$. 
	\item If orientation of $d_1$ points into $v$ and orientation of $d_2$ points out of $v$ then either $s$ is small relative to $d_1$ and  large relative to $d_2$, or~$s$ is small relative to both $d_1$ and $d_2$ in which case if it veers right (left) of $d_1$, then it veers right (left) of $d_2$. 
\end{enumerate}
In particular, $s$ has at least four edges.
\end{lemma}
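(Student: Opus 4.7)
The plan is to perform a case analysis comparing the local structure of $\B$ near $v$ with the two models shown in Figure~\ref{fig:veering_branched_surface}. By Definition~\ref{def:taut}(1), every vertex of $\B$ has exactly two incoming and two outgoing edges, and either model~(a) or~(b) specifies, for each of the four sectors meeting that vertex, which side (one-sheeted or two-sheeted) of each incident edge it occupies, together with the veering direction when relevant.

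For part~(1), the hypothesis that $d_1$ and $d_2$ both point into $v$ is, by Definition~\ref{def:taut}(3), exactly the condition that $v = t_s$. Inspecting Figure~\ref{fig:veering_branched_surface}, the sector whose two adjacent edges both point into $v$ sits on the one-sheeted side of each of them; hence $s$ is large relative to both $d_1$ and $d_2$. For part~(2), symmetrically $v = b_s$, the corresponding corner lies on the two-sheeted side of both adjacent edges, and the consistency of the veering direction (right versus left) across those two edges is read off directly from the same figure: at the bottom corner of either model, the two outgoing ``small'' edges veer in the same horizontal sense. For part~(3), I would enumerate the two ``in--out'' corners in models~(a) and~(b); in one of them the sector is small relative to the incoming edge and large relative to the outgoing edge, while in the other it is small relative to both, with a common veering direction, again read directly off the figure.

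The only non-routine step is the final ``at least four edges'' assertion. For this I would combine parts~(1) and~(2) with uniqueness of the top and bottom vertices. By Definition~\ref{def:taut}(2),(3), the cell $s$ has a unique bottom vertex $b_s$ and a unique top vertex $t_s$. Both edges of $s$ meeting $t_s$ are large relative to $s$ by part~(1), and both edges of $s$ meeting $b_s$ are small relative to $s$ by part~(2). Since being ``large'' or ``small'' relative to an edge $d$ depends only on which side of $d$ the sector occupies, these are mutually exclusive global properties of the pair $(s,d)$, so no edge of $s$ can meet both $b_s$ and $t_s$. A bigon would have both of its edges joining $b_s$ to $t_s$, and a triangle would have at least one such edge; both configurations are excluded, so $s$ has at least four edges.

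The hardest part, such as it is, is a careful reading of Figure~\ref{fig:veering_branched_surface} to verify (1)--(3); the remainder is bookkeeping with Definitions~\ref{def:taut} and~\ref{def:veering}.
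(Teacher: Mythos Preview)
Your proposal is correct and follows the same approach as the paper, which simply states that the lemma ``is a verbalization of the local picture of $\B$ presented in Figure~\ref{fig:veering_branched_surface}.'' Your argument for the ``at least four edges'' claim---that an edge joining $t_s$ to $b_s$ would have to be simultaneously large and small---is a nice explicit justification of a step the paper leaves to the reader.
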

\begin{proof}
The statement of this lemma is a verbalization of the local picture of $\B$ presented in Figure \ref{fig:veering_branched_surface}.
\end{proof}

Lemma \ref{lem:stable:branched:surface} says that if $s$ veers to the right (left) of $d$ then for every other edge~$d'$ of $s$ such that $s$ is small relative to $d'$, $s$ veers to the right (left) of $d'$. Since, by  Lemma~\ref{lem:stable:branched:surface}(2), $s$ is a small relative to at least two of its edges, the \emph{veering direction} of~$s$ is well-defined. We can therefore assign colors, red and blue, to the sectors of $\B$ so that right-veering sectors are colored blue and left-veering sectors are colored red; see Figure \ref{fig:branched_surface_sides}. We call them the \emph{veering colors} on $\B$. Dually, we obtain a coloring on the edges of $\V$.
\begin{corollary}\label{cor:veering:equivalence}
Let $\V$ be a veering triangulation. The veering colors on sectors of the stable branched surface of $\V$ determine colors on edges of $\V$ such that for every tetrahedron $t$ of $\V$ the following two conditions hold.
\begin{itemize}
	\item Let  $e_0, e_1, e_2$ be edges of a top face of $t$, ordered counter-clockwise as viewed from above and so that $e_0$ is the top diagonal of t. Then $e_1$ is red and $e_2$ is blue.
	\item Let  $e_0, e_1, e_2$ be edges of a bottom face of $t$, ordered counter-clockwise as viewed from above and so that $e_0$ is the bottom diagonal of $t$. Then $e_1$ is blue and $e_2$ is red.
	$\pushQED{\qed} 
	\hfill \qedhere
	\popQED$
\end{itemize}
\end{corollary}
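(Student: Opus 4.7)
The plan is to translate Lemma \ref{lem:stable:branched:surface} through the combinatorial duality between $\mathcal{B}$ and $\V$. First I would observe that sectors of $\mathcal{B}$ are in bijection with edges of $\V$ (every sector is a 2-cell dual to some edge), so the veering coloring on sectors transfers verbatim to a coloring on edges of $\V$. This is the coloring we want to analyze.

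Next I would fix a tetrahedron $t$ and a top face $f$ of $t$, labeling its edges $e_0, e_1, e_2$ counter-clockwise as viewed from above with $e_0$ the top diagonal. Under the duality, $f$ corresponds to an edge $d$ of $\mathcal{B}$ whose orientation is determined by the coorientation of $f$ (which by the taut structure points out of $t$), and each $e_i$ corresponds to a sector $s_i$ of $\mathcal{B}$ with $d \subset \partial s_i$. The key geometric step is to identify which $s_i$ is large versus small relative to~$d$. Since $e_0$ is the top diagonal of $t$, the dihedral angle assigned to $e_0$ at $t$ is $\pi$, so the two faces of $t$ meeting at $e_0$ (the two top faces, $f$ being one of them) lie on a common smooth sheet of $\mathcal{H}$ through~$e_0$; passing to the dual picture this means $s_0$ is the unique sector adjacent to $d$ on the one-sheeted side, i.e.\ $s_0$ is large relative to $d$, while $s_1$ and $s_2$ are the two small sectors, one veering right and one veering left (by Lemma \ref{lem:stable:branched:surface} such an edge $d$ with $s_0$ large is precisely the configuration of Figure~\ref{fig:branched_surface_sides}).

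The remaining step is bookkeeping of orientations. Because the ambient 3-manifold and $\V$ are oriented, and $d$ is cooriented by the direction of the coorientation of $f$, the notions of ``right of $d$'' and ``left of $d$'' are unambiguous. Comparing with the counter-clockwise ordering $e_0, e_1, e_2$ of the edges of $f$ viewed from above, one checks that $s_1$ lies on the right of $d$ and $s_2$ lies on its left (or vice versa, depending on convention), and by Definition of the veering colors this gives the stated red/blue pattern on $e_1$ and $e_2$. For the bottom-face claim, the same argument applies after reversing the coorientation: reversing the orientation of $d$ swaps its right and left sides, swapping blue and red sectors, which accounts for the color flip between top and bottom faces in the statement.

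The main obstacle I expect is keeping the orientation conventions consistent: precisely which counter-clockwise reading corresponds to which veering direction requires a careful look at Figure~\ref{fig:veering_branched_surface} and the orientation of the manifold. Once this convention is pinned down on a single tetrahedron, however, all other tetrahedra are handled identically, since by Lemma \ref{lem:stable:branched:surface} the veering direction of each sector is a global invariant, well-defined independently of which edge of the sector one looks at.
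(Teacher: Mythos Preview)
Your overall strategy---pass to the dual spine, identify the three sectors adjacent to the edge $d$ dual to $f$, and read off their veering directions via Lemma~\ref{lem:stable:branched:surface}---is exactly what the paper intends. But the central geometric identification is wrong, and the rest of the argument rests on it.

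You claim that $s_0$ (the sector dual to the top diagonal $e_0$) is \emph{large} relative to $d$. In fact it is \emph{small}. At the vertex $v=v(t)$, the two edges of $s_0$ are the duals of the two top faces of $t$; since top faces are cooriented out of $t$, both of these edges of $\B$ point \emph{out} of $v$. Lemma~\ref{lem:stable:branched:surface}(2) then says $s_0$ is small relative to both. (Dually, the sector dual to the \emph{bottom} diagonal sits between the two incoming edges at $v$ and is large by Lemma~\ref{lem:stable:branched:surface}(1).) Your justification---that the two top faces form a smooth sheet of $\HH$ at $e_0$, hence ``passing to the dual picture'' $s_0$ is on the one-sheeted side of $d$---conflates two different branched surfaces: the smoothing of $\HH$ along the edge $e_0$ of $\T$ has nothing to do with the smoothing of $\B$ along the edge $d$ of $\D$. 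Another way to see the error: by Lemma~\ref{lem:large:edges} the large edge of a top face $f$ is the bottom diagonal of the tetrahedron \emph{above} $f$, never the top diagonal $e_0$ of $t$ itself.

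The bottom-face step is also off: the orientation of $d$ is fixed once and for all by the taut structure $\alpha$, so you cannot ``reverse the coorientation'' to handle bottom faces. What changes is that for a bottom face the dual edge $d$ points \emph{into} $v(t)$, and you must rerun the case analysis of Lemma~\ref{lem:stable:branched:surface} with that orientation. Once you correct the large/small identification (one of $s_1,s_2$ is large relative to $d$, the other and $s_0$ are small), the color pattern follows by reading Figure~\ref{fig:veering_branched_surface} directly, which is all the paper does.
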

The conditions from Corollary \ref{cor:veering:equivalence} are exactly the classical veeringness conditions; see \cite[Definition 5.1]{SchleimSegLinks}. Therefore if a triangulation is veering in the sense of Definition~\ref{def:veering}, then it is veering in the sense of \cite[Definition 5.1]{SchleimSegLinks}. The converse also holds. This can be seen by observing that colors on edges of a veering tetrahedron~$t$ that satisfy conditions listed in Corollary \ref{cor:veering:equivalence} determine how to smoothen the dual spine of $t$ into a branched surface which locally around its vertices looks like the one presented in Figure \ref{fig:veering_branched_surface}: blue edges are dual to right veering sectors, and red edges are dual to left veering sectors; see Figure \ref{fig:branched_surface_sides}.

Let $\B$ be the stable branched surface of a veering triangulation $\V$. For every face $f$ of $\V$ the intersection of $\B$ with $f$ is a train track with one switch $v_f$ in the interior of $f$ and three branches, each joining $v_f$ with the mid-point of an edge of~$f$. The union of all these train tracks in faces of $\V$ gives a train track in the horizontal branched surface of $\V$. We call it the \emph{stable train track} of $\V$, and denote it by $\tau$.
A picture of~$\tau$ restricted to the faces of one veering tetrahedron is presented in Figure~\ref{fig:stable:track}.

\begin{figure}[h]
\begin{center}
	\includegraphics[width=0.35\textwidth]{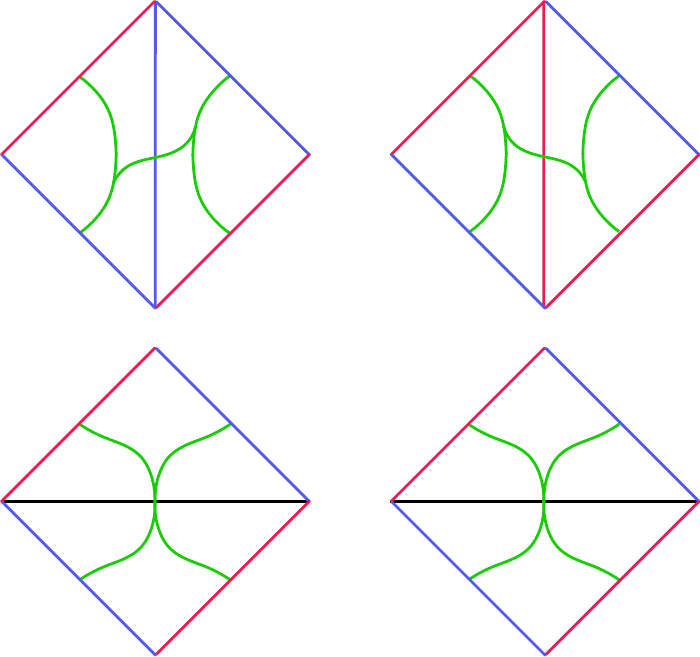} 
\end{center}
\caption{The stable track in the faces of a veering tetrahedron $t$. There are two cases, depending on the structure of the stable branched surface dual to~$t$ (equivalently: depending on the color of the top diagonal of $t$). For each column, the square in the top row represents the top faces of $t$, and the square in the bottom row represents bottom faces of $t$.} 
\label{fig:stable:track}
\end{figure}

Let $\tau_f = \tau\cap f$. It is a trivalent train track with one large branch and two small branches. We say that an edge $e$ of $f$ is the \emph{large edge} of $f$ if it is dual to the large branch of $\tau_f$. Otherwise we say that $e$ is a small edge of $f$. The key property of the stable train track is stated in the following lemma.
\begin{lemma}\label{lem:large:edges}
Let $f$ be a top face of a tetrahedron $t$ of a veering triangulation $\V$. Then the large edge of $f$ is identified with the bottom diagonal of the tetrahedron immediately above $f$.
\end{lemma}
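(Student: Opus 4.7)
The plan is to work entirely in the dual picture. Under the duality between $\T$ and its dual spine, the face $f$ corresponds to an edge $d = d(f)$ of the stable branched surface $\B$; the three edges of $f$ correspond to the three sectors of $\B$ adjacent to $d$; and the tetrahedron $t'$ above $f$ corresponds to the terminal vertex $v' = v(t')$ of $d$ (indeed, since $f$ is a top face of $t$, the taut structure orients $d$ from $v(t)$ to $v'$, so $d$ is an \emph{incoming} edge at $v'$). The large edge of $f$ will be identified first with a specific sector of $\B$, and that sector will then be identified with the bottom diagonal of $t'$.

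The first step is to show that the large edge of $f$ is dual to the unique sector of $\B$ adjacent to $d$ that is large relative to $d$. The train track $\tau_f = \B \cap f$ has a single switch $v_f$ and three branches reaching the midpoints of the three edges of $f$, one branch lying in each of the three sectors of $\B$ adjacent to $d$. The branching of $\tau_f$ at $v_f$ is inherited from the branching of $\B$ at the point $f \cap d$: the large branch of $\tau_f$ lies on the one-sheeted side of $d$, and the two small branches lie on the two-sheeted side. Hence the large edge of $f$ is dual to the unique sector of $\B$ that is large relative to $d$.

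The second step is to identify this large sector with the bottom diagonal $e^\ast$ of $t'$. Since $e^\ast$ is the common edge of the two bottom faces of $t'$, and the two bottom faces of $t'$ are dual to the two edges of $\B$ pointing into $v'$, the sector $s^\ast$ dual to $e^\ast$ has $v'$ as a corner at which both of its edges at $v'$ point into $v'$. By Lemma \ref{lem:stable:branched:surface}(1), $s^\ast$ is large relative to both these edges, so in particular large relative to $d$. The remaining two sectors adjacent to $d$ at the corner $v'$ are dual to equatorial edges of $t'$ and each has, at the corner $v'$, one incoming and one outgoing edge of $\B$; by Lemma \ref{lem:stable:branched:surface}(3) such a sector is small relative to its incoming edge, which in our situation is $d$. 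Thus $s^\ast$ is the unique sector adjacent to $d$ that is large relative to $d$, and by the first step its dual edge $e^\ast$ is the large edge of $f$, completing the proof. The only delicate point is the bookkeeping in the first step, where one must verify that the one-sheeted/two-sheeted structure of $\B$ along $d$ restricts cleanly to the large/small structure of $\tau_f$ at $v_f$; after that, the conclusion is essentially an immediate consequence of Lemma \ref{lem:stable:branched:surface}.
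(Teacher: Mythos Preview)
Your proof is correct. The paper's own argument is a one-liner: since the face $f$ (a top face of $t$) is glued to a bottom face $f'$ of $t'$, the train tracks $\tau_f$ and $\tau_{f'}$ are identified, and a glance at Figure~\ref{fig:stable:track} shows that in any bottom face of a veering tetrahedron the large branch is dual to the bottom diagonal. You instead pass to the dual branched surface $\B$ and argue via Lemma~\ref{lem:stable:branched:surface}: the large edge of $f$ is dual to the unique sector that is large relative to $d=d(f)$, and parts (1) and (3) of that lemma pin this sector down as the one dual to the bottom diagonal of $t'$. Both arguments rest on the same local picture of the veering branched surface (Definition~\ref{def:veering}); the paper reads it off the train-track figure directly, while you route it through the verbalized version in Lemma~\ref{lem:stable:branched:surface}. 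Your approach has the virtue of being figure-independent and of making explicit why the two equatorial sectors must be small relative to $d$ (Lemma~\ref{lem:stable:branched:surface}(3) forces smallness on the incoming edge in both of its subcases), whereas the paper's version is shorter but asks the reader to parse Figure~\ref{fig:stable:track}.
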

\begin{proof} Let $t'$ be the tetrahedron immediately above $f$. There is a bottom face $f'$ of~$t'$ such that the large edge of $f$ is identified with an edge $e'$ of $f'$. The picture of the stable train track in the bottom faces of a veering tetrahedron (Figure \ref{fig:stable:track}) indicates that~$e'$ must be the bottom diagonal of $t'$.
\end{proof}

The stable branched surface and the stable train track both carry the same combinatorial information. However, it is beneficial to have both these perspectives on the same object, as they have different applications in this paper. In Subsection \ref{subsec:veeringness:of:mutant} we use the stable train track to define a certain subgroup of the group of orientation-preserving combinatorial automorphisms of a surface carried by a veering triangulation. The whole branched surface is more natural to use in the proof of Theorem \ref{thm:mutant:veering} which says that under certain conditions a mutant of a veering triangulation is veering.

The stable branched surface $\B$ can be used to divide all veering triangulations into two classes, depending on whether $\B$ is transversely orientable or not.

\begin{definition}
A veering triangulation $\V$ is \emph{edge-orientable} if its stable branched surface is transversely orientable. Otherwise we say that $\V$ is \emph{not edge-orientable}.
\end{definition}

We refer the reader to \cite{taut_alex} for more information about edge-orientability and how it affects certain polynomial invariants of veering triangulations.

\begin{remark}\label{remark:unstable:branched:surface}
The dual spine of a veering triangulation $\V$ can be smoothened into another branched surface, called  the \emph{unstable branched surface} of $\V$.  We denote it by~$\B^u$; see \cite[Section~6.1]{SchleimSegLinks}. It is also encoded by the colors on edges of~$\V$. If $t$ is a tetrahedron of $\V$ whose bottom diagonal is blue (respectively, red) then $\B^u_t = \B^u\cap t$ is obtained from Figure \ref{fig:veering_branched_surface}(a) (respectively, Figure \ref{fig:veering_branched_surface}(b)) by rotating it by $\pi$ in the plane of the page and then reversing orientations of all edges in the branch locus.
\end{remark}

\begin{remark}
\label{remark:two:veering:tris}
If $M$ admits a veering triangulation $\V = (\T, \alpha, \B)$ then it also admits a veering triangulation $-\V = (\T, -\alpha, -\B^u)$, where $-\alpha$ is obtained from $\alpha$ by reversing orientations of all edges of the dual spine of $\T$, and $-\B^u$ is the unstable branched surface of $\V$ with orientation on the branch locus given by $-\alpha$.
\end{remark}

In Proposition \ref{prop:not:hyperbolic} we will  use the following crucial fact about veering triangulations.
\begin{theorem}\label{thm:HRST}
\emph{(Hodgson-Rubinstein-Segerman-Tillmann, \cite[Theorem 1.5]{veer_strict-angles})}\nopagebreak

\noindent Suppose that $M$ is a compact oriented 3-manifold that admits a veering triangulation. Then the interior of $M$ admits complete hyperbolic metric. \qed
\end{theorem}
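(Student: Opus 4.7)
The plan is to verify the hypotheses of Thurston's geometrization theorem (in its Haken form, or Perelman's general form) for the interior of $M$. Concretely, it suffices to establish that
\begin{enumerate}
\item $\partial M$ is a nonempty union of tori;
\item $M$ is irreducible and $\partial$-irreducible;
\item $M$ is atoroidal and anannular (in the relative sense).
\end{enumerate}
Item (1) is already available from the toolbox developed in this paper: since a veering triangulation is by definition a taut ideal triangulation, Lemma~\ref{lem:taut:torus:boundary} immediately gives the torus boundary condition.

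The technical heart, and the step I expect to be hardest, is to upgrade the taut (degenerate) angle structure associated to $\alpha$ into a \emph{strict} angle structure, i.e.\ an assignment of strictly positive dihedral angles in $(0,\pi)$ to the three pairs of opposite edges of each tetrahedron of $\T$ such that (a) opposite edges receive equal angles, (b) the three angles in each tetrahedron sum to $\pi$, and (c) the angles around every edge of $\T$ sum to $2\pi$. The space of solutions to (a)--(c) is a (possibly empty) convex polytope $\mathcal{A}(\T)$; the taut structure gives a distinguished point on $\overline{\mathcal{A}(\T)}$ whose coordinates lie in $\{0,\pi\}$. By Farkas' lemma, a strict point exists iff no nonzero nonnegative \emph{leading trailing deformation} (the dual cone) obstructs it. Here one exploits the veering combinatorics decisively: the red/blue coloring of edges established in Corollary~\ref{cor:veering:equivalence} and the strong compatibility between colors of top and bottom diagonals in adjacent tetrahedra (Lemma~\ref{lem:large:edges}) force any such obstruction to be trivial. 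This is the main content of \cite{veer_strict-angles}, and I would reproduce it by writing the leading trailing deformation as a sum of local contributions and then checking, one link type at a time, that the veering color pattern around each edge forbids a nonzero cancellation.

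With a strict angle structure in hand, steps (2) and (3) follow from standard arguments of Casson, Lackenby and others. First, because every tetrahedron is positively oriented in the strict angle structure, the triangulation is essential: no 2-sphere or essential disk can be built from normal pieces with positive angle sums, whence irreducibility and $\partial$-irreducibility. Next, the combinatorial Gauss--Bonnet formula applied to any normal torus or annulus shows that its total angle defect vanishes, forcing it into the thin part of a cusp; by a standard patching argument, such surfaces are boundary-parallel, so $M$ is atoroidal and anannular. (Alternatively, one can invoke \cite[Lemma~4]{Lack_taut} or its strict-angle refinement directly.)

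Finally, $M$ is a compact, orientable, irreducible, $\partial$-irreducible, atoroidal, anannular 3-manifold with nonempty toral boundary, hence Haken. Thurston's hyperbolization theorem for Haken manifolds then produces a complete hyperbolic metric on $\inter(M)$ of finite volume. Alternatively, the same conclusion follows without Haken-ness from Perelman's geometrization. The main obstacle, as noted, is setting up the strict angle structure; all subsequent steps are standard once that obstruction is cleared.
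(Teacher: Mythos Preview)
The paper does not prove this theorem; it is quoted from \cite{veer_strict-angles} and marked with a bare \qed. Your plan is essentially the strategy of that cited paper: use the veering combinatorics to show that the taut angle structure can be perturbed to a strict angle structure, then invoke the Casson--Lackenby argument that a strict angle structure forces irreducibility and atoroidality, and finish with Thurston's hyperbolization for Haken manifolds with torus boundary.

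One small correction: the obstruction-theoretic step you describe (Farkas' lemma applied to leading--trailing deformations) is due to Futer--Gu\'eritaud rather than to \cite{veer_strict-angles} directly; the original HRST argument is more hands-on, explicitly constructing the strict angles tetrahedron by tetrahedron using the veering color constraints. Either route works, and your dual-cone formulation is arguably cleaner. Also, you do not need to verify anannularity separately: once $M$ is irreducible, atoroidal, not Seifert fibered, and has nonempty torus boundary, hyperbolization applies; the strict angle structure already rules out Seifert pieces via the same Gauss--Bonnet count you mention for tori.
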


\subsubsection{The veering census}\label{sec:veering:signature}
Data on veering triangulations of orientable 3-manifolds consisting of up to 16 tetrahedra is available in the Veering Census \cite{VeeringCensus}. A veering triangulation in the census is described by a string of the form
\begin{equation}\label{string}
\texttt{[isoSig]\underline{ }[taut angle structure]}.
\end{equation}
The first part of this string is the isomorphism signature of the triangulation. It identifies a triangulation uniquely up to combinatorial isomorphism \cite[Section 3]{Burton_isoSig}. The second part of the string records a taut angle structure, that is a taut structure up to reversing orientation of all dual edges. 
A string of the form \eqref{string} is called a \emph{taut signature} and we use it whenever we refer to any particular veering triangulation from the Veering Census. 

The following lemma is well-known since the development of the Veering Census. It explains why there is at most one veering triangulation with a fixed underlying taut ideal triangulation. We include its proof here because we will use it in Subsection \ref{subsec:generalization}.

\begin{lemma}\label{lem:uniqueness:of:veering}
Suppose that $(\T, \alpha)$ is a taut triangulation. If $(\T,\alpha)$ admits a veering structure then this structure  is unique.
\end{lemma}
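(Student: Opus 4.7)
My plan is to reduce the problem to uniqueness of the red/blue edge coloring induced by a veering structure. By Corollary \ref{cor:veering:equivalence} together with the discussion following it, specifying a veering structure on the fixed taut triangulation $(\T, \alpha)$ is equivalent to specifying a two-coloring of the edges of $\T$ with the property that, on each top face ordered counter-clockwise starting from the top diagonal, the remaining two edges are red and then blue, and dually on each bottom face the remaining two edges are blue and then red. Hence it suffices to show that any such edge coloring is uniquely determined by $(\T, \alpha)$.

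The central observation is that the veering conditions above directly prescribe the color of an edge $e$ at every embedding of $e$ into a tetrahedron $t$ in which $e$ is neither the top nor the bottom diagonal of $t$; call such embeddings \emph{equatorial}. So if I can show that every edge of $\T$ admits at least one equatorial embedding, the color of every edge is forced by $(\T, \alpha)$ and the coloring is unique. Different equatorial embeddings of the same edge must assign the same color, but this consistency is automatic from the standing hypothesis that a veering structure on $(\T, \alpha)$ actually exists, since that structure provides a coloring witnessing consistency.

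The step I expect to be the main obstacle is producing an equatorial embedding for every edge. By Definition \ref{def:taut}(2)-(3), each edge $e$ is the top diagonal of exactly one tetrahedron and the bottom diagonal of exactly one tetrahedron, so I need to rule out the possibility that the degree of $e$ in $\T$ equals $2$. This is where I would invoke Lemma \ref{lem:stable:branched:surface}: the 2-cell of the dual spine dual to $e$ is a sector of the stable branched surface, and the lemma guarantees that every such sector has at least four edges. Since the number of edges of this sector equals the degree of $e$ in $\T$, every edge has degree at least four and therefore at least two equatorial embeddings. At any one of these the color of $e$ is forced by Corollary \ref{cor:veering:equivalence}, so the coloring, and hence the veering structure, is uniquely determined by $(\T, \alpha)$.
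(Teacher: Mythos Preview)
Your proof is correct but follows a genuinely different route from the paper's. The paper argues directly via the stable train track and Lemma~\ref{lem:large:edges}: if two veering structures $\B,\B'$ differed on some tetrahedron $t$, then in a top face $f$ of $t$ the two structures would have distinct large edges $e_1\neq e_2$; applying Lemma~\ref{lem:large:edges} to each structure forces both $e_1$ and $e_2$ to be identified with the bottom diagonal of the tetrahedron immediately above $f$, a contradiction. Your argument instead passes through the equivalent edge-coloring description of Corollary~\ref{cor:veering:equivalence}, notes that the color of an edge is forced at every equatorial embedding purely by $(\T,\alpha)$, and then uses the degree bound from Lemma~\ref{lem:stable:branched:surface} to guarantee such an embedding always exists. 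Both proofs are short; the paper's is more local in that it never needs a global degree bound, while yours makes the underlying mechanism more transparent --- the color of an edge is visibly dictated by its position in any tetrahedron where it is not a diagonal.
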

\begin{proof}
Suppose that there are two veering veering triangulations $\V = (\T, \alpha, \B)$, \mbox{$\V' = (\T, \alpha, \B')$}. If they are distinct then there is a tetrahedron $t$ of $\T$ such that $\B_t = \B\cap t$ and $\B'_t = \B'\cap t$ are different. Let $f$ be a top face of $t$. Let $\tau_f$, $\tau'_f$ be the stable train tracks in $f$ determined by $\B$, $\B'$, respectively. Definition \ref{def:veering} and the assumption that $\B_t\neq\B'_t$ imply that $\tau_f \neq \tau'_f$. In particular, there is an edge $e_1$ of $f$ which is dual to the large branch of $\tau_f$ and a distinct edge $e_2$ of $f$, of a different color than $e_1$,  which is dual to the large branch of $\tau_f'$. Applying Lemma \ref{lem:large:edges} to $\V$ yields that $e_1$ is identified with the bottom diagonal of the tetrahedron immediately above $f$, and applying it to $\V'$ yields that $e_2$ is identified with the bottom diagonal of the tetrahedron immediately above $f$. Since $e_1, e_2$ cannot be identified in $\T$, this is a contradiction to the assumption that the taut ideal triangulations underlying $\V, \V'$ are the same.
\end{proof}

\subsection{Surfaces carried by a taut triangulation} \label{subsec:carried:surfaces}
Let $(\T, \alpha)$ be a taut triangulation of an oriented 3-manifold $M$, with the set $T$ of tetrahedra, the set $F$ of faces, and the set $E$ of edges. Recall that $\alpha$ determines a branched surface structure on $\T^{(2)}$ which we call the horizontal branched surface and denote by $\HH$; see Figure~\ref{fig:horizontal}. 
The 1-skeleton of $\T$ is the branch locus of $\HH$. Thus each (oriented) edge $e$ of $(\T, \alpha)$ determines a~\emph{branch equation} defined as follows. Let $f_{1}, f_{2}, \ldots, f_{k}$ be triangles attached to~$e$ on the left side, ordered from the bottom to the top. Let $f'_1, f'_{2}, \ldots, f'_{l}$ be triangles attached to $e$ on the right side, also ordered from the bottom to the top. 
Then the branch equation determined by $e$ is given by
\begin{equation} \label{eqgn:branch_eqn}
f_{1} + f_{2} + \ldots + f_{k} = f'_1 + f'_2 + \ldots + f'_{l}.
\end{equation}
Let $w = (w_f)_{f\in F}$ be a nonzero, nonnegative, integral solution to the system of branch equations of $(\T, \alpha)$.  We call the number $w_f$ the \emph{weight} of $f$ and $w$ a \emph{weight system} on $(\T, \alpha)$. Using weights of triangles we can define the weight $w_e$ of an edge $e \in E$ as the sum of weights of faces attached to $e$ on one of its sides. For edge satisfying the branch equation \eqref{eqgn:branch_eqn} we have
\[w_e = \sum\limits_{i=1}^k w_{f_i} = \sum\limits_{j=1}^l w_{f_j'}.\]

Equip the triangles of $\T$ with an orientation determined by their coorientation via the right hand rule. Then the (relative) 2-chain \[S_w = \sum\limits_{f \in F} w_f \cdot f.\]
is a 2-cycle giving an oriented surface properly immersed in $M$. It is embedded if and only if $w_e \leq 1$ for every $e \in E$. Let $x \in E \cup F$. If $w_x> 1$  then multiple copies of $x$  are pinched together. Pulling these overlapping regions of $S_w$ slightly apart yields an oriented surface $\Semb$ which is properly embedded in $M$; see Figure \ref{fig:embedded_copy}. We say that $\Semb$ is \emph{carried} by $(\T, \alpha)$.

\begin{figure}[h]
\includegraphics[scale=1]{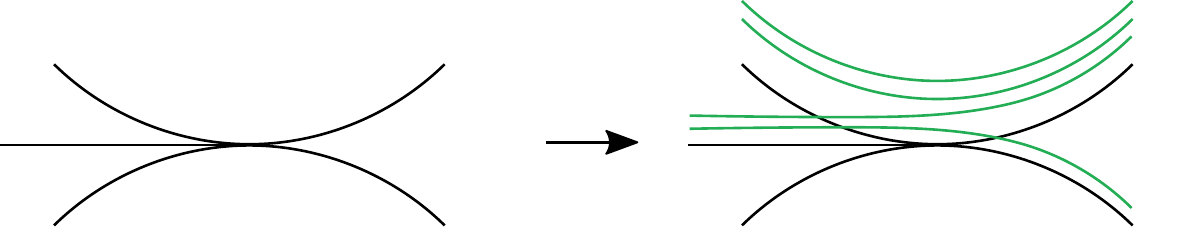}
\put(-335,45){2}
\put(-350,20){2}
\put(-335,-5){0}
\put(-211, 45){3}
\put(-211, -5){1}
\caption{From a solution to branch equations to an embedded surface.}
\label{fig:embedded_copy}
\end{figure}
More generally, we say that a surface $S$ properly embedded in $M$ is \emph{carried} by $(\T, \alpha)$ if there exists a nonzero, nonnegative, integral solution $w = (w_f)_{f\in F}$ to the system of branch equations of $(\T, \alpha)$ such that $S$ is homotopic to  the relative 2-cycle $S_w$.
Note that the same properly embedded surface $S$ can be carried by $(\T, \alpha)$ in multiple different ways.  When we write $S_w$ or $\Semb$ we always mean a surface in a fixed carried position corresponding to the weight system $w$.

If there exists a strictly positive integral solution $w$ to the system of branch equations of $(\T, \alpha)$, we say that $(\T, \alpha)$ is \emph{layered}. 
If there exists a nonnegative, nonzero integral solution, but no strictly positive integral solution, then we say that~$(\T, \alpha)$ is \emph{measurable}. If there is no nonnegative nonzero solution to the system of branch equations of $(\T, \alpha)$ then we say that $(\T,\alpha)$ is \emph{nonmeasurable}.

\subsection{Connection with pseudo-Anosov flows}\label{sec:veering:and:flows}
Recall from the Introduction that veering triangulations are combinatorial tools to study pseudo-Anosov flows. In this subsection we will make this statement more precise.

\begin{definition}\label{defn:pA}
A continuous flow $\Psi: N \times \rr \rightarrow N$ on a closed 3-manifold $N$ is \emph{pseudo-Anosov} if there are 2-dimensional singular foliations $\mathcal{F}^s$, $\mathcal{F}^u$ on $N$ with the following properties:
\begin{itemize}
	\item $\mathcal{F}^s$, $\mathcal{F}^u$ intersect along the flow lines of $\Psi$.
	\item $\Psi$ admits finitely many (potentially zero) isolated closed orbits $\ell_1, \dots, \ell_k$ such that for \mbox{$i=1,2 \ldots k$} in a sufficiently small tubular neighborhood of $\ell_i$ the foliation $\mathcal{F}^{s/u}$ is isotopic to the mapping torus of the $\frac{2\pi m_i}{p_i}$-rotation of the $p_i$-pronged foliation of a disk (with the prong singularity in the center), for some $p_i \geq 3$, $m_i \in \zz$. These orbits are called the \emph{singular orbits} of $\Psi$.
	\item Away from the singular orbits of $\Psi$, foliations $\mathcal{F}^s$, $\mathcal{F}^u$ are nonsingular and transverse to each other. 
	\item Two flow lines contained in the same leaf of $\mathcal{F}^s$ are forward asymptotic, and two flow lines contained in the same leaf of $\mathcal{F}^u$ are backward asymptotic.
\end{itemize}
If $\Psi$ has a dense orbit then we say that $\Psi$ is \emph{transitive}.
If $\Psi$ does not have any singular orbits then it is called an \emph{Anosov flow}. 
\end{definition}

\begin{definition}\label{defn:top:equiv}
Two flows $\Psi, \Psi'$ on $N$ are \emph{topologically equivalent} (or \emph{orbit equivalent}) if there is a homeomorphism $h: N\rightarrow N$ which takes oriented orbits of~$\Psi$  to oriented orbits of~$\Psi'$.
\end{definition}

In the literature there is also a notion of a smooth pseudo-Anosov flow; see \cite[Definition 5.8]{Tsang-Agol}. Recently Shannon proved that any continuous transitive Anosov flow is topologically equivalent to a smooth Anosov flow \cite[Section 5]{Shannon-equivalence}; see Definition \ref{defn:top:equiv}. His methods generalize to transitive continuous pseudo-Anosov flows \cite[Theorem~5.10]{Tsang-Agol}. Thus up to topological equivalence we may assume that our flows are smooth. The reason we prefer the above definition is that it focuses on topological properties of pseudo-Anosov flows which are crucial for our purposes --- namely, the existence of foliations $\mathcal{F}^s$, $\mathcal{F}^u$ with prescribed behavior. These foliations are called the  \emph{stable} and \emph{unstable foliations} of the flow, respectively. Splitting $\mathcal{F}^s$, $\mathcal{F}^u$ open along their singular leaves yields a pair of laminations $\mathcal{L}^s, \mathcal{L}^u$ in $N$, called the \emph{stable} and \emph{unstable laminations} of $\Psi$. Leaves of $\mathcal{L}^{s/u}$ are either open annuli, open M\"obius bands or planes. M\"obius bands appear if and only if $\mathcal{L}^{s/u}$ is not transversely orientable. We will not pay much attention to the parametrization of a flow. In fact, we will consider two flows which are topologically equivalent  to be the same.

The simplest examples of pseudo-Anosov flows are the suspension flows of pseudo-Anosov homeomorphisms of closed surfaces on their mapping tori.
Their stable/unstable foliations are formed by the mapping tori of the the stable/unstable foliations of the monodromy of the fibration. Suspension flows have an additional property: an embedded surface which intersects every flow line with positive sign. Such a surface is called a \emph{cross-section} to the flow. Any flow which admits a cross-section is called \emph{circular}. From the suspension flow of a pseudo-Anosov homeomorphism one can construct infinitely many other pseudo-Anosov flows via the Goodman-Fried surgery \cite{Fried_drill, Goodman}. In particular, Goodman-Fried surgery can be used to construct non-circular pseudo-Anosov flows, including pseudo-Anosov flows on non-fibered 3-manifolds.

Let $\Psi$ be a pseudo-Anosov flow on a closed 3-manifold $N$. Fix a finite collection $\Lambda$  of closed orbits of $\Psi$ which includes all singular orbits of $\Psi$. To be able to construct a veering triangulation of $N \bez \Lambda$ encoding $\Psi$ a technical condition, called \emph{no perfect fits relative to $\Lambda$}, has to be satisfied. We refer the reader to \cite[Definition 5.12]{Tsang-Agol} for a precise definition of this term. In this paper we will work combinatorially with veering triangulations, and deduce appropriate statements concerning flows using the following two theorems. 

\begin{theorem}\label{thm:AG} \emph{(Agol-Gu\'eritaud, unpublished)}

\noindent
Let $\Psi$ be a pseudo-Anosov flow on a closed 3-manifold $N$. Suppose that $\Lambda$  is a finite collection of closed orbits of $\Psi$ which includes all singular orbits of $\Psi$ and such that $\Psi$ has no perfect fits relative to $\Lambda$. Then $N \bez \Lambda$ admits a veering triangulation $\V$ such that the stable (unstable) branched surface of $\V$, when embedded in $N$ via the inclusion $N \bez \Lambda \hookrightarrow N$, fully carries the stable (unstable) lamination of $\Psi$.  \qed 
\end{theorem}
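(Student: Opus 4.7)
The plan is to adapt Agol's orbit-space construction (extended by Gu\'eritaud from the fibered setting to general pseudo-Anosov flows) to the data $(\Psi, \Lambda)$. First I would pass to the universal cover $\widetilde{N}$ of $N$ and form the orbit space $\mathcal{O} := \widetilde{N}/\widetilde{\Psi}$. By standard properties of pseudo-Anosov flows, $\mathcal{O}$ is homeomorphic to $\rr^2$, and the stable and unstable foliations $\mathcal{F}^s, \mathcal{F}^u$ of $\Psi$ descend to two transverse singular foliations $\widetilde{\mathcal{F}}^s, \widetilde{\mathcal{F}}^u$ on $\mathcal{O}$ with prong singularities at the images of the singular orbits of $\Psi$. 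The preimage $\widetilde{\Lambda} \subset \mathcal{O}$ of $\Lambda$ is then a discrete, $\pi_1(N)$-invariant set that contains all singular points.

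Next, I would introduce \emph{rectangles}: topological disks in $\mathcal{O}$ whose boundary alternates between two arcs of $\widetilde{\mathcal{F}}^s$ and two arcs of $\widetilde{\mathcal{F}}^u$. A rectangle is \emph{maximal} if its four corners lie in $\widetilde{\Lambda}$ and it cannot be enlarged further without an extra point of $\widetilde{\Lambda}$ entering its interior. The no-perfect-fits hypothesis relative to $\Lambda$ is precisely what guarantees that every rectangle expands to a maximal one and that the resulting family $\mathcal{R}$ of maximal rectangles is discrete and $\pi_1(N \bez \Lambda)$-invariant.

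To each $R \in \mathcal{R}$ I would associate an ideal tetrahedron $t_R$ whose four ideal vertices are the four corners of $R$. The pair of $\widetilde{\mathcal{F}}^s$-sides of $R$ become the bottom face pair of $t_R$, the pair of $\widetilde{\mathcal{F}}^u$-sides become the top pair, and two distinguished ``crossing'' leaf segments through the interior of $R$ (coming from the first point of $\widetilde{\Lambda}$ met when trying to enlarge $R$ across each pair of opposite sides) play the roles of the top and bottom diagonals of $t_R$. Face identifications arise from pairs of maximal rectangles related by a diagonal exchange: two rectangles sharing an $\widetilde{\mathcal{F}}^{s/u}$-side come from a common parent region containing one additional point of $\widetilde{\Lambda}$ in its interior, and the corresponding tetrahedra are glued along the face determined by that shared side. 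Quotienting by $\pi_1(N \bez \Lambda)$ yields a candidate ideal triangulation $\T$; the taut structure $\alpha$ is the stable-vs-unstable coorientation of faces, and the veering structure $\B$ is the smoothening of the dual spine in which the two-sheeted direction at each edge corresponds to the stable one, matching Figure \ref{fig:veering_branched_surface}.

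The hardest step is verifying that the tetrahedra $\{t_R\}$ actually assemble to $N \bez \Lambda$: every $\widetilde{\Psi}$-orbit in $\widetilde{N} \bez \widetilde{\Lambda}$ must be covered by exactly one $t_R$, and face identifications must close up consistently around every edge of the candidate triangulation. The no-perfect-fits hypothesis is used crucially here to exclude limiting or nested configurations that would destroy local finiteness or force degenerate tetrahedra. Once $\T$ is established as a genuine ideal triangulation of $N \bez \Lambda$, the axioms of Definitions \ref{def:taut} and \ref{def:veering} are local and can be read off tetrahedron by tetrahedron from the rectangle combinatorics. Finally, the fully-carried statement follows by tracing the construction backward: each leaf of $\mathcal{L}^s$ projects, in the orbit space, to a union of $\widetilde{\mathcal{F}}^s$-sides of rectangles in $\mathcal{R}$, which are precisely the branches of $\B$. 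The unstable case follows symmetrically, or by applying the same argument to $-\V$ via Remark \ref{remark:two:veering:tris}.
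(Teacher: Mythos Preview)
The paper does not prove Theorem~\ref{thm:AG}; it is stated as an unpublished result of Agol--Gu\'eritaud, closed with a \qed, and the reader is referred to \cite[Section~4]{LMT_flow} for an outline in the case where $\Lambda$ equals the singular set. So there is no ``paper's own proof'' to compare against---your proposal is filling in what the paper deliberately outsources.

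Your sketch follows the standard orbit-space/maximal-rectangle construction and is broadly correct in spirit. A couple of points are worth sharpening. First, the four faces of $t_R$ are not literally the four $\widetilde{\mathcal F}^{s/u}$-sides of $R$; rather, each face corresponds to a sub-rectangle of $R$ obtained by cutting along one of the two interior leaf segments (the ``diagonals''), and adjacent maximal rectangles share such a sub-rectangle. Second, the edges of $\T$ correspond to points of $\widetilde\Lambda$ (more precisely, to ``edge rectangles'' spanned by pairs of $\widetilde\Lambda$-points), and one must check that only finitely many maximal rectangles share a given edge---this is where discreteness and the no-perfect-fits hypothesis enter most delicately. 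Your remark that ``every $\widetilde\Psi$-orbit must be covered by exactly one $t_R$'' is not quite right either: the tetrahedra overlap along faces and edges, and what one actually shows is that the union of the $t_R$ is all of $\widetilde{N\bez\Lambda}$ and that the gluing pattern is locally finite and consistent. None of these are fatal gaps---they are the expected level of imprecision for a sketch of an unpublished result---but they are the places where the real work lies.
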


The outline of the proof of the above theorem  when $\Lambda$ consists only of the singular orbits of $\Psi$ and is nonempty is presented in \cite[Section~4]{LMT_flow}. In this case we obtain a  veering triangulation that is canonical for the flow. However, we are interested also in Anosov flows and pseudo-Anosov flows which do have perfect fits relative to their singular orbits. The latter are called \emph{pseudo-Anosov flows with perfect fits}. To construct a veering triangulation encoding the stable/unstable laminations of an Anosov flow or of a pseudo-Anosov flow with perfect fits, we need to add more orbits to the set $\Lambda$. It was proved by Tsang that any (pseudo)-Anosov flow is without perfect fits relative to some~$\Lambda$ containing all singular orbits and one additional orbit \cite[Proposition]{Tsang-Birkhoff}. The choice of this one additional orbit is not canonical though, so if $\Psi$ has perfect fits there is no canonical veering triangulation associated to it. Nonetheless,  even in this case there are  veering triangulations that can be used to study~$\Psi$; it is only the canonicity that is lost. If $\V$ arises from $\Psi$ via the Agol-Gu\'eritaud's construction we will say that $\V$ \emph{encodes} $\Psi$.

Another theorem that we will use in this paper says that one can go also in the other direction: use veering triangulations to construct pseudo-Anosov flows. To state it, we need to mention another connection between a pseudo-Anosov flow $\Psi$ and a veering triangulation $\V$ encoding it. Let $\ell \in \Lambda$ be a $p$-pronged orbit of $\Psi$, where $p \geq 2$ and $p=2$ corresponds to a non-singular orbit. 
If we view the compact core of $N - \Lambda$ inside of~$N$ it is clear that its boundary torus $T_\ell$ around $\ell$ meets the singular leaves of $\mathcal{F}^{s/u}$ along $p$ parallel simple closed curves. We call them the \emph{prong curves} of $\Psi$ in~$T_\ell$.
These curves are encoded by a special family of curves carried by the boundary track of $\V$ (defined in Subsection \ref{subsec:taut} for any taut triangulation). Namely, when a taut triangulation is veering, its boundary track on each boundary torus arranges into a collection of \emph{ladders} separated by \emph{ladderpole curves} \cite[Section 1.2]{Gueritaud_CT}. The ladderpole curves of $\V$ correspond precisely to the prong curves of~$\Psi$.

\begin{theorem}\emph{(Agol-Tsang, \cite[Theorem~5.1]{Tsang-Agol})} \label{thm:AT}\nopagebreak

\noindent
Let $\V$ be a veering triangulation of a \mbox{3-manifold}~$M$. Suppose that $M$ has $k$ boundary components $T_1, \ldots, T_k$. Let $l_i$ be the collection of ladderpole curves of $\V$ on $T_i$, and let $s_i$ be a connected simple closed curve on $T_i$.
If for every $i=1,2, \ldots, k$ the algebraic intersection $p_i = \langle \ell_i, s_i \rangle$ is greater than one, then the Dehn filled manifold $M(s_1, \ldots, s_k)$ admits a transitive pseudo-Anosov flow $\Psi$ with the following properties:
\begin{itemize}
	\item $\Psi$ is without perfect fits relative to a collection $\Lambda = \lbrace \ell_1, \ldots, \ell_k \rbrace$ of closed orbits  isotopic to the cores of the filling solid tori.
	\item The orbit $\ell_i$ is $p_i$-pronged for $i=1,2, \ldots,k$.
	\item The stable (unstable) lamination of $\Psi$ is fully carried by the stable (unstable) branched surface of~$\V$.\qed
\end{itemize}
\end{theorem}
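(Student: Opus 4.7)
The plan is to construct the pseudo-Anosov flow $\Psi$ on $N$ by first producing a transverse pair of laminations $(\mathcal{L}^s, \mathcal{L}^u)$ on $N$ that are carried by (suitable extensions of) the stable and unstable branched surfaces $\B$ and $\B^u$ of $\V$, and then assembling the flow from this lamination pair. Morally, this realizes the reverse direction of Theorem~\ref{thm:AG}: instead of reading off a veering triangulation from a given flow, one reads off the flow from the veering triangulation.

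First I would use the combinatorics of $\V$ to produce laminations $\mathcal{L}^s_M, \mathcal{L}^u_M$ on $M$ that are fully carried by $\B$ and $\B^u$ respectively; this can be done by an inverse-limit or splitting construction on the two branched surfaces (see Remark~\ref{remark:unstable:branched:surface} for $\B^u$), yielding a genuine pair of transverse $2$-dimensional laminations whose leaves are planes, annuli, or M\"obius bands. By the structure of the boundary track~$\beta$ of $\V$, the intersection of $\B$ (and of $\B^u$) with each torus $T_i$ is a train track whose only carried curves are those in the ladderpole direction $\ell_i$, so the leaves of $\mathcal{L}^s_M$ and $\mathcal{L}^u_M$ spiral into $T_i$ along parallel copies of $\ell_i$.

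Next I would extend the laminations across each filling solid torus. Since $\langle \ell_i, s_i \rangle = p_i$, after attaching the solid torus along $s_i$ the $p_i$ parallel ends of $\mathcal{L}^{s/u}_M$ meeting $T_i$ can be coherently closed off into a single $p_i$-pronged singular leaf whose singular locus is the core of the filling torus; this gives laminations $\mathcal{L}^s, \mathcal{L}^u$ on $N$ fully carried by the filled-in branched surfaces. The hypothesis $p_i > 1$ is precisely what allows this extension to have genuine prong singularities rather than developing a perfect fit along $\ell_i$. With the transverse pair $(\mathcal{L}^s, \mathcal{L}^u)$ in hand on $N$, I would reconstruct $\Psi$ by the standard principle that a transverse pair of $2$-dimensional laminations with matching singular structure and a product-like complement determines a pseudo-Anosov flow (up to orbit equivalence) whose orbits are locally the intersections of leaves of $\mathcal{L}^s$ with leaves of $\mathcal{L}^u$. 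Transitivity then follows from the connectedness and minimality forced by the combinatorics of $\V$, and the absence of perfect fits relative to $\Lambda = \{\ell_1, \ldots, \ell_k\}$ is built directly into the local extension model.

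The hard part will be the final passage from a pair of laminations on $N$ to an honest flow. One must verify that the complementary regions of $\mathcal{L}^s \cup \mathcal{L}^u$ carry a consistent flow-box / product structure along which compatible oriented flow directions can be glued, and then confirm that the resulting continuous flow has the prong orders claimed along each $\ell_i$ and no extraneous singular orbits elsewhere. The veering condition on $\V$ (rather than mere tautness, cf.~Theorem~\ref{thm:HRST}) is essential here: the local pictures in Figure~\ref{fig:veering_branched_surface} and Figure~\ref{fig:stable:track} are precisely what forces the transverse intersection $\B \cap \B^u$ to globally behave like the invariant bifoliation of a pseudo-Anosov flow, and a weaker structure would not suffice to rule out additional singularities or non-product complementary regions.
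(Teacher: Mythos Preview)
The paper does not prove this theorem; it is quoted from Agol--Tsang with a terminal \qed, and the only additional comment is that the third bullet follows from \cite[Proposition 5.13]{Tsang-Agol}. So there is no in-paper proof to compare against; what follows is a comparison with the actual Agol--Tsang construction and an assessment of your sketch.

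Your outline diverges substantially from the actual construction and contains a genuine gap. The step you flag as ``the hard part'' is not merely hard but is not a standard principle at all: a transverse pair of $2$-dimensional laminations on $N$, even with matching singular loci and product complementary regions, does \emph{not} canonically determine a pseudo-Anosov flow up to orbit equivalence. A pseudo-Anosov flow requires dynamical data---exponential contraction along $\mathcal{L}^s$ and expansion along $\mathcal{L}^u$---which cannot be read off from the topological intersection pattern of the laminations. There is no off-the-shelf theorem that produces the flow from the lamination pair; this is precisely the content one has to supply, and your sketch does not indicate how. Relatedly, your first step (producing laminations fully carried by $\B$ and $\B^u$ via an inverse-limit/splitting construction) is itself a nontrivial theorem for veering branched surfaces, not an automatic consequence of Definition~\ref{def:veering}.

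The actual Agol--Tsang argument proceeds in the opposite order: it constructs the flow \emph{directly} from the veering combinatorics by assembling an explicit flow-box decomposition of the Dehn-filled manifold dictated by the tetrahedra of $\V$, and only afterwards reads off the stable/unstable laminations as invariant sets of the constructed flow. The veering condition is used to ensure that the local flow models on adjacent tetrahedra glue coherently and that the resulting dynamics near the filling cores have the required $p_i$-prong structure. In short, your proposed route ``laminations first, then flow'' reverses the logical dependencies of the actual proof, and the reversal is where the missing idea lies.
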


The fact that the stable lamination of $\Psi$ is fully carried by the stable branched surface of the veering triangulation is  not explicitly stated in \cite[Theorem~5.1]{Tsang-Agol} but if follows from  \cite[Proposition 5.13]{Tsang-Agol}. (Note that the authors call the stable branched surface from Definition \ref{def:veering} the unstable branched surface, and orient the edges of its branch locus in the opposite direction). 
If $\V$ is a veering triangulation of $M$ and~$\Psi$ is a pseudo-Anosov flow on some closed Dehn filling $N$ of $M$ constructed by the Agol-Tsang's construction we will say that~$\Psi$ is \emph{built from} $\V$.
\begin{remark}\label{remark:inverse}

In \cite{Tsang-Agol} there is no claim that if one applies the Agol-Gu\'eritaud's construction to a pseudo-Anosov flow $\Psi$ built from $\V$ one gets $\V$ back. However,  Chi Cheuk Tsang recently told the author that this fact can be proved using \cite[Theorem~1.1]{pA-classification}.
There is also a program of Schleimer-Segerman which aims to draw a one-to-one correspondence between pairs (veering triangulation, appropriate Dehn filling slopes) and pseudo-Anosov flows. The outline of this program can be found in \cite[Section 1.2]{SchleimSegLinks}, and the details of individual steps will appear in \cite{SchleimSeg-dynamic-pairs, SchleimSeg-flows}.

To simplify our discussion we will often write about pseudo-Anosov flows and veering triangulations as if the Agol-Gu\'eritaud's construction and the Agol-Tsang's construction were inverses of each other. However, since there is currently no written proof of this fact, in Theorem \ref{thm:many:flows} we will show that certain different veering triangulations of the same manifold  encode topologically inequivalent flows independently of that convenient assumption.
\end{remark}

Let $\V$ be a veering triangulation of $M$ encoding a pseudo-Anosov flow $\Psi$ on some closed  Dehn filling $N$ of $M$. 
If we view $M$ as a cusped 3-manifold $N \bez \Lambda$, then it is naturally equipped with a flow $\Psi_{N\bez \Lambda}$, the restriction of  $\Psi$ to the complement of~$\Lambda$.  The flow $\Psi_{N \bez \Lambda}$ is not pseudo-Anosov: orbits of $\Psi$ that are asymptotic to an element of $\Lambda$ become orbits of $\Psi_{N\bez \Lambda}$ that escape to infinity. Alternatively, we can view $M$ as a manifold obtained from $N$ by \emph{blowing up} elements of $\Lambda$ into toroidal boundary components. There is a notion of the \emph{blown-up flow} $\Psi^\circ$ on $M$ \cite[Section~3.2]{Mosher_manuscipt}. See also \cite[Section 3.6]{Bonatti-surgeries} for the construction of $\Psi^\circ$ in the context of smooth flows. The orbits of $\Psi$ that are asymptotic to an element of $\Lambda$ become orbits of $\Psi^\circ$ that are asymptotic to a prong curve on some boundary component of $M$ (which is an orbit of $\Psi^\circ$). By splitting the stable/unstable foliations of $\Psi$ open along the leaves through each orbit of $\Lambda$ we obtain a pair of laminations  in $M$. We will call these laminations  the stable/unstable laminations of~$\Psi^\circ$, respectively. Theorem \ref{thm:AT} immediately implies the following statement.

\begin{corollary}\label{cor:EO:TO}
Suppose that a pseudo-Anosov flow $\Psi$ is built from a veering triangulation  $\V$ of $M$. The following statements are equivalent.
\begin{itemize}
	\item $\V$ is edge-orientable. 
	\item The stable lamination of~$\Psi$ is transversely orientable. 
	\item The stable lamination of~$\Psi^\circ$ is transversely orientable. \qed
\end{itemize}
\end{corollary}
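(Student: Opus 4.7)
The plan is to establish the chain $(1)\Leftrightarrow(2)\Leftrightarrow(3)$.

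To prove $(1)\Leftrightarrow(2)$, I would invoke Theorem~\ref{thm:AT}, which tells us that the stable lamination $\mathcal{L}^s$ of $\Psi$ is fully carried by the stable branched surface $\B$ of $\V$, and then appeal to the general fact that a lamination $\mathcal{L}$ fully carried by a branched surface $\B$ is transversely orientable if and only if $\B$ is. The forward direction is routine: $\mathcal{L}$ lies in a regular $I$-bundle neighborhood $N(\B)$, and a transverse orientation of $\B$ pulls back along the collapse map $N(\B)\to\B$ to one of $\mathcal{L}$. For the converse, ``fully carried'' ensures that every sector of $\B$ meets leaves of $\mathcal{L}$, so a transverse orientation on $\mathcal{L}$ singles out a preferred normal direction at each point of $\B$; this assignment extends continuously across each branch edge because the three sheets of $\B$ meeting there all carry nearby leaves of $\mathcal{L}$ whose transverse orientations are pairwise compatible through the $I$-bundle structure. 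Since edge-orientability of $\V$ is by definition transverse orientability of $\B$, this yields $(1)\Leftrightarrow(2)$.

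For $(2)\Leftrightarrow(3)$, I would use the description from the paper of how the two laminations are built from the singular stable foliation $\mathcal{F}^s$ of $\Psi$: the lamination $\mathcal{L}^s$ arises from $\mathcal{F}^s$ by splitting along its singular leaves, while the stable lamination of $\Psi^\circ$ arises from $\mathcal{F}^s$ by splitting along all leaves through orbits of $\Lambda$. Since $\Lambda$ contains all singular orbits, the second lamination is obtained from $\mathcal{L}^s$ by additional splittings along finitely many non-singular leaves (those through the non-singular orbits of $\Lambda$), together with the blow-up that opens up a toroidal boundary component around each such orbit. Splitting a codimension-one lamination along a leaf preserves transverse orientability in both directions: a transverse orientation on the unsplit lamination transfers consistently to both parallel copies of the split leaf, and conversely a transverse orientation on the split lamination assigns matching transverse orientations to the two copies, which glue to yield one on the unsplit lamination. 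Passing between the ambient manifolds $N$ and $M\cong N\setminus\Lambda$ along the blow-up does not affect transverse orientability either, since transverse orientability is a local property of the normal bundle of a codimension-one lamination and is unaffected by the one-dimensional set $\Lambda$.

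The main obstacle, such as it is, lies in the precise formulation of the ``lamination fully carried by $\B$ is transversely orientable iff $\B$ is'' principle used in $(1)\Leftrightarrow(2)$; this is a folklore result about branched surfaces but requires some care at the branch locus, where one must check that the local transverse orientations of $\mathcal{L}$ on the three sheets meeting at a branch edge fit together coherently. Once this is in hand, $(2)\Leftrightarrow(3)$ is a routine check about the splitting operation, and the corollary follows by concatenating the two equivalences.
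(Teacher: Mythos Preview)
Your proposal is correct and follows essentially the same approach as the paper. The paper treats the corollary as an immediate consequence of Theorem~\ref{thm:AT} (note the \qed in the statement and the sentence ``Theorem~\ref{thm:AT} immediately implies the following statement'' preceding it), while you have simply spelled out the two ingredients the paper leaves implicit: the folklore fact that a lamination is transversely orientable if and only if a branched surface fully carrying it is, and the observation that the additional splittings producing $\mathcal{L}^s(\Psi^\circ)$ from $\mathcal{L}^s(\Psi)$ do not affect transverse orientability.
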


Given a flow $\Psi$ on an oriented 3-manifold $N$ and an oriented surface $S$ properly embedded in $N$ we will say that $S$ is \emph{transverse} to $\Psi$ if $S$ is transverse to the orbits of~$\Psi$ and the orientation on $TS\oplus T\Psi$ agrees with the orientation of $M$. For instance, if a veering triangulation $\V$ is built from a pseudo-Anosov flow $\Psi$ then all surfaces carried by $\V$ are transverse to $\Psi^\circ$.

\begin{theorem}\label{thm:LMT:transverse}\emph{(Landry-Minsky-Taylor, \cite[Theorem 5.1]{LMT_flow})}

\noindent Suppose that $\V$ is a veering triangulation of $M$ encoding a pseudo-Anosov flow $\Psi$ on some closed Dehn filling $N$ of $M$. Then any surface carried by $\V$ is transverse to the blown-up flow $\Psi^\circ$ on $M$. 
\end{theorem}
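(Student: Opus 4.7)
The plan is to reduce transversality of an arbitrary carried surface to transversality of individual faces of $\T$, and then verify the latter by exploiting how the stable and unstable branched surfaces of $\V$ record the dynamics of $\Psi^\circ$. By definition $S_w = \sum_{f \in F} w_f \cdot f$ is a weighted sum of triangular faces of $\T$, and the embedded representative $\Semb$ is obtained from $S_w$ by pulling apart the finitely many pinched regions along the 1-skeleton into a small tubular neighborhood of $\T^{(1)}$. If each individual face of $\T$ (with its coorientation from $\alpha$) is transverse to the orbits of $\Psi^\circ$ with the flow direction agreeing with the coorientation, then after a small enough perturbation $\Semb$ inherits the same property. So the whole statement reduces to checking transversality of the horizontal branched surface $\HH = \T^{(2)}$ against $\Psi^\circ$, with matching orientations.

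For the transversality, I would first pass from $\Psi$ on $N$ to $\Psi^\circ$ on $M$ and observe that the stable and unstable branched surfaces $\B, \B^u$ of $\V$, which by Theorem \ref{thm:AG} fully carry $\mathcal{L}^s$ and $\mathcal{L}^u$ inside $N$, still fully carry the corresponding blown-up laminations $\mathcal{L}^s_\circ, \mathcal{L}^u_\circ$ of $\Psi^\circ$ on $M$; the blow-up operation merely splits the leaves of $\mathcal{F}^{s/u}$ through each orbit in $\Lambda$, and this splitting is absorbed by the carrying. An orbit of $\Psi^\circ$ lies in the intersection of a leaf of $\mathcal{L}^s_\circ$ with a leaf of $\mathcal{L}^u_\circ$, so at every point its tangent vector lies in the intersection of the tangent planes to $\B$ and $\B^u$. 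Away from the 1-skeleton of the dual spine $\D$, the branched surfaces $\B$ and $\B^u$ meet transversely along the 1-skeleton of $\D$, and at any point in the interior of a face $f$ of $\T$ the intersection $T\B \cap T\B^u$ is one-dimensional and transverse to $f$ (since $f$ is Poincar\'e dual to a 2-cell of $\D$). This gives transversality of $\Psi^\circ$ to the interior of each face.

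It then remains to check that the flow crosses $f$ in the direction of its coorientation. The taut structure $\alpha$ orients the 1-skeleton of $\B$ (and of $\B^u$), and the Agol--Gu\'eritaud construction of $\V$ from $\Psi$ produces this orientation exactly by recording the forward flow direction along the edges of $\D$; dually this is the coorientation of the face $f$ of $\T$. A direct local check on each veering tetrahedron (using the combinatorial picture of $\B, \B^u$ from Figure~\ref{fig:veering_branched_surface} and Remark~\ref{remark:unstable:branched:surface}) confirms that the flow direction $T\B \cap T\B^u$ in the interior of $f$ points in the cooriented side. The main obstacle is bridging the combinatorial data of $\V$ and the smooth dynamics of $\Psi^\circ$; this bridge is supplied by Theorem~\ref{thm:AG}, which guarantees that the branched surfaces of $\V$, when included into $M$, carry the invariant laminations of $\Psi^\circ$ in the geometric sense required for the above tangent-plane argument.
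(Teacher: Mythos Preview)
The paper does not prove this statement; it is quoted as \cite[Theorem 5.1]{LMT_flow} and used as a black box. There is therefore no ``paper's own proof'' to compare against.

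Regarding your sketch: the overall strategy --- reduce to transversality of the horizontal branched surface $\HH$, then use that the flow direction is recorded by the intersection of the stable and unstable branched surfaces --- is the right intuition, but there is a genuine gap in the middle. ``$\B$ fully carries $\mathcal{L}^s_\circ$'' means the lamination lives in an $I$-bundle neighborhood of $\B$ and projects onto it; it does \emph{not} mean leaves of $\mathcal{L}^s_\circ$ are tangent to $\B$. So your sentence ``at every point its tangent vector lies in the intersection of the tangent planes to $\B$ and $\B^u$'' is not justified by carrying alone. You would need the carrying maps to be $C^1$-controlled, or some separate argument that the flow is isotopic to one transverse to $\HH$; this is exactly the substantive content of the cited theorem, and you have pushed it into the phrase ``in the geometric sense required for the above tangent-plane argument'' without supplying it.

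The orientation step has the same issue. You assert that the taut structure $\alpha$ records the forward flow direction because ``the Agol--Gu\'eritaud construction of $\V$ from $\Psi$ produces this orientation exactly by recording the forward flow direction.'' That is true, but it is a fact about the construction (which is unpublished, per the paper's Theorem~\ref{thm:AG}), not something you have argued. Finally, you handle only interior points of faces and say nothing about transversality near the $1$-skeleton of $\T$, where $\HH$ has its branch locus and several faces meet; the perturbation producing $\Semb$ happens precisely there.
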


\subsection{Veering triangulations and the Thurston norm}\label{veering:vs:thurston}

For a compact, oriented 3-manifold $M$  Thurston defined a  semi-norm $\Thnorm{\cdot}$ on $H_2(M, \partial M;\rr)$  as follows. Every integral class $\eta \in H_2(M, \partial M; \zz)$ can be represented by a properly embedded surface $S \subset M$ \cite[Lemma 1]{Thur_norm}. If~$S$ is connected we set
\begin{equation*}
\chi_-(S) = \max \lbrace 0, -\chi(S) \rbrace,
\end{equation*}
where $\chi(S)$ denotes the Euler characteristic of $S$.
Otherwise, denote by $S_1, S_2, \ldots, S_k$ connected components of $S$ and set 
\begin{equation*}
\chi_-(S) = \sum\limits_{i=1}^{k} \chi_-(S_i).
\end{equation*}
\label{notation:Thurston}
We define a quantity $\Thnorm{\eta}$   as the infimum of $\chi_-(S)$ over all surfaces~$S$ which are properly embedded in $M$ and represent $\eta$. 
The function
$\Thnorm{\cdot}$  can be extended to $H_2(M, \partial M; \mathbb{Q})$ by requiring linearity on each ray through the origin in $H_2(M, \partial M;\rr)$, and then to  $H_2(M, \partial M; \rr)$  by requiring continuity \cite[Section 1]{Thur_norm}.  
$\Thnorm{\cdot}$  is a norm  if we assume that every surface representing a nonzero class in $H_2(M, \partial M;\zz)$ has negative Euler characteristic \cite[Theorem 1]{Thur_norm}. 	Then $\Thnorm{\eta}$ is called the \emph{Thurston norm} of $\eta$. 
If a properly embedded surface $S$ satisfies  $\chi(S) = -\Thnorm{\eta}$ then it is called a \emph{taut representative} of $\eta$ or a \emph{Thurston norm minimizing representative} of~$\eta$.
The unit norm ball $\mathbb{B}_{\mathrm{Th}}$ of 	$\Thnorm{\cdot}$ is a polytope with rational vertices \cite[Theorem 2]{Thur_norm}. Thus we can speak about \emph{faces} of the Thurston norm ball.

A connection between the Thurston norm and pseudo-Anosov flows on closed hyperbolic 3-manifolds was established by Fried and Mosher in the 80's and 90's. The first result in that direction concerned only fibered faces.

\begin{theorem} \emph{(Fried, \cite[Theorem~7]{Fried_suspension})}  \label{thm:F} \nopagebreak

\noindent Let $N$ be a closed hyperbolic 3-manifold. Let $\face{F}$ be a fibered face of the Thurston norm ball in $H_2(N;\rr)$. There is a unique, up to isotopy and reparametrization, circular pseudo-Anosov flow~$\Psi$  such that a class $\eta \in H_2(N;\zz)$ can be represented by a cross-section to $\Psi$ if and only if $\eta$ is in the interior of $\cone{\face{F}}$.\qed 
\end{theorem}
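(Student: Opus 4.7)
The plan is to construct $\Psi$ as the suspension flow of a pseudo-Anosov monodromy in the face $\face{F}$, identify its Fried cone of cross-section classes with $\inter(\cone{\face{F}})$, and then invoke Thurston's classification of pseudo-Anosov homeomorphisms for uniqueness.

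For existence, I would pick an integer class $\eta_0 \in \inter(\cone{\face{F}})$; by Thurston's fibration theorem $\eta_0$ is fibered, with fiber $S_0$ and monodromy $\varphi_0 : S_0 \to S_0$. Because $N$ is closed hyperbolic, hence neither Seifert fibered nor toroidal, $\varphi_0$ admits a pseudo-Anosov representative. I take $\Psi$ to be its suspension flow on the resulting mapping torus $N$; this is a pseudo-Anosov flow with $S_0$ a cross-section representing $\eta_0$.

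Next I would appeal to Fried's theory of cross-sections for nonsingular flows. The set $D(\Psi) \subseteq H_2(N;\rr)$ of classes represented by cross-sections to $\Psi$ is an open convex cone, dually characterized as the set of $\eta$ with $\langle \eta, [c] \rangle > 0$ for every closed orbit $c$ of $\Psi$; moreover, for every integer $\eta \in D(\Psi)$, any cross-section representing $\eta$ is a Thurston-norm-minimizing fiber of a fibration, with $\Thnorm{\eta}$ given by a single linear functional computed from a flow cocycle. Hence $\Thnorm{\cdot}$ is linear on the open cone $D(\Psi)$, and the polyhedrality of $\mathbb{B}_{\mathrm{Th}}$ together with $\eta_0 \in D(\Psi)$ forces $D(\Psi) \subseteq \inter(\cone{\face{F}})$. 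The main obstacle is the reverse inclusion: given an integer $\eta \in \inter(\cone{\face{F}})$, Thurston's theorem makes $\eta$ fibered, and I would combine the openness of $D(\Psi)$ with a connectedness and deformation argument along a path of rational fibered classes from $\eta_0$ to $\eta$ to show that the suspension flow of the fiber at $\eta$ is isotopic, up to reparametrization, to $\Psi$. This would yield $D(\Psi) = \inter(\cone{\face{F}})$.

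For uniqueness, suppose $\Psi'$ is another circular pseudo-Anosov flow with the stated cross-section cone. Then $\eta_0$ is represented by a cross-section $S'$ to $\Psi'$ whose first-return map $\varphi' : S' \to S'$ is a pseudo-Anosov representative of the monodromy of the (up-to-isotopy unique) fibration in class $\eta_0$. Thurston's classification then forces $\varphi'$ and $\varphi_0$ to be conjugate by an isotopy, and suspending this conjugacy produces an ambient isotopy, together with a reparametrization, carrying $\Psi'$ to $\Psi$.
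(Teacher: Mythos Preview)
The paper does not prove this theorem; it is quoted from Fried with a citation and a \qed, so there is no in-paper argument to compare your proposal against. Your outline is in fact the standard route to Fried's result and is broadly correct: build $\Psi$ as the suspension of a pseudo-Anosov monodromy over some $\eta_0$ in the interior of $\cone{\face{F}}$, identify the cross-section cone $D(\Psi)$ via Fried's homology-directions description, and get uniqueness from the Nielsen--Thurston classification applied to a common cross-section.

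One step deserves tightening. Your argument for the reverse inclusion $\inter(\cone{\face{F}}) \subseteq D(\Psi)$ is phrased as a ``connectedness and deformation argument along a path of rational fibered classes'' whose goal is to show that the suspension flow at $\eta$ is isotopic to $\Psi$; but that conclusion is essentially the uniqueness statement you are ultimately after, so as written the step is close to circular. The clean way to finish is intrinsic to $\Psi$: Fried shows that every class on $\partial\,\overline{D(\Psi)}$ pairs to zero with some homology direction of $\Psi$ and hence cannot be represented by a cross-section (in particular is not fibered for \emph{this} flow, and one checks it is not fibered at all). Since $D(\Psi)$ is open, convex, nonempty, contained in $\inter(\cone{\face{F}})$, and its boundary consists of non-fibered classes while $\inter(\cone{\face{F}})$ consists entirely of fibered classes, connectedness of $\inter(\cone{\face{F}})$ forces $D(\Psi)=\inter(\cone{\face{F}})$. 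Alternatively, use that $-\chi_\Psi$ is linear and agrees with $\Thnorm{\cdot}$ on $\overline{D(\Psi)}$, which has full dimension; polyhedrality of $\mathbb{B}_{\mathrm{Th}}$ then forces $\overline{D(\Psi)}$ to be the cone on an entire top-dimensional face. Either way you avoid invoking isotopy of flows before it has been established.
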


The importance of this result lies in the fact that when $b_1(N)>1$  there are infinitely many fibrations lying over $\face{F}$ and thus one can construct infinitely many suspension flows on $N$: one for each fibration. Theorem \ref{thm:F} implies that all these flows are the same up to isotopy and parametrization. We will say that the unique flow $\Psi$ associated to a fibered face $\face{F}$ \emph{dynamically represents} $\face{F}$.

Mosher extended Fried's result by showing that if a circular flow $\Psi$ dynamically represents a fibered face $\face{F}$ then $\eta \in H_2(N; \zz)$ can be represented by a surface that is \emph{almost transverse} to  $\Psi$ if and only if $\eta$ is  in $\cone{\face{F}}$ \cite[Theorem~1.4]{Mosher_branched}. 
Almost transversality means that the surface is transverse to a slightly modified flow $\Psi^{\#}$ obtained by \emph{dynamically blowing up} finitely many closed orbits of~$\Psi$ into a collection of annuli; see \cite[Section~1.3]{Mosher-dynamical1992} for details. If $S$ is almost transverse to~$\Psi$ then the algebraic intersection of $\lbrack S \rbrack$ with the homology class $\lbrack \gamma \rbrack$ of every closed orbit $\gamma$ of~$\Psi$ is nonnegative. Conversely, if $\eta \in H_2(M;\zz)$ is such that $\langle \eta, \lbrack \gamma \rbrack \rangle \geq 0$ for every closed orbit~$\gamma$ of $\Psi$ then~$\eta$ can be represented by a taut surface which is almost transverse to $\Psi$ \cite[Theorem~1.3.2]{Mosher-dynamical1992}.  Using these facts, Mosher extended the notion of  dynamical representation of faces of the Thurston norm ball to non-fibered faces \cite{Mosher-dynamical1992}.
Given a  pseudo-Anosov flow $\Psi$ on a closed 3-manifold $N$ let  $\C(\Psi) \subset H_2(N;\rr)$ be the non-negative span of second homology classes whose algebraic intersection with homology class of every closed orbit of $\Psi$ is nonnegative. 
	By the aforementioned result  \cite[Theorem~1.3.2]{Mosher-dynamical1992}, we can  think of $\C(\Psi)$ as the cone of homology classes of surfaces that are almost transverse to~$\Psi$.

\begin{definition}\label{defn:dynamical_rep} We say that a pseudo-Anosov flow $\Psi$ on a closed hyperbolic \mbox{3-manifold}~$N$ \emph{dynamically represents} a face $\face{F}$ of the Thurston norm ball in $H_2(N;\rr)$ if $\C(\Psi) = \cone{\face{F}}$.\end{definition}
Definition \ref{defn:dynamical_rep} differs slightly from Mosher's original definition \cite[p. 244]{Mosher-dynamical1992}. He required equality  $\face{F}= \C(\Psi)\cap \chi_{\Psi}^{-1}(-1)$, where $\chi_{\Psi}$ denotes the  Euler class of the normal plane bundle to $\Psi$. We simplified the definition because the equality $\Thnorm{\eta} = -\chi_{\Psi}(\eta)$ always holds for $\eta \in \C(\Psi)$ \cite[p. 262]{Mosher-dynamical1992}.

By our earlier discussion, the circular flow~$\Psi$ associated to a fibered face $\face{F}$ as in Theorem~\ref{thm:F} dynamically represents $\face{F}$. Furthermore, Mosher  found sufficient conditions on a non-circular pseudo-Anosov flow to dynamically represent a non-fibered face of the Thurston norm ball in \cite[Theorem 2.7]{Mosher-dynamical1992}. In \cite[Section 4]{Mosher-dynamical1992} he presented an example of a non-circular pseudo-Anosov flow which dynamically represents a top-dimensional non-fibered face of the Thurston norm ball, as well as an example of a pseudo-Anosov flow which does not dynamically represent any face of the Thurston norm ball. In the latter case $\C(\Psi)$ is properly contained in the cone on some face the Thurston norm ball \cite[Theorem 2.8]{Mosher_branched}.
The results of Mosher raise the following two questions.
\begin{question}
Let $N$ be a closed 3-manifold. Given a non-fibered face $\face{F}$ of the Thurston norm ball in $H_2(N;\rr)$, is there a pseudo-Anosov flow $\Psi$ on $N$ which dynamically represents $\face{F}$?
\end{question}
\begin{question}
Suppose that a non-fibered face $\face{F}$ of the Thurston norm ball is dynamically represented by $\Psi$. Is the flow $\Psi$ unique, up to isotopy and reparametrization?
\end{question}

Question 1 is still open. However, using veering triangulations and their connection with pseudo-Anosov flows (outlined in Subsection \ref{sec:veering:and:flows}) it is possible to find examples of distinct flows representing the same face of the Thurston norm ball; see Section \ref{sec:distinct:flows}.

Once we know that a face of the Thurston norm ball can be represented by multiple distinct flows, we may ask another question.

\begin{question}
Suppose that a face $\face{F}$ of the Thurston norm ball is dynamically represented by two topologically inequivalent flows $\Psi, \Psi'$. How are $\Psi, \Psi'$ related?
\end{question}
This problem also can be approached by employing veering triangulations. We partially answer this question in the case of non-fibered faces of manifolds with nonempty boundary in Section \ref{sec:distinct:flows}.

Both Fried and Mosher worked in the setup of closed 3-manifolds. However, the Thurston norm can be defined for any compact oriented atoroidal 3-manifold $M$. Thus we can ask Questions 1, 2, 3 also in the context of faces of the Thurston norm ball in $H_2(M, \partial M;\rr)$ when $\partial M \neq \emptyset$. 
Since we will work with veering triangulations, considering 3-manifolds with $\partial M \neq \emptyset$ is in fact more natural, and is a necessary intermediate step when trying to answer the questions in the closed case.  Instead of pseudo-Anosov flows we then consider blown-up pseudo-Anosov flows.  As in the closed case, if $\Psi^\circ$ is a blown-up pseudo-Anosov flow on $M$ we denote by $\C(\Psi^\circ)$ the nonnegative span of integral homology classes in $H_2(M, \partial M;\rr)$ whose algebraic intersection with  closed orbits  $\Psi^\circ$ is nonnegative.
Then we say that $\Psi^\circ$ dynamically represents a face $\face{F}$ of the Thurston norm ball in $H_2(M, \partial M;\rr)$ if $\C(\Psi^\circ) = \cone{~\face{F}}$.  
The fact that suspension flows of pseudo-Anosov homeomorphisms of surfaces with boundary dynamically represent fibered faces of the Thurston norm ball of their mapping tori (i.e. an analogue of Fried's Theorem \ref{thm:F}) was proved by Landry in \cite[Theorem 3.5]{Landry_stable}.

We will rely on results of Landry-Minsky-Taylor, stated below, connecting the Thurston norm directly with veering triangulations. Recall from Subsection \ref{subsec:carried:surfaces} that a veering triangulation $\V$ of~$M$ may carry surfaces properly embedded in $M$. Each such surface is a Thurston norm minimizing representative of its homology class \cite[Theorem 3]{Lack_taut}, and is transverse to the flow encoded by $\V$ \cite[Theorem 5.1 (stated here as Theorem \ref{thm:LMT:transverse})]{LMT_flow}.
In analogy to the cone $\C(\Psi^\circ)$ of homology classes of surfaces almost transverse to a flow $\Psi^\circ$ we  define the cone $\C(\V)$ of homology classes of surfaces carried by $\V$. We then say that $\V$ \emph{combinatorially represents} a face $\face{F}$ of the Thurston norm ball in $H_2(M, \partial M;\rr)$ if $\C(\V) = \cone{\face{F}}$. 

\begin{theorem}\emph{(Landry-Minsky-Taylor, \cite[Theorems 5.12 and 5.15]{LMT})}\label{thm:LMT:faces}

\noindent If $\V$ is a layered or measurable veering triangulation of $M$, then there is a (not necessarily top-dimensional) face $\face{F}$ of the Thurston norm ball in $H_2(M, \partial M;\rr)$ with $\C(\V) = \cone(\face{F})$. Furthermore, $\face{F}$ is fibered if and only if $\V$ is layered.
\end{theorem}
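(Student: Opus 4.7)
The plan is to describe $\C(\V)$ as the image, under the linear map $w \mapsto [S_w]$, of the rational polyhedral cone $W \subset \rr^F$ of nonnegative real solutions to the branch equations. A standard cell-counting computation (vertices, edges, faces of $S_w$ in the ideal triangulation) shows that $\chi(S_w)$ is a linear function of $w$; combined with Lackenby's result \cite[Theorem~3]{Lack_taut} that every $S_w$ is a Thurston norm minimizing representative of $[S_w]$, I obtain $\Thnorm{[S_w]} = -\chi(S_w)$ linear on $\C(\V)$. Because $\Thnorm{\cdot}$ is piecewise linear on $H_2(M, \partial M;\rr)$ with maximal linearity regions equal to the cones $\cone{\face{G}}$ over faces $\face{G}$ of $\mathbb{B}_{\mathrm{Th}}$, this linearity forces $\C(\V) \subseteq \cone{\face{F}}$ where $\face{F}$ is the unique face whose relative interior meets the relative interior of $\C(\V)$.

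The main obstacle is the reverse inclusion $\cone{\face{F}} \subseteq \C(\V)$. The approach I would take is: fix an integral class $\eta \in \cone{\face{F}}$, take a Thurston norm minimizing representative $S$ of $\eta$, and isotope $S$ into normal form with respect to $\T$. The key step is to argue that the taut, and more strongly veering, structure forbids normal pieces other than triangles of $\T$ --- that is, the only way a taut surface can sit in a tetrahedron of $\V$ is by being a $\zz_{\geq 0}$-combination of its faces satisfying the branch equations. This upgrades $S$ into a carried representative $S_w$ with $w \in W$, proving $\eta \in \C(\V)$. Since both cones are rational polyhedral with coinciding relative interiors, equality follows. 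I expect that ruling out unwanted normal pieces is where the taut/veering hypothesis is essential and is the main technical content of the argument.

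For the fibered versus layered dichotomy: if $\V$ is layered then a strictly positive integer weight system $w$ exists, and from the taut structure one shows $S_w$ admits a layering --- tetrahedra of $\T$ stack above $S_w$ one at a time and reassemble to $M$ --- so $S_w$ is the fiber of a fibration $M \to S^1$, placing $[S_w]$ in the interior of a fibered face by \cite[Theorem~3]{Thur_norm}, hence $\face{F}$ is fibered. Conversely, if $\face{F}$ is fibered then an integral fiber class $\eta$ lies in the interior of $\face{F}$, and since $\eta \in \C(\V)$ it equals $[S_w]$ for some $w \in W$; because a fiber is transverse to the suspension flow and must cross each tetrahedron of $\V$, one has $w_f > 0$ for every face $f \in F$, showing $\V$ is layered.
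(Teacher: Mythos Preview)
The paper does not prove this theorem; it is quoted from \cite[Theorems 5.12 and 5.15]{LMT} and used as a black box (see also Remark~\ref{remark:same:carried:surfaces}, which mentions one feature of the LMT proof). So there is no proof in the paper to compare your proposal against.

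On the merits of your sketch: the forward inclusion $\C(\V)\subseteq\cone{\face{F}}$ via linearity of $-\chi$ on $W$ together with \cite[Theorem~3]{Lack_taut} is correct and is indeed the easy half. The reverse inclusion is where your outline has a genuine gap. Putting a taut representative $S$ of $\eta$ into normal position with respect to $\T$ and then asserting that ``the taut/veering structure forbids normal pieces other than triangles of $\T$'' is not something that follows locally from the picture of a single veering tetrahedron: a priori $S$ can have quadrilateral normal discs, and excluding them requires a global argument about how such quads would force negative Euler characteristic contributions or violate tautness of $S$ across edges. This is exactly the technical heart of \cite[Theorem~5.12]{LMT}, and it uses more of the veering combinatorics (in particular the structure of the stable branched surface and the associated flow graph) than your sentence suggests. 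As the paper notes in Remark~\ref{remark:same:carried:surfaces}, the LMT argument in fact proves the stronger statement that \emph{every} taut representative of $\eta\in\cone{\face{F}}$ is carried by $\V$, not just some representative.

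Your fibered $\Leftrightarrow$ layered argument also has a gap in the ``fibered $\Rightarrow$ layered'' direction. Knowing that a fiber class $\eta$ lies in $\C(\V)$ gives $\eta=[S_w]$ for some nonnegative $w$, but you then assert $w_f>0$ for all $f$ because the fiber ``must cross each tetrahedron.'' This is circular: transversality to the suspension flow is a property of a particular embedded representative, not of the weight vector $w$, and there is no a priori reason a carried representative of a fiber class must have all weights positive. The actual argument requires showing that a measurable (non-layered) $\V$ cannot carry a fiber, which again uses the global dynamics encoded by the veering structure.
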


The above theorem says that layered and measurable veering triangulations always combinatorially represent some face of the Thurston norm ball. This is in contrast with pseudo-Anosov flows for which it is possible that $\C(\Psi)$ is nonempty, but is a proper subset of the cone on some face of the Thurston norm ball \cite[Section 4]{Mosher-dynamical1992}. 
It is also known that if a fibered face is combinatorially represented by a veering triangulation, then this veering triangulation is unique \cite[Proposition 2.7]{MinskyTaylor}. Note that not every fibered face is combinatorially represented by some veering triangulation, because the circular flow associated to the face might have singular orbits. 

\begin{remark}\label{remark:same:carried:surfaces}
In the proof of \cite[Theorem 5.12]{LMT} the authors show that if $\cone{F} = \C(\V)$ then  $\V$ not only carries \emph{some} taut representative of every $\eta \in \cone{F}$, but it carries \emph{every} taut representative of $\eta$. Thus in particular if $\C(\V) = \C(\V')$ for some veering triangulations $\V,\V'$ of a fixed manifold, then $\V, \V'$ carry the same surfaces (although possibly in different number of ways).
\end{remark}

\begin{remark}\label{remark:faces:and:automorphisms}
For a taut triangulation $(\T, \alpha)$ let $\Aut^+(\T |  \alpha)$ denote the group of orientation-preserving combinatorial automorphism of $\T$ which preserve $\alpha$. Recall from Subsection \ref{sec:veering:signature} that in the Veering Census veering triangulations $\V = (\T, \alpha, \B)$ and $\phi(\V) = (\phi(\T), \phi(\alpha), \phi(\B))$ have the same taut signature. Thus the same entry in the Veering Census may encode multiple veering triangulations representing different faces of the Thurston norm ball which lie in the same orbit of the action of $\mathrm{Homeo}^+(M)$ on $H_2(M, \partial M;\rr)$. Furthermore, veering triangulations $\V, -\V$ also have the same taut signature. They satisfy $\C(\V) = -\C(\V)$ and represent a pair of opposite faces of the Thurston norm ball. 
\end{remark}

Landry, Minsky, and Taylor defined a \emph{flow graph} of a veering triangulation $\V$ whose oriented cycles correspond to orbits of the flow encoded by $\V$. We refer the reader to \cite[Section 4]{LMT} for the definition of the flow graph, and to \cite[Section 6]{LMT_flow} for an explanation of the relationship between the flow graph of $\V$ and orbits of the flow encoded by $\V$. From their results it follows that the blown-up pseudo-Anosov flow on the 3-manifold underlying a layered or measurable veering triangulation dynamically encodes a face of the Thurston norm ball.

\begin{theorem}\label{thm:LMT:cones}
\emph{(Landry-Minsky-Taylor \cite[Theorem 6.1]{LMT_flow} and \cite[Theorem 5.1]{LMT})}\nopagebreak

\noindent Let $\V$ be a veering triangulation of $M$. Suppose that $\V$ encodes a pseudo-Anosov flow~$\Psi$ on some closed Dehn filling $N$ of $M$. Let $\Psi^\circ$ be the associated blown-up flow on $M$. Then
\[\C(\Psi^\circ) = \C(\V).\]
\end{theorem}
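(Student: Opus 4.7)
The proposal is to prove the two inclusions separately, using \textbf{Theorem \ref{thm:LMT:transverse}} for the easy direction and the \emph{flow graph} machinery of Landry-Minsky-Taylor for the hard direction.

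\textbf{The inclusion $\C(\V) \subseteq \C(\Psi^\circ)$.} Let $\eta \in \C(\V) \cap H_2(M, \partial M;\zz)$. By definition, $\eta$ is represented by a surface $S$ carried by $\V$. Theorem \ref{thm:LMT:transverse} then implies that $S$ is transverse to $\Psi^\circ$ in the sense specified in Subsection \ref{sec:veering:and:flows}, i.e.\ orbits of $\Psi^\circ$ cross $S$ only in the positive direction. In particular, for any closed orbit $\gamma$ of $\Psi^\circ$ the algebraic intersection number $\langle [S], [\gamma] \rangle$ equals the geometric intersection number, which is nonnegative. Hence $\eta \in \C(\Psi^\circ)$. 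By taking closures and positive scalar multiples this extends to all of $\C(\V)$.

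\textbf{The inclusion $\C(\Psi^\circ) \subseteq \C(\V)$.} This is the main step and it is where the flow graph $\Phi_\V$ alluded to at the end of Subsection \ref{veering:vs:thurston} comes in. The plan is to use the two LMT results cited in the statement as black boxes. First, by \cite[Theorem 5.1]{LMT} the cone $\C(\V)$ is characterized as the cone of classes in $H_2(M, \partial M;\rr)$ that pair nonnegatively with every (homology class of) directed cycle of the flow graph $\Phi_\V$, viewed as a 1-chain in $M$. Second, by \cite[Theorem 6.1]{LMT_flow} the homology classes of directed cycles of $\Phi_\V$ coincide, as a subset of $H_1(M;\rr)$, with the nonnegative cone generated by the homology classes of the closed orbits of the blown-up flow $\Psi^\circ$ (once one accounts for the orbits that became boundary periodic orbits after blowing up). Combining these two statements, a class $\eta$ is in $\C(\V)$ if and only if it pairs nonnegatively with every closed orbit of $\Psi^\circ$, which is exactly the definition of $\C(\Psi^\circ)$.

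\textbf{Main obstacle.} The substantive work is entirely inside the two cited LMT results; once they are invoked, the theorem is essentially a formal identification of the two dual cones. In writing the proof one should be slightly careful about two bookkeeping points: (i) that the branched surface carrying $S$ in Theorem \ref{thm:LMT:transverse} is indeed the stable branched surface of the same veering triangulation $\V$ that is used to build $\Phi_\V$, so that the transversality and flow graph statements refer to the same flow $\Psi^\circ$; and (ii) that the correspondence between cycles of $\Phi_\V$ and closed orbits of $\Psi^\circ$ is compatible with passing from $N$ to the blow-up $M = N \bez \Lambda$, so that prong curves on $\partial M$ (which are orbits of $\Psi^\circ$) also appear as cycles of $\Phi_\V$ and contribute to the same cone. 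Once these compatibilities are recorded, the proof is a one-line combination of \cite[Theorem 5.1]{LMT} and \cite[Theorem 6.1]{LMT_flow}.
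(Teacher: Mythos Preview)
Your proposal is essentially correct and matches what the paper does: the paper does not prove this theorem at all but states it as a cited result, with the surrounding text pointing to the flow graph as the bridge between $\C(\V)$ and $\C(\Psi^\circ)$, exactly as you outline. Your decomposition into the easy inclusion via Theorem~\ref{thm:LMT:transverse} and the hard inclusion via the flow graph duality is the intended reading of the two citations in the theorem header.
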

Under additional assumptions, we also have an analogous theorem concerning the pseudo-Anosov flow $\Psi$ on $N$. There is a (potentially empty) subcone $\C(\V  |  N) \subset \C(\V) \subset H_2(M, \partial M;\rr)$  of homology classes of surfaces carried by $\V$ whose boundary components have slopes consistent with the Dehn filling slopes yielding $N$ out of $M$. These surfaces cap off to embedded surfaces in $N$. We denote by $\C_N(\V) \subset H_2(N;\rr)$ the nonnegative span of homology classes of these capped off surfaces. 

\begin{theorem}\label{thm:L:cones}
\emph{(Landry \cite[Theorem A]{Landry_homology_isotopy} and LMT \cite[Theorem 6.1]{LMT_flow})}

\noindent Let $\V$ be a veering triangulation of $M$. 
Suppose that $\V$ encodes a pseudo-Anosov flow~$\Psi$ on some closed Dehn filling $N$ of $M$ such that the core curves of the filling solid tori are singular orbits of $\Psi$ with at least 3 prongs. Then $N$ is hyperbolic. Furthermore, if $\C_N(\V)\neq \emptyset$, there is a face $\face{F}$ of the Thurston norm ball in $H_2(N;\rr)$  such that $\cone{F} =\C_N(\V) = \C(\Psi)$.
\end{theorem}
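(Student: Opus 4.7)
The proof naturally splits into proving hyperbolicity of $N$ and proving the cone identity $\cone{\face{F}} = \C_N(\V) = \C(\Psi)$, with the latter built on Theorem \ref{thm:LMT:cones} combined with a cap-off argument at the filling tori.

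For hyperbolicity, the flow $\Psi$ produced by Theorem \ref{thm:AT} is transitive and has no perfect fits relative to $\Lambda = \{\ell_1, \ldots, \ell_k\}$, whose elements are by hypothesis $p_i$-pronged with $p_i \ge 3$. A closed 3-manifold carrying a transitive pseudo-Anosov flow is irreducible with infinite $\pi_1$; the no-perfect-fits hypothesis prevents essential tori transverse or tangent to $\Psi$ (such a torus would produce asymptotic leaves of $\mathcal{L}^s,\mathcal{L}^u$ meeting a perfect-fit configuration); and the requirement $p_i \ge 3$ rules out compatible Seifert fibered structures, since the only flow-respecting Seifert structure would force a 2-pronged singular orbit. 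Atoroidality, non-Seifert-fiberedness, and irreducibility then give hyperbolicity by geometrization.

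For the cone identity, the inclusion $\C_N(\V) \subseteq \C(\Psi)$ is direct: a surface $S$ carried by $\V$ whose boundary slopes match the filling slopes is transverse to $\Psi^\circ$ by Theorem \ref{thm:LMT:transverse}, and the meridian disks added when capping off meet each core $\ell_i$ transversely with signs determined by the coorientation of the corresponding ladderpole curves; thus $\langle [\hat S], [\gamma]\rangle \ge 0$ for every closed orbit $\gamma$ of $\Psi$. For the reverse inclusion, take an integral $\eta \in \C(\Psi)$, use Mosher's theorem to obtain a Thurston-norm-minimizing representative $\hat S$ almost transverse to $\Psi$, and after dynamical blow-up isotope $\hat S$ to be transverse to $\Psi$ near $\Lambda$. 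Removing small cooriented disks around $\hat S \cap \Lambda$ yields a surface $S \subset M$ transverse to $\Psi^\circ$ whose boundary slopes match the filling slopes. Using that $\V$ is canonically built from $\Psi$ (Remark \ref{remark:inverse}), together with Theorem \ref{thm:LMT:cones}, one argues that any such transverse surface can be homotoped to one carried by $\V$, so $\eta$ lies in $\C_N(\V)$.

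To identify $\C_N(\V)$ with $\cone{\face{F}}$ for a face of the Thurston norm ball of $N$, following \cite{Landry_homology_isotopy} one shows that every cap-off $\hat S$ is Thurston norm minimizing in $N$: the underlying $S\subset M$ is taut in $M$ by \cite{Lack_taut}, and the added meridian disks contribute nothing to $\chi_-$, so any closed competitor of smaller complexity in $N$ would, after intersecting it transversely with the filling tori and splitting, contradict tautness of $S$ in $M$. Consequently $\eta\mapsto -\chi(\hat S)$ is linear on $\C_N(\V)$ and agrees with $\Thnorm{\cdot}$ there, so by Thurston's polyhedrality theorem $\C_N(\V)$ lies in the cone over a face $\face{F}$ of $\mathbb{B}_{\mathrm{Th}}$; matching dimensions (using that $\C_N(\V)$ is the full-dimensional projection of $\C(\V)\cap\{\text{correct slopes}\}$) upgrades the inclusion to an equality. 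I expect the main obstacle to be the reverse inclusion $\C(\Psi) \subseteq \C_N(\V)$: passing from an abstract almost-transverse representative to one actually carried by the stable branched surface of $\V$ requires delicate control over how the representative interacts with the invariant laminations $\mathcal{L}^s,\mathcal{L}^u$ near the orbits of $\Lambda$.
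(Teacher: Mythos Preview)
The paper does not actually prove this theorem: it is stated as a citation of results of Landry \cite[Theorem~A]{Landry_homology_isotopy} and Landry--Minsky--Taylor \cite[Theorem~6.1]{LMT_flow}, with no proof environment. What the paper does include is an explanatory paragraph immediately following the statement, sketching \emph{why} the equality $\cone{\face{F}}=\C(\Psi)$ holds under the $\ge 3$-prong hypothesis, and this sketch follows a rather different line from yours. The paper's explanation works through the Euler class $\chi_\Psi$ of the normal plane bundle: by Mosher one always has $\C(\Psi)$ contained in the cone on the face $\face{F}$ determined by $\chi_\Psi(\eta)=\Thnorm{\eta}$, and the only way this inclusion can be proper is if some $\eta$ in $\cone{\face{F}}$ pairs negatively with a closed orbit. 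But Mosher's computation \cite[pp.~259--261]{Mosher-dynamical1992} shows that if $\langle\eta,[\gamma]\rangle<0$ for a singular orbit $\gamma$ with at least three prongs then $\chi_\Psi(\eta)<\Thnorm{\eta}$, so $\eta\notin\cone{\face{F}}$. Combined with Theorem~\ref{thm:LMT:cones} this closes the gap.

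Your route is more hands-on: you try to move surfaces back and forth between $M$ and $N$ and argue directly that capped-off carried surfaces remain taut in $N$. This is in the spirit of Landry's original argument in \cite{Landry_homology_isotopy}, but two of your steps are genuinely incomplete as written. First, your tautness-after-capping argument (``any closed competitor of smaller complexity in $N$ would, after intersecting it transversely with the filling tori and splitting, contradict tautness of $S$ in $M$'') does not work: a competing surface in $N$ may miss the filling tori entirely, or meet them in slopes unrelated to the filling slopes, so you cannot simply pull it back to $M$. Landry's actual argument requires substantially more control on isotopy classes of taut surfaces near the filled-in orbits. Second, as you yourself flag, the reverse inclusion $\C(\Psi)\subseteq\C_N(\V)$ is the crux, and invoking Remark~\ref{remark:inverse} plus Theorem~\ref{thm:LMT:cones} does not by itself put an almost-transverse surface into carried position with the correct boundary slopes; this is exactly the content of \cite[Theorem~A]{Landry_homology_isotopy}. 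The Euler-class route the paper sketches sidesteps both of these difficulties.
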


The face $\face{F}$ is determined by the Euler class $\chi_{\Psi}$ of the normal plane bundle to~$\Psi$ in the sense that $\chi_\Psi(\eta) = \Thnorm{\eta}$ for every $\eta \in \cone{F}$. Mosher's example of a pseudo-Anosov flow $\Psi$ on a closed hyperbolic 3-manifold~$N$ which does not dynamically represent a whole face of the Thurston norm ball in $H_2(N;\rr)$ is such that there is a class \mbox{$\eta \in H_2(N;\rr)$} for which $\chi_\Psi(\eta) = \Thnorm{\eta}$, but which pairs negatively with the homology class of some nonsingular closed orbit of $\Psi$ \cite[Section~4]{Mosher-dynamical1992}. In this case $\C(\Psi)$ is a proper subset of the cone on some face of the Thurston norm ball in $H_2(N;\rr)$. This `pathology' does not happen under the assumptions of Theorem \ref{thm:L:cones} because of Theorem \ref{thm:LMT:cones} and the fact that if $\langle \eta, \lbrack \gamma\rbrack \rangle <0$ for a singular orbit $\gamma$ with at least 3 prongs then $\chi_\Psi(\eta) < \Thnorm{\eta}$; see \cite[pp. 259 -- 261]{Mosher-dynamical1992}.

\section{Mutations of veering triangulations}
\label{sec:mutations}

For the remainder of this paper by $X|Y$ we denote the metric completion of $X \bez Y$ with respect to the path metric on $X  \bez Y$ induced from $X$.

Let $S$ be an oriented surface properly embedded in an oriented compact \mbox{3-manifold}~$M$. We say that $M|S$  is the \emph{cut manifold} obtained from $M$ by \emph{decomposing} it along $S$. The orientation on $S$ determines a transverse orientation on $S$ via the right hand rule.  By $S^+$ we denote the boundary copy of $S$ in $M|S$ which is cooriented out of $M|S$ and by $S^-$ we denote the boundary copy of $S$ in $M|S$ which is cooriented into $M|S$. 
Any homeomorphism $\varphi: S^+ \rightarrow S^-$ gives rise to a \emph{mutant manifold} $M^\varphi$ obtained from $M|S$ by gluing $S^+$ to~$S^-$ via $\varphi$. This manifold admits an embedded surface $S^\varphi$ homeomorphic to $S$. We say that $M^\varphi$ is obtained from $M$ by \emph{mutating} it \emph{along} $S$ via~$\varphi$.
We also say that $S$ is a \emph{mutating surface}.

In this section we approach the following problem: assuming that $M$ admits a veering triangulation~$\V = (\T, \alpha, \B)$, when does the mutant manifold~$M^\varphi$ admits a veering triangulation? We restrict our considerations to the case when the mutating surface is carried by~$\mathcal{V}$ with weights $w = (w_f)_{f \in F}$ and the homeomorphism $\varphi$ comes from an orientation-preserving combinatorial automorphism~$\varphi$ of the ideal triangulation $\QVw$ of~$S_w$; we discuss this triangulation in detail in Subsection \ref{subsec:surface:tri}.  
In Subsections \ref{subsec:isomorphism} and~\ref{subsec:sutured} we recall the notions of combinatorial isomorphisms of triangulations and sutured manifolds, respectively. Subsection  \ref{subsec:cut:tri:versus:cut:manifold} is devoted to analyzing a certain \emph{cut triangulation} $\T|F_w$ and its relation to the cut manifold $M|\Semb$. In Subsection \ref{subsec:mutating}, given a pair  $(S_w, \varphi)$, we define a \emph{mutant triangulation}~$\T^\varphi$ of $\T$.  
In Subsection \ref{subsec:mutant:homeomorphism} we find a sufficient and necessary condition on $\varphi$ so that $\T^\varphi$ is an ideal triangulation of $M^\varphi$. In Section \ref{subsec:veeringness:of:mutant} we find a sufficient and necessary condition for the existence of a taut structure $\alpha^\varphi$ on $\T^\varphi$ and furthermore conditions which ensure that $(\T^\varphi, \alpha^\varphi)$ admits a veering structure $\B^\varphi$. In Subsection \ref{subsec:generalization} we generalize this result to give sufficient and necessary conditions on $(\T^\varphi, \alpha^\varphi)$ to admit a veering structure. We also generalize a \emph{veering mutation} $(\T, \alpha, \B) \rightsquigarrow (\T^\varphi, \alpha^\varphi, B^\varphi)$ to a \emph{veering mutation with insertion}.

\subsection{Triangulations: face identifications and combinatorial automorphisms}\label{subsec:isomorphism}

We will use a taut ideal triangulation $(\T, \alpha)$ of a 3-manifold to construct another ideal triangulation of a, typically different, 3-manifold using a combinatorial automorphism of a surface carried by $(\T, \alpha)$. In this subsection we explain the standard conventions used to encode triangulations and their combinatorial automorphisms.

Recall from Section \ref{sec:veering} that by an ideal triangulation of a compact 3-manifold~$M$  we mean an expression of $M \bez \partial M$ as a collection of finitely many ideal tetrahedra with triangular faces identified in pairs by homeomorphisms which send vertices to vertices. Such an identification between a face $f$ of tetrahedron $t$ and a face $f'$ of tetrahedron $t'$ can be encoded by a bijection between their vertices. In our construction we will `forget' identifications between a subset of faces of the triangulation, and replace them with different ones. The whole procedure will be governed by a combinatorial automorphism of a carried surface.

Suppose that $S_w$ is a surface carried by $(\T, \alpha)$ as in Subsection \ref{subsec:carried:surfaces}. Since $S_w$ is built out of triangles and edges of $\T$, it inherits an ideal triangulation from $\T$. We discuss this triangulation in detail in Subsection \ref{subsec:surface:tri}. For now, we will denote this triangulation of $S_w$ by $\Q$. Similarly as in the case of a 3-dimensional triangulation, $\Q$ is an expression of $S_w \bez \partial S_w$ as a collection of finitely many ideal triangles with edges identified in pairs by homeomorphisms which send vertices to vertices. 

Below we recall the definition of a combinatorial isomorphism between two \mbox{2-dimensional} triangulations.

\begin{definition}\label{def:comb:aut}
Let $\Q_1$, $\Q_2$ be finite (ideal) 2-dimensional triangulations. For $i=1,2$ let $F_i, E_i$ denote the set of triangles and edges of~$\Q_i$, respectively. A \emph{combinatorial isomorphism} from $\Q_1$ to $\Q_2$ consists of 
\begin{itemize}
	\item a bijection $\varphi: F_1 \rightarrow F_2$,
	\item for each $f \in F_1$ a bijection $\varphi_f$ between the edges of $f$ and edges of $\varphi(f)$ such that if edges $e$ of $f$ and $e'$ of $f'$ are identified in $\Q_1$ then edges $\varphi_f(e)$ of $\varphi(f)$ and $\varphi_{f'}(e')$ of $\varphi(f')$ are identified in $\Q_2$.
\end{itemize}
\end{definition}

If for every triangle of $\Q_1$ and $\Q_2$ we fix a bijection between its edges and vertices we can view $\varphi_f$ as a bijection between vertices $f$ and vertices of $f'$. It is standard to fix this bijection so that a vertex $v$ of $f$ is associated to the edge of $f$ opposite to $v$. 


Different combinatorial isomorphisms from $\Q_1$ to $\Q_2$ may have the same bijection \mbox{$\varphi: F_1\rightarrow F_2$}. Nonetheless, for simplicity we often abuse the notation and denote a combinatorial isomorphism  by $\varphi: \mathcal{Q}_1 \rightarrow \mathcal{Q}_2$, understanding that it carries information about both a bijection $\varphi: F_1 \rightarrow F_2$ and bijections $\lbrace \varphi_f \ | \ f \in F_1 \rbrace$. 

If $\Q$ is an ideal triangulation then a combinatorial isomorphism $\varphi: \Q \rightarrow \Q$ is called a \emph{combinatorial automorphism} of $\Q$.
We denote the group of orientation-preserving combinatorial automorphism of $\Q$ by $\Aut^+(\Q)$. It follows directly from Definition~\ref{def:comb:aut} that $\Aut^+(\Q)$ is finite.

In Section \ref{sec:faces} we will sometimes mention that two 3-dimensional ideal triangulations are, or are not, combinatorially isomorphic. To understand what it means it suffices to replace in Definition \ref{def:comb:aut} triangles by tetrahedra and edges by triangular faces.
\subsection{Triangulation of a surface carried by a veering triangulation}\label{subsec:surface:tri}

Let $\V = (\T, \alpha, \B)$ be a finite veering triangulation of $M$ with the set $T$ of tetrahedra, the set $F$ of 2-dimensional faces, and the set $E$ of edges. 
Let $w = (w_f)_{f\in F}$ be a weight system on $(\T, \alpha)$. We denote the triangulation of $S_w$ inherited from $\T$ by $\QVw$. A properly embedded surface $\Semb$ obtained by slightly pulling apart overlapping regions of $S_w$ also can be seen as triangulated by $\QVw$. Recall that each triangle of $\V$ is equipped with a trivalent train track; see Figure \ref{fig:stable:track}. Therefore the surfaces $S_w$, $\Semb$ come equipped with a train track dual to their triangulation $\QVw$. We will denote this train track by $\tau_{\V, w}$ and call it the \emph{stable train track} of $S_w$ or $\Semb$.

Let $F_w = \lbrace f \in F \ | \ w_f > 0 \rbrace$ and $E_w = \lbrace e \in E \ | \ w_e > 0 \rbrace$. Given $f \in F_w$ there are~$w_f$ copies of~$f$ in the triangulation $\QVw$, and given $e \in E_w$ there are $w_e$ copies of $e$ in~$\QVw$. Coorientation on the faces of $\mathcal{V}$ and a fixed embedding of $\Semb$ in~$M$ determines a linear order on the copies of a given simplex of $\V$ in  $\QVw$: from the lowermost to the uppermost; see Figure \ref{fig:order}. Denote by $F_{\V,w}$, $E_{\V,w}$  the set of triangles and edges of $\QVw$, respectively. We define maps
\[
L : F_w \cup E_w \rightarrow F_{\V,w} \cup E_{\V,w} \hspace{1cm} 	U : F_w \cup E_w \rightarrow F_{\V,w} \cup E_{\V,w} \]
such that given a simplex $x \in F_w\cup E_w$ the simplex $L(x)$ denotes the \emph{lowermost copy} of $x$ in $\QVw$ and 	$U(x)$ denotes the \emph{uppermost copy} of $x$ in $\QVw$. 
Observe that both $L$ and $U$ are injective. To simplify notation we will sometimes denote $L(x)$ by $\check{x}$. 
Given $f\in F_w$ we will denote by $\sigma_f^L$, $\sigma_f^U$ the bijections between the edges of $f$ and the corresponding edges of $L(f)$, $U(f)$, respectively.
Furthermore we define
\[\ab: \left(F_{\V,w} \cup E_{\V,w}\right) \bez U(F_w \cup E_w) \rightarrow \left(F_{\V,w} \cup E_{\V,w}\right) \bez L(F_w \cup E_w) \]
such that if $y \notin U(F_w \cup E_w)$ then $\ab(y)$ is the triangle of $\QVw$ which is a copy of the same simplex of $\V$ as $y$ and lies \emph{immediately above} $y$ in $M$; see Figure \ref{fig:order}. Simplex $\ab(y)$  exists by the assumption that $y \notin U(F_w \cup E_w)$ (not uppermost), and is never in $L(F_w \cup E_w)$, because lowermost copies do not have copies of the same simplex below them. 	If $w_x = k \geq 0$ then $\ab^0(\check{x})$, $\ab(\check{x}), \ldots, \ab^{k-1}(\check{x})$ are defined, where by $\ab^0(\check{x})$ we mean~$\check{x}$.

\begin{figure}[h]
\includegraphics[scale=0.75]{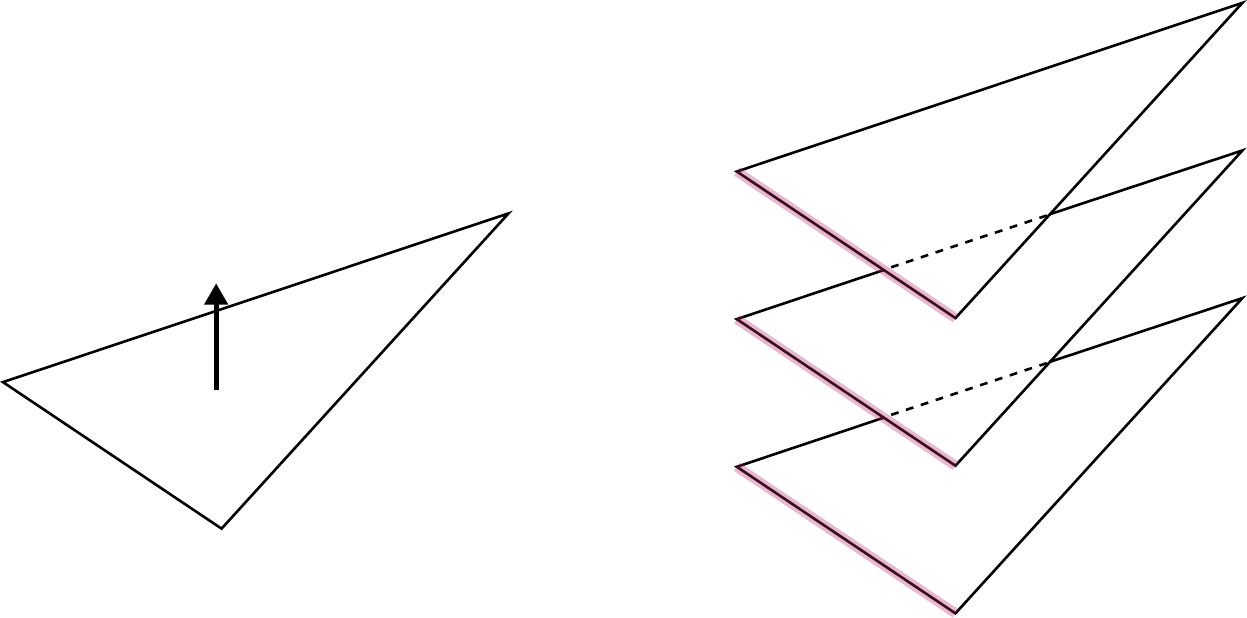}
\footnotesize
\normalsize
\put(-310, 20){$w_f = 3$}
\put(-190, 45){$f$}
\put(-10, 50){$\check{f} := L(f)$}
\put(-10, 82){$\ab(\check{f})$}
\put(-10, 114){$\ab^2(\check{f}) = U(f)$}
\caption{Face $f$ with weight 3. Coorientation on $f$ determines an order on the copies of $f$ in $\mathcal{Q}_{\V,w}$. Shaded edges are $\ab^k(\check{e})$, $\ab^{k+1}(\check{e})$, $\ab^{k+2}(\check{e})$, respectively (from the bottom), for an edge $e$ of $f$ and some $0\leq k\leq w_e-1$.}
\label{fig:order}
\end{figure}


\subsection{Sutured manifolds}\label{subsec:sutured}
If $S$ is an oriented surface properly embedded in an oriented 3-manifold with empty or toroidal boundary then the cut manifold $M|S$ is an example of a \emph{sutured manifold}, defined below.

\begin{definition}\cite[Definition 3.1]{Gabai-sutured}
A \emph{sutured manifold} $(N, \gamma)$ is a compact oriented 3-manifold $N$ together with a set $\gamma \subset \partial N$ of pairwise disjoint annuli $A(\gamma)$, called \emph{sutured annuli}, and tori~$T(\gamma)$, called \emph{sutured tori},  such that
\begin{itemize}
	\item the interior of each component of $A(\gamma)$ contains a homologically nontrivial oriented simple closed curve called a \emph{suture}, 
	\item every connected component of $R(\gamma) = \partial N \bez \mathrm{int}(\gamma)$ is oriented so that every connected component  of $\partial R(\gamma)$ when equipped with the boundary orientation represents the same homology class in $H_1(\gamma)$ as some suture.
\end{itemize}
A fixed orientation of $(N, \gamma)$  endows $R(\gamma)$ with coorientation. This determines a decomposition of of $R(\gamma)$ into $R^+(\gamma)$, where the coorientation points out of $N$, and $R^-(\gamma)$ where the coorientation points into $N$. We call $R^+(\gamma)$ the \emph{top boundary} of the sutured manifold $N$, and $R^-(\gamma)$ its \emph{bottom boundary}. We  also denote them by $\partial^+N$, $\partial^- N$, respectively. A boundary component of a sutured annulus $A$ of $(N, \gamma)$ that is contained in $\partial^+ N$ (respectively, $\partial^- N$) is called its \emph{top} (respectively, \emph{bottom}) \emph{boundary} and denoted by $\partial^+ A$ (respectively, $\partial^- A$).
\end{definition}

The pair $(M|S, \partial M|\partial S)$ is an example of a  sutured manifold. Its sutured tori correspond to the boundary tori of $M$ that are disjoint from $S$. A boundary torus of $M$ containing $k$ boundary components of $S$ gives rise to $k$ sutured annuli in ~$(M|S, \partial M|\partial S)$. For brevity, we often say that $M|S$ is a sutured manifold, without explicitly indicating its sutured tori and annuli.

\subsection{Cutting veering triangulations along carried surfaces}\label{subsec:cut:tri:versus:cut:manifold}
Let $\mathcal{V}$ be a finite veering triangulation of a 3-manifold $M$ with the set $T$ of tetrahedra, the set $F$ of triangular faces and the set $E$ of edges. Let $S_w$ be a surface carried by $\mathcal{V}$ with weights $(w_f)_{f \in F}$.  
As in Subsection \ref{subsec:surface:tri}, we denote by $F_w$ the set of $f\in F$ for which $w_f>0$.  

Recall from Section \ref{subsec:veering} that the caligraphic letter $\V$ implicitly denotes three pieces of combinatorial data: an ideal triangulation $\T$, a taut structure $\alpha$, and a veering structure $\B$. We denote the result of decomposing $\T$ along $F_w$ by $\T|F_w$. All faces of $\T|F_w$ inherit coorientations from $\V = (\T, \alpha, \B)$. We denote this choice of coorientations on the faces of $\T|F_w$ by  $\alpha|F_w$. By $F_w^+$  we denote the boundary triangles of $\mathcal{T} | F_w$ which are cooriented out of $\mathcal{T} | F_w$ and by $F_w^-$  the boundary triangles of $\mathcal{T} | F_w$ which are cooriented into $\mathcal{T} | F_w$. Finally, after cutting the stable branched surface $\B$ along $\B\cap F_w$ we obtain a branched surface $\B|F_w$.
For simplicity, we denote the triple $(\T|F_w, \alpha|F_w, \B|F_w)$ by $\V|F_w$. In Subsections \ref{subsec:cut:tri:versus:cut:manifold} -- \ref{subsec:mutant:homeomorphism} we do not make use of this (partial) veering structure, and consider only the pair $(\T|F_w, \alpha|F_w)$. 

The set $F_w$ can be seen as a branched surface embedded in $M$ which fully carries~$S_w^\epsilon$. We denote the (sutured) manifold underlying $\T|F_w$ by $M|F_w$. Note that $\T|F_w$ is not an ideal triangulation of $M|F_w$ in the sense 
introduced at the beginning of Section 2.  The manifold $M|F_w$ is expressed as a union of ideal tetrahedra of $\T|F_w$, but only some of their faces are identified in pairs by homeomorphisms sending vertices to vertices. The remaining faces make up the triangulations of the top and bottom boundaries of $M|F_w$. To avoid any confusion we call $\T|F_w$ a \emph{cut triangulation}. In this section we will establish a relationship between $M|F_w$ and $M|\Semb$.

Recall from Subsection \ref{subsec:surface:tri} that $\Semb$ is triangulated by $\QVw$. We denote the corresponding triangulations of $\Sembp, \Sembm$ in $M|\Semb$ by $\QVw^+$, $\QVw^-$, respectively. Given a simplex $x$ of $\QVw$ we denote by $x^+$ and $x^-$ the corresponding simplices of $\QVw^+$ and $\QVw^-$, respectively. Notation that we use below was introduced in  Subsection \ref{subsec:surface:tri}.

Let $e \in E_w = \lbrace e \in E \ | \ w_e > 0 \rbrace$. Recall that $\check{e}$ is a shorthand for $L(e)$, the lowermost copy of $e$ in $\QVw$. Suppose that $w_e = k \geq 2$. Then for \mbox{$i=1,2, \ldots, k-1$} there is a disk $D_i^e$ properly embedded in $M|\Semb$ whose boundary decomposes into four arcs: one arc corresponding to the edge $\ab^{i-1}(\check{e})^-$ of $\QVw^-$, one arc corresponding to the edge  $\ab^i(\check{e})^+$ of $\QVw^+$, and two arcs each of which joins $\ab^{i-1}(\check{e})^-$ to $\ab^i(\check{e})^+$ and intersects a suture of $(M|\Semb, \partial M|\partial \Semb)$ exactly once; see Figure \ref{fig:product:disks}. We call $\ab^{i-1}(\check{e})^-$ the \emph{bottom base} of the disk $D^e_i$ and $\ab^i(\check{e})^+$ its \emph{top base}. We denote them by $\partial^- D^e_i$, $\partial^+ D^e_i$, respectively. The set $\partial_vD^e_i = \partial D^e_i - \mathrm{int}(\partial^+ D^e_i) - \mathrm{int}(\partial^- D^e_i)$ is called the \emph{vertical boundary} of $D^e_i$.

Disk $D^e_i$ intersects the sutures of $(M|\Semb, \partial M|\partial \Semb)$ exactly twice. In the theory of sutured manifolds properly embedded disks with this property are called \emph{product disks}.   

\begin{figure}[h]
\includegraphics[scale=0.75]{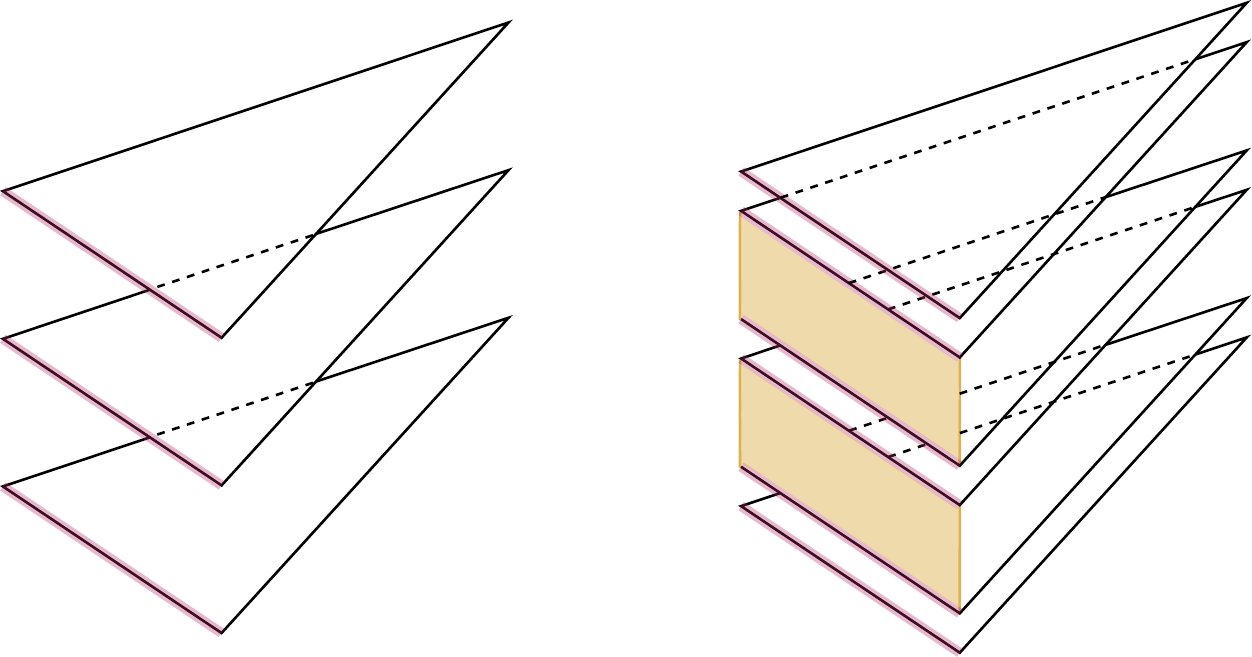}
\put(-320,0){$w_e = 3$}
\put(-280,34){$\check{e}$}
\put(-293, 66){$\ab(\check{e})$}
\put(-298, 98){$\ab^2(\check{e})$}
\put(-127,25){$\check{e}^+$}
\put(-141, 58){$\ab(\check{e})^+$}
\put(-146, 90){$\ab^2(\check{e})^+$}
\color{gray}
\put(-127,38){$\check{e}^-$}
\put(-141, 73){$\ab(\check{e})^-$}
\put(-146, 105){$\ab^2(\check{e})^-$}
\color{black}
\put(-97,35){$D_1^e$}
\put(-97,67){$D_2^e$}
\caption{Two edge product disks $D_1^e$, $D_2^e$ associated to an edge $e \in E$ of weight~3. For each copy of $e$ in $\QVw$ we draw only one triangle attached to it so that the disks $D_1^e$, $D_2^e$ are clearly visible. To simplify notation we denote $L(e)$ by~$\check{e}$.} 
\label{fig:product:disks}
\end{figure}

Let $D_w$ denote the set of product disks in $M|\Semb$ associated to the edges of $\V$ with $w_e >1$. We say that an element of $D_w$ is an \emph{edge product disk}. Note that $M|\Semb$ can admit more product disks, which are not elements of $D_w$. Let $M_w = (M|\Semb)|D_w$. Since~$M_w$ arises as a result of decomposing a sutured manifold along finitely many product disks, it is also a sutured manifold; see \cite[Definition~3.8]{Gabai-sutured}. Figure~\ref{fig:cutting_along_carried} illustrates the relationship between $M|F_w$ and $M_w$ that we formalize in Lemma \ref{lem:cut:triangulation:sutured:manifold}, after introducing necessary notation below.

\begin{figure}[h]
\includegraphics[scale=0.78]{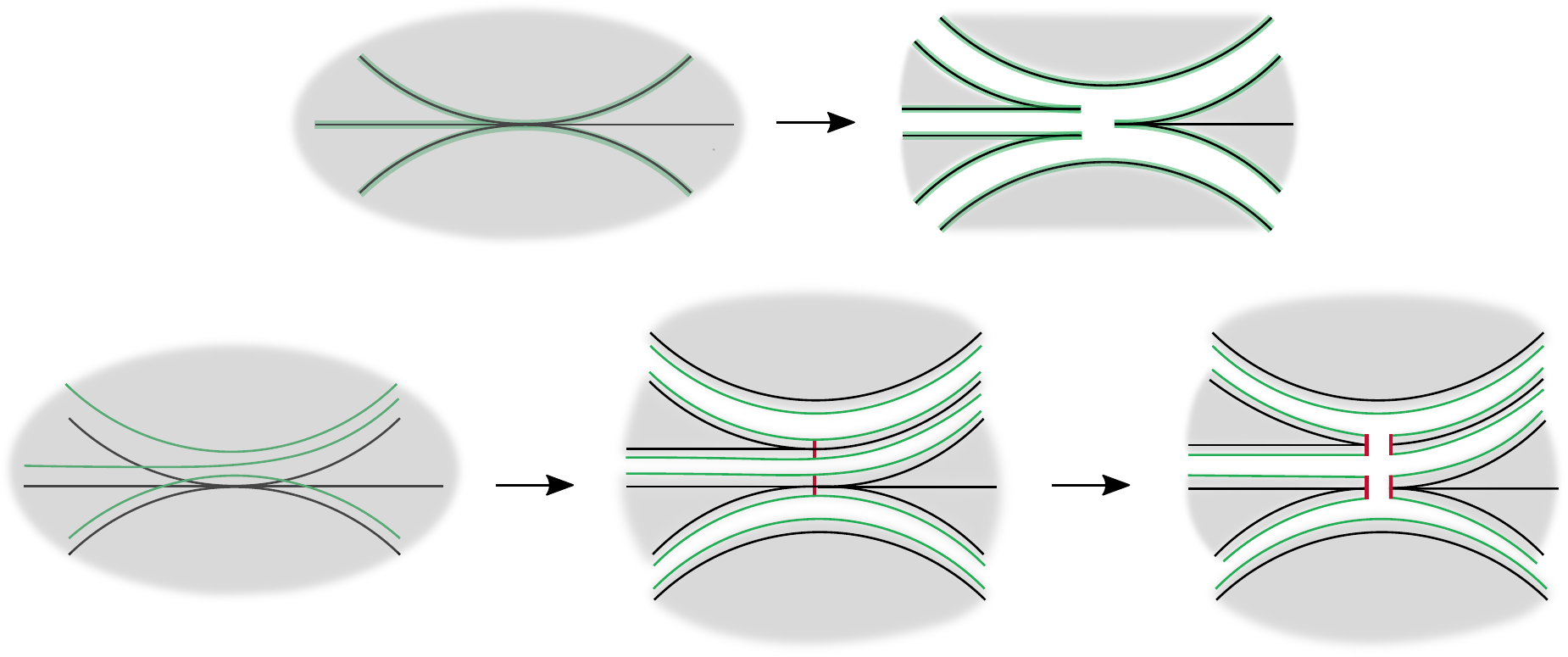}
\put(-410, 95){(a)}
\put(-287, 95){\fbox{$M$}}
\put(-142, 95){\fbox{$M|F_w$}}
\put(-210, 148){$F_w$}
\put(-316,158){1}
\put(-331,144){1}
\put(-326,128){1}
\put(-246,158){2}
\put(-233,144){0}
\put(-236,128){1}
\put(-410, -10){(b)}
\put(-364, -10){\fbox{$M$}}
\put(-216, -10){\fbox{$M|\Semb$}}
\put(-61, -10){\fbox{$M_w$}}
\put(-285, 52){$\Semb$}
\put(-138, 52){$D_w$}
\caption{(a) Cutting along $F_w$. A weight on a face is indicated by the number immediately above the face. (b) First arrow: cutting along an embedded surface $\Semb$. Second arrow: cutting along edge product disks.}
\label{fig:cutting_along_carried}
\end{figure}

Triangulations $\QVw^+, \QVw^- \subset M|\Semb$ determine a pair of triangulations $\overline{\QVw^+}, \overline{\QVw^-}$ in the top and the bottom boundary of $M_w$, respectively. For any triangle $g^\pm$ of $\QVw^\pm$ there is an associated triangle $\overline{g^\pm}$ of $\overline{\QVw^\pm}$. We will always assume that the indexing of edges/vertices of $\overline{g^\pm}$ is the same as in $g^\pm$. The only difference between $\QVw^\pm$ and $\overline{\QVw^\pm}$ is that there might be triangles $g_1^\pm,g_2^\pm$ of $\QVw^\pm$ which are identified along an edge $e_1$ of $g_1^\pm$ and an edge $e_2$ of $g_2^\pm$ such that the corresponding edges $\overline{e_1}$ of $\overline{g_1^\pm}$ and $\overline{e_2}$ of $\overline{g_2^\pm}$ are not identified in $\overline{\QVw^\pm}$. This happens if and only if the common edge of $g_1^\pm, g_2^\pm$ is the top or bottom base of an edge product disk from $D_w$.

For any $f \in F$ with $w_f > 1$ the sutured manifold $M_w$ admits $(w_f - 1)$ connected components of the form $f \times \lbrack 0, 1 \rbrack$. We call them \emph{triangular prisms}, and denote the set of such triangular prisms in $M_w$ by~$P_w$. Each $P\in P_w$ is a sutured 3-ball, so we can speak about its top and bottom boudaries $\partial^+P$, $\partial^-P$, respectively. Observe that  if $\partial^-P = \overline{g^-} \in \overline{F_{\V,w}^-}$ then $\partial^+P = \overline{\ab(g)^+} \in \overline{F_{\V,w}^+}$. We call $\partial_v P = \partial P  - \mathrm{int}(\partial^+ P) - \mathrm{int}(\partial^- P)$ the \emph{vertical boundary} of $P$.   

Each edge product disk $D \in D_w$ gives rise to two disks $D',D''$ contained in the sutured annuli of $M_w$; see Figure \ref{fig:doubled:disk}. We denote the set of such disks contained in the sutured annuli of $M_w$ by $D(M_w)$. 
Let 
\begin{equation}\label{eqn:collapse}\mathrm{coll}: M_w \rightarrow \mathrm{coll}(M_w)\end{equation}
be the map which vertically collapses every $D \in D(M_w)$ to $\partial^-D$ and every $P \in P_w$ to~$\partial^-P$.

\begin{figure}[h]
\includegraphics[scale=0.5]{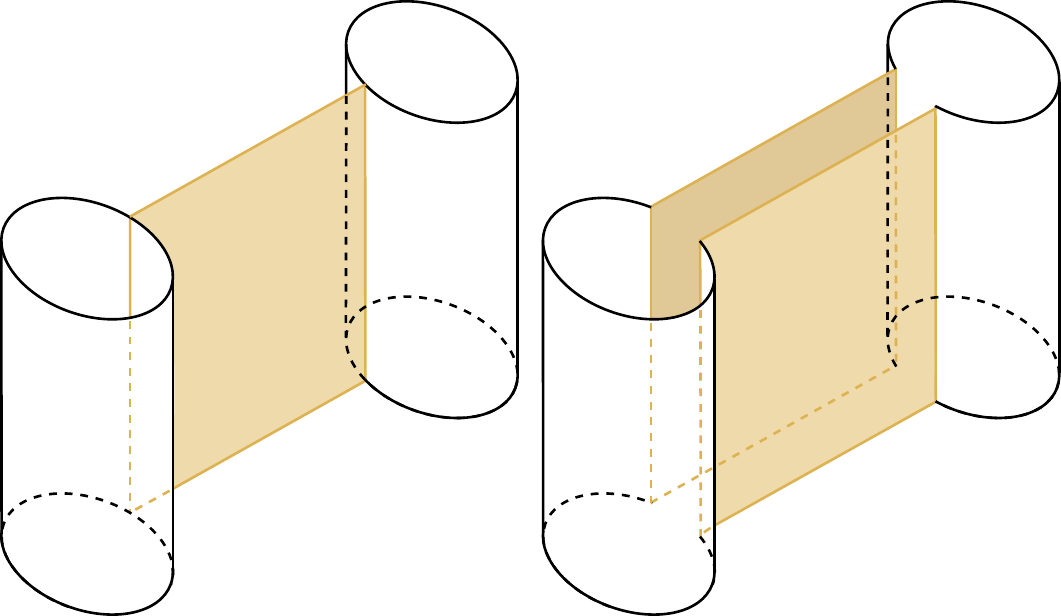}
\caption{Left: Product $D$ disk in $M|\Semb$. Right: Two disks contained in a sutured annulus of $M_w$ arising from cutting $M|\Semb$ along $D$. }
\label{fig:doubled:disk}
\end{figure}


\begin{lemma}\label{lem:cut:triangulation:sutured:manifold}
The image of $M_w - P_w$ under $\mathrm{coll}$ is homeomorphic to $M|F_w$. Furthermore, for any $f \in F_w$ 
\begin{itemize}
	\item if $\overline{g^+}$ is a triangle of $\partial^+(M_w -P_w)$  with $\coll(\overline{g^+}) = f^+$ then $\overline{g^+} = \overline{L(f)^+}$,
	\item if $\overline{g^-}$ is a triangle of  $\partial^-(M_w -P_w)$ with $\coll(\overline{g^-}) = f^-$ then $\overline{g^-} = \overline{U(f)^-}$,
\end{itemize}
\end{lemma}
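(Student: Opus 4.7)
The plan is to construct an explicit homeomorphism $\Phi : \coll(M_w - P_w) \to M|F_w$ piece-by-piece over the \emph{chambers} of $M$, and to read off the \emph{furthermore} clauses from the construction.

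First I would analyze the local combinatorics of $M|\Semb$. Since $\Semb$ is obtained from $S_w$ by pulling its overlapping sheets slightly apart along the coorientation, $M|\Semb$ decomposes into two kinds of connected $3$-dimensional pieces: \emph{chambers}, in bijection with the connected components of $M \setminus F_w$, and \emph{slabs} indexed by pairs $(f,i)$ with $f \in F_w$ and $1 \le i \le w_f - 1$. The slab at $(f,i)$ is a $3$-ball with top triangle $\ab^i(L(f))^+$, bottom triangle $\ab^{i-1}(L(f))^-$, and three vertical rectangular sides, one per edge $e$ of $f$; since $w_e \ge w_f > 1$ whenever $w_f > 1$, each such vertical side is an edge product disk from $D_w$.

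Next I would verify that cutting $M|\Semb$ along $D_w$ detaches every slab from its adjacent chambers. Each slab thus becomes a connected component of $M_w$ homeomorphic to $f \times [0,1]$, and these components are precisely the elements of $P_w$. Removing $P_w$ leaves $M_w - P_w$ as the collection of chambers of $M|\Semb$, with each chamber now carrying in its sutured annuli the $D''$-copies of the edge product disks that departed with the prisms; the triangles $\overline{L(f)^+}$ and $\overline{U(f)^-}$ survive as top/bottom boundary faces. The map $\coll$ then contracts every $D''$ vertically onto its bottom base edge, undoing the extra boundary structure introduced by the $D_w$-cuts and restoring the sutured annular structure to that of $M|F_w$. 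Since each chamber of $M|\Semb$ is homeomorphic as an abstract $3$-manifold to the corresponding chamber of $M|F_w$ via the homotopy collapsing $\Semb$ back onto $F_w$, the resulting chamber-by-chamber identification $\Phi$ is the asserted homeomorphism.

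The \emph{furthermore} clauses are then immediate: a top boundary triangle $\overline{g^+}$ of $M_w - P_w$ whose image under $\coll$ equals $f^+$ in $M|F_w$ must come from the lowermost copy of $f$ in $\Semb$, because every higher copy had a slab immediately below it in $M|\Semb$ that was removed with $P_w$; hence $\overline{g^+} = \overline{L(f)^+}$. The dual argument yields $\overline{g^-} = \overline{U(f)^-}$. The main obstacle I expect is the careful local verification at branch loci: at an edge $e \in E_w$ with $w_e > 1$, one must show that after $\coll$ contracts the $D''$-ribbons, the sutured annular structure of $M_w - P_w$ matches the seam structure of $M|F_w$ exactly. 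This is a purely local check that should go through uniformly across all weight profiles on the faces incident to $e$ and all positions of $e$ in $M$, but it is likely to carry the bulk of the technical content of the argument.
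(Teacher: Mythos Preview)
Your approach is correct and close in spirit to the paper's, though phrased dually. The paper argues directly that $\Semb \cup D_w$ is a thickening of $F_w$ obtained by a three-step process (replace each branch edge $e$ by $e\times[0,1]$, insert the extra $w_f-1$ parallel copies of each $f$, spread the incident triangles along $e\times[0,1]$), so that $\coll$ simply un-thickens: collapsing the disks recovers the branch locus, and the triangular prisms account exactly for the extra parallel copies of faces. You instead decompose the complement $M|\Semb$ into chambers and slabs and track how these pieces behave under cutting and collapsing. Both routes reduce to the same local analysis near branch edges, which you rightly flag as the crux; the paper's ``build $\Semb\cup D_w$ from $F_w$'' viewpoint just packages that local check as a global recipe.

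Two small imprecisions worth fixing. First, cutting along $D_w$ does more than detach slabs from chambers: an edge product disk can have a chamber on each side (take $w_e=2$ with all incident faces of weight $1$) or a slab on each side, so the cut also separates chambers from chambers and slabs from slabs. Your conclusion that the non-prism components of $M_w$ biject with components of $M|F_w$ is still correct, but the sentence as written undersells what $D_w$ does. Second, and relatedly, the disks sitting in the sutured annuli of a chamber after cutting are not only the ``$D''$-copies of the edge product disks that departed with the prisms'': they include both copies of any edge product disk that had chambers on both sides. What $\coll$ collapses on $M_w - P_w$ is all of $D(M_w - P_w)$, not just the prism-adjacent halves. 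Neither point breaks your argument, but cleaning them up would make the chamber/slab bookkeeping match the actual combinatorics.
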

\begin{proof} The manifold $M_w$ can be seen as a sutured manifold $M|(\Semb \cup D_w)$. 
Observe that
$\Semb \cup D_w$ can be obtained from $F_w$ in the following three steps.
\begin{enumerate}
	\item Replace every edge $e$ in the branch locus of $F_w$ by $e \times \lbrack 0, 1 \rbrack$, keeping the triangles  that were on the two sides of $e$ attached to $e \times \lbrace 0 \rbrace \subset e\times \lbrack 0,1\rbrack$.
	\item For every $f \in F_w$ with $w_f > 1$ add additional $(w_f - 1)$ copies of $f$ and for every edge $e$ of $f$ attach them along $e \times \lbrace 0 \rbrace$ immediately above $f$.
	\item For every $e \times \lbrack 0, 1 \rbrack$ that occur as a result of step (1), spread the triangles that are on the two sides of $e \times \lbrace 0 \rbrace$ evenly along $e \times \lbrack 0, 1 \rbrack$. 
\end{enumerate}

The last step is possible because edges in the branch locus of $F_w$ correspond to the edges of $\V$ with weight greater than one, and thus there are at least two triangles attached to either side of $e \times \lbrack 0, 1 \rbrack$.

It follows	that collapsing every $D \in D(M_w)$ vertically to $\partial^-D$ collapses $\Semb \cup D_w$ into a branched surface whose branch locus can be identified with that of $F_w$, but which has multiple parallel copies of $f \in F_w$ whenever $w_f >1$. Each region between two parallel copies of $f$ corresponds a triangular prism  $P$ with its vertical boundary collapsed and such that $\partial^-P = \overline{\ab^k(\check{f})^-}$ and $\partial^+P = \overline{\ab^{k+1}(\check{f})^+}$ for  some $0\leq k \leq w_f-2$ (recall that $\check{f} = L(f)$). Therefore collapsing each such $P$ into $\partial^-P$ results in collapsing them all to the lowermost copy of $f$. Performing this for all $f\in F_w$ yields $F_w$. It follows that $\coll(M_w - P_w)$ can be identified with $M|F_w$. 

The `furthermore' part follows from the observation that  $\overline{L(f)^+}$ is the only copy of $f$ in $\overline{\QVw^+}$ which does not have a triangular prism below it, and $\overline{U(f)^-}$ is the only copy of $f$ in $\overline{\QVw^-}$ which does not have a triangular prism above it.
\end{proof}

\subsection{The mutant triangulation}\label{subsec:mutating}

In this subsection we explain how to construct the \emph{mutant triangulation}~$\T^\varphi$ out of the cut triangulation $(\T|F_w, \alpha|F_w)$, defined in Section~\ref{subsec:cut:tri:versus:cut:manifold}, and a combinatorial automorphism $\varphi \in \Aut^+(\QVw)$. This mutant triangulation is not guaranteed to be veering even when we do have a partial veering structure $\B|F_w$ on $(\T|F_w, \alpha|F_w)$. In Section \ref{subsec:mutant:homeomorphism} we give sufficient and necessary conditions on~$\varphi$ so that $\T^\varphi$ is an ideal triangulation of $M^\varphi$. In Section \ref{subsec:veeringness:of:mutant} we put additional restrictions on $\varphi$ which allow us to define taut and veering structures on~$\T^\varphi$, thus resulting in a veering triangulation $\V^\varphi = (\T^\varphi, \alpha^\varphi, \B^\varphi)$. 

Recall from Subsection \ref{subsec:surface:tri} that $F_{\V,w}$ denotes the set of triangles of $\QVw$. A combinatorial automorphism  $\varphi \in \Aut^+(\QVw)$ gives a bijection $\varphi: F_{\V,w} \rightarrow F_{\V,w}$ and a set of bijections $\lbrace\varphi_g\rbrace_{g \in F_{\V,w}}$ between the edges of \mbox{$g \in F_{\V,w}$} and the edges of $\varphi(g) \in F_{\V,w}$. Using the natural correspondence between the triangulations $\QVw^+$ and $\QVw^-$ in the top an bottom boundries of $M|\Semb$, respectively, we can view $\varphi \in \Aut^+(\QVw)$ as a combinatorial isomorphism $\varphi: \QVw^+ \rightarrow \QVw^-$. Thus we can use $\varphi$ to construct a mutant manifold~$M^\varphi$ out of $M|\Semb$.

However, as explained in Subsection \ref{subsec:cut:tri:versus:cut:manifold}, when we work with the triangulation $(\T, \alpha)$ of~$M$, we generally do not cut along $\Semb$, but along $F_w$. Thus to construct the mutant triangulation $\T^\varphi$ we need to specify a \emph{regluing map} $r(\varphi) = \left(r^\varphi: F_w^+ \rightarrow F_w^-, \left(r^\varphi_{f^+}\right)_{f^+ \in F_w^+}\right)$ determined by~$\varphi$, consisting of a bijection  \mbox{$r^\varphi: F_w^+ \rightarrow F_w^-$} and a family of bijections $\left(r^\varphi_{f+}\right)_{f^+ \in F_w^+}$ between edges of $f^+ \in F_w^+$ and edges of $r^\varphi(f^+) \in F_w^-$. The map $r(\varphi)$ has to be such that  $\T^\varphi$ obtained from $(\T|F_w, \alpha|F_w)$ by identifying $F_w^+$ with $F_w^-$ via $r(\varphi)$ is, at least under certain conditions,  a triangulation of~$M^\varphi$. In this section we will define~$r(\varphi)$. A sufficient and necessary condition on $\varphi$ for the mutant triangulation to be a triangulation of $M^\varphi$ appears in Theorem \ref{thm:mutant:correct:manifold}.

Below we use notation introduced in Subsection~\ref{subsec:surface:tri}. In particular, recall that given $f \in F_w$ by $U(f)$, $L(f)$ we denote the uppermost and the lowermost copies of $f$ in $\QVw$, respectively. 
Furthermore, let $\iota: M_w \rightarrow M|\Semb$ be the surjective immersion, induced by cutting $M|\Semb$ along~$D_w$, which sends $\overline{g^+}$ to $g^+$ and $\overline{g^-}$ to $g^-$ for every $g\in F_{\V,w}$. 	Given $P \in P_w$ we will say that $\iota(P)$ is a triangular prism in $M|\Semb$. To simplify notation, we set $P^\iota= \iota(P)$ and $\partial^\pm P^\iota = \iota(\partial^\pm P)$.

Before we formally define $r(\varphi)$ let us briefly explain the idea behind its definition.
A triangle $g^+$ of $\QVw^+$ does not have a triangular prism below it if and only if it is in $L(F_w)^+$; see Figure \ref{fig:order}. Thus there is a natural identification between $L(F_w)^+$ and $F_w^+$. In particular, $r^\varphi(f^+)$ will depend on $\varphi(L(f)^+)$. Similarly, a triangle $g^-$ of $\QVw^-$ does not have a triangular prism above it if and only if it is in $U(F_w)^-$. Therefore we can identify $U(F_w)^-$ with $F_w^-$. If $\varphi(L(f)^+)$ is the bottom base of a triangular prism~$P^\iota$ then it is not immediately clear to which  triangle of $F_w^-$ the map $r^\varphi$ should send~$f^+$. In this case we flow upwards through the prism $P^\iota$ and look at $\varphi(\partial^+P^\iota)$. If it is in $U(F_w)^-$ then the image of $f^+$ under $r^\varphi$ will be the triangle $f'^-$ with $U(f')^- = \varphi(\partial^+ P^\iota)$. Otherwise, we continue flowing upwards through triangular prisms. Below we describe this procedure more formally and prove that it always terminates.

Given $f \in F_w$ we define a sequence $g^\varphi(f) = (g_i)_{i \geq 1}$ of triangles of $\Q_{\V, w}$  as follows. The first element $g_1$ is equal to $\varphi(L(f))$. For $i \geq 1$ if $g_i \in U(F_w)$ we are done. Otherwise, there is another triangle $\ab(g_i)$ of $\mathcal{Q}_{\V, w}$ which is a copy of the same triangle of~$(\mathcal{T}, \alpha)$ as $g_i$ and lies immediately above~$g_i$. Then we set $g_{i+1} = \varphi(\ab(g_i))$. 

\begin{lemma}\label{lem:finite:seq}
For every $f \in F_w$ the sequence $g^\varphi(f)$ is finite. Furthermore, if $f, f' \in F_w$ are distinct then the last elements of $g^\varphi(f)$ and $g^\varphi(f')$ are distinct. 
\end{lemma}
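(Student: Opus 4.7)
\medskip

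\noindent\textbf{Proof proposal.} The plan is to exploit two injectivity facts together with the finiteness of $F_{\V,w}$, and then trace the recursion backwards to the asymmetric base case $g_1=\varphi(L(f))$. The two injectivity facts are (i) $\varphi$ is a bijection of $F_{\V,w}$ (as a combinatorial automorphism of $\QVw$), and (ii) the ``one step up'' map $\ab$ is injective on its domain, because two distinct copies of the same triangle of $\T$ in $\QVw$ have distinct ``immediately above'' copies, and copies of distinct triangles of $\T$ remain distinct under $\ab$. The key asymmetric fact is that $L(f)\notin \im(\ab)$: a lowermost copy has, by definition, no copy of the same simplex below it, so it is not of the form $\ab(y)$.

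For finiteness, I will argue by contradiction. If $g^\varphi(f)$ were infinite, then every term $g_i$ would lie in the domain of $\ab$ (i.e.\ $g_i\notin U(F_w)$), so all $g_{i+1}=\varphi(\ab(g_i))$ are defined. Since $F_{\V,w}$ is finite, pigeonhole gives indices $i<j$ with $g_i=g_j$. Using the injectivity of $\varphi$ and of $\ab$ on the composition $g_{k+1}=\varphi(\ab(g_k))$, I can peel off one step at a time: $g_i=g_j$ forces $g_{i-1}=g_{j-1}$, and so on, until the earlier index becomes $1$. This yields $g_1=g_{1+(j-i)}$; expanding the right-hand side via the recursion gives $\varphi(L(f))=\varphi(\ab(g_{j-i}))$, hence $L(f)=\ab(g_{j-i})\in\im(\ab)$, contradicting the asymmetric fact above.

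For the ``furthermore'' part, let $g_m$ and $g'_{m'}$ be the last elements of $g^\varphi(f)$ and $g^\varphi(f')$, respectively, and assume without loss of generality $m\le m'$. Suppose $g_m=g'_{m'}$. I will peel back $m-1$ steps using the same injectivity argument to obtain $g_1=g'_{m'-m+1}$. If $m=m'$, this reads $\varphi(L(f))=\varphi(L(f'))$, so $L(f)=L(f')$, and since $L$ is injective this forces $f=f'$, contradicting $f\ne f'$. If $m<m'$, then $m'-m+1\ge 2$, so $g'_{m'-m+1}=\varphi(\ab(g'_{m'-m}))$, and the equality $g_1=g'_{m'-m+1}$ together with injectivity of $\varphi$ yields $L(f)=\ab(g'_{m'-m})\in\im(\ab)$, again a contradiction.

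The only place where care is needed is the bookkeeping around the base of the recursion: the sequence starts at $\varphi(L(f))$ rather than at $\varphi(\ab(\text{something}))$, so each of the two peeling arguments has to be stopped precisely when the earlier index drops to $1$, and then the contradiction is extracted from the fact that $L(f)$ is not in the image of $\ab$. I do not expect any substantial technical obstacle beyond this.
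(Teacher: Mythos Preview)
Your proof is correct and takes essentially the same approach as the paper: both arguments use finiteness to produce a repeated term, then trace the recursion backwards via the injectivity of $\varphi$ and of $\ab$ until the index drops to $1$, at which point the contradiction $L(f)\in\im(\ab)$ (or, in the equal-length case, $L(f)=L(f')$) is obtained. Your presentation is in fact a bit cleaner than the paper's, which first sets up an eventually-periodic decomposition with minimal parameters $m,N$ and handles a separate case $m\ge N$ before arriving at the same contradiction.
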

\begin{proof}
Since the triangulation $\Q_{\V, w}$ consists of finitely many triangles and $g_i$ completely determines $g_{i+1}$, if the sequence $g^\varphi(f)$ is infinite then it is eventually periodic. That is, there are integers $m\geq 0$, $N \geq 1$ such that $g_{m + j} = g_{m+k\cdot N + j}$ for any $j \in \lbrace 1, 2, \ldots, N\rbrace$ and $k \geq 0$. Pick minimal such $m$ and $N$. We can write~$g^\varphi(f)$ as 
\[g^\varphi(f) = g_1, g_2, \ldots, g_m, (h_1, \ldots, h_N), (h_1, h_2, \ldots, h_N), \ldots \]
First observe that we can assume that $m<N$. Otherwise, using the definition of $g^\varphi(f)$ and the fact that $\varphi$ is a bijection on the set of triangles of $\QVw$, we get that $g_{m-k} = h_{N-k}$ for any $k \in \lbrace 0, 1, \ldots, N-1 \rbrace$. This means that the period $(h_1, \ldots, h_N)$ starts immediately after $g_{m-N}$, if $m>N$, or there is no pre-periodic sequence at all if $m=N$. This is a contradiction with the minimality of $m$. 
On the other hand, if $m<N$ we obtain the equality $L(f) = \ab(h_{N-m})$. This is a contradiction, because the lowermost copy of  $f$ in $\QVw$ does not have any copies of $f$ below it, so in particular it cannot lie immediately above $h_{N-m}$. Thus $g^\varphi(f)$ is finite. 


Now suppose that for some $1\leq k \leq l$ we have
\[g^\varphi(f) = (g_1, g_2, \ldots, g_k), \hspace{1cm} g^\varphi(f) = (g_1', g_2', \ldots, g_l').\]
If $g_k = g_l'$ then $g_1 = g_{l-k+1}'$. If $l<k$ we get  $L(f) = \ab(g_{l-k}')$ which is a contradiction, because $L(f)$ is the lowermost copy of $f$ in $\QVw$ while $\ab(g_{l-k}')$ lies above $g_{l-k}'$. If $k=l$ we get $L(f) = L(f')$ and thus $f=f'$ by the injectivity of $L$.
\end{proof}

Let $f \in F_w$.   Denote by $k \geq 1 $ the length of the sequence $g^\varphi(f)$. By definition, $g_i \notin U(F_w)$ for all $i \in \lbrace 1, 2, \ldots, k-1 \rbrace$ and $g_k \in  U(F_w)$. Since $U$ is injective, there is a unique $f' \in F_w$ such that $g_k = U(f')$. Let $f^+$, $f'^-$ be the triangles corresponding to $f, f'$ in $F_w^+, F_w^-$, respectively, and set
\[r^\varphi(f^+) = f'^- \in F_w^-. \]

Lemma \ref{lem:finite:seq} and injectivity of $U$ imply that $r^\varphi$ is a bijection.  Thus it determines a pairing between faces of the top boundary $F_w^+$ of  the cut triangulation $(\T|F_w, \alpha|F_w)$ and faces of the bottom boundary $F_w^-$ of $(\T|F_w, \alpha|F_w)$. To define a mutant triangulation built out of $(\T|F_w, \alpha|F_w)$ it therefore remains to specify, for every $f\in F_w$, a bijection $r^\varphi_{f^+}$ between the edges of $f^+$ and that of $f'^-$. As in Subsection \ref{subsec:surface:tri}, to simplify notation we will denote $L(f)$ by $\check{f}$. Recall from Definition~\ref{def:comb:aut} that~$\varphi$ associates to $\check{f}$ 
a bijection $\varphi_{\check{f}}$ between the edges of $\check{f}$ and edges of $g_1 = \varphi(\check{f})$. Analogously, for $i=1, 2, \ldots, k-1$ there is a bijection $\varphi_i$ between the edges of $\ab(g_i)$ and edges of $g_{i+1} = \varphi(\ab(g_i))$.  
Let $\delta_i$ be the bijection between the edges of $g_i$ and edges of $\ab(g_i)$ such that $\delta_i(e) = \ab(e)$ for any edge $e$ of $g_i$. Recall from Subsection \ref{subsec:surface:tri} that $\sigma_f^L$, $\sigma_f^U$ denote the bijections between the edges of $f$ and edges of $L(f)$, $U(f)$, respectively. 
Using this we set
\[r^\varphi_{f^+} = \left(\sigma_{f'}^U\right)^{-1}\circ \varphi_{k-1}\circ \delta_{k-1} \circ\varphi_{k-2} \circ \delta_{k-2} \circ \cdots \circ \varphi_{1} \circ \delta_1 \circ \varphi_{\check{f}} \circ \sigma_f^L.\]

We will also write $r^\varphi_{f^+} = \left(\sigma_{f'}^U\right)^{-1}\circ  \left(\varphi_{i}\circ \delta_i\right)_{i=1}^{k-1} \circ \varphi_{\check{f}} \circ \sigma_f^L$ for brevity. We define the \emph{mutant triangulation} $\T^\varphi$ as the triangulation obtained from $(\T|F_w, \alpha|F_w)$ by identifying a triangle $f^+ \in F_w^+$ with the triangle $r^\varphi(f^+) \in F_w^-$ in such a way that an edge $e$ of $f^+$ is identified with the edge $r^\varphi_{f^+}(e)$ of $r^\varphi(f^+)$. 

Observe that $\T^\varphi$ is an ideal triangulation of $M^{r(\varphi)}$ which is not necessarily homeomorphic to $M^\varphi$. We explore this problem in the next subsection. For now, we state the relationship between $r(\varphi) = \left(r^\varphi: F_w^+ \rightarrow F_w^-, \left(r^\varphi_{f^+}\right)_{f ^+\in F_w^+}\right)$  and $\varphi = \left( \varphi: \QVw^+ \rightarrow \QVw^-, \left(\varphi_g\right)_{g^+ \in F_{\V,w}^+}\right)$.
\begin{lemma}\label{lem:correct:gluing}
Let $f\in F_w$. Suppose that \begin{gather*}r^\varphi(f^+) = f'^-\\
	r^\varphi_{f^+} = \left(\sigma_{f'}^U\right)^{-1}\circ \left(\varphi_{i}\circ \delta_i\right)_{i=1}^{k-1} \circ \varphi_{\check{f}}\circ \sigma_f^L.\end{gather*} 
Then
\begin{enumerate} \item $k=1$ if and only if $\varphi(L(f)^+) = U(f')^-$ and their vertices are identified by  $ \varphi_{\check{f}}$.
	\item $k \geq2$ if and only if there is a sequence $(P_1^\iota, \ldots, P_{k-1}^\iota)$ of triangular prisms in $M|\Semb$ with the following properties:
	\begin{itemize}
		\item $\varphi(L(f)^+) = \partial^- P_1^\iota$ and their vertices are identified by  $\varphi_{\check{f}}$,
		\item  vertex $v$ of $\partial^- P_i^\iota$ is below the vertex $\delta_i(v)$ of  $\partial^+P_i^\iota$ for $i = 1,2, \ldots, k-1$, 
		\item $\varphi(\partial^+P_i^\iota) = \partial^- P_{i+1}^\iota$ and their vertices are identified by  $\varphi_i$ for $i=1,2, \ldots, k-2$,
		\item $\varphi(\partial^+P_{k-1}^\iota) = U(f')^-$ and their vertices are identified by  $\varphi_{k-1}$.
	\end{itemize}
	Denote by $P(f)$ the quotient space of $L(f)^+ \cup \left(\bigcup\limits_{i=1}^{k-1}P_i^\iota\right)\cup U(f')^-$ by the identifications listed above. 
	Vertically collapsing $P(f)$  results in an identification of $L(f)^+$ with $U(f')^-$ with vertex correspondence given by $\left(\varphi_{i}\circ \delta_i\right)_{i=1}^{k-1} \circ \varphi_{\check{f}}$.
\end{enumerate}
\end{lemma}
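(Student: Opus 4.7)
My proposed proof will directly unwind the definitions of $r(\varphi)$ and of the sequence $g^\varphi(f)$, translating each step into its geometric counterpart inside $M|\Semb$. The central observation I will use, implicit in the discussion preceding the definition of $g^\varphi(f)$ and in Lemma \ref{lem:cut:triangulation:sutured:manifold}, is the following dichotomy: a triangle $h^- \in \QVw^-$ either lies in $U(F_w)^-$, or it has an immediate upper copy $\ab(h)^+ \in \QVw^+$, in which case $h^-$ and $\ab(h)^+$ are respectively the bottom and top bases of a triangular prism $P^\iota \subset M|\Semb$ whose vertex correspondence is given by the extension of $\delta_h$ to vertices. An analogous statement holds for triangles of $\QVw^+$.

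For part (1), $k=1$ means by definition of $g^\varphi(f)$ that $g_1 = \varphi(L(f))$ already lies in $U(F_w)$. Injectivity of $U$ then supplies a unique $f' \in F_w$ with $g_1 = U(f')$, and the definition of $r^\varphi$ forces $r^\varphi(f^+) = f'^-$. Under the identification of $\varphi \in \Aut^+(\QVw)$ with the induced isomorphism $\varphi : \QVw^+ \to \QVw^-$, the equality $\varphi(L(f)) = U(f')$ is the same as $\varphi(L(f)^+) = U(f')^-$, and the vertex correspondence between them is $\varphi_{\check{f}}$ by Definition \ref{def:comb:aut}. The converse is immediate: if $\varphi(L(f)^+) = U(f')^-$, then $g_1 \in U(F_w)$ and the sequence stops at length one.

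For the forward direction of part (2), if $k \geq 2$ then for each $i \in \lbrace 1, \ldots, k-1 \rbrace$ the triangle $g_i$ lies outside $U(F_w)$, so the dichotomy above yields a prism $P_i^\iota \subset M|\Semb$ with $\partial^- P_i^\iota = g_i^-$, $\partial^+ P_i^\iota = \ab(g_i)^+$, and vertex correspondence $\delta_i$. The defining relations $g_1 = \varphi(\check{f})$, $g_{i+1} = \varphi(\ab(g_i))$ for $1 \le i \le k-2$, and $g_k = U(f') = \varphi(\ab(g_{k-1}))$, together with their associated edge bijections $\varphi_{\check{f}}, \varphi_1, \ldots, \varphi_{k-1}$, then read as precisely the four bullet points of item (2). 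For the converse, a sequence of prisms with the listed properties lets us read off $g_1 := \varphi(\check{f})$ and $g_{i+1} := \varphi(\ab(g_i))$ and verify that $g_i \notin U(F_w)$ for each $i < k$ (since $g_i^-$ is the bottom base of a prism $P_i^\iota$ it has an immediate upper copy and so cannot be uppermost) and that $g_k = U(f') \in U(F_w)$; hence $g^\varphi(f)$ has length exactly $k$ and $r^\varphi(f^+) = f'^-$.

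Finally, for the vertical-collapse statement about $P(f)$, I will simply chain the identifications: $L(f)^+$ is carried to $\partial^- P_1^\iota$ by $\varphi_{\check{f}}$, then $\partial^- P_1^\iota$ is vertically collapsed to $\partial^+ P_1^\iota$ via $\delta_1$, then $\partial^+ P_1^\iota$ is sent to $\partial^- P_2^\iota$ by $\varphi_1$, and so on, terminating at $U(f')^-$. Composing yields exactly $\left(\varphi_i \circ \delta_i\right)_{i=1}^{k-1} \circ \varphi_{\check{f}}$. I do not anticipate a serious obstacle in this proof, since the argument is essentially a dictionary between the combinatorial recursion defining $g^\varphi(f)$ and the geometric picture of prisms in $M|\Semb$; the main care needed is bookkeeping, namely tracking which triangle of $\QVw^\pm$ corresponds to which face of which prism under the immersion $\iota$.
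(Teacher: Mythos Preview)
Your proposal is correct and follows essentially the same approach as the paper's own proof: both arguments unwind the recursive definition of the sequence $g^\varphi(f)$ and translate each step into the presence or absence of a triangular prism in $M|\Semb$ via the dichotomy that $g_i^- \in U(F_w)^-$ if and only if there is no prism above it. Your write-up is in fact slightly more complete than the paper's, since you spell out the converse direction of (2) and the vertical-collapse statement explicitly, whereas the paper leaves these implicit.
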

\begin{proof}
We view $\varphi$ as a combinatorial automorphism $\varphi: \QVw^+ \rightarrow \QVw^-$. 
The assumption on $r^\varphi_{f^+}$ implies that $g^\varphi(f)$ has length~$k$. Let $g^\varphi(f) = (g_1, \ldots, g_k)$. 
By the definition of $g^\varphi(f)$ we get that $\varphi(L(f)^+)  \in U(F_w)^-$ if and only if $k=1$. In this case indeed $\varphi(L(f)^+) = g_1^- = U(f')^-$ and the vertex correspondence between these triangles is given by $ \varphi_{\check{f}}$.

For the case $k\geq 2$ the existence of $P_1^\iota, P_2^\iota, \ldots, P_{k-1}^\iota$ follows from the definition of $g^\varphi(f)$. Namely, since $g_i^- \notin U(F_w)^-$ and $g_{i+1}^- = \varphi(\ab(g_i)^+)$  for $1\leq i \leq k-1$, there are prisms $P_1^\iota, \ldots, P_{k-1}^\iota$ in $M|\Semb$ satisfying 	$\partial^-P_i^\iota = g_i^-$ and $\partial^+P_i^\iota = \ab(g_i)^+$, and such that identifications on $\partial^\pm P_i^\iota$ are as required.
\end{proof}

\subsection{Manifold underlying the mutant triangulation}\label{subsec:mutant:homeomorphism}
The mutant triangulation~$\T^\varphi$ defined in Section \ref{subsec:mutating} is an ideal triangulation of $M^{r(\varphi)}$. In this section we study the relationship between $M^{r(\varphi)}$ and $M^\varphi$, and derive sufficient and necessary condition for them to be homeomorphic.


Recall from Lemma \ref{lem:cut:triangulation:sutured:manifold} that $M|F_w$ is more closely related to $M_w = (M|\Semb)|D_w$ than it is to $M|\Semb$. As in Subsection \ref{subsec:cut:tri:versus:cut:manifold}, we will denote the triangulations in the top and bottom boundary of $M_w$ by $\overline{\QVw^+}, \overline{\QVw^-}$, respectively. 
A combinatorial isomorphism  $\varphi: \QVw^+ \rightarrow \QVw^-$ determines a map \mbox{$\overline{\varphi}: \overline{\QVw^+} \rightarrow \overline{\QVw^-}$} via $\overline{\varphi}(\overline{f}) = \overline{\varphi(f)}$ and $\overline{\varphi}_{\overline{f}} = \varphi_f$. Note that $\overline{\varphi}$ is not a combinatorial automorphism in the sense of Definition \ref{def:comb:aut}, as it can map a pair non-adjacent triangles to a pair of adjacent triangles, and vice versa. Nonetheless, we can use $\overline{\varphi}$ to construct a mutant manifold $M_w^{\overline{\varphi}}$ out of~$M_w$.
Figure~\ref{fig:manifolds:diagram} summarizes relationships between $M^\varphi$, $M^{r(\varphi)}$ and~$M_w^{\overline{\varphi}}$.

\begin{figure}[h]
\[\begin{tikzcd}[ar symbol/.style = {draw=none,"\textstyle#1" description,sloped},
	isomorphic/.style = {ar symbol={\cong}},]
	\text{combinatorially:} &(\T, \alpha) \arrow[r, "F_w"] \ar[d, isomorphic]&(\T|F_w, \alpha|F_w)\arrow[r, rightsquigarrow, "r(\varphi)"] \ar[d, isomorphic]& \T^\varphi \ar[d, isomorphic]& \\
	\text{topologically:} &M \arrow[r, "F_w"] \arrow[rd, "\Semb"]& M|F_w \arrow[r, rightsquigarrow, "r(\varphi)"] & M^{r(\varphi)}&\\ 	
	& & M|\Semb \arrow[r, rightsquigarrow, "\varphi"] \arrow[rd, "D_w"]& M^\varphi\\
	& & & M_w \arrow[r, rightsquigarrow, "\overline{\varphi}"]& M_w^{\overline{\varphi}}\\
\end{tikzcd}
\]
\caption{Each straight  arrow corresponds to cutting the \mbox{3-manifold} on the left of the arrow open along the set specified above the arrow. Each squiggly arrow corresponds to gluing the top boundary of the sutured manifold on the left of the arrow to its bottom boundary via the map specified above the arrow.}
\label{fig:manifolds:diagram}
\end{figure}

The relationship between $M^\varphi$ and $M_w^{\overline{\varphi}}$ is the easiest to state.
\begin{lemma}\label{lem:cuts:M_w^phi}
$M_w^{\overline{\varphi}}$ is obtained from $M^\varphi$ by cutting it along finitely many (potentially zero) disks, annuli and M\"obius bands.
\end{lemma}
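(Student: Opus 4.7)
The plan is to construct a canonical surjection $\pi \colon M_w^{\overline{\varphi}} \to M^\varphi$, identify its non-injective locus $\Sigma \subset M^\varphi$, and show that the connected components of $\Sigma$ are disks, annuli, and M\"obius bands; cutting $M^\varphi$ along $\Sigma$ will then recover $M_w^{\overline{\varphi}}$.

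First I would produce $\pi$. The immersion $\iota \colon M_w \to M|\Semb$ identifies each pair $\{D',D''\} \in D(M_w)$ back to its parent disk $D \in D_w$, and postcomposing with the $\varphi$-gluing $M|\Semb \to M^\varphi$ gives a surjection $M_w \to M^\varphi$. The identity $\iota(\overline{\varphi}(\overline{x^+})) = \iota(\overline{\varphi(x^+)}) = \varphi(x^+)$, combined with the fact that $\varphi(x^+)$ is identified with $x^+$ in $M^\varphi$ by construction, shows that this surjection descends through the quotient $M_w \to M_w^{\overline{\varphi}}$, yielding $\pi$. The map $\pi$ is $2$-to-$1$ on the image $\Sigma$ of $\bigcup_{D \in D_w}(D' \cup D'')$ and injective elsewhere: the disks $D'$ and $D''$ lie inside the sutured annuli of $M_w$, which are untouched by the $\overline{\varphi}$-gluing but identified by $\iota$. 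Hence $M_w^{\overline{\varphi}}$ is precisely $M^\varphi$ cut along $\Sigma$.

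Next I would classify the components of $\Sigma$. The vertical boundary of each $D \in D_w$ lies on a sutured annulus of $M|\Semb$ and therefore stays inside $\partial M^\varphi$; consequently the only identifications gluing two disks of $D_w$ together in $M^\varphi$ are matchings of top bases with bottom bases via $\varphi$. To track these I would introduce a partial map $\mu \colon D_w \to D_w$, defined at $D_i^e$ exactly when $\varphi(\partial^+ D_i^e)$ is not an uppermost copy, with value the unique $D_j^{e'}$ satisfying $\varphi(\partial^+ D_i^e) = \partial^- D_j^{e'}$, equivalently $\varphi(\ab^i(\check{e})) = \ab^{j-1}(\check{e}')$; its image consists of those $D$ for which $\varphi^{-1}(\partial^- D)$ is not a lowermost copy. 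Since $D_w$ is finite and $\mu$ is injective on its domain, its orbits partition $D_w$ into finite linear chains and finite cycles, and each orbit corresponds to exactly one connected component of $\Sigma$, obtained by successively gluing the constituent rectangles along their identified horizontal edges. A linear orbit yields a single larger rectangle, hence a disk; a cyclic orbit yields a closed surface that is an annulus or a M\"obius band depending on whether the composition of horizontal edge identifications around the cycle preserves or reverses the transverse orientation of the constituent disks.

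The main technical point to verify will be that this orbit-to-component correspondence is a bijection, which reduces to the disjointness of the disks in $D_w \subset M|\Semb$ together with the injectivity of $\varphi$ on edges of $\QVw$. Once this is in place, the classification of orbits as cycles versus chains gives the decomposition of $\Sigma$ into the stated disks, annuli, and M\"obius bands, and the description of $M_w^{\overline{\varphi}}$ as the cut of $M^\varphi$ along $\Sigma$ follows immediately from the analysis of $\pi$.
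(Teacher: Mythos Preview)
Your proof is correct and follows essentially the same approach as the paper. The paper's argument is very terse: it observes that $M_w = (M|\Semb)|D_w$, that the $\overline{\varphi}$-gluing is defined precisely so as to be compatible with the $\varphi$-gluing, hence the cuts along $D_w$ persist, and finally notes that under $\varphi$ the top base of one edge product disk may be sent to the bottom base of another, so the disks chain up into disks, annuli, or M\"obius bands. Your construction of $\pi$ and the partial map $\mu$ simply makes each of these steps explicit; the orbit analysis of $\mu$ is the natural formalization of the paper's one-line remark about disks ``matching up''.
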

\begin{proof}
Recall that $M_w = (M|\Semb)|D_w$ is obtained from $M|\Semb$ by cutting it along finitely many product disks.  These cuts persists in the mutant manifold $M_w^{\overline{\varphi}}$. Since $\overline{\varphi}$ maps $\overline{f}$ to $\overline{\varphi(f)}$ with vertex correspondence $\varphi_f$, these cuts are the only difference between $M^\varphi$ and $M_w^{\overline{\varphi}}$.

Observe that under $\varphi$ the top boundary of an edge product disk $D\in D_w$ can be mapped to the bottom boundary of an edge product disk $D' \in D_w$. Thus edge product disks can match up into annuli or M\"obius bands in $M^\varphi$ which are cut in $M_w^{\overline{\varphi}}$.
\end{proof}

We will say that the disks, annuli and M\"obius bands in $M^\varphi$ coming from edge product disks  are \emph{vertical}. Below we define a property of $\varphi$ which ensures the existence of vertical annuli or M\"obius bands.

\begin{definition}\label{defn:aligns:product:disks}
Let $S_w$ be a surface carried by a veering triangulation $\V$. We say that $\varphi \in \Aut^+(\QVw)$ \emph{aligns edge product disks} if there is a sequence of edge product disks $(D_i)_{i \in I} \subset D_w$ in $M|\Semb$ which glue up to an annulus or a M\"obius band in $M^\varphi$. Otherwise we say that $\varphi$ \emph{misaligns edge product disks}.
\end{definition}

In Theorem \ref{thm:mutant:correct:manifold} we will prove that the mutant triangulation $\T^\varphi$ is an ideal triangulation of $M^\varphi$ if and only if $\varphi$ misaligns edge product disks. 
The forward direction will rely on an observation that when $\varphi$ aligns edge product disks $M^\varphi$ and $M^{r(\varphi)}$ either  have different number of connected components or have non-homeomorphic boundary. The boundary of $M^\varphi$ is composed of sutured annuli and tori of $M|\Semb$. Since~$\Semb$ is an oriented surface in an oriented 3-manifold $M$, it induces orientation on the boundaries of sutured annuli of $M|\Semb$. Furthermore, since $\varphi$ is orientation-preserving, it sends a top boundary of a sutured annulus of $M|\Semb$ to a bottom boundary of a sutured annulus of $M|\Semb$ in an orientation-preserving way. 
It follows that all boundary components of~$M^\varphi$ are tori.

Lemma \ref{lem:cut:triangulation:sutured:manifold} implies that by identifying, for every $f\in F_w$, $f^+$ with $\overline{L(f)^+}$ and $f^-$ with $\overline{U(f)^-}$  we can view $M^{r(\varphi)}$ as a quotient space of $\coll(M_w \bez P_w)$. Therefore boundary components of $M^{r(\varphi)}$ are composed of the images of sutured annuli and tori of $M_w \bez P_w$ under the collapsing map \eqref{eqn:collapse}. 	
Given a sutured annulus $A$ of $M|\Semb$ the image $\coll(\iota^{-1}(A))$ is either an annulus or a disjoint union of bigon disks and intervals. 
The latter option happens if and only if $A \cap D_w \neq \emptyset$. In this case $D_w$ separates $A$ into finitely many rectangles that we call \emph{$D_w$-rectangles}.

\begin{definition}
Let $A$ be a sutured annulus of $M|\Semb$.
We say that a subset $R \subset A$ is a \emph{$D_w$-rectangle} if there are edge product disks $D, D' \in D_w$ such that the boundary of~$R$ decomposes into four arcs: one arc $\partial^+R$ contained in $\partial^+A$, one arc $\partial^-R$ contained in $\partial^-A$, one arc contained in $\partial_v D$, and  one arc contained in $\partial_v D'$. We call the last two arcs in the boundary of $R$ the \emph{vertical sides} of $R$.
\end{definition}
We say that a $D_w$-rectangle $R$ is \emph{prismatic} if there is a triangular prism $P \in P_w$ such that  $P\cap \iota^{-1}(R) = \iota^{-1}(R)$. Otherwise we say that $R$ is \emph{non-prismatic}. If $R$ is non-prismatic then $\coll(\iota^{-1}(R))$ is a bigon disk. In this case the boundary of $\coll(\iota^{-1}(R))$ decomposes into the \emph{positive boundary}  $\partial^+\coll(\iota^{-1}(R)) = \coll(\iota^{-1}(\partial^+ R))$, and the \emph{negative boundary} $\partial^-\coll(\iota^{-1}(R)) = \coll(\iota^{-1}(\partial^- R))$; see Figure \ref{fig:cones}. When $R$ is prismatic $\coll(\iota^{-1}(R))$ is an interval.

\begin{figure}[h]
\includegraphics[scale=0.75]{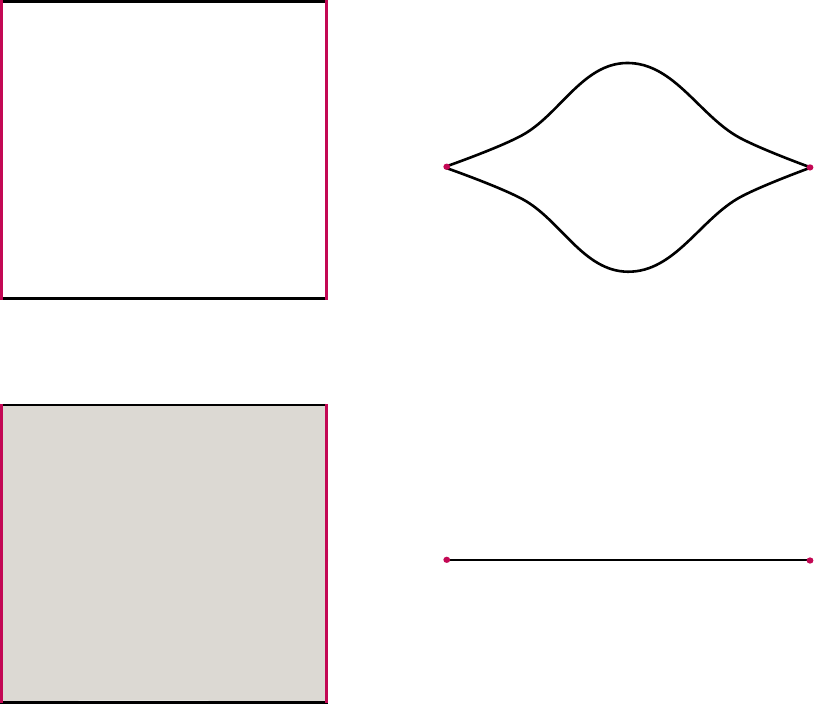}
\put(-248,119){$R$}
\put(-280,104){(non-prismatic)}
\put(15,114){$\coll(\iota^{-1}(R))$}
\put(-72,146){$\partial^+\coll(\iota^{-1}(R)$}
\put(-72,78){$\partial^-\coll(\iota^{-1}(R)$}
\put(-248,33){$R$}
\put(-270,18){(prismatic)}
\put(15,28){$\coll(\iota^{-1}(R))$}
\caption{Top: A non-prismatic $D_w$-rectangle and its image under $\coll$. Bottom: A prismatic $D_w$-rectangle and its image under $\coll$. Red vertical intervals correspond to the intersection of $R$ with $D_w$.}
\label{fig:collapsed:rectangle}
\end{figure}

\begin{theorem}\label{thm:mutant:correct:manifold}
The mutant triangulation $\T^\varphi$ is an ideal triangulation of $M^\varphi$ if and only if $\varphi$ misaligns edge product disks.
\end{theorem}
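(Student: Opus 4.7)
The plan is to compare $M^\varphi$ and $M^{r(\varphi)}$ by exhibiting $M^{r(\varphi)}$ as a quotient of $M^\varphi$ that vertically collapses the chains of triangular prisms and edge product disks in $M|\Semb$ which get strung together by the $\varphi$-gluing of $\Semb^+$ to $\Semb^-$. The key input is Lemma \ref{lem:correct:gluing}: for each $f \in F_w$ with $r^\varphi(f^+) = f'^-$ it produces a finite tower $P(f) = L(f)^+ \cup P_1^\iota \cup \cdots \cup P_{k-1}^\iota \cup U(f')^-$ whose vertical collapse realizes precisely the $r(\varphi)$-identification of $f^+$ with $f'^-$. Combined with Lemma \ref{lem:cut:triangulation:sutured:manifold}, this means that whenever all such towers are honest products, the gluing $r(\varphi)$ is the combinatorial shadow of a deformation retraction of $M^\varphi$.

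For the backward direction I would assume $\varphi$ misaligns edge product disks, so by Definition \ref{defn:aligns:product:disks} every collection of edge product disks glued up in $M^\varphi$ closes to a disk rather than to an annulus or M\"obius band. The same hypothesis forces every maximal chain of triangular prisms linked by $\varphi$ in $M^\varphi$ to terminate at a pair of boundary triangles $L(f)^+, U(f')^-$ of $\Semb^\pm$; each such chain is then a disjoint $D^2 \times I$ region of $M^\varphi$. Collapsing all of these regions together with the doubled disks in the sutured annuli is a deformation retraction whose combinatorial effect, by Lemma \ref{lem:correct:gluing}, is exactly the $r(\varphi)$-identification on $F_w^+ \cup F_w^-$. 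This deformation retraction therefore descends to a homeomorphism $M^\varphi \to M^{r(\varphi)}$, making $\T^\varphi$ an ideal triangulation of $M^\varphi$.

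For the forward direction I would argue the contrapositive. Assume $\varphi$ aligns edge product disks; then there is a cyclic chain of edge product disks, and a parallel cyclic chain of triangular prisms, which glues via $\varphi$ into a vertical annulus or M\"obius band $V \subset M^\varphi$. Because this cycle visits no $L(f)^+$ and no $U(f')^-$, Lemma \ref{lem:correct:gluing} shows it appears in none of the towers $P(f)$, so under the prism-collapse realizing $M^{r(\varphi)}$ it is crushed vertically to a $1$-complex. In $M^\varphi$ the same subset is a solid torus or a twisted $I$-bundle over a M\"obius band, attached to the remainder of $M^\varphi$ along a subset of the sutured annuli of $M|\Semb$; either it is an entire connected component of $M^\varphi$, in which case the collapse strictly reduces the number of components, or its collapse merges the boundary pieces of $M^\varphi$ meeting it into a new boundary surface of nonzero Euler characteristic. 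Since $\partial M^\varphi$ is a union of tori, $M^{r(\varphi)} \not\cong M^\varphi$. The hard part will be to carry out this last topological argument cleanly and ensure that the cyclic prism-chain collapse does not inadvertently recombine with other parts of $M^\varphi$ to reproduce toroidal boundary; this is the observation, announced in the text preceding the theorem, that $M^\varphi$ and $M^{r(\varphi)}$ must differ either in the number of connected components or in boundary topology.
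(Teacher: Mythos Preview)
Your backward direction is essentially the paper's argument: once $\varphi$ misaligns edge product disks, every $\varphi$-chain of triangular prisms terminates, and Lemmas \ref{lem:cut:triangulation:sutured:manifold} and \ref{lem:correct:gluing} let you identify the vertical collapse of $M^\varphi$ with $M^{r(\varphi)}$. That part is fine.

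The forward direction has a genuine gap. You assert that a cyclic chain of edge product disks is accompanied by ``a parallel cyclic chain of triangular prisms'' and that the resulting subset of $M^\varphi$ is a solid torus or twisted $I$-bundle. This is not true in general: an edge product disk $D_i^e$ need not be a face of any triangular prism. If $w_e = 2$ is realised by two distinct faces of weight~$1$ on each side of $e$, then $D_i^e$ has no prism on either side, and there is no solid region to collapse. Your dichotomy (either a whole component disappears, or the boundary Euler characteristic changes) is the right one, but the mechanism you propose for the second alternative does not work as stated.

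The paper handles this by working directly on the boundary. Given a vertical annulus or M\"obius band $V$, pick a boundary torus $T$ of $M^\varphi$ meeting $V$, cut $T$ along $T \cap V_w^\varphi$ into annuli $X_i$, and decompose each $X_i$ into $D_w$-rectangles (subrectangles of the sutured annuli of $M|\Semb$ bounded by edge product disks). Under $\coll$, a non-prismatic $D_w$-rectangle becomes a bigon disk and a prismatic one becomes an interval; the definition of $r(\varphi)$ then glues the bigons corresponding to a single $X_i$ cyclically into a $2$-sphere. So whenever some $X_i$ contains a non-prismatic rectangle, $M^{r(\varphi)}$ acquires a spherical boundary component. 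Only when \emph{every} such $X_i$ consists entirely of prismatic rectangles do you get a component of $M^\varphi$ made entirely of prisms, which is the component-count case you anticipated. You should replace the ``parallel solid torus'' picture with this rectangle analysis on $\partial M^\varphi$.
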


\begin{proof}

By Lemma \ref{lem:cut:triangulation:sutured:manifold}, we can view view $M^{r(\varphi)}$ as a quotient space of $\coll(M_w \bez P_w)$. 
The map
\[\partial^+(\coll(M_w \bez P_w)) \rightarrow \partial^-(\coll(M_w \bez P_w))\] by which we quotient $\coll(M_w \bez P_w)$ to get $M^{r(\varphi)}$ is obtained from $r(\varphi) = \left(r^\varphi:F_w^+ \rightarrow F_w^-, \left(r_{f^+}^\varphi\right)_{f^+\in F_w^+}\right)$ by modifying the bijections $r^\varphi_{f^+}$ to make up for the identifications $f^+ \sim\overline{L(f)^+}$, $f^-\sim\overline{U(f)^-}$. For simplicity, we abuse the notation and denote this map by $r(\varphi)$.

Let $V_w^\varphi$ denote the set of vertical annuli and M\"obius bands in $M^\varphi$. This set is nonempty if and only if $\varphi$ aligns edge product disks. 

Suppose that $\varphi$ aligns edge product disks. Let $T$ be a boundary torus of $M^\varphi$ with $T \cap V_w^\varphi \neq \emptyset$. Observe that $T\cap V_w^\varphi$ consists of finitely many parallel simple closed curves in $T$. We denote the connected components of $T\cap V_w^\varphi$ by $d_1, \ldots, d_r$, $r\geq 1$, and we assume that they are circularly ordered so that $d_i$ and $d_{i+1}$ cobound an annulus $X_i \subset T$ whose interior is disjoint from $V_w^\varphi$ (the subscript $r$ is taken modulo~$r$). 
Let $A_1, \ldots, A_N$ be sutured annuli of $M|\Semb$ such  that $\varphi(\partial^+ A_j) = \partial^-A_{j+1}$ for every $j=1,2, \ldots, N$ (the subscript $j$ is taken modulo $N$) and $T$ is the quotient of $A_1 \sqcup A_2 \sqcup \ldots \sqcup A_N$ by~$\varphi$. 
Each $A_j \cap X_i$ consists of finitely many $D_w$-rectangles. Let $\mathcal{R}_i^j$ be the collection of $D_w$-rectangles making up $A_j \cap X_i$ and let~$\mathcal{R}_i$ be the union of all $\mathcal{R}_i^j$. 

\noindent\textbf{Case 1:} There is $i \in \lbrace1, 2, \ldots, r\rbrace $ such that $\mathcal{R}_i$ contains a non-prismatic $D_w$-rectangle.

If every non-prismatic $D_w$-rectangle $R \in \mathcal{R}_i$ has both vertical sides contained in $d_i \cup d_{i+1}$ then each $\mathcal{R}_i^j$ either contains only prismatic $D_w$-rectangles or  contains exactly one non-prismatic $D_w$-rectangle. Furthermore, by the assumption of this case, there is at least one $\mathcal{R}_i^j$ of the latter type. Thus, by the definition of $r(\varphi)$, there are non-prismatic $D_w$-rectangles $R_1, \ldots, R_n \in \mathcal{R}_i$, $1\leq n\leq N$, such that $r(\varphi)(\partial^+\coll(\iota^{-1}(R_j))) = \partial^-\coll(\iota^{-1}(R_{j+1}))$, where the subscript $j$ is taken modulo~$n$. 
Since the image of a non-prismatic $D_w$-rectangle under $\coll$ is  a bigon disk (see Figure~\ref{fig:collapsed:rectangle}), this gives us a sequence of bigon disks glued to each other top to bottom. The bigons $\coll(\iota^{-1}(R_j))$ inherit orientation on their boundary from $R_j$. Thus the assumption that $\varphi$ is orientation-preserving together with Lemma \ref{lem:correct:gluing} imply that the quotient space of $\coll(\iota^{-1}(\mathcal{R}_i))$ by $r(\varphi)$ is a sphere. This means that $M^{r(\varphi)}$ admits a spherical boundary component. Since $M^\varphi$ has only toroidal boundary components, these manifolds cannot be homeomorphic.

Now suppose that $R\in \mathcal{R}_i$ is a non-prismatic $D_w$-rectangle which has a vertical side $d$ that is disjoint from $d_i \cup d_{i+1}$. An example of such a situation is presented in Figure~\ref{fig:spherical}. 
The assumption that $\mathrm{int}(X_i)\cap V_w^\varphi = \emptyset$ implies that there is a non-prismatic $D_w$-rectangle  $R' \in \mathcal{R}_i$ such that $r(\varphi)$ identifies $\coll(\iota^{-1}(d))$ with an interior point of $\partial^-\coll(\iota^{-1}(R'))$. Therefore the quotient space of $\coll(\iota^{-1}(\mathcal{R}_i))$ by $r(\varphi)$ is again a sphere and thus $M^{r(\varphi)}$ is not homeomorpic to $M^\varphi$.  

\noindent\textbf{Case 2:} For every $1\leq i \leq r$ the set $\mathcal{R}_i$ contains only prismatic $D_w$-rectangles.


\noindent \textbf{Subcase 2A:} There is a boundary torus $T'$ of $M^\varphi$ such that $T'\cap V_w^\varphi \neq \emptyset$ and $T'$ is not composed entirely of prismatic $D_w$-rectangles. In this situation we can apply Case~1 to a connected component of  $T' \bez V_w^\varphi$  to deduce that $M^{r(\varphi)}$ admits a spherical boundary component and is therefore not homeomorphic to $M^\varphi$.

\noindent \textbf{Subcase 2B:} For every boundary torus $T'$ of $M^\varphi$ either $T'\cap V_w^\varphi = \emptyset$ or $T'$ is composed entirely of prismatic $D_w$-rectangles. Since $V_w^\varphi \neq \emptyset$ this assumption implies that there is a connected component of $M^\varphi$ consisting entirely of the images of triangular prisms under $\iota$. In particular, $M^\varphi$ has strictly more connected components than $M^{r(\varphi)}$, so these manifolds are not homeomorphic. (Note, however, that $M^{r(\varphi)}$ may be homeomorphic to the union of other connected components of $M^\varphi$.)

\begin{figure}[h]
	\includegraphics[scale=1]{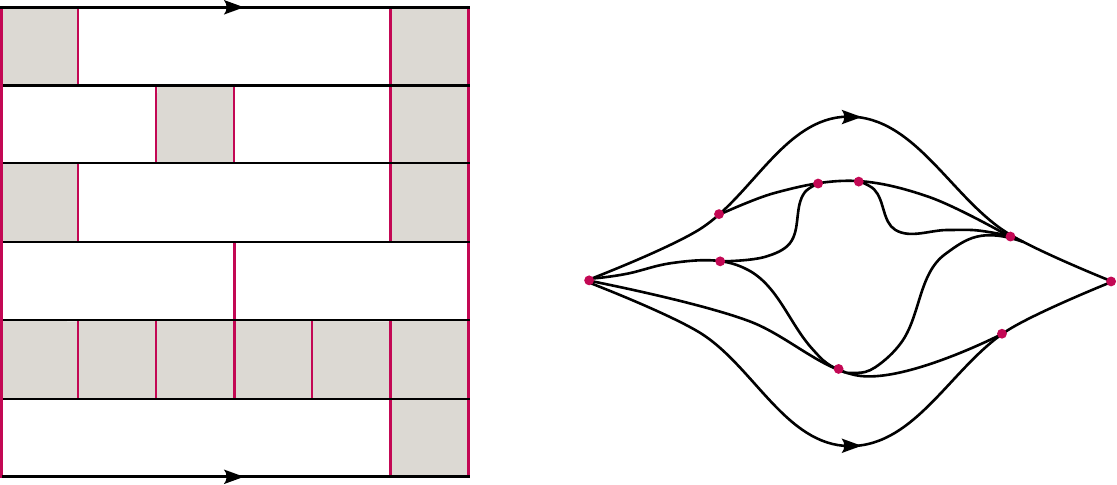}
	\put(-325,-10){$d_i$}
	\put(-195,-10){$d_{i+1}$}
	\put(-370,10){$A_1 \cap X_i$}
	\put(-370,32){$A_2 \cap X_i$}
	\put(-370,54){$A_3 \cap X_i$}
	\put(-370,76){$A_4 \cap X_i$}
	\put(-370,98){$A_5 \cap X_i$}
	\put(-370,123){$A_6 \cap X_i$}
	\put(-268,9){1}
	\put(-292,54){2}
	\put(-223,54){3}
	\put(-258,78){4}
	\put(-303,100){5}
	\put(-235,100){6}
	\put(-258,123){7}
	\put(-81,18){1}
	\put(-120,54){2}
	\put(-40,54){3}
	\put(-80,55){4}
	\put(-108,70){5}
	\put(-62,75){6}
	\put(-80,92){7}
	
	\caption{Left: An annular subset $X_i$ of a boundary torus $T$ of $M^\varphi$ cobounded by a pair of vertical annuli or M\"obius bands. Red intervals correspond to the intersection of $X_i$ with edge product disks. Prismatic $D_w$-rectangles are shaded gray. Non-prismatic $D_w$-rectangles are numbered. Right:  Spherical boundary component of $M^{r(\varphi)}$ corresponding to~$X_i$. A bigon arising as a result of collapsing a non-prismatic $D_w$-rectangle labelled with $i$ on the left is labelled with $i$.}
	\label{fig:spherical}
\end{figure}

Now suppose that $\varphi$ misaligns edge product disks. Then there are no vertical annuli or M\"obius bands in $M^\varphi$ and hence, by Lemma~\ref{lem:cuts:M_w^phi}, $M_w^{\overline{\varphi}}$ can be obtained from $M^\varphi$ by cutting it along finitely many vertical disks. Equivalently, $M^\varphi$ is the quotient space of $\coll(M_w)$ by $\overline{\varphi}$.

Lemma \ref{lem:correct:gluing} explains how the definition of the regluing map simulates the process of collapsing triangular prisms into their bottom triangles. Thus $r(\varphi)$ on $F_w^+$  respects not only the identification between $L(f)^+$ and $\varphi(L(f)^+)$, for  all $f \in F_w$, but also the identification between $g^+$ and $\varphi(g^+)$ for all $g \in F_{\V,w}$ such that either $g$ or $\ab^{-1}(g)$ appears in the sequence $g^\varphi(f)$ for some $f \in F_w$. If there is $g  \in F_{\V,w}\bez L(F_w)$ such that for every $f\in F_w$ neither $g$ nor $\ab^{-1}(g)$ appears in $g^\varphi(f)$ then there are triangular prisms of $M_w = (M|\Semb)|D_w$ through which we have not passed  when defining $r(\varphi)$. These triangular prisms would arrange into solid tori components of $M_w^{\overline{\varphi}}$ consistiny entirely of triangular prisms. However,  the assumption that $\varphi$ misaligns edge product disks implies that $M_w^{\overline{\varphi}}$ does not admit such solid tori components. Therefore when $\varphi$ misaligns edge product disks Lemma \ref{lem:correct:gluing} implies that the quotient space of $\coll(M_w)$ by $\overline{\varphi}$ is homeomorphic to the quotient space of $\coll(M_w -P_w)$ by $r(\varphi)$. The latter is $M^{r(\varphi)}$, while the former --- as explained in the previous paragraph --- is $M^\varphi$.
Thus $M^{r(\varphi)}$ is homeomorphic to~$M^\varphi$.
\end{proof}

\begin{remark}
In the proof of Theorem \ref{thm:mutant:correct:manifold} we constructed a sphere out of bigon disks. This may look like a contradiction to the fact, mentioned in Subsection \ref{subsec:taut},  that only surfaces with zero Euler characteristic can admit a bigon train track. However, the obtained decomposition of $S^2$ into bigons is not a bigon track in the usual sense.  If $\tau$ is a train track on $S$ then for every switch $v$ of $\tau$ which is not contained in $\partial S$ there must be two complementary regions of $\tau$ which meet $v$ along a smooth point in their boundary. In the construction we get two points in the sphere which meet only cusps of bigons.
\end{remark}
\subsection{Veeringness of the mutant triangulation} \label{subsec:veeringness:of:mutant}
In Subsection \ref{subsec:mutant:homeomorphism}  we found a sufficient and necessary condition on $\varphi$ for the mutant triangulation $\T^\varphi$ to be a triangulation of~$M^\varphi$. In this subsection we are interested in endowing $\T^\varphi$ with a veering structure.  

By tautness, edges of the dual spine $\D$ of $(\T, \alpha)$ admit orientations such that every vertex $v$ of $\D$ has exactly two incoming edges and two outgoing edges; this is condition~(1) from Definition \ref{def:taut}. When we construct $\T^\varphi$ out of $(\T|F_w, \alpha|F_w)$ we always identify a face $f^+ \in F_w^+$ with a face $r^\varphi(f^+) \in F_w^-$. Therefore there is a natural orientation on the edges of the dual spine $\D^\varphi$ of $\T^\varphi$ that is induced from the orientation on the edges of the dual spine of $\T$. With this orientation $\D^\varphi$  satisfies condition (1) from Definition \ref{def:taut}. To obtain a taut structure on $\T^\varphi$ it suffices to find a sufficient condition on $r(\varphi)$ so that every 2-cell of $\D^\varphi$ has exactly one top vertex and exactly one bottom vertex. To derive such a condition it is helpful to analyze the structure of $\D|F_w$ and its relationship to $(\T|F_w, \alpha|F_w)$. First, observe that edges of $(\T|F_w, \alpha|F_w)$ can be classified into four types.
We say that an edge $e$ of  $(\T|F_w, \alpha|F_w)$, or $\V|F_w$, is
\begin{itemize}
\item \emph{internal} if $e$ is neither an edge of a triangle from $F_w^+$ nor an edge of a triangle from $F_w^-$, 
\item \emph{positive} if $e$ is an edge of a triangle from $F_w^+$ and is not edge of a triangle from~$F_w^-$,
\item \emph{negative} if $e$ is an edge of a triangle from $F_w^-$ and is not an edge of a triangle from $F_w^+$,
\item \emph{mixed} if $e$ is an edge of a triangle from $F_w^+$ and also an edge of a triangle from~$F_w^-$.
\end{itemize}

Assume that $\T$ and~$\D$ are embedded in~$M$ so that they are  dual to one another. 
For every 2-cell~$p$ of $\D|F_w$ there is a 2-cell $s$ of $\D$ such that $p$ is a connected component of $s|F_w$. If $s\cap F_w = \emptyset$ then we say that $p$ is an \emph{internal cell} of $\D|F_w$. Now suppose that $s\cap F_w \neq \emptyset$. If  $p$ contains the top vertex of $s$ we say that $p$ is a \emph{negative cell} of $\D|F_w$; see Figure~\ref{fig:partial_cells}(a). If $p$ contains the bottom vertex of $s$ we say that $p$ is a \emph{positive cell} of $\D|F_w$; see Figure \ref{fig:partial_cells}(b). If $p$ contains neither the top nor the bottom vertex of $s$ we say that $p$ is a \emph{mixed cell} of $\D|F_w$; see Figure \ref{fig:partial_cells}(c). Naturally, internal/positive/negative/mixed cell of $\D|F_w$ is dual to an internal/positive/negative/mixed edge of $(\T|F_w, \alpha|F_w)$.

\begin{figure}[h]
\includegraphics[scale=0.8]{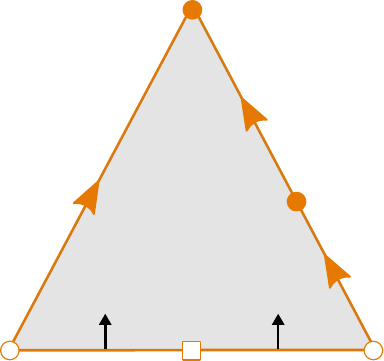} \hspace*{1cm}
\includegraphics[scale=0.8]{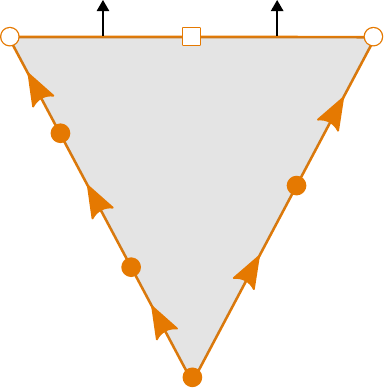} \hspace*{1cm}
\includegraphics[scale=0.8]{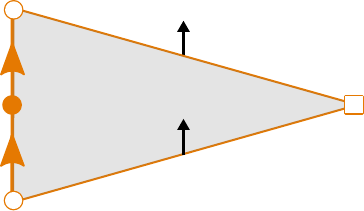}
\put(-295, -30){(a)}
\put(-292, 89){$t_s^p$}
\put(-292, -12){$e_s^p$}
\put(-315, -10){$\gamma_1^-$}
\put(-273, -10){$\gamma_2^-$}
\put(-325, 40){$\delta_1$}
\put(-262, 40){$\delta_2$}
\put(-170, -30){(b)}
\put(-167, -12){$b_s^p$}
\put(-168, 89){$e_s^p$}
\put(-200, 89){$\gamma_1^+$}
\put(-141, 89){$\gamma_2^+$}
\put(-200, 40){$\delta_1$}
\put(-135, 40){$\delta_2$}
\put(-48, -30){(c)}
\put(-52, 0){$\partial^- p$}
\put(-52, 45){$\partial^+ p$}
\put(5, 21){$e_s^p$}
\put(-91, 21){$\delta$}
\caption{(a) Negative cell of $\D|F_w$. b) Positive cell of $\D|F_w$. (c) Mixed cell of $\D|F_w$. The number of vertices in the interiors of $\delta_1, \delta_2, \delta$ may vary.}
\label{fig:partial_cells}
\end{figure}

For every cell $s$ of $\D$ such that $s \cap F_w \neq \emptyset$ we denote by $e_s$ the intersection of~$s$ with its dual edge in $(\T,\alpha)$. Then every connected component $p$ of $s|F_w$ has a point in its boundary corresponding to $e_s$ that we will denote by $e_s^p$. If $p$ is negative then there is a point $t_s^p$ in the boundary of $p$ corresponding to the top vertex $t_s$ of $s$. If $p$ is positive then there is a point $b_s^p$ in the boundary of $p$ corresponding to the bottom vertex $b_s$ of~$s$.

If $p$ is a negative cell of $\D|F_w$ then its boundary decomposes into: 
\begin{itemize}
\item two arcs $\gamma_1^-, \gamma_2^-$ meeting at $e_s^p$, both cooriented into $p$,
\item two arcs $\delta_1, \delta_2$ meeting at  $t_s^p$, both oriented so that they point into $t_s^p$.
\end{itemize}
See Figure \ref{fig:partial_cells} (a). We say that $\gamma_1^-, \gamma_2^-$ are \emph{maximal negative arcs} in the boundary of~$p$.

If $p$ is a positive cell of $\D|F_w$ then its boundary decomposes into:
\begin{itemize}
\item two arcs $\gamma_1^+, \gamma_2^+$ meeting at $e_s^p$, both cooriented out of $p$,
\item two arcs $\delta_1, \delta_2$ meeting at $b_s^p$, both oriented so that they point out of $b_s^p$.
\end{itemize}
See Figure \ref{fig:partial_cells} (b). We say that  $\gamma_1^+, \gamma_2^+$ are \emph{maximal positive arcs} in the boundary of $p$.

If $p$ is a mixed cell of $\D|F_w$ then its boundary decomposes into
\begin{itemize}
\item two arcs $\partial^+p, \partial^-p$ meeting at $e_s^p$, such that $\partial^+p$ is cooriented out of $p$ and $\partial^-p$ is cooriented into $p$,
\item one arc $\delta$ oriented from $\partial^-p$ to $\partial^+p$. 
\end{itemize}
See Figure \ref{fig:partial_cells} (c). We say that  $\partial^+p$ is the \emph{maximal positive arc} in the boundary of $p$, and that $\partial^-p$ is the \emph{maximal negative arc} in the boundary of $p$. 

If $\gamma^\pm$ is a maximal positive/negative arc in the boundary of a cell $p$ of $\D|F_w$ we say that $e_s^p$ is the \emph{internal endpoint} of $\gamma^\pm$. 

Using the duality between internal/positive/negative/mixed edges of $(\T|F_w,  \alpha|F_w)$ and internal/positive/negative/mixed cells of $\D|F_w$, respectively, we can now derive a combinatorial sufficient condition on $r(\varphi)$ so that $\T^\varphi$ admits a taut structure.

\begin{lemma}\label{lem:sufficient:tautness}
If  for every mixed edge $e$ of $(T|F_w, \alpha|F_w)$ there is a positive edge $e^+$ of $(T|F_w, \alpha|F_w)$ and a negative edge $e^-$ of $(T|F_w, \alpha|F_w)$ such that $e, e^+, e^-$ are identified  in $\T^\varphi$ then the triangulation $\T^\varphi$ admits a taut structure.
\end{lemma}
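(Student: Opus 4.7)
The strategy is to transport the orientation $\alpha$ on edges of the dual spine $\D$ of $\T$ directly to the edges of the dual spine $\D^\varphi$ of $\T^\varphi$, and then verify the three conditions of Definition \ref{def:taut} for the resulting oriented 1-skeleton. The construction $\T\rightsquigarrow\T|F_w\rightsquigarrow\T^\varphi$ of Subsection \ref{subsec:mutating} affects only the 2-cells of the dual spine: a 2-cell is cut along arcs dual to the faces of $F_w$, and the resulting pieces are reglued via the edge bijections $r^\varphi_{f^+}$ along their maximal positive/negative arcs. Consequently, the 1-skeleton of $\D^\varphi$ is in canonical orientation-preserving bijection with that of $\D$, and every vertex of $\D^\varphi$ inherits exactly two incoming and two outgoing edges, giving Definition \ref{def:taut}(1) for free.

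For conditions (2) and (3), I will dualize and work with edges of $\T^\varphi$. Each such edge is an equivalence class $[e]$ of edges of $(\T|F_w,\alpha|F_w)$ identified by $r(\varphi)$, and its dual 2-cell of $\D^\varphi$ is assembled from the dual cells in $\D|F_w$ of the edges in $[e]$, glued along maximal positive/negative arcs. Under the classification given just above, the top vertex $t_s^p$ of a cell of $\D|F_w$ is present only when that cell is negative, and the bottom vertex $b_s^p$ only when it is positive; internal cells retain one of each. Hence conditions (2) and (3) translate to: each class $[e]$ contains \emph{exactly} one positive edge and \emph{exactly} one negative edge of $(\T|F_w,\alpha|F_w)$. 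Classes consisting of a single internal edge have dual cells that are unaltered cells of $\D$ and so satisfy (2) and (3) automatically.

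Existence of at least one positive and one negative edge in each non-internal class is immediate: if $[e]$ contains a mixed edge, the hypothesis supplies them directly; if $[e]$ is purely positive/negative, the $k=1$ case of Lemma \ref{lem:correct:gluing} shows $[e]$ consists of a positive edge paired with a negative one. Uniqueness is the main obstacle. The plan here is to exploit the bijectivity of $r(\varphi)$ together with the explicit edge-level formula $r^\varphi_{f^+}=(\sigma_{f'}^U)^{-1}\circ(\varphi_i\circ\delta_i)_{i=1}^{k-1}\circ\varphi_{\check f}\circ\sigma_f^L$ and the sequences $g^\varphi(f)$ of Lemma \ref{lem:finite:seq}: a positive edge has identifications only ``downstream'' under $r(\varphi)$, a negative edge only ``upstream'', and a mixed edge has precisely one of each. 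This precludes branching and organizes every class $[e]$ as a simple chain whose only possible non-mixed members are a single positive edge at one end and a single negative edge at the other. The only alternative --- a cycle consisting entirely of mixed edges --- is precisely the configuration forbidden by the hypothesis, so it cannot arise. Making the ``chain'' claim fully rigorous, in particular handling the case when a single edge of $(\T|F_w,\alpha|F_w)$ represents several edge-of-triangles in $F_w^+$ or $F_w^-$, is expected to be the most delicate part of the argument.
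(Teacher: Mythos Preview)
Your plan is essentially the paper's own argument, lightly dualized: the paper works on the dual side with the arc-bijection $\Gamma^\varphi:\Gamma^+\to\Gamma^-$ between maximal positive and negative arcs in $\partial(\D|F_w)$, while you phrase everything in terms of equivalence classes of edges of $(\T|F_w,\alpha|F_w)$ under $r(\varphi)$. Both routes transport $\alpha$ to get condition~(1) for free, decompose each 2-cell of $\D^\varphi$ into one internal piece \emph{or} a cyclic assembly of positive/negative/mixed pieces, and then observe that the only obstruction to (2)--(3) is a cycle built entirely of mixed pieces --- precisely what the hypothesis forbids.

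The point you flag as ``most delicate'' is exactly where the paper invests its effort, and your sketch underestimates it slightly. A positive edge is an edge of \emph{two} triangles of $F_w^+$ (dually, its cell has two maximal positive arcs $\gamma_1^+,\gamma_2^+$), so it has two downstream identifications, not one; likewise a negative edge has two upstream identifications. Thus the ``chain'' picture is not automatic: from a positive piece $p$ one follows two separate chains of mixed pieces to negative pieces $p',p''$, and one must argue that $p'=p''$ rather than landing on two distinct negative pieces (which would produce a 2-cell with two top and two bottom vertices). The paper's mechanism for this is that $\Gamma^\varphi$ matches the \emph{internal endpoints} $e_s^p$ of the arcs --- the apex where $\gamma_1^+,\gamma_2^+$ meet --- and tracking this shared apex through both chains forces them to close up at a single negative cell. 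Your invocation of the $k=1$ case of Lemma~\ref{lem:correct:gluing} does not supply this; you will want the internal-endpoint bookkeeping (or an equivalent) to complete the uniqueness step.
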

\begin{proof}
We assume that the 1-skeleton of $D^\varphi$ is equipped with the orientation induced by $\alpha|F_w$. It suffices to show that under the assumption of the lemma, every 2-cell of~$D^\varphi$ has exactly one top vertex and exactly one bottom vertex; see Definition \ref{def:taut}.

Denote by $\Gamma^+$ (respectively, $\Gamma^-$) the set of maximal positive (respectively, negative) arcs in the boundaries of cells of $\D|F_w$. Recall that $\T^\varphi$ is the quotient space of $(\T|F_w, \alpha|F_w)$ under the regluing map $r(\varphi)$. 
Dually we get a regluing map $\Gamma(\varphi) = \left\lbrace \Gamma^\varphi: \Gamma^+ \rightarrow \Gamma^-, \left(\Gamma^\varphi_{\gamma^+}\right)_{\gamma^+ \in \Gamma^+} \right\rbrace$, where $\Gamma^\varphi$ is a bijection between $\Gamma^+$ and $\Gamma^-$ and  $\Gamma^\varphi_{\gamma^+}$ is a bijection between the endpoints of $\gamma^+$ and the endpoints of $\Gamma^\varphi(\gamma^+)$, such that the quotient of $\D|F_w$ by $\Gamma(\varphi)$ gives $\D^\varphi$. Since the 1-skeleton of $\D^\varphi$ arises from recombining the 1-skeleton of $\D$ we get that $\Gamma^\varphi_{\gamma^+}$ must send the internal endpoint of $\gamma^+$ to the internal endpoint of~$\Gamma^\varphi(\gamma^+)$. 

Let $p$ be a positive cell of $\D|F_w$. Denote by $\gamma_1^+, \gamma_2^+$ the two distinct maximal positive arcs in the boundary of $p$. For $i=1,2$ the bijection $\Gamma^\varphi$ can send $\gamma_i^+$ only to a maximal negative arc in the boundary of a mixed cell or to a maximal negative arc in the boundary of a negative cell. Let $q_1, \ldots, q_m$ be the maximal collection of mixed cells of $\D|F_w$ such that $\Gamma^\varphi(\gamma_1^+) = \partial^- q_1$ and $\Gamma^\varphi(\partial^+ q_i) = \partial^- q_{i+1}$ for $i=1,2, \ldots, m-1$. Let $q_1', \ldots, q_n'$ be the maximal collection of mixed cells of $\D|F_w$ such that $\Gamma^\varphi(\gamma_2^+) = \partial^- q_1'$ and $\Gamma^\varphi(\partial^+ q_j') = \partial^- q_{j+1}'$ for $j=1,2, \ldots, n-1$. First assume that these collections of mixed cells are nonempty, that is $m,n \geq 1$. By maximality and the fact that positive cells do not have arcs in $\Gamma^-$, there are negative cells $p', p''$ of $\D|F_w$ such that $\Gamma^\varphi(\partial^+ q_m)$  is a maximal negative arc in the boundary of $p'$ and $\Gamma^\varphi(\partial^+ q_n')$  is a maximal negative arc in the boundary of $p''$. The cells $q_1, \ldots, q_m, q_1', \ldots, q_n'$ must all be distinct and hence $\Gamma^\varphi(\partial^+ q_m)$, $\Gamma^\varphi(\partial^+ q_n')$ are distinct. Since $\Gamma(\varphi)$ sends internal endpoints of arcs to internal endpoints of arcs, we must have $p' = p''$. Thus we obtain a cell of $\D^\varphi$ composed of $p, q_1, \ldots, q_m, q_1', \ldots, q_n', p'=p''$. Such a cell has exactly one top vertex (coming from $p'=p''$) and exactly one bottom vertex (coming from $p$). 
If $m=0$ it suffices to replace $\Gamma^\varphi(\partial^+ q_m)$ with $\Gamma^\varphi(\gamma_1^+)$, and if $n=0$ it suffices to replace $\Gamma^\varphi(\partial^+ q_n')$ with $\Gamma^\varphi(\gamma_2^+)$, to still get a cell of $\D^\varphi$ with precisely one top vertex and precisely one bottom vertex.

It follows that every 2-cell of $\D^\varphi$ is either
\begin{itemize}
	\item composed of one internal cell of $\D|F_w$, or
	\item composed of one positive cell of $\D|F_w$, one negative cell of $\D|F_w$ and finitely many (potentially zero) mixed cells of $\D|F_w$, or
	\item composed of finitely many mixed cells of $\D|F_w$.
\end{itemize}
The last type of cells of $\D^\varphi$ have cyclically oriented edges in their boundary; see Figure~\ref{fig:loosing_tautness}. These are the only cells of $\D^\varphi$ that do not satisfy Definition \ref{def:taut}. Using the duality between positive/negative/mixed cells of $\D|F_w$ and positive/negative/mixed edges of $(\T|F_w, \alpha|F_w)$ it is easy to see that if for every mixed edge $e$ of $(T|F_w, \alpha|F_w)$ there is a positive edge $e^+$ of $(T|F_w, \alpha|F_w)$ and a negative edge $e^-$ of $(T|F_w, \alpha|F_w)$ such that $e, e^+, e^-$ are identified  in $\T^\varphi$  then $\D^\varphi$ does not admit such cells. Thus under this assumption $\alpha|F_w$ induces a taut structure on $\T^\varphi$.
\end{proof}

\begin{figure}[h]
\includegraphics[scale=0.8]{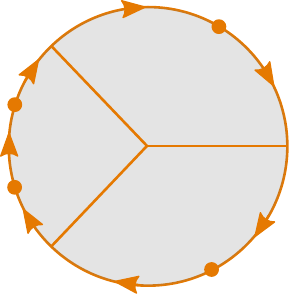}
\caption{Gluing mixed cells of $\D|F_w$ cyclically yields a cell of $\D^\varphi$ whose edges are cyclically oriented.}
\label{fig:loosing_tautness}
\end{figure}

We will devote the rest of this subsection to restate the condition of Lemma \ref{lem:sufficient:tautness} in terms of $\varphi$ and then prove that it is not only sufficient but also necessary for the existence of a taut structure on $\T^\varphi$. Recall that by $D(M_w)$ we denote the set of disks contained in the sutured annuli of $M_w$ arising from cutting $M|\Semb$ along $D_w$. Let $D(M_w  \bez P_w)$ be the subset of $D(M_w)$ consisting only of those disks which are not contained in the sutured annuli of triangular prisms of $M_w$. 
Lemma~\ref{lem:cut:triangulation:sutured:manifold} 
implies the following relationship between mixed edges of $(\T|F_w, \alpha|F_w)$ and disks $D(M_w \bez P_w)$ contained in the sutured annuli of $M_w-P_w$.
\begin{corollary}\label{cor:mixed-edges}
For every one sided-edge $e$ of $(\T|F_w, \alpha|F_w)$ there is precisely one \mbox{$D' \in D(M_w \bez P_w)$} such that $e = \coll(D')$.\qed 
\end{corollary}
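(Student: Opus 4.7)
The plan is to realize the claimed correspondence as the bijection induced by the collapsing map $\coll$. The key input is Lemma \ref{lem:cut:triangulation:sutured:manifold}, which identifies $\coll(M_w - P_w)$ with $M|F_w$ by sending $\overline{L(f)^+}\mapsto f^+$ and $\overline{U(f)^-}\mapsto f^-$ for every $f\in F_w$. Since $P_w$ consists of entire connected components of $M_w$, deleting them leaves a sutured manifold $M_w - P_w$ whose positive boundary is exactly $\bigcup_{f\in F_w}\overline{L(f)^+}$ and whose negative boundary is exactly $\bigcup_{f\in F_w}\overline{U(f)^-}$. Moreover, the only identifications that $\coll$ performs inside $M_w - P_w$ come from collapsing the disks $D'\in D(M_w - P_w)$: each such $D'$ identifies $\partial^+D'\subset \partial^+(M_w - P_w)$ with $\partial^-D'\subset \partial^-(M_w - P_w)$.

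First I would prove that $\coll(D')$ is a mixed edge for every $D'\in D(M_w - P_w)$. Since $D'$ lies in a sutured annulus of $M_w - P_w$, which runs from $\partial^+(M_w - P_w)$ to $\partial^-(M_w - P_w)$, by the paragraph above $\partial^+D'$ lies on some $\overline{L(f_1)^+}$ and $\partial^-D'$ lies on some $\overline{U(f_2)^-}$. Under the identification of Lemma \ref{lem:cut:triangulation:sutured:manifold}, $\coll(D')$ is therefore simultaneously an edge of $f_1^+\in F_w^+$ and of $f_2^-\in F_w^-$, which makes it mixed.

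Conversely, I would show that every mixed edge arises uniquely this way. Let $e$ be a mixed edge of $(\T|F_w,\alpha|F_w)$; by definition $e$ is an edge of some $f^+\in F_w^+$ and of some $f'^-\in F_w^-$. Via the identification, $e$ has a preimage $e^+$ on $\overline{L(f)^+}\subset \partial^+(M_w - P_w)$ and a preimage $e^-$ on $\overline{U(f')^-}\subset \partial^-(M_w - P_w)$; these are distinct edges in $M_w$ that become identified by $\coll$. The description of $\coll$ then forces the existence of a disk $D'\in D(M_w)$ with $\partial^+D' = e^+$ and $\partial^-D' = e^-$, and because $e^\pm$ do not lie on any prism face, $D'\in D(M_w - P_w)$.

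For uniqueness, I would observe that $\coll$ does not identify two edges within $\partial^+(M_w - P_w)$ (nor within $\partial^-(M_w - P_w)$), because every $\coll$-identification inside $M_w - P_w$ travels between the two sides of the boundary via some disk $D'$. Consequently the edges $e^+$ and $e^-$ are uniquely determined by $e$, and so is the disk $D'$ having them as its bases. The most delicate point I expect is the characterization of $\partial^\pm(M_w - P_w)$ at the start, which relies on $P_w$ being a set of whole connected components of $M_w$, combined with the descriptions of $\partial^\pm P$ from Subsection \ref{subsec:cut:tri:versus:cut:manifold} to conclude that the surviving boundary faces are exactly the lowermost positive copies and the uppermost negative copies.
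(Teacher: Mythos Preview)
Your proposal is correct and follows essentially the same approach as the paper: the paper treats this as an immediate consequence of Lemma~\ref{lem:cut:triangulation:sutured:manifold} (hence the bare \qed), and your argument simply unpacks that implication by tracing the bijection through the collapsing map. Your observation that $P_w$ consists of whole connected components of $M_w$ is the right structural fact to isolate, and your existence and uniqueness arguments are sound.
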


However, we are mainly interested in the relationship between mixed edges of $(\T|F_w, \alpha|F_w)$ and edge product disks in $M|\Semb$.

\begin{definition}
Let $D', D'' \in D(M_w)$ be such that $\iota(D') = \iota(D'') = D \in D_w$. We say that the edge product disk $D$ 
\begin{itemize} \item \emph{has prisms on both sides} if there are triangular prisms $P', P'' \in P_w$ such that~$D'$ is contained in the sutured annulus of $P'$ and $D''$ is contained in the sutured annulus of~$P''$,
	\item \emph{has a prism on one side}  if exactly one disk out of $D', D''$ is  contained in the sutured annulus of some triangular prism  of $M_w$,
	\item \emph{does not have a prism on either side} if $D,D' \in D(M_w \bez P_w)$.
\end{itemize}
\end{definition}

Using this definition, we can restate Corollary \ref{cor:mixed-edges} and say that an edge product disk $D \in D_w$ corresponds to two, one or zero mixed edges of $(\T|F_w, \alpha|F_w)$ if and only if $D$ does not have a prism on either side, has a prism on one side or has prisms on both sides, respectively.

Let $V$ be a vertical annulus or a vertical M\"obius band in $M^\varphi$. Let $D_1, \ldots, D_N$ be edge product disks of $M|\Semb$ such that  $V$ is the quotient space of $D_1 \sqcup \ldots \sqcup D_N$ by~$\varphi$. We say that~$V$ \emph{lies in a prismatic region of $M^\varphi$} if $D_i$ has prisms on both sides for every $1\leq i \leq N$. 
Using this terminology we can now restate the assumption of Lemma \ref{lem:sufficient:tautness} in terms of topological properties of $M^\varphi$.

\begin{lemma}\label{lem:prismatic:vertical}
For every mixed edge $e$ of $(\T|F_w,\alpha|F_w)$ there is a positive edge $e^+$ of $(\T|F_w,\alpha|F_w)$ and a negative edge $e^-$ of $(\T|F_w,\alpha|F_w)$ such that $e, e^+, e^-$ are identified in $\T^\varphi$ if and only if every vertical annulus or M\"obius band in $M^\varphi$ lies in a prismatic region of $M^\varphi$.
\end{lemma}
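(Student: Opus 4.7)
The plan is to analyze the equivalence relation on edges of $(\T|F_w, \alpha|F_w)$ induced by the regluing $r(\varphi)$, and to show that its cycles consisting entirely of mixed edges correspond exactly to vertical annuli and M\"obius bands in $M^\varphi$ that are not contained in prismatic regions. First I would organize the identifications into a directed graph on the set of edges of $(\T|F_w, \alpha|F_w)$: draw an arrow from $\tilde{e}$ to $\tilde{e}'$ whenever $r(\varphi)$ glues $\tilde{e}$, viewed as an edge of some $f^+ \in F_w^+$, to $\tilde{e}'$, viewed as an edge of $r^\varphi(f^+) \in F_w^-$. Internal edges are isolated; positive edges have out-degree one and in-degree zero; negative edges have the reverse; mixed edges have both degrees equal to one. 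Hence each connected component is either a singleton internal edge, a directed path from a positive edge through finitely many mixed edges to a negative edge, or a directed cycle consisting entirely of mixed edges, and the hypothesis in the lemma is precisely the nonexistence of such cycles.

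Next I would translate mixed edges into edge product disks via Corollary \ref{cor:mixed-edges}: every mixed edge $e$ has the form $e = \coll(D')$ for a unique $D' \in D(M_w - P_w)$, and $D'$ is one of the two halves of an edge product disk $\hat{D} = \iota(D') \in D_w$. Combining this with Lemma \ref{lem:cut:triangulation:sutured:manifold} and the identifications $F_w^+ \leftrightarrow L(F_w)^+$ and $F_w^- \leftrightarrow U(F_w)^-$, when $e$ is viewed as an edge of $f^+$ it is represented in $\QVw^+$ by the top base $\partial^+\hat{D}$ as an edge of $L(f)^+$, and when viewed as an edge of $g^-$ it is represented in $\QVw^-$ by the bottom base $\partial^-\hat{D}$ as an edge of $U(g)^-$.

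The main step is to trace the $r(\varphi)$-identification between consecutive mixed edges $e_i, e_{i+1}$ using the formula in Lemma \ref{lem:correct:gluing}. Starting from $\partial^+\hat{D}_i$ one applies $\varphi$; if the resulting edge lies in a triangle of $U(F_w)$, the trace terminates at $\partial^-\hat{D}_{i+1}$ and we obtain the gluing relation $\varphi(\partial^+\hat{D}_i)=\partial^-\hat{D}_{i+1}$ in $M|\Semb$. Otherwise, by Lemma \ref{lem:correct:gluing}(2), the trace passes upward through a triangular prism $P$, which corresponds in $M|\Semb$ to an intermediate edge product disk whose side meeting $P$ is a rectangle of $\partial_v P$, hence not in $D(M_w - P_w)$ and therefore contributing no mixed edge. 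Concatenating these segments produces a sequence of edge product disks glued by $\varphi$ along consecutive top and bottom bases, exactly the condition for them to form a portion of a vertical annulus or M\"obius band in $M^\varphi$.

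Running this concatenation around a full cycle $e_0, \ldots, e_{N-1}$ of mixed edges yields a closed vertical annulus or M\"obius band $V \subset M^\varphi$ containing the disks $\hat{D}_0, \ldots, \hat{D}_{N-1}$; since each $\hat{D}_i$ comes from a mixed edge it is not prismatic on both sides, so $V$ fails to lie in a prismatic region. Conversely, any vertical annulus or M\"obius band not in a prismatic region contains at least one edge product disk $\hat{D}_{j_0}$ that is non-prismatic on some side, and following the vertical boundary of $V$ along that side runs the trace in reverse to produce a cycle of mixed edges, yielding the other implication. I expect the main obstacle to be careful bookkeeping of which of the two sides of each intermediate edge product disk is being used by the trace: this is dictated by the triangle we land in after each application of $\varphi$, and verifying that this choice is consistent around the cycle is what allows the combinatorial trace and the geometric vertical annulus to be compared unambiguously.
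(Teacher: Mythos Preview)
Your proposal is correct and matches the paper's proof closely: both directions proceed by translating a cycle of mixed edges into a chain of edge product disks via Corollary~\ref{cor:mixed-edges}, then using the definition of $r(\varphi)$ (equivalently Lemma~\ref{lem:correct:gluing}) to insert intermediate edge product disks---those traversed through triangular prisms---so as to assemble or disassemble a vertical annulus or M\"obius band in $M^\varphi$; the paper's phrase ``after possibly switching $e_{i_j}$ with $e_{i_j}'$'' is exactly the side-bookkeeping you flag at the end. One small correction that does not affect the argument: in your directed graph a positive edge has out-degree \emph{two}, not one (dually, a positive cell of $\D|F_w$ has two maximal positive arcs, cf.\ Figure~\ref{fig:partial_cells}(b)), and likewise a negative edge has in-degree two; the conclusion you need---that any directed cycle consists entirely of mixed edges and is its own weakly connected component---still holds because mixed edges have in- and out-degree exactly one.
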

\begin{proof}
First observe that if $e$ is an edge of triangle from $F_w^+$ then the the edge $r(\varphi)(e)$ might not be well-defined, because a positive edge can be mapped to two different mixed edges. However, if $e$ is a mixed edge then $r(\varphi)(e)$ is well defined, because $e$ is an edge of only one $f^+\in F_w^+$.

In what follows the subscript $i$ is taken modulo~$n$. Suppose that there is a collection $e_1, \ldots, e_n$ of mixed edges of $(\T|F_w, \alpha|F_w)$ such that $r(\varphi)(e_i) = e_{i+1}$ for every \mbox{$1\leq i \leq n$}. By Corollary~\ref{cor:mixed-edges}, there is a collection $D_1', \ldots, D_n' \in D(M_w \bez P_w)$ of disks contained in the sutured annuli of $M_w- P_w$ such that $\coll(D_i') = e_i$. Let \mbox{$D_i = \iota(D_i') \in D_w$}. By the definition of $r(\varphi)$ for every $1\leq i \leq n$ either $\varphi(\partial^+ D_i) = \partial^- D_{i+1}$ or there is $k_i \geq 1$ and a collection of edge product disks $D_i^{(1)}, \ldots, D_i^{(k_i)} \in D_w$  (which all have prisms on at least one side) such that $\varphi(\partial^+D_i) = \partial^- D_i^{(1)}$, $\varphi(\partial^+ D_i^{(j)}) = \partial^- D_i^{(j+1)}$ for every $1\leq j \leq k_i - 1$ and $\varphi(\partial^+ D_i^{(k_i)}) = \partial^- D_{i+1}$. Therefore the quotient of $\bigsqcup\limits_{i=1}^n D_i \sqcup \bigsqcup\limits_{j=1}^{k_i} D_i^{(j)}$ by $\varphi$ is a vertical annulus or M\"obius band~$V$ in $M^\varphi$. Since $D_i' \in D(M_w \bez P_w)$, $D_i$ does not have prisms on both sides. It follows that $V$ does not lie in a prismatic region of~$M^\varphi$.

Now, let $D_1, \ldots, D_n \in D_w$ be a collection of edge product disks in $M|\Semb$ such that $\varphi(\partial^+ D_i) = \partial^- D_{i+1}$ for every $1\leq i \leq n$. Denote by $V$ the quotient space of $D_1 \sqcup \ldots \sqcup D_n $ by $\varphi$. If $V$ does not lie in a prismatic region of $M^\varphi$ there is $1\leq k \leq n$ and a sequence $1\leq i_1 <i_2 <\ldots<i_k\leq n$ such that $D_{i_j}$ does not have prisms on both sides for every $1\leq j \leq k$, and for every $l \notin \lbrace i_1, i_2, \ldots, i_k \rbrace$ the edge product disk~$D_l$ has prisms on both sides. For $1\leq j \leq k$ if $D_{i_j}$ does not have a prism on either side there are disks $D_{i_j}', D_{i_j}'' \in D(M_w \bez P_w)$ such that $\iota(D_{i_j}') = \iota(D_{i_j}'') = D_{i_j}$. If $D_{i_j}$ has a prism on one side, there is $D_{i_j}'\in D(M_w \bez P_w)$ such that $\iota(D_{i_j}') = D_{i_j}$. By Corollary~\ref{cor:mixed-edges} there are mixed edges $e_{i_j} = \coll(D_{i_j}')$, $e_{i_j}' = \coll(D_{i_j}'')$ of $(\T|F_w, \alpha|F_w)$. By the definition of $r(\varphi)$, after possibly switching $e_{ij}$ with $e_{i_j}'$ for some $1\leq j \leq k$, there is $m\geq 1$ and a sequence $1\leq l_1 < l_2 <\ldots <l_m \leq k$ such that $r(\varphi)(e_{i_{l_j}}) = e_{i_{l_{j+1}}}$ for every $1\leq j \leq m$. 
This gives an edge of $\T^\varphi$ which is composed entirely of mixed edges of $(\T|F_w,\alpha|F_w)$.
\end{proof}

\begin{proposition}\label{prop:taut:sufficient:necessary}
Ideal triangulation $\T^\varphi$ admits a taut  structure if and only if every vertical annulus or M\"obius band in $M^\varphi$ lies in a prismatic region of $M^\varphi$.
\end{proposition}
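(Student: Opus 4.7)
My plan is to handle the backward direction first, as it follows immediately from results already in place. Specifically, if every vertical annulus or M\"obius band in $M^\varphi$ lies in a prismatic region of $M^\varphi$, then Lemma \ref{lem:prismatic:vertical} implies that every mixed edge of $(\T|F_w, \alpha|F_w)$ is identified in $\T^\varphi$ with at least one positive edge and at least one negative edge, and then Lemma \ref{lem:sufficient:tautness} produces a taut structure on $\T^\varphi$ (namely the one induced from $\alpha|F_w$).

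For the forward direction I would argue the contrapositive. Suppose $V$ is a vertical annulus or M\"obius band in $M^\varphi$ that does not lie in a prismatic region. By definition, at least one edge product disk $D \in D_w$ forming $V$ fails to have prisms on both sides. The plan is to show that this forces the manifold $M^{r(\varphi)}$ underlying $\T^\varphi$ to contain a spherical boundary component, after which Lemma \ref{lem:taut:torus:boundary} gives an immediate contradiction to $\T^\varphi$ admitting any taut structure. Note that $\T^\varphi$ is an ideal triangulation of $M^{r(\varphi)}$ regardless of whether $\varphi$ misaligns edge product disks, so this application of Lemma \ref{lem:taut:torus:boundary} is legitimate even in settings where Theorem \ref{thm:mutant:correct:manifold} fails and $M^{r(\varphi)} \not\cong M^\varphi$.

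The production of the spherical boundary component is essentially worked out already in the proof of Theorem \ref{thm:mutant:correct:manifold}. The non-prismatic edge product disk $D$ corresponds, on some boundary torus $T$ of $M^\varphi$, to a non-prismatic $D_w$-rectangle lying in an annular region $X_i$ of $T$ adjacent to $V \cap T$. The case analysis there (Case 1, together with Subcase 2A when the non-prismatic rectangle is propagated to another torus via $V_w^\varphi$) shows that under the description $M^{r(\varphi)} = \coll(M_w - P_w)/r(\varphi)$ the cyclic regluing of the bigon disks arising from non-prismatic $D_w$-rectangles along $X_i$ assembles a 2-sphere in $\partial M^{r(\varphi)}$, exactly what is needed.

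The main obstacle is a bookkeeping one: I need to verify that a non-prismatic vertical annulus or M\"obius band places us in Case 1 or Subcase 2A of that earlier argument rather than in Subcase 2B, since Subcase 2B does not produce a spherical boundary and would not obstruct tautness. This follows directly from the definitions, because Subcase 2B is characterized precisely by every boundary torus meeting $V_w^\varphi$ being composed entirely of prismatic $D_w$-rectangles, which is equivalent to every vertical annulus or M\"obius band lying in a prismatic region of $M^\varphi$. Thus a single non-prismatic $V$ forces the spherical-boundary alternatives, closing the argument via Lemma \ref{lem:taut:torus:boundary}.
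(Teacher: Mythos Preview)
Your proof is correct and follows essentially the same approach as the paper: the backward direction via Lemmas \ref{lem:sufficient:tautness} and \ref{lem:prismatic:vertical}, and the forward direction by contraposition, invoking Case~1 of the proof of Theorem \ref{thm:mutant:correct:manifold} to produce a spherical boundary component of $M^{r(\varphi)}$ and then applying Lemma \ref{lem:taut:torus:boundary}. One minor point: the equivalence you assert between Subcase 2B and ``every vertical annulus or M\"obius band lies in a prismatic region'' only holds in the direction you actually need (a non-prismatic $V$ rules out Subcase~2B), since edge product disks that glue into vertical \emph{disks} rather than annuli or M\"obius bands can also produce non-prismatic $D_w$-rectangles in a torus meeting $V_w^\varphi$; this does not affect the validity of your argument.
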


\begin{proof}

The fact that when every vertical annulus or M\"obius band in $M^\varphi$ lies in a prismatic region of $M^\varphi$ then $\T^\varphi$ admits a taut structure follows from Lemmas \ref{lem:sufficient:tautness} and~\ref{lem:prismatic:vertical}.

We prove the other direction by contraposition. Suppose that there is a vertical annulus or a M\"obius band $V$ in $M^\varphi$ which does not lie in a prismatic region of $M^\varphi$. Let $T$ be a boundary torus of $M$ such that $T\cap V \neq \emptyset$. Since $V$ does not consist entirely of edge product disks which have prisms on both sides, $T$ does not consist entirely of prismatic $D_w$-rectangles. It follows from the proof of Theorem \ref{thm:mutant:correct:manifold} (Case~1) that $M^{r(\varphi)}$, the manifold underlying $\T^\varphi$, admits a spherical boundary component. Therefore, by Lemma \ref{lem:taut:torus:boundary}, $M^{r(\varphi)}$ does not have a taut triangulation. In particular, $\T^\varphi$ does not admit a taut structure.
\end{proof}

It follows that $\T^\varphi$ admits a taut structure if and only if orientations on the edges of the dual spine $\D^\varphi$ inherited from $(\T|F_w, \alpha|F_w)$ determine a taut structure on $\T^\varphi$. In this case we denote the taut structure on $\T^\varphi$  by $\alpha^\varphi$. We also say that $(\T, \alpha)$ and $(T^\varphi, \alpha^\varphi)$ are \emph{taut mutants}.

The assumption that $\varphi$ misaligns edge product disks is stronger than the assumption that every vertical annulus or M\"obius band in $M^\varphi$ lies in a prismatic region of $M^\varphi$. However, since it is a necessary condition for  $\T^\varphi$ to be an ideal triangulation of $M^\varphi$ (Theorem~\ref{thm:mutant:correct:manifold}), it is reasonable to assume this stronger condition for the rest of the paper. Below we also prove that when $\varphi$ aligns edge product disks then $M^\varphi$ does not admit a veering triangulation. This further justifies restricting our considerations only to automorphisms which misalign edge product disks.

\begin{proposition}\label{prop:not:hyperbolic} 
If $\varphi \in \Aut^+(\QVw)$ aligns edge product disks then $M^\varphi$ does not admit a veering triangulation.
\end{proposition}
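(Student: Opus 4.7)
The plan is to argue by contraposition, using Theorem \ref{thm:HRST}: any compact oriented 3-manifold admitting a veering triangulation has complete hyperbolic interior, hence is in particular irreducible, atoroidal, and anannular, with only toroidal boundary. So it suffices to show that when $\varphi$ aligns edge product disks, $M^\varphi$ fails one of these properties. By Definition \ref{defn:aligns:product:disks} there is a cyclic sequence $D_1, \ldots, D_N \in D_w$ of edge product disks in $M|\Semb$ whose $\varphi$-quotient is a vertical annulus or M\"obius band $V \subset M^\varphi$. Since each vertical boundary arc $\partial_v D_i$ crosses sutures of $(M|\Semb, \partial M|\partial \Semb)$, and these sutures assemble into closed curves in the boundary tori of $M^\varphi$, the surface $V$ is properly embedded in $M^\varphi$ with $\partial V \subset \partial M^\varphi$. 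I would then split into two cases depending on whether $V$ lies in a prismatic region of $M^\varphi$ in the sense preceding Lemma \ref{lem:prismatic:vertical}.

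In the first case, every $D_i$ in $V$ has prisms on both sides. Starting from $V$ and adjoining the triangular prisms on either side of each $D_i$, then adjoining the edge product disks on the opposite vertical faces of those prisms, and iterating, I would produce a closed-up submanifold $U \subset M^\varphi$ foliated by triangular cross-sections --- a solid torus. Following the reasoning of Subcase 2B in the proof of Theorem \ref{thm:mutant:correct:manifold}, either $U$ is already a connected component of $M^\varphi$, in which case $M^\varphi$ has a solid-torus connected component and cannot be hyperbolic, or else $\partial U \cap \mathrm{int}(M^\varphi)$ is a nonempty torus which, by the finite positive complexity of its complement, is essential, contradicting atoroidality.

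In the second case, some $D_i$ does not have prisms on both sides. I would then argue that $V$ itself is an essential annulus or M\"obius band in $M^\varphi$. Its boundary on $\partial M^\varphi$ is not null-homotopic, because each arc $\partial_v D_i$ crosses a suture $\partial \Semb$ exactly once, so $\partial V$ represents a slope distinct from the suture slope; and $V$ is incompressible because any compression disk would have to meet the $D_i$ in arcs incompatible with their being product disks in the sutured manifold $M|\Semb$. An essential annulus or M\"obius band with boundary on toroidal components of $\partial M^\varphi$ again contradicts the anannularity forced by Theorem \ref{thm:HRST}.

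The main obstacle will be Case 2: verifying that $V$ is genuinely essential --- incompressible and not boundary-parallel --- when it does not lie in a prismatic region. This will require a careful interaction argument between hypothetical compression/boundary-compression disks and the product structure of the edge product disks in $M|\Semb$, likely phrased in the language of sutured manifold decomposition. Case 1, by contrast, is structural and largely parallels the analysis in the proof of Theorem \ref{thm:mutant:correct:manifold}.
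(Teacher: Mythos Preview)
Your overall strategy --- produce a properly embedded vertical annulus or M\"obius band $V$ in $M^\varphi$ and then invoke Theorem~\ref{thm:HRST} --- is exactly the paper's. But the paper's execution is much shorter: it does \emph{not} split into prismatic and non-prismatic cases. It simply observes that $V$ is a properly embedded annulus or M\"obius band with $\partial V$ on the toroidal boundary of $M^\varphi$, and concludes in one line that $M^\varphi$ either contains an essential annulus or is Seifert fibered; either way it is not hyperbolic, so by Theorem~\ref{thm:HRST} it admits no veering triangulation. The dichotomy ``essential annulus or Seifert fibered'' absorbs the boundary-parallel and compressible possibilities you were worried about, so the obstacle you flag in Case~2 is finessed rather than confronted.

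Your Case~1 argument, as written, has a genuine gap. When you iterate by adjoining triangular prisms on either side of $V$ and then the edge product disks on their remaining vertical faces, the region $U$ you build is a union of prism towers (solid tori), but $\partial U$ is \emph{not} a torus in $\mathrm{int}(M^\varphi)$. Each prism tower $W$ has $\partial W$ a torus, but that torus meets $\partial M^\varphi$ along the truncated-vertex annuli; the part $\partial W \cap \mathrm{int}(M^\varphi)$ consists of three vertical annuli or M\"obius bands, not a closed torus. So your iteration terminates with $\partial U \cap \mathrm{int}(M^\varphi)$ equal to a union of further vertical annuli/M\"obius bands, and you are thrown back to Case~2 anyway. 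This is another reason the case split buys nothing: the prismatic case does not produce an essential torus, it just produces more copies of the object you already had. Dropping the split and arguing directly from the existence of $V$, as the paper does, avoids all of this.
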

\begin{proof}
If $\varphi$ aligns edge product disks then $M^\varphi$ admits either an annulus or a M\"obius band joining two boundary components of $M^\varphi$ or one boundary component of $M^\varphi$ to itself. Consequently, $M^\varphi$ either contains an essential annulus or is a Seifert fibered space. In either case, $M^\varphi$ is not hyperbolic. Since veering triangulations admit strict angle structures \cite[Theorem 1.5 (stated here as Theorem \ref{thm:HRST})]{veer_strict-angles}, they can live only on hyperbolic manifolds. Thus $M^\varphi$ does not admit a veering triangulation. 
\end{proof}

\begin{remark}\label{remark:Pachner}
If $\varphi$ aligns edge product disks, we could consider a certain modification of a taut mutation that relies on applying 0-2 Pachner moves to get rid of edges of weight greater than one; see Figure~\ref{fig:Pachner} for an illustration of a 0-2 Pachner move. Suppose that a weight system~$w$ on a taut triangulation $(\T,\alpha)$ of $M$ is such that some edges of $(\T, \alpha)$ have weight greater than one.  That is, the set $D_w$ of edge product disks in $M|\Semb$ is nonempty. Let $Q$ be the induced triangulation on $S_w$. We can perform finitely many 0-2 Pachner moves above pairs of triangles adjacent to edges with weight greater than one to get a taut triangulation $(\T_\ast, \alpha_\ast)$ of $M$ with the following properties:
\begin{itemize}
	\item $(\T_\ast, \alpha_\ast)$ carries a surface $S_{w_\ast}$ whose induced triagulation $\Q_\ast$ is combinatorially isomorphic to $\Q$,
	\item the weight system $w_\ast$ on $(\T_\ast, \alpha_\ast)$ is such that no edge of $(\T_\ast, \alpha_\ast)$ has weight greater than one. 
\end{itemize}  Then for every $\varphi_\ast \in \Aut^+(\Q_\ast)$ there is $\varphi \in \Aut^+(\Q)$ such that $M^\varphi =M^{\varphi_\ast}$. Since $M|S_{w_\ast}^\epsilon$ does not admit any edge product disks, every $\varphi_\ast \in \Aut^+(\Q_\ast)$ misaligns edge product disks. Therefore, by Proposition \ref{prop:taut:sufficient:necessary} and Theorem \ref{thm:mutant:correct:manifold}, we can use $\varphi_\ast$ to construct a taut triangulation $(\T_\ast^{\varphi_\ast}, \alpha_\ast^{\varphi_\ast})$ of $M^\varphi =M^{\varphi_\ast}$.  

It follows that applying 0-2 Pachner moves before a mutation can solve both the problem of the lack of tautness after the mutation as well as the problem with the wrong homeomorphism type after the mutation. 
We do not consider this modification of the taut mutation, because our focus is on veering triangulations and Proposition~\ref{prop:not:hyperbolic} implies that when $\varphi$ aligns edge product disks $M^\varphi$ cannot admit a veering triangulation. (Note that even though $\varphi^\ast$ does not align \emph{edge} product disks, it aligns different types of product disks in $M|S_{w_\ast}^\epsilon$.)
\end{remark}

\begin{figure}[h]
\includegraphics[scale=1.2]{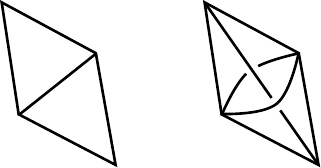}
\put(-65,30){$\longrightarrow$}
\caption{0-2 Pachner move. If $f, f'$ are adjacent along $e$ we can replace $f\cup f'$ by a union of two tetrahedra glued along a pair of faces obtained from $f \cup f'$ by a diagonal exchange. If before the move the triangulation is taut and $f$, $f'$ are on different sides of the common edge then the triangulation after the move is taut as well.}
\label{fig:Pachner}
\end{figure}

From now on we assume that $\varphi$ misaligns edge product disks.  The taut triangulation $(\T^\varphi, \alpha^\varphi)$ is veering if and only if its dual spine $\D^\varphi$ has a smoothening into a branched surface which locally looks like in Figure \ref{fig:veering_branched_surface}; see Definition \ref{def:veering}. One way of ensuring that is to construct the required branched surface structure on $D^\varphi$ using the branched surface structure $\B|F_w$ on $\D|F_w$. 
This, however, is possible only if for every $f^+ \in F_w^+$ the regluing map $r(\varphi)$ maps the large edge of $f^+$ to the large edge of $r^\varphi(f^+)$. For this reason we define the group $\Aut^+(\QVw \ | \ \tau_{\V, w})$ of orientation-preserving combinatorial automorphisms of $\Q_{\V,w}$ which preserve $\tau_{\V, w}$. 
\begin{lemma}\label{lem:train:track}
Let $\varphi \in \Aut^+(\QVw \ | \ \tau_{\V, w})$.
If $e$ is the large edge of $f ^+\in F_w^+$ then $r^\varphi_{f^+}(e)$ is the large edge of $r^\varphi(f^+) \in F_w^-$.
\end{lemma}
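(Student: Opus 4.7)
The plan is to verify that every constituent bijection in the composition
$$r^\varphi_{f^+} = \left(\sigma_{f'}^U\right)^{-1}\circ \left(\varphi_{i}\circ \delta_i\right)_{i=1}^{k-1} \circ \varphi_{\check{f}} \circ \sigma_f^L$$
sends the large edge of its domain triangle to the large edge of its codomain triangle, and then chain these observations. Since ``being the large edge of a triangle'' is read off from where the large branch of the dual train track is anchored, the whole lemma reduces to checking at each step that the edge bijection is compatible with the stable train track $\tau_{\V,w}$.

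The bijections $\sigma_f^L$, $\sigma_{f'}^U$, and $\delta_i$ are canonical identifications between the edges of two distinct copies in $\QVw$ of a single triangle of $\V$. The stable train track on any such copy is pulled back from the fixed trivalent train track on the corresponding triangle of $\V$ (see Figure~\ref{fig:stable:track}), so its large branch, and hence its large edge, occupies the same combinatorial position in every copy. Consequently each of these three bijections carries large edges to large edges; in particular $\delta_i$ sends the large edge of $g_i$ to the large edge of $\ab(g_i)$, and $\sigma_f^L$ (respectively $(\sigma_{f'}^U)^{-1}$) sends the large edge of $f$ (respectively $U(f')$) to the large edge of $\check{f}$ (respectively $f'$).

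The remaining bijections $\varphi_{\check{f}}$ and $\varphi_i$ are restrictions of $\varphi$ to individual triangles of $\QVw$. The hypothesis $\varphi \in \Aut^+(\QVw \ | \ \tau_{\V, w})$ is exactly the statement that $\varphi$ preserves $\tau_{\V,w}$; dually, it preserves the assignment of the large edge to each triangle of $\QVw$. Hence $\varphi_{\check f}$ carries the large edge of $\check f$ to the large edge of $g_1=\varphi(\check f)$, and each $\varphi_i$ carries the large edge of $\ab(g_i)$ to the large edge of $g_{i+1}=\varphi(\ab(g_i))$. Composing, the map $r^\varphi_{f^+}$ sends the large edge of $f^+$ to the large edge of $r^\varphi(f^+)$.

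The only step that might look subtle is the invariance under $\delta_i$, because $g_i$ and $\ab(g_i)$ are genuinely distinct triangles of $\QVw$; the point is simply that they are copies of the same triangle of $\V$, whose large edge is intrinsic and therefore transports along $\delta_i$ tautologically. Beyond this bookkeeping remark, the lemma is a direct unwinding of the definitions.
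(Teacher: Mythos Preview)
Your proof is correct and follows essentially the same approach as the paper: both arguments track the large edge through each factor of the composite $r^\varphi_{f^+}$, noting that $\sigma_f^L$, $(\sigma_{f'}^U)^{-1}$, and each $\delta_i$ preserve large edges because they identify copies of a single triangle of $\V$, while each $\varphi_{\check f}$ and $\varphi_i$ preserves large edges by the hypothesis $\varphi \in \Aut^+(\QVw \mid \tau_{\V,w})$. The only organizational difference is that you group the canonical bijections and the $\varphi$-bijections separately, whereas the paper walks through the composition strictly in order.
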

\begin{proof}
Suppose that $g^\varphi(f) = (g_1, \ldots, g_k)$. Then $g_1 = \varphi(L(f))$, $g_{i+1} = \varphi(\ab(g_i))$ for \mbox{$i =1, \ldots, k-1$}, and $r^{\varphi}(f^+)$ is the triangle $f'^-\in F_w^-$ such that $U(f') = g_k$. Furthermore, $r(\varphi)$ maps $e$ to $\left( \left(\sigma_{f'}^U\right)^{-1}\circ  \left(\varphi_{i}\circ \delta_i\right)_{i=1}^{k-1} \circ \varphi_{\check{f}} \circ \sigma_f^L\right)(e)$. We refer the reader to Subsections  \ref{subsec:surface:tri} and \ref{subsec:mutating} to recall the notation.

The large edge of $L(f)$ is given by $\sigma_f^L(e)$. Since $\varphi \in \Aut^+(\QVw \ | \ \tau_{\V, w})$ we get that $\left(\varphi_{\check{f}} \circ \sigma_f^L\right)(e)$ is the large edge of $g_1$. By the definition of $\delta_i$,  $\left(\delta_1 \circ \varphi_{\check{f}} \circ \sigma_f^L\right)(e)$ is the large edge of $\ab(g_1)$. Again, the assumption that $\varphi \in \Aut^+(\QVw \ | \ \tau_{\V, w})$ implies that $\left(\varphi_1 \circ \delta_1 \circ \varphi_{\check{f}} \circ \sigma_f^L\right)(e)$ is the large edge of $g_2$. Continuing this way, we get that $\left( \left(\varphi_{i}\circ \delta_i\right)_{i=1}^{k-1} \circ \varphi_{\check{f}} \circ \sigma_f^L\right)(e)$ is the large edge of $g_k$, and thus $\left( \left(\sigma_{f'}^U\right)^{-1}\circ  \left(\varphi_{i}\circ \delta_i\right)_{i=1}^{k-1} \circ \varphi_{\check{f}} \circ \sigma_f^L\right)(e)$ is the large edge of $r^\varphi(f^+) = f'^-$.
\end{proof}

The above lemma gives a sufficient condition for when the dual spine of $\T^\varphi$ admits a smoothening into a branched surface. Combining it with Proposition \ref{prop:taut:sufficient:necessary} gives sufficient conditions for the existence of a veering structure on $\T^\varphi$. 
\begin{theorem}\label{thm:mutant:veering}
Let $S_w$ be a surface carried by a veering triangulation $\V = (\T, \alpha, \B)$ of $M$.  Suppose that $\varphi \in \Aut^+(\QVw)$ misaligns edge product disks. If additionally \linebreak $\varphi \in \Aut^+(\QVw \ | \ \tau_{\V, w})$ then $(\T^\varphi, \alpha^\varphi)$ admits a veering structure.
\end{theorem}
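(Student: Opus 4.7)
The plan is to construct the veering structure $\B^\varphi$ on $(\T^\varphi, \alpha^\varphi)$ by simply regluing the cut stable branched surface $\B|F_w$ via $r(\varphi)$, and then verifying Definition \ref{def:veering} at each vertex of the new dual spine $\D^\varphi$.

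First, since $\varphi$ misaligns edge product disks the set $V_w^\varphi$ of vertical annuli and M\"obius bands in $M^\varphi$ is empty, so the condition of Proposition~\ref{prop:taut:sufficient:necessary} is vacuously satisfied and $\alpha^\varphi$ is a genuine taut structure on $\T^\varphi$. By Theorem~\ref{thm:mutant:correct:manifold}, $\T^\varphi$ is an ideal triangulation of $M^\varphi$, so it is meaningful to speak of a branched surface structure on~$\D^\varphi$ embedded in $M^\varphi$.

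Next I would construct $\B^\varphi$. The restriction $\B|F_w$ is a smoothening of $\D|F_w$ whose intersection with each triangle $f^\pm \in F_w^\pm$ is a copy of the stable train track $\tau_f$ of Figure~\ref{fig:stable:track}. To glue $\B|F_w$ into a branched surface structure on $\D^\varphi$ one only needs to check that for every $f^+ \in F_w^+$ the regluing map $r(\varphi)$ sends the train track in $f^+$ to the train track in $r^\varphi(f^+)$ in a way that matches the smoothing data (equivalently, that preserves large versus small branches and preserves the orientation on the branch locus induced from $\alpha^\varphi$). Lemma~\ref{lem:train:track}, applied under the hypothesis $\varphi \in \Aut^+(\QVw \mid \tau_{\V,w})$, gives exactly that $r^\varphi_{f^+}$ takes the large edge of $f^+$ to the large edge of $r^\varphi(f^+)$. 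Since the stable train track in a triangle is trivalent with a single large branch and two small branches, the large edge determines the track up to the combinatorics of its switch, and this combinatorics is preserved along every intermediate step $\varphi_i \circ \delta_i$ in the expression of $r^\varphi_{f^+}$: the vertical maps $\delta_i$ are part of the product structure of triangular prisms and carry $\tau$ to $\tau$, and each $\varphi_i$ preserves $\tau_{\V,w}$ by hypothesis. Hence the smoothings on the two sides of the gluing surface match up, and $\B^\varphi$ is a well-defined branched surface in $M^\varphi$ dual to $\T^\varphi$.

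Finally I would verify Definition~\ref{def:veering} for $\B^\varphi$. Every vertex of $\D^\varphi$ is naturally identified with a vertex of $\D$, since mutation reattaches faces without altering tetrahedra, and an open neighborhood of that vertex in $\B^\varphi$ is identical to the corresponding neighborhood in the veering branched surface~$\B$. Therefore the local model around each vertex of~$\D^\varphi$ is exactly one of the two pictures in Figure~\ref{fig:veering_branched_surface}(a) or (b), which is the remaining condition for $\B^\varphi$ to be a veering structure. The main place where care is needed is the compatibility check in the middle paragraph: the regluing map is not $\varphi$ itself but a composition that traverses, for each $f$, a chain of triangular prisms indexed by $g^\varphi(f)$, and one must verify that the train track data is transported coherently along this whole chain. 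This is precisely the same bookkeeping carried out in the proof of Lemma~\ref{lem:train:track}, extended from the single large branch to the full trivalent switch, and it is this bookkeeping that I expect to be the only nontrivial step.
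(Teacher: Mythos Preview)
Your proposal is correct and follows essentially the same approach as the paper: establish tautness via Proposition~\ref{prop:taut:sufficient:necessary}, invoke Lemma~\ref{lem:train:track} to see that $r(\varphi)$ matches the stable train tracks on $F_w^+$ and $F_w^-$, glue $\B|F_w$ into a branched surface $\B^\varphi$ on $\D^\varphi$, and then check Definition~\ref{def:veering} locally by noting that each tetrahedron of $\T^\varphi$ inherits its branched-surface piece unchanged from a tetrahedron of $\V$. The paper's proof is slightly terser and does not separately invoke Theorem~\ref{thm:mutant:correct:manifold}, but the logical content is the same.
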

\begin{proof}
By Proposition \ref{prop:taut:sufficient:necessary},  the assumption that $\varphi$ misaligns edge product disks implies the existence of a taut structure $\alpha^\varphi$ on $\T^\varphi$. If $\varphi \in \Aut^+(\QVw \ | \ \tau_{\V, w})$ then, by Lemma~\ref{lem:train:track}, for every $f^+\in F_w^+$ the regluing map $r(\varphi)$ maps the large edge of $f^+$ to the large edge of $r^\varphi(f^+)$.
Thus the structure of a branched surface on $\B|F_w$ gives a structure of a branched surface on the dual spine $\D^\varphi$ of $(\T^\varphi, \alpha^\varphi)$. We denote this branched surface by ~$\B^\varphi$. For every tetrahedron $t$ of $\T^\varphi$ the branched surface $\B^\varphi_t = \B^\varphi \cap t$ looks  as in Figure \ref{fig:veering_branched_surface}, because there is a tetrahedron $t'$ of $\V$ such with $\B_{t'} = \B\cap t' = B^\varphi_t$ and $(\T, \alpha, \B)$ is veering. Thus $\B^\varphi$ satisfies Definition \ref{def:veering} and $\V^\varphi = (\T^\varphi, \alpha^\varphi, \B^\varphi)$ is veering.\end{proof}

We say that $\V^\varphi = (\T^\varphi, \alpha^\varphi, \B^\varphi)$ is obtained from $\V = (\T, \alpha, \B)$ by a \emph{veering mutation} or that $\V^\varphi$, $\V$ are \emph{veering mutants}. For instance, the first two veering triangulations in the Veering Census, the veering triangulation \texttt{cPcbbbdxm\_10} of the figure eight knot sister (manifold m003 in the SnapPy Census \cite{snappea}) and the veering triangulation \texttt{cPcbbbiht\_12} of the figure eight knot (m004), are veering mutants.

\begin{remark}\label{remark:no:bijection:needed}
By Lemma \ref{lem:train:track}, a combinatorial automorphism $\varphi \in \Aut^+(\QVw \ | \ \tau_{\V, w})$ is uniquely determined by the associated bijection $\varphi: F_{\V,w} \rightarrow F_{\V,w}$. For this reason, when discussing examples of veering mutants in Section \ref{sec:faces} we will not  label the vertices of tetrahedra nor talk about bijections between vertices of identified triangles.
\end{remark}

Theorem \ref{thm:mutant:veering} gives a sufficient condition for veeringness of a taut mutant, but this condition is not necessary. It is possible that $(\T^\varphi, \alpha^\varphi)$ admits a veering structure even though $\varphi \notin \Aut^+(\QVw \ | \ \tau_{\V, w})$. We discuss this possibility briefly, and give an example of this phenomenon, in the next subsection.
\subsection{Generalizations}\label{subsec:generalization}
We say that $(\T|F_w, \alpha|F_w)$ admits a veering structure if it is possible to smoothen its dual spine $\D|F_w$ into a branched surface which locally around every vertex looks either as in Figure \ref{fig:veering_branched_surface}(a) or as in Figure \ref{fig:veering_branched_surface}(b). Suppose that $(\T|F_w, \alpha|F_w)$ admits a veering structure $\B^\ast|F_w$. Let $t$ be a tetrahedron of $(\T|F_w, \alpha|F_w)$. Let $\B^\ast_t = \B^\ast|F_w \cap t$. By $-\B_t^\ast$ we denote the other possible veering structure on $t$; see Figure \ref{fig:veering_branched_surface} to see the two options.
Lemma \ref{lem:large:edges} implies that if $t$ has a top face $f$ which is a bottom face of some tetrahedron of $(\T|F_w, \alpha|F_w)$ then we cannot change the veering structure on $t$ from $\B^\ast_t$ to $-\B^\ast_t$ without destroying veeringness. On the other hand, if both top faces of $t$ are in $F_w^+$ then we can freely change $\B^\ast_t$ to $-\B^\ast_t$ and the resulting branched surface still defines a veering structure on $(\T|F_w, \alpha|F_w)$. 

Let $\tau_{\V, w}^{\ast \ +}$, $\tau_{\V, w}^{\ast \ -}$ be the train tracks in $\QVw^+, \QVw^-$, respectively, induced by~$\B^\ast|F_w$. Using the same arguments as in the proof of Theorem \ref{thm:mutant:veering} we can show that if \mbox{$\varphi \in \Aut^+(\QVw)$} misaligns edge  product disks and sends $\tau_{\V, w}^{\ast \ +}$ to $\tau_{\V, w}^{\ast \ -}$ then the veering structure $\B^\ast|F_w$ on $(\T|F_w, \alpha|F_w)$ glues up into a veering structure on $(\T^\varphi, \alpha^\varphi)$. The advantage of considering this more general setup is that now we can derive both sufficient and necessary conditions for veeringness of a taut mutant.

\begin{theorem}\label{thm:generalized}
Let $S_w$ be a surface carried by a veering triangulation $\V = (\T, \alpha, \B)$ of~$M$.  Suppose that $\varphi \in \Aut^+(\QVw)$ misaligns edge product disks. The taut triangulation $(\T^\varphi, \alpha^\varphi)$ admits a veering structure if and only if there is a veering structure $\B^\ast|F_w$ on $(\T|F_w, \alpha|F_w)$ such that the isomorphism $\varphi: \QVw^+ \rightarrow \QVw^-$ sends $\tau_{\V, w}^{\ast \ +}$ to $\tau_{\V, w}^{\ast \ -}$.
\end{theorem}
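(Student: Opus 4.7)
The plan is to prove both implications by extending the argument of Theorem~\ref{thm:mutant:veering}, replacing the restriction $\B|F_w$ of the original veering structure by an arbitrary veering structure $\B^\ast|F_w$ on the cut triangulation, and replacing $\tau_{\V,w}$ by the induced train tracks $\tau^{\ast \ +}_{\V,w}$, $\tau^{\ast \ -}_{\V,w}$ on $\QVw^\pm$. These train tracks are defined so that each copy of a face $f \in F_w$ in $\QVw^+$ carries the large edge that $\B^\ast|F_w$ assigns to $L(f)^+$, and each copy in $\QVw^-$ carries the large edge that $\B^\ast|F_w$ assigns to $U(f)^-$. The condition $\varphi(\tau^{\ast \ +}_{\V,w}) = \tau^{\ast \ -}_{\V,w}$ will play the role of the hypothesis $\varphi \in \Aut^+(\QVw \ | \ \tau_{\V,w})$ in Theorem~\ref{thm:mutant:veering}: it is exactly what makes $\B^\ast|F_w$ glue through the regluing map $r(\varphi)$ into a branched surface on $\T^\varphi$.

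For the backward direction, I will first prove an analog of Lemma~\ref{lem:train:track}, asserting that the hypothesis $\varphi(\tau^{\ast \ +}_{\V,w}) = \tau^{\ast \ -}_{\V,w}$ forces the composition
\[
r^\varphi_{f^+} = (\sigma_{f'}^U)^{-1}\circ (\varphi_i\circ \delta_i)_{i=1}^{k-1}\circ \varphi_{\check f}\circ \sigma_f^L
\]
to send the large edge of $L(f)^+$ (relative to $\B^\ast|F_w$) to the large edge of $U(f')^-$. The argument tracks large edges through the chain exactly as in the proof of Lemma~\ref{lem:train:track}: each $\delta_i$ preserves the large-edge data automatically from the propagation rule defining $\tau^{\ast \ \pm}_{\V,w}$ through triangular prisms of $M|\Semb$, while each $\varphi_i$ preserves it by the hypothesis on $\varphi$. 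Consequently $\B^\ast|F_w$ glues via $r(\varphi)$ to a branched surface $\B^{\ast \ \varphi}$ on $\T^\varphi$, and the local picture of Figure~\ref{fig:veering_branched_surface} around each vertex of the dual spine $\D^\varphi$ is inherited unchanged from the corresponding veering tetrahedron of $(\T|F_w, \alpha|F_w)$. Thus $\B^{\ast \ \varphi}$ is a veering structure on $(\T^\varphi, \alpha^\varphi)$.

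For the forward direction, suppose $\B^\varphi$ is a veering structure on $(\T^\varphi, \alpha^\varphi)$. Cutting $\B^\varphi$ along the image of $F_w^\pm$ produces a branched surface $\B^\ast|F_w$ on $(\T|F_w, \alpha|F_w)$. Since the local condition of Definition~\ref{def:veering} is tested at vertices of the dual spine lying in the interior of tetrahedra, and cutting along 2-cells of the dual spine does not disturb these interior vertices, the resulting branched surface is itself a veering structure. The assumption that $\B^\varphi$ is globally well defined on $\T^\varphi$ means that for every $f \in F_w$ the map $r(\varphi)$ identifies the large edge of $L(f)^+$ with the large edge of $r^\varphi(f^+) = U(f')^-$, both relative to $\B^\ast|F_w$. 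Decomposing $r^\varphi_{f^+}$ as in the backward direction and using that each $\delta_i$ transports large edges consistently with the propagation rule defining $\tau^{\ast \ \pm}_{\V,w}$, this forces each $\varphi_i$ and $\varphi_{\check f}$ to preserve large edges on the faces where they act. Letting $f$ range over $F_w$ exhausts the triangles of $\QVw^+$ used along sequences $g^\varphi(f)$, and the copy-of-simplex propagation then extends the conclusion to all remaining triangles, giving $\varphi(\tau^{\ast \ +}_{\V,w}) = \tau^{\ast \ -}_{\V,w}$.

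The main obstacle will be setting up the definition of $\tau^{\ast \ \pm}_{\V,w}$ on $\QVw^\pm$ cleanly enough that the large-edge bookkeeping through the factorization of $r^\varphi_{f^+}$ is transparent, in particular that each $\delta_i$ becomes the identity on large-edge data. Once this is in place, both implications reduce to variations of the edge-tracking argument already present in the proof of Lemma~\ref{lem:train:track}, carried out for the possibly different veering structure $\B^\ast|F_w$ rather than for $\B|F_w$ itself.
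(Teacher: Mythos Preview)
Your proposal is correct and takes essentially the same approach as the paper: the backward direction reproduces the argument of Theorem~\ref{thm:mutant:veering} with $\B^\ast|F_w$ in place of $\B|F_w$, and the forward direction cuts a given veering structure on $\T^\varphi$ to obtain $\B^\ast|F_w$ and then lifts the train-track compatibility from $r(\varphi)$ back to $\varphi$. The one place where the paper is slightly sharper is the last step of the forward direction: rather than invoking a ``copy-of-simplex propagation'' to reach any remaining triangles, the paper uses the misalignment hypothesis directly to observe that every $g \in \QVw$ lies in $L(F_w)$, appears in some $g^\varphi(f)$, or has $\ab(g)$ appearing in some $g^\varphi(f)$ --- equivalently, every triangular prism is traversed when constructing $r(\varphi)$ --- so the lift already covers all of $\QVw^+$ and no further propagation is needed.
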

\begin{proof}
The backward direction can be proved exactly as Theorem \ref{thm:mutant:veering}. If $(\T^\varphi, \alpha^\varphi)$ has a veering structure $\B^\ast$ then $(\T|F_w, \alpha|F_w)$ must have a veering structure $\B^\ast|F_w$ such that $r(\varphi)$ sends the train track on $F_w^+$ induced by $\B^\ast|F_w$ to the  train track induced by $\B^\ast|F_w$ on $F_w^-$. 
Since $\varphi$ misaligns edge product disks, for every $g \in \QVw$ we have a trichotomy: $g \in L(F_w)$, $g$ appears in $g^\varphi(f)$ for some $f \in F_w$, or $\ab(g)$ appears in $g^\varphi(f)$ for some $f \in F_w$. Equivalently,  when constructing $r(\varphi)$ from $\varphi$ we have passed through every triangular prism of $M_w$.  Therefore Lemma \ref{lem:correct:gluing} implies that $\varphi$ sends $\tau_{\V, w}^{\ast \ +}$ to $\tau_{\V, w}^{\ast \ -}$.
\end{proof}

The more general setup of Theorem \ref{thm:generalized} is not just theoretical.  There are veering triangulations $(\T, \alpha, \B)$, $(\T^\varphi, \alpha^\varphi, \B')$ which are taut mutants but not veering mutants. One such pair is given by the veering triangulation \texttt{gLMzQbcdefffhhhqxdu\_122100} of the manifold s463 and the veering  triangulation \texttt{gLMzQbcdefffhhhqxti\_122100} of the manifold~s639.

Another generalization we might consider is a \emph{veering mutation with insertion}. Let $(\T|F_w, \alpha|F_w, \B|F_w)$ be a veering cut triangulation.  If there are two triangles \mbox{$f_1^+, f_2^+ \in F_w^+$} which are adjacent  along an edge which is large in both $f_1^+$ and $f_2^+$, we might stack another veering tetrahedron on top of $f_1^+\cup f_2^+$. We then obtain another cut triangulation with a veering structure which has more tetrahedra than $\T|F_w$. We can also add a new veering tetrahedron on top of two faces $f_1^+, f_2^+ \in F_w^+$ whose large edges ore mixed. 

Suppose that $(\T^\ast|F_{w_\ast}, \alpha^\ast|F_{w_\ast}, \B^\ast|F_{w_\ast})$ is obtained from $(\T|F_w, \alpha|F_w, \B|F_w)$ by additing finitely many veering tetrahedra on top of $F_w^+$. If there is a map $r: F_{w_\ast}^+ \rightarrow F_{w_\ast}^-$ such that identifying $F_{w_\ast}^+$ with $F_{w_\ast}^-$ yields a veering triangulation $\V^r = (\T^r, \alpha^r, \B^r)$ then we say that $\V^r$ is obtained from $\V$ by a \emph{veering mutation with insertion}. For instance, the veering triangulation \texttt{dLQbccchhfo\_122} of the manifold m009 is obtained from the veering triangulation \texttt{cPcbbbiht\_12} of the manifold m004 (figure eight knot complement) by a veering mutation with insertion.

\section{Homeomorphic veering mutants}\label{sec:faces}
Manifold $M$ and its mutant $M^\varphi$ can be homeomorphic. This can happen for instance for many graph manifolds mutated along one of their decomposing tori.
If a pair of veering mutants $\V, \V^\varphi$ live on the same manifold, they might be  combinatorially isomorphic or combinatorially distinct. For instance, the veering triangulation \texttt{eLMkbcddddedde\_2100} of the $6^2_2$ link complement carries a four times punctured sphere such that mutating the triangulation along it via an involution yields \texttt{eLMkbcddddedde\_2100} back.  We discuss a few examples of combinatorially distinct veering mutants of the same manifold in Subsections \ref{subsec:same:face}, \ref{subsubsec:higher:betti}, and \ref{sec:higher:genus}.

Recall from Theorem \ref{thm:LMT:faces} that veering triangulations combinatorially represent faces of the Thurston norm ball. 
A pair of veering mutants on a 3-manifold $M$ may represent either the same face or different faces of the Thurston norm ball in $H_2(M, \partial M;\rr)$. 
The main  obstacle to finding examples of measurable veering mutants which represent the same face of the Thurston norm ball is that when $b_1(M) > 1$ there are infinitely many distinct bases for $H_2(M, \partial M;\zz)$. 
While it is relatively easy to find the cones $\mathcal{C}(\V)$, $\C(\V^\varphi)$ of homology classes of surfaces carried by $\V, \V^\varphi$, respectively (this is explained in \cite[Section 11.2]{parlak-thesis}), it is not always straightforward to figure out whether they are the same up to a change of basis.

The above problem does not appear in the $b_1(M) = 1$ case. Then, up to $\eta \mapsto -\eta$, there is only one (0-dimensional) face of the Thurston norm ball in $H_2(M, \partial M;\rr)$. If~$M$ admits a pair of veering mutants $\V$, $\V^\varphi$ then neither $\mathcal{C}(\V)$ nor $\C(\V^\varphi)$ is empty. Hence $\V, \V^\varphi$ must combinatorially represent the same face of the Thurston norm ball; see Remark \ref{remark:not:opposite}. When $b_1(M)>1$ it is sometimes possible to verify if $\C(\V) = \C(\V^\varphi)$ using the combinatorics of the Thurston norm ball. We do this in Subsections \ref{subsubsec:higher:betti} (where the faces are the same) and \ref{sec:higher:genus} (where the faces are different).

\begin{remark}\label{remark:not:opposite}
Recall that  veering triangulations  come in pairs $\V, -\V$ having the same taut signature and representing opposite faces of the Thurston norm ball; see Remarks~\ref{remark:two:veering:tris} and \ref{remark:faces:and:automorphisms}. Since  the veering mutant $\V^\varphi = (\T^\varphi, \alpha^\varphi, \B^\varphi)$ inherits coorientations on faces from $\V = (\T, \alpha, \B)$, and these coorientations determine  orientations of carried surfaces, we cannot have $\C(\V^\varphi) = -\C(\V)$. 
\end{remark}

In this section we will establish the following facts connecting veering mutations and faces of the Thurston norm ball.
\pagebreak
\begin{fact}\label{thm:mutation:properties} \emph{(Veering mutations and faces of the Thurston norm ball)}\nopagebreak 
\begin{enumerate}
	\item A non-fibered face $\face{F}$ of the Thurston norm ball of a compact, oriented, hyperbolic 3-manifold with boundary can be represented by two combinatorially non-isomorphic veering mutants.
	\item Performing a veering mutation along a surface representing a class lying at the boundary of a fibered face may yield a veering triangulation representing a non-fibered face of the Thurston norm ball of the mutant manifold.
\end{enumerate}
\end{fact}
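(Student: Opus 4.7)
Both parts of Fact \ref{thm:mutation:properties} will be established by exhibiting explicit examples of veering mutants, relying on the Veering Census \cite{VeeringCensus} and the theory developed in Sections \ref{sec:veering} and \ref{sec:mutations}. The central tool is Theorem \ref{thm:LMT:faces}, which guarantees that any layered or measurable veering triangulation $\V$ of $M$ combinatorially represents a (necessarily unique) face of the Thurston norm ball in $H_2(M, \partial M;\rr)$, fibered if and only if $\V$ is layered. Consequently detecting whether a given veering triangulation represents a fibered or a non-fibered face reduces to the finite check of whether the system of branch equations admits a strictly positive integer solution.

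For part (1), the plan is to search the taut signatures in the Veering Census for pairs of veering triangulations $\V, \V^{\varphi}$ of the same underlying manifold $M$ related by a veering mutation along a carried surface $S_w$ (Theorem \ref{thm:mutant:veering}), with $\V$ (and hence also $\V^{\varphi}$) measurable but not layered. Combinatorial non-isomorphism of $\V$ and $\V^{\varphi}$ is certified by distinct isomorphism signatures of their underlying ideal triangulations. What remains is to verify $\C(\V) = \C(\V^{\varphi})$, i.e.\ that both triangulations combinatorially represent the \emph{same} face of the Thurston norm ball. When $b_1(M) = 1$ this is automatic, since up to $\eta \mapsto -\eta$ there is a unique nontrivial face and both carried cones are nonempty; the simplest instances should therefore come from cusped manifolds with $b_1 = 1$. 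For higher $b_1$, both cones $\C(\V), \C(\V^{\varphi})$ can be computed explicitly as the images in $H_2(M, \partial M;\rr)$ of the vertex rays of the polyhedral cone of nonnegative integer solutions to the branch equations, and then compared directly inside the Thurston norm ball.

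For part (2), the plan is to begin from a layered veering triangulation $\V$ of some $M$, so that $\C(\V) = \cone{F}$ for a fibered face $\face{F}$, and to identify a carried surface $S_w$ with $[S_w]$ lying on the boundary of $\cone{F}$ --- equivalently, a weight system $w$ that vanishes on at least one triangle of $\T$. An automorphism $\varphi \in \Aut^+(\QVw \mid \tau_{\V, w})$ that misaligns edge product disks is then selected, producing via Theorems \ref{thm:mutant:correct:manifold} and \ref{thm:mutant:veering} a veering mutant $\V^{\varphi}$ of $M^{\varphi}$. The decisive verification is that $\V^{\varphi}$ is measurable but \emph{not} layered: its branch equations must admit a nonnegative nonzero integer solution (the class $[S_w]$ persists as a carried class after mutation) but no strictly positive one, so that by Theorem \ref{thm:LMT:faces} the cone $\C(\V^{\varphi})$ is a non-fibered face of the Thurston norm ball of $M^{\varphi}$.

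The main obstacle in both parts is exploratory and computational rather than theoretical: locating candidate pairs within the Veering Census whose properties line up exactly as required, and, for part (2), arranging simultaneously that the chosen boundary surface, the mutating automorphism, and the resulting mutant triangulation all meet the required hypotheses. Once such candidates are isolated, each remaining verification --- the homeomorphism type of $M^\varphi$ (Theorem \ref{thm:mutant:correct:manifold}), combinatorial non-isomorphism via isomorphism signatures, layeredness versus measurability via the branch equations, and equality or distinctness of the carried cones --- is a finite check that can be carried out with Regina \cite{regina}, SnapPy \cite{snappea}, and the Veering package \cite{VeeringGitHub}.
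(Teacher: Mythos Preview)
Your proposal is correct and follows essentially the same approach as the paper: both parts are established by explicit example drawn from the Veering Census, with the paper exhibiting the specific quadruple $\V, \V^{\varrho}, \V^{\sigma}, \V^{\varrho\sigma}$ on the manifolds t12488 and t12487 (Subsection~\ref{subsec:same:face}) --- where $\V, \V^{\varrho\sigma}$ prove (1) in the $b_1=1$ case and the pair $\V^{\varrho}, \V^{\varrho\sigma}$ proves (2) --- and a further pair in Subsection~\ref{subsubsec:higher:betti} for (1) with $b_1>1$. Your outline of the verification steps (measurability versus layeredness via branch equations, $b_1=1$ forcing equality of cones, isomorphism signatures for combinatorial non-isomorphism) matches exactly what the paper carries out.
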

\begin{proof}
In Subsection  \ref{subsec:same:face} we discuss four veering mutants $\V, \V^\varrho, \V^\sigma, \V^{\varrho\sigma}$ such that~$\V$ and $\V^{\varrho\sigma}$ represent the same non-fibered face of the Thurston norm ball in a certain manifold~$M$ with $b_1(M)=1$, and $\V^\varrho, \V^\sigma$ represent  adjacent fibered faces of \mbox{$M^\ro \cong M^\sigma$} with $b_1(M^\ro) = 2$. Triangulations $\V, \V^{\varrho\sigma}$ prove (1) in $b_1(M) = 1$ case, while triangulations $\V^\varrho, \V^{\sigma\varrho}$ prove~(2); see also Proposition \ref{prop:mutant:boundary}. Veering mutants proving (1) in the case $b_1(M)>1$ are discussed in Subsection~\ref{subsubsec:higher:betti}. 
\end{proof}

\subsection{Two veering mutants representing the same face of the Thurston norm ball when $b_1(M) =1$}\label{subsec:same:face}
Let $M$ be the manifold t12488 from the SnapPy census. This manifold is not fibered and $H_1(M;\zz) = \zz \oplus \zz/8$. It also admits a pair of distinct measurable veering triangulations which, since $b_1(M) = 1$, must represent the same face of the Thurston norm ball. We will show that they differ by a veering mutation.

Let $\V$ be a veering triangulation of $M$ with the taut signature \[\texttt{iLLLPQccdgefhhghqrqqssvof\_02221000}. \]
We present the tetrahedra of $\V$ in Figure \ref{fig:svof}. 
\begin{figure}[h]
\includegraphics[scale=0.7]{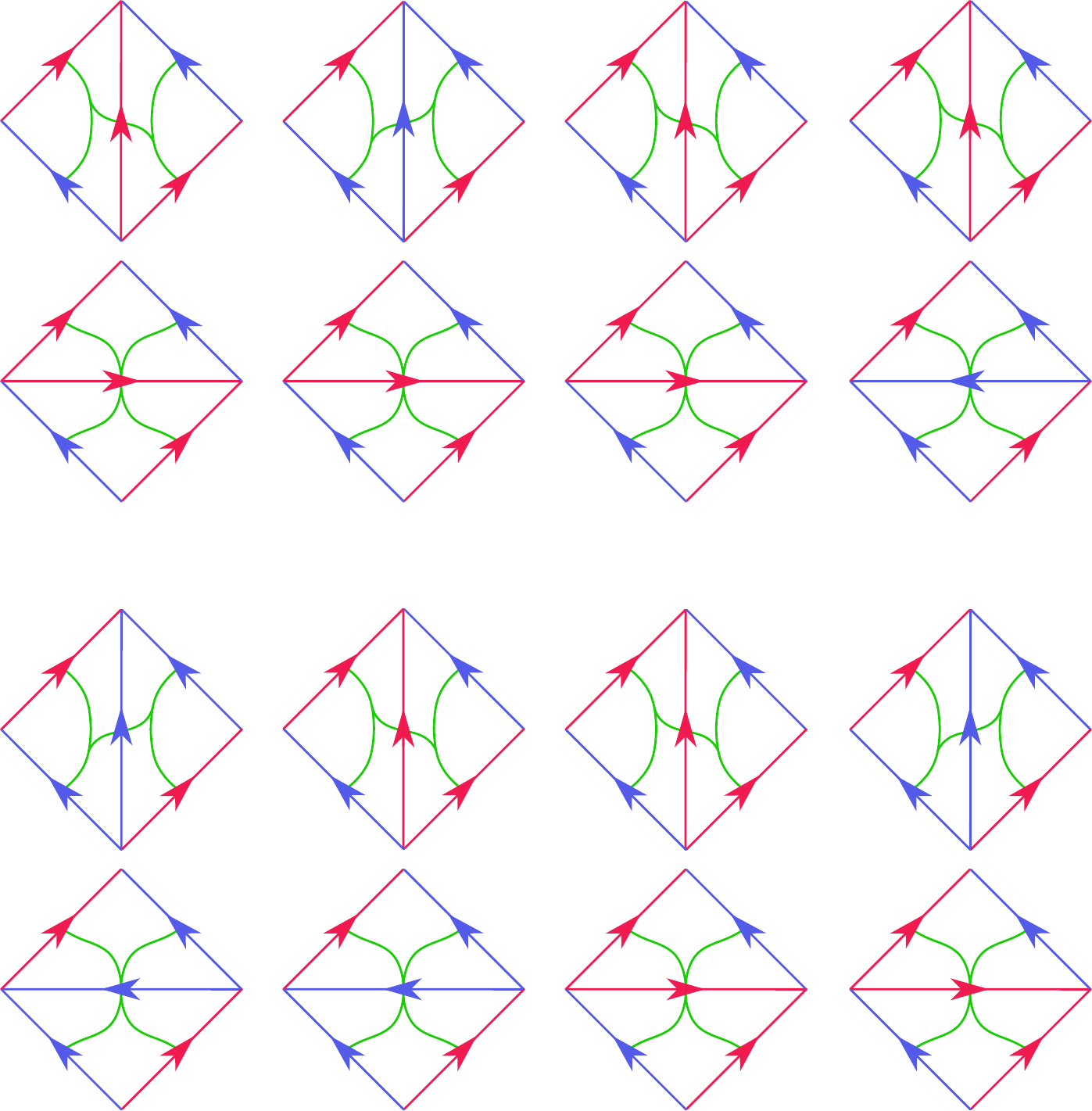}
\put(-254,295){0}
\put(-181, 295){1}
\put(-108, 295){2}
\put(-35, 295){3}
\put(-254,138){4}
\put(-181, 138){5}
\put(-108, 138){6}
\put(-35, 138){7}
\put(-275, 253){$f_3$}
\put(-238, 253){$f_2$}
\put(-202, 253){$f_4$}
\put(-165, 253){$f_5$}
\put(-128, 253){$f_1$}
\put(-92, 253){$f_8$}
\put(-55, 253){$f_9$}
\put(-18, 253){$f_0$}
\put(-256, 205){$f_0$}
\put(-256, 168){$f_1$}
\put(-183, 205){$f_6$}
\put(-183, 168){$f_3$}
\put(-110, 205){$f_2$}
\put(-110, 168){$f_7$}
\put(-37, 205){$f_4$}
\put(-38, 168){$f_{10}$}
\put(-277, 97){$f_{12}$}
\put(-240, 97){$f_{10}$}
\put(-204, 97){$f_{13}$}
\put(-165, 97){$f_6$}
\put(-128, 97){$f_7$}
\put(-94, 97){$f_{15}$}
\put(-57, 97){$f_{11}$}
\put(-20, 97){$f_{14}$}
\put(-258, 47){$f_{11}$}
\put(-256, 11){$f_5$}
\put(-185, 47){$f_{12}$}
\put(-185, 11){$f_{14}$}
\put(-110, 47){$f_8$}
\put(-112, 11){$f_{13}$}
\put(-39, 47){$f_{15}$}
\put(-38, 11){$f_{9}$}

\caption{Veering triangulation \texttt{iLLLPQccdgefhhghqrqqssvof\_02221000} of the manifold t12488.}
\label{fig:svof}
\end{figure}

By solving the system of branch equations associated to $\V$ one can verify that~$\V$ carries four surfaces that can be expressed as the following (relative) 2-cycles (which we identify with the induced triangulations):
\begin{align*}
\Q_0 &= f_2 + f_5 + f_7 + f_{11}, \\
\Q_1 &= f_1 + f_5  + f_8 + f_{11}, \\
\Q_2 &= f_2 + f_7 + f_{10} + f_{12},\\
\Q_3 &= f_1 + f_8 + f_{10} + f_{12}.
\end{align*}
\begin{figure}[h]
\includegraphics[scale=0.7]{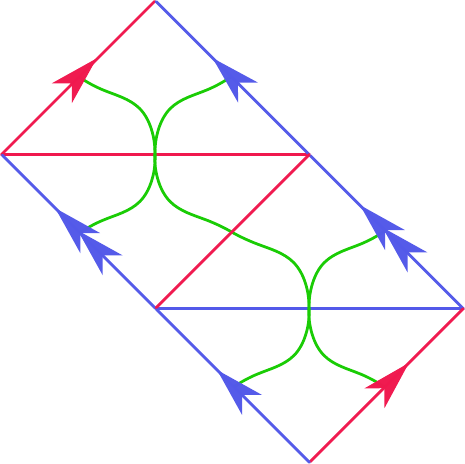}
\hspace{2cm}
\includegraphics[scale=0.7]{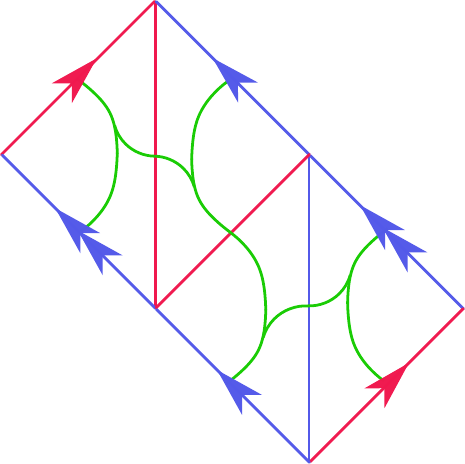}
\put(-240,-5){(a)}
\put(-80,-5){(b)}
\put(-226, 78){$f_2$}
\put(-226, 42){$f_7$}
\put(-196, 47){$f_{11}$}
\put(-194, 11){$f_5$}
\put(-85, 60){$f_1$}
\put(-50, 60){$f_{8}$}
\put(-57, 28){$f_{12}$}
\put(-22, 28){$f_{10}$}

\caption{(a) Triangulation $\Q_0 = f_2 + f_5 + f_7 + f_{11}$ and its dual stable track $\tau_0$. The group $\mathrm{Aut}^+(\Q_0 \ | \  \tau_0)$ is generated by a rotation $\ro$ by $\pi$ about the center of the red edge between faces $f_7$ and $f_{11}$ and a `shift by one square' map $\sigma$. (b) Triangulation $\Q_3= f_1 + f_8  + f_{10} + f_{12}$ and its dual stable track~$\tau_3$. The group $\mathrm{Aut}^+(\Q_3 \ | \  \tau_3)$ is generated by $\rho\sigma$ only.}
\label{fig:svof_surfaces}
\end{figure}

All these surfaces are twice punctured tori.  Triangulations $\Q_0$ and $\Q_3$ are presented in Figure \ref{fig:svof_surfaces}. Since the first Betti number of $M$ is equal to one, these punctured tori are homologous. In fact, it is easy to see that they are all homotopic. $\Q_0$ consists of two bottom faces of tetrahedron 2 and two bottom faces of tetrahedron~4. By performing the diagonal exchange corresponding to tetrahedron 2, one obtains triangulation $\Q_1$. By performing the diagonal exchange corresponding to tetrahedron 4, one obtains triangulation $\Q_2$. Triangulation $\Q_3$ can be obtained from $\Q_0$ by performing diagonal exchanges through both tetrahedra 2 and 4. 

Let $\tau_i$ be the train track dual to $\Q_i$ induced by the stable train track of $\V$. That is, $\tau_i = \tau_{\V, w_i}$ where $w_i$ is the weight system on~$\V$ determining $\Q_i$. Let $\mathrm{Aut}^+(\Q_i \ | \  \tau_i)$ be the group of orientation-preserving combinatorial automorphisms of $\Q_i$ which preserve~$\tau_i$.
Then
\begin{align*}
\mathrm{Aut}^+(\Q_0 \ | \  \tau_0) &= \zz/2 \oplus \zz/2, \\
\mathrm{Aut}^+(\Q_1 \ | \  \tau_1) &= \zz/2, \\
\mathrm{Aut}^+(\Q_2 \ | \  \tau_2)&= \zz/2,\\
\mathrm{Aut}^+(\Q_3 \ | \  \tau_3)&= \zz/2.
\end{align*}
The group $\mathrm{Aut}^+(\Q_0 \ | \  \tau_0)$ is generated by a rotation by $\pi$ about the center of the red edge between faces $f_7$ and $f_{11}$ and a `shift by one square' map; see Figure \ref{fig:svof_surfaces} (a). We denote these combinatorial isomorphisms of $\Q_0$ by $\varrho$ and~$\sigma$, respectively. 
For $i=1,2,3$ the group $\mathrm{Aut}^+(\Q_i \ | \  \tau_i)$ is generated by $\ro\sigma$; see Figure \ref{fig:svof_surfaces}~(b) for the $i=3$ case. 
The fact that $\mathrm{Aut}^+(\Q_0 \ | \  \tau_0)$ is the largest is not surprising; the surface underlying $\Q_0$ is the lowermost carried representative of the generator of $H_2(M, \partial M;\zz)$, and thus the stable train track $\tau_0$ has the most large branches (is minimally splitted).

Since $\Q_0$ does not traverse any edge of $\V$ more than once we  automatically get that neither of $\varrho, \sigma, \varrho\sigma$ aligns edge product disks. Thus, by Theorem \ref{thm:mutant:veering}, we get three veering mutants $\V^\varrho$, $\V^\sigma$, $\V^{\varrho\sigma}$ of $\V$.   Information about the regluing maps $r(\varrho), r(\sigma), r(\varrho\sigma)$ is presented in Table \ref{tab:svof:regluing}. Recall from Remark \ref{remark:no:bijection:needed} that since we are looking only at elements of $\Aut^+(\Q_0 \ | \ \tau_0)$ the regluing maps are uniquely determined by their associated bijections $\left\lbrace f_2^+, f_5^+, f_7^+, f_{11}^+\right\rbrace \rightarrow \left\lbrace f_2^-, f_5^-, f_7^-, f_{11}^-\right\rbrace$. In Figure \ref{fig:4mutants} we present taut signatures of the four mutants as well some additional information about their underlying manifolds.

\begin{table}[h]
\begin{tabular}{|c||c|c|c|c|}
	\hline
	&&&& \\[-1em]
	$f^+$ & $f_2^+$ & $f_5^+$ & $f_7^+$ & $f_{11}^+$ \\ 
	&&&& \\[-1em] 
	\hline
	&&&& \\[-1em]
	$r^\varrho(f^+)$ & $f_5^-$ & $f_2^-$ & $f_{11}^-$ & $f_7^-$\\
	&&&& \\[-1em] 
	\hline
	&&&& \\[-1em]
	$r^\sigma(f^+)$ & $f_{11}^-$ & $f_7^-$ & $f_{5}^-$ & $f_2^-$\\
	&&&& \\[-1em] 
	\hline
	&&&& \\[-1em]
	$r^{\varrho\sigma}(f^+)$ & $f_7^-$ & $f_{11}^-$ & $f_2^-$ & $f_5^-$\\
	\hline
\end{tabular}
\vspace{0.2cm}
\caption{The regluing maps determined by $\varrho, \sigma$ and $\varrho\sigma$.}
\label{tab:svof:regluing}\end{table}

\begin{figure}[h]
\begin{tikzcd}[column sep=large, row sep = large]
	\begin{matrix}\texttt{iLLLPQccdgefhhghqrqqssvof}\\
		\texttt{02221000}\\ t12488\\
		\text{edge-orientable}\\
		\text{measurable}\\
		\zz \oplus \zz/8\end{matrix} \arrow[r,"\varrho"] \arrow[d, "\sigma", swap] &\begin{matrix}\texttt{ivLLQQccfhfeghghwadiwadrv} \\\texttt{20110220} \\t12487\\
		\text{not edge-orientable}\\
		\text{layered}\\
		\zz \oplus \zz\end{matrix} \arrow[d, "\sigma", swap]\\
	\begin{matrix}\texttt{ivLLQQccdhghgfhggrqlipigb}\\\texttt{12020011}\\ t12487\\
		\text{edge-orientable}\\
		\text{layered}\\
		\zz \oplus \zz\end{matrix} \arrow[r,"\varrho"] &\begin{matrix}\texttt{iLLLPQccdgefhhghhrhajsvss}\\
		\texttt{02221000}\\ t12488\\
		\text{not edge-orientable}\\
		\text{measurable}\\
		\zz \oplus \zz/8\end{matrix} \\
\end{tikzcd}
\caption{Four veering mutants. Each data set consists of the isomorphism signature of the triangulation (first row), the taut angle structure (second row),  the name of the underlying manifold in the SnapPy's census (third row), information about edge-orientability (fourth row), the type of the triangulation (fifth row), and the first homology group with integer coefficients of the underlying manifold (sixth row).}
\label{fig:4mutants}
\end{figure}

\begin{proposition}\label{prop:two:tris:on:face}
A non-fibered face $\face{F}$ of the Thurston norm ball can be combinatorially represented by two combinatorially distinct veering mutants.
\end{proposition}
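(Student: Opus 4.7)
My plan is to extract $\V$ and $\V^{\varrho\sigma}$ from the four veering mutants listed in Figure \ref{fig:4mutants} and verify that they satisfy all three required properties: they live on the same manifold, they combinatorially represent a common non-fibered face of its Thurston norm ball, and they are not combinatorially isomorphic.

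First I would observe from Figure \ref{fig:4mutants} that both $\V$ and $\V^{\varrho\sigma}$ have underlying manifold t12488. Since $b_1(\text{t12488}) = 1$, up to the involution $\eta \mapsto -\eta$ the Thurston norm ball in $H_2(M,\partial M;\rr)$ has a unique top-dimensional face $\face{F}$. Both $\mathcal{C}(\V)$ and $\mathcal{C}(\V^{\varrho\sigma})$ are nonzero cones spanned by homology classes of carried surfaces, and by Remark \ref{remark:not:opposite} neither of them can equal $-\mathcal{C}(\V)$. Hence by Theorem \ref{thm:LMT:faces} both triangulations combinatorially represent $\face{F}$.

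Next I would show that $\face{F}$ is non-fibered. According to Theorem \ref{thm:LMT:faces}, $\face{F}$ is fibered if and only if one (equivalently any) veering triangulation representing it is layered. Solving the branch equations for $\V$ yields only nonnegative solutions with at least one zero coordinate, so $\V$ is measurable but not layered; the same computation for $\V^{\varrho\sigma}$ gives the analogous conclusion. Thus $\face{F}$ is non-fibered.

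Finally, I would check that $\V$ and $\V^{\varrho\sigma}$ are combinatorially distinct. The quickest way is to invoke edge-orientability, which is a combinatorial isomorphism invariant of a veering triangulation. From Figure~\ref{fig:4mutants}, $\V$ is edge-orientable while $\V^{\varrho\sigma}$ is not, so they cannot be combinatorially isomorphic. (Alternatively, their isomorphism signatures differ.) The main, essentially computational, obstacle is the verification that $\V^{\varrho\sigma}$ is genuinely a veering triangulation of t12488, but this has already been handled: $\varrho\sigma \in \Aut^+(\Q_0 \mid \tau_0)$ by inspection of Figure \ref{fig:svof_surfaces}, $\Q_0$ traverses each edge of $\V$ at most once so no edge product disks exist and $\varrho\sigma$ trivially misaligns them, and Theorems \ref{thm:mutant:correct:manifold} and \ref{thm:mutant:veering} then produce the veering triangulation $\V^{\varrho\sigma}$ of $M^{\varrho\sigma} \cong M$. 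This completes the proof.
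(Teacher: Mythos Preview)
Your proposal is correct and follows essentially the same approach as the paper: both arguments use the pair $\V$, $\V^{\varrho\sigma}$ on t12488, invoke $b_1=1$ together with Remark \ref{remark:not:opposite} and Theorem \ref{thm:LMT:faces} to conclude that the two measurable triangulations represent the same non-fibered face, and observe that they are combinatorially distinct. The only minor differences are cosmetic: the paper certifies that the underlying manifolds agree via a short Pachner path, whereas you cite the data in Figure \ref{fig:4mutants}; and you make the distinctness explicit via edge-orientability, while the paper leaves it implicit in the differing taut signatures.
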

\begin{proof}
Triangulations $\V$, $\V^{\varrho\sigma}$
are two combinatorially distinct measurable veering mutants. Since there is a sequence of Pachner moves from $\V$ to $\V^{\varrho\sigma}$, their underlying manifolds are homeomorphic. (The shortest such path has length four and consists of two 2-3 moves and two 3-2 moves.) Both these triangulations carry a twice punctured torus, which in particular means that the cones $\C(\V)$, $\C(\V^{\varrho\sigma})$ are nonempty. 
Since the first Betti number of $M$ is equal to 1, up to $\eta \mapsto -\eta$ there is only one face $\face{F}$ of the Thurston norm ball in $H^1(M;\rr)$. Remark \ref{remark:not:opposite} then implies that $\C(\V) = \C(\V^{\varrho\sigma}) = \cone(\face{F})$. 
\end{proof}


Another conclusion that we can draw from Figure \ref{fig:4mutants} is that a mutant of a measurable veering triangulation does not have to be measurable.
Observe however, that if a layered veering triangulation $\V$ admits a measurable veering mutant $\V^\varphi$ then the homology class of the mutating surface must lie in the boundary of the fibered cone represented by $\V$.
\begin{proposition}\label{prop:mutant:boundary}Let $\V$ be a finite layered veering triangulation of a 3-manifold $M$.
Suppose that $\V \rightarrow \V^\varphi$ is a veering mutation such that $\V^\varphi$ is measurable. Then the homology class of the mutating surface lies in the boundary of the fibered cone in $H_2(M, \partial M; \rr)$ represented by $\V$. 
\end{proposition}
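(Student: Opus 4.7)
The plan is to argue by contraposition: I will assume that $[S_w]$ lies in the interior of the fibered cone $\cone{\face{F}}$ combinatorially represented by $\V$ and derive a contradiction with the measurability of $\V^\varphi$. Since $\V$ is layered, Theorem~\ref{thm:LMT:faces} gives $\C(\V) = \cone{\face{F}}$ with $\face{F}$ fibered. By Thurston's theorem on fibered faces \cite[Theorem~3]{Thur_norm} and the fact that surfaces carried by a taut ideal triangulation are Thurston norm minimizing \cite[Theorem~3]{Lack_taut}, the embedded surface $\Semb$ is a cross-section to a fibration of $M$ over~$S^1$; that is, each component of $\Semb$ is a fiber, and $\Semb$ is a disjoint union of parallel copies of a common fiber $F$.

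Next I would track the topology of the mutation. Cutting $M$ along $\Semb$ yields a disjoint union of trivial $I$-bundles of the form $F \times [0,1]$. The regluing $\varphi$ is a homeomorphism of a disjoint union of copies of $F$ that identifies the tops of these $I$-bundles with their bottoms. Arranging the $I$-bundle pieces according to the cycle structure of the permutation of components induced by $\varphi$ exhibits each connected component of $M^\varphi$ as the mapping torus of a self-homeomorphism of $F$. In particular $M^\varphi$ fibers over $S^1$ component-wise, and $[S_w^\varphi]$ lies in the interior of the cone on a fibered face $\face{F}^\varphi_{\mathrm{fib}}$ of the Thurston norm ball in $H_2(M^\varphi, \partial M^\varphi;\rr)$.

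The final step is to identify $\face{F}^\varphi_{\mathrm{fib}}$ with the face combinatorially represented by $\V^\varphi$. The construction of $\V^\varphi$ alters face identifications only along $F_w$, and the relative 2-chain $\sum_f w_f \cdot f$ is supported precisely there and is a 2-cycle in $\T$, so I will argue that the same weights $w$ reassemble into a 2-cycle in $\T^\varphi$ whose corresponding surface is $S_w^\varphi \subset M^\varphi$. This makes $S_w^\varphi$ carried by $\V^\varphi$, so $[S_w^\varphi] \in \C(\V^\varphi) = \cone{\face{F}^\varphi}$ for the (non-fibered) face $\face{F}^\varphi$ combinatorially represented by the measurable $\V^\varphi$ via Theorem~\ref{thm:LMT:faces}. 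Cones on distinct faces of the Thurston norm ball overlap only on their common boundary, so an interior point of $\cone{\face{F}^\varphi_{\mathrm{fib}}}$ that also lies in $\cone{\face{F}^\varphi}$ forces $\face{F}^\varphi = \face{F}^\varphi_{\mathrm{fib}}$. Hence $\face{F}^\varphi$ is fibered and $\V^\varphi$ is layered, contradicting measurability.

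The delicate point I expect to require the most care is verifying that $w$ satisfies the branch equations of $\V^\varphi$, i.e., that it is a weight system for $\V^\varphi$. I plan to handle this by unwinding the description of the regluing map $r(\varphi)$ from Subsection~\ref{subsec:mutating}: the partial branch equations of $(\T|F_w, \alpha|F_w)$ are satisfied by $w$ as restrictions of the branch equations of $\V$, and the regluing recombines boundary triangles in $F_w^\pm$ along recombined edges of $\T^\varphi$ in a way that preserves the equality between the two sides of every branch equation.
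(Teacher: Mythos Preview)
Your proof is correct and follows the same contrapositive route as the paper: if the mutating class were interior to the fibered cone, $M|\Semb$ would be a product, $M^\varphi$ would fiber with $S_w^\varphi$ a fiber carried by $\V^\varphi$, and Theorem~\ref{thm:LMT:faces} would force $\V^\varphi$ to be layered. The only differences are cosmetic: you reach the contradiction via the incidence of Thurston-norm cones while the paper directly cites that a veering triangulation carrying a fiber is layered, and you flag as delicate the fact that $S_w^\varphi$ is carried by $\V^\varphi$, which the paper treats as evident from the construction.
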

\begin{proof}
Since $\V$ is layered, the face $\face{F}$ of the Thurston norm ball represented by $\V$ is fibered \cite[Theorem 5.15 (stated here as Theorem \ref{thm:LMT:faces})]{LMT}.
Denote by $S_w$ the surface carried by $\V$ that can be used to mutate $\V$ into $\V^\varphi$, and by $\Semb$ the embedded surface obtained from $S_w$ by slightly pulling apart overlapping regions of $S_w$. The surface $\Semb$ is a Thurston norm minimizing representative of its homology class \cite[Theorem~3]{Lack_taut}. If that homology class lies in the interior of $\cone(\face{F})$ then $M|\Semb$ is a product sutured manifold \cite[Theorem~3]{Thur_norm}. Therefore the mutant manifold $M^\varphi$ is fibered over the circle with the mutating surface being the fiber. The assumption that $\V \rightarrow \V^\varphi$ is a veering mutation implies that $\varphi$ misaligns edge  product disks, and therefore, by Theorems~\ref{thm:mutant:correct:manifold} and \ref{thm:mutant:veering}, $\V^\varphi$ is a veering triangulation of $M^\varphi$. We therefore get that $\V^\varphi$ carries a fiber of a fibration of $M^\varphi$ over the circle. But a veering triangulation that carries fibers of fibrations over the circle is layered \cite[Theorem~5.15]{LMT}. This is  a contradiction with the assumption that $\V^\varphi$ is measurable. 
\end{proof}

The other two mutants, $\V^\varrho$ and $\V^\sigma$, both live on the same 3-manifold t12487, which is the L11n222 link complement.  Since we cannot have  two combinatorially distinct veering triangulations representing the same fibered face of the Thurston norm ball \cite[Proposition 2.7]{MinskyTaylor}, we deduce that $\V^\varrho$ and $\V^\sigma$ represent different faces of the Thurston norm ball. In particular, it is possible that two different fibered faces of the Thurston norm ball of the same manifold are related by a veering mutation.

Let $\face{F}^\varrho$, $\face{F}^\sigma$ be the fibered  face  represented by $\V^\varrho$,~$\V^\sigma$, respectively. The Thurston norm ball of the L11n222 link complement is a quadrilateral with two pairs of fibered faces. Therefore the mutating twice punctured torus $S$ represents the primitive integral class  lying either on the ray $\cone{\face{F}^\varrho} \cap \cone{\face{F}^\sigma}$ or on the ray $\cone{\face{F}^\varrho} \cap \left(-\cone{\face{F}^\sigma}\right)$. Since the stable train tracks $\tau^\varrho$, $\tau^\sigma$ on~$S$ induced from $\V^\varrho, \V^\sigma$, respectively, are equal, we get that under the same mutation (eg. by~$\varrho$) two different veering triangulations on t12487 mutate into two different veering triangulations on t12488. It is the fact that the first Betti number of t12488 is equal to one that makes these two distinct veering triangulations represent the same top-dimensional face of the Thurston norm ball. 
In other words, in this example the phenomenon of a top-dimensional non-fibered face of the Thurston norm ball represented by multiple distinct veering triangulations arises from mutating a fibered 3-manifold with a higher first Betti number along a surface representing a class lying at the intersection of multiple fibered faces.

\begin{remark}\label{remark:many-betti1}
There are 110 manifolds with the first Betti number equal to one which admit 2 measurable veering triangulations. Among those, 87 differ by a veering mutation along a connected surface. The mutating surface is either a four times punctures sphere (8 cases), a twice punctured torus (75 cases), or a four times punctured torus (4 cases). 
\end{remark}

\subsection{Two veering mutants representing the same face of the Thurston norm ball when $b_1 (M)= 2$}\label{subsubsec:higher:betti}
As explained at the beginning of this section, finding examples of two different measurable veering triangulations representing the same face of the Thurston norm ball is harder when $b_1(M)>1$ because  then $H_2(M, \partial M;\rr)$ admits infinitely many distinct bases. A possible approach to overcome this problem is to focus on manifolds for which any two non-fibered non-opposite faces of the Thurston norm ball have different combinatorics, or which have only one pair of opposite non-fibered faces. For instance, we searched for a cusped hyperbolic \mbox{3-manifold} $M$ such that
\begin{itemize}
\item $b_1(M) = 2$,
\item the Thurston norm ball in $H_2(M, \partial M;\rr)$ is a quadrilateral,
\item $M$ is fibered,
\item $M$ admits at least two measurable veering triangulations $\V, \V'$ with different taut signatures and such that $\C(\V), \C(\V')$ are 2-dimensional.
\end{itemize}
When the first three conditions are satisfied, the Thurston norm ball of $M$ admits only one pair of top-dimensional non-fibered faces.  Therefore if additionally $M$ admits at least two measurable veering triangulations whose cones of homology classes of carried surfaces are 2-dimensional, they either represent the same non-fibered face or opposite non-fibered faces; see Remark \ref{remark:not:opposite}. If they represent opposite faces then switching coorientations on faces of one of the triangulations make them represent the same face.
In the Veering Census \cite{VeeringCensus} there is a 3-manifold $M$ which satisfies all these conditions. It admits (at least) three veering triangulations  $\V_1$, $\V_2$, $\V_3$ with the following taut signatures:
\begin{gather*}
\texttt{qLLLzvQMQLMkbeeekljjlmljonppphhhhaaahhahhaahha\_0111022221111001}\\
\texttt{qLLLzvQMQLMkbeeekljjlmljonppphhhhaaahhahhaahha\_1200111112020112}\\
\texttt{qLLLzvQMQLMkbeeekljjlmljonppphhhhaaahhahhaahha\_2111200001111221},
\end{gather*}
respectively. Observe that $\V_1, \V_2, \V_3$ are combinatorially isomorphic as triangulations, but they have different taut structures.
Triangulations $\V_1$ and $\V_3$ are measurable, and~$\V_2$ is layered.  Using \texttt{tnorm}~\cite{tnorm} we can verify that $M$ indeed has only two pairs of faces of the Thurston norm ball. One of this pair has to be fibered because $M$ admits a layered veering triangulation. Thus, after possibly replacing $\V_1$ by $-\V_1$, we must have that $\C(\V_1) = \C(\V_3)$.

Triangulations $\V_1$ and $\V_3$ not only represent the same non-fibered face of the Thurston norm ball, but they are also each other's mutants. Triangulation $\V_1$ carries a four times punctured torus which, using the same labels as in the Veering Census, can be represented by the 2-cycle 
\[S_w = f_3 + f_5 + f_{14} + f_{15} + f_{18} + f_{22} + f_{28} + f_{31}.\]
To save some space, we do not include the picture of tetrahedra of $\V_1$ (this is a triangulation with 16 tetrahedra).
We do, however,  present the induced triangulation $\Q_{\V_1,w}$ and the induced train track $\tau_{\V_1,w}$ in Figure~\ref{fig:S_1_4}. We have
\[\Aut^+(\Q_{\V_1, w} \ | \ \tau_{\V_1, w})= \zz/2 \oplus \zz/2 \oplus \zz/2.\]
\begin{figure}[h]
\includegraphics[scale=0.7]{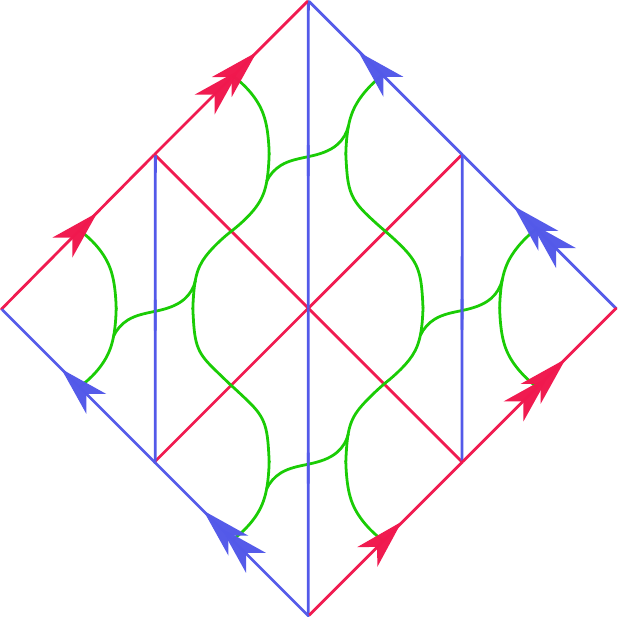}
\put(-119,60){$f_{22}$}
\put(-83,60){$f_{31}$}
\put(-54,60){$f_{3}$}
\put(-21,60){$f_{14}$}
\put(-88,28){$f_{15}$}
\put(-50,28){$f_{5}$}
\put(-88,92){$f_{28}$}
\put(-52,92){$f_{18}$}
\caption{Ideal triangulation $\Q_{\V_1, w}$ and the stable train track $\tau_{\V_1, w}$ of a four times punctured torus carried by $\V_1$.}
\label{fig:S_1_4}
\end{figure}

The group $\Aut^+(\Q_{\V_1, w} \ | \ \tau_{\V_1, w})$ is generated by the rotation by $\pi$ around the center  of Figure \ref{fig:S_1_4}, which we denote by $\varrho$, shift by one `layer' in the north-east direction $\sigma_+$, and shift by one `layer' in the north-west direction $\sigma_-$. Surface $S_w$ does not traverse any edge of $\V_1$ more than once, and thus there are no edge product disks in $M|\Semb$. Consequently, we can construct 8 veering mutants of $\V_1$. They do not have pairwise distinct taut signatures. In particular, $\V_3$ has the same taut signature as both $\V_1^\varrho$ and~$\V_1^{\varrho\sigma_-}$. Data on the remaining mutants of~$\V_1$ is available in Figure \ref{fig:6_mutants}. Observe that in each column we have veering triangulations with the same isomorphism signature but different taut structure. Thus in each column we have two veering triangulations of the same manifold. The manifold in the right column is the complement of the L14n62847 link.

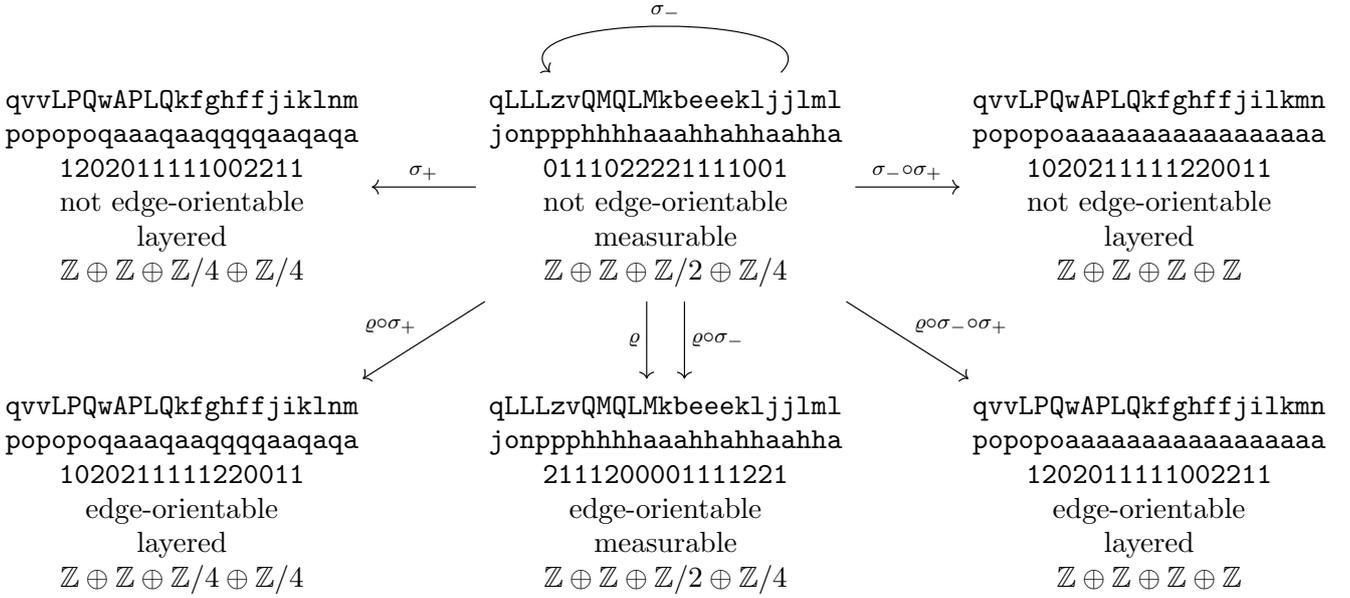
\begin{figure}[h]
\hspace*{-1.7cm}\begin{tikzcd}[column sep=large, row sep = large]
	\begin{matrix}\texttt{qvvLPQwAPLQkfghffjiklnm}\\\texttt{popopoqaaaqaaqqqqaaqaqa}\\
		\texttt{1202011111002211}\\ 
		\text{not edge-orientable}\\
		\text{layered}\\
		\zz \oplus \zz \oplus \zz/4 \oplus \zz/4\end{matrix} &
	\begin{matrix}\texttt{qLLLzvQMQLMkbeeekljjlml}\\\texttt{jonppphhhhaaahhahhaahha}\\
		\texttt{0111022221111001}\\ 
		\text{not edge-orientable}\\
		\text{measurable}\\
		\zz \oplus \zz \oplus \zz/2 \oplus \zz/4\end{matrix} \arrow[l,"\sigma_+", swap] \arrow[r,"\sigma_- \circ \sigma_+"] \arrow[ld, "\varrho \circ \sigma_+", swap] \arrow[rd, "\varrho \circ \sigma_- \circ \sigma_+"] \arrow[d,"\varrho", shift right=1.5ex, swap] \arrow[d, "\varrho \circ \sigma_- ",  shift left=1.5ex] \arrow[loop, distance=3em, "\sigma_-", swap]&
	\begin{matrix}\texttt{qvvLPQwAPLQkfghffjilkmn}\\\texttt{popopoaaaaaaaaaaaaaaaaa}\\
		\texttt{1020211111220011}\\ 
		\text{not edge-orientable}\\
		\text{layered}\\
		\zz \oplus \zz \oplus \zz \oplus \zz\end{matrix} \\	
	\begin{matrix}\texttt{qvvLPQwAPLQkfghffjiklnm}\\\texttt{popopoqaaaqaaqqqqaaqaqa}\\
		\texttt{1020211111220011}\\ 
		\text{edge-orientable}\\
		\text{layered}\\
		\zz \oplus \zz \oplus \zz/4 \oplus \zz/4\end{matrix} &
	\begin{matrix}\texttt{qLLLzvQMQLMkbeeekljjlml}\\\texttt{jonppphhhhaaahhahhaahha}\\
		\texttt{2111200001111221}\\ 
		\text{edge-orientable}\\
		\text{measurable}\\
		\zz \oplus \zz \oplus \zz/2 \oplus \zz/4\end{matrix} &
	\begin{matrix}\texttt{qvvLPQwAPLQkfghffjilkmn}\\\texttt{popopoaaaaaaaaaaaaaaaaa}\\
		\texttt{1202011111002211}\\ 
		\text{edge-orientable}\\
		\text{layered}\\
		\zz \oplus \zz \oplus \zz \oplus \zz\end{matrix}
\end{tikzcd}
\caption{Six veering mutants of $\V_1$. Each data set consists of the isomorphism signature of the triangulation (split into the first and second row), the taut angle structure (third row), information about edge-orientability (fourth row), the type of the triangulation (fifth row), and the first homology group with integer coefficients of the underlying manifold (sixth row).}
\label{fig:6_mutants}
\end{figure}

It is worth mentioning that in this example the mutating surface represents a homology class that lies in the interior of the cone $\C(\V_1) = \C(\V_3)$, and thus in the interior of the cone over a face of the Thurston norm ball, but  over a vertex of the Alexander norm ball. See \cite{McMullen_Alex} for the relationship between the Thurston and Alexander norms on $H_2(M, \partial M;\rr)$. 
The Alexander polynomial of $M$ is equal to
\[\Delta_M = a^2b^2 + 2a^2b + 4ab + 4a + 2b + 6 + 2b^{-1} + 4a^{-1} + 4a^{-1}b^{-1}+2a^{-2}b^{-1}+a^{-2}b^{-2}.\]
Using this we present the Alexander norm ball  of $M$ in  Figure \ref{fig:cones}. We also marked the cones $\mathcal{C}(\V_1), \mathcal{C}(\V_2), \mathcal{C}(\V_3)$ of homology classes carried by $\V_1, \V_2, \V_3$, respectively.  We can see that $\mathcal{C}(\V_1) = \mathcal{C}(\V_3)$, and that this cone is a cone on two adjacent faces of the Alexander norm ball, but one face of the Thurston norm ball. The homology class of the mutating surface lies over a vertex of the Alexander norm ball.
\begin{figure}[h]
\includegraphics[scale=2]{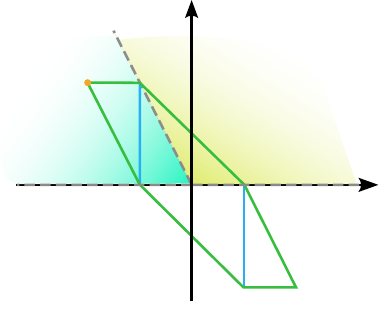}
\put(-215,87){$\C(\V_1) = \C(\V_3)$}
\put(-85, 117){$\C(\V_2)$}
\caption{The unit norm ball of the Alexander norm on $H_2(M, \partial M;\rr)$ has vertices at $\left(\pm \frac{1}{2},  \mp\frac{1}{2}\right), \left(\pm \frac{1}{4}, \mp \frac{1}{2}\right)$ and $\left(\pm \frac{1}{4},0\right)$. Its boundary is marked green. The unit norm ball of the Thurston norm has vertices at $\left(\pm \frac{1}{4}, \mp \frac{1}{2}\right)$ and $\left(\pm \frac{1}{4},0\right)$. The part of its boundary which does not overlap with the boundary of the Alexander norm ball is marked blue. The cones of homology classes carried by $\V_1$ and $\V_3$ are equal to the cone on two adjacent faces of the Alexander norm ball. The surface mutating $\V_1$ to $\V_3$ represents the class $(-1,1)$ lying over the vertex $\left(-\frac{1}{2}, \frac{1}{2}\right)$ of the Alexander norm ball (marked orange).}
\label{fig:cones}
\end{figure}

				\subsection{Mutating along a higher genus surface}\label{sec:higher:genus}
				
				In Subsections \ref{subsec:same:face} and \ref{subsubsec:higher:betti} we discussed pairs of homeomorphic veering mutants for which the mutating surface was of genus one. In Remark \ref{remark:not:opposite} we mentioned also homeomorphic mutants with mutating surface of genus zero. In this subsection we discuss a pair of veering triangulations of the same manifold which differ by a mutation along a surface of genus two. This example differs from the previous ones not only by the genus of the mutating surface, but also by the fact that this surface is a fiber of a fibration over the circle.  Moreover, the sutured manifold $M|\Semb$ has edge product disks.
				
				Let $\V, \V'$ be veering triangulations with taut signatures
				\begin{center}
					\texttt{jLLAvQQcedehihiihiinasmkutn\_011220000},
					
					\texttt{jvLLAQQdfghhfgiiijttmtltrcr\_201102102},
				\end{center}
				respectively. These are two veering triangulations of the $10^3_{12}$ link complement. We present tetrahedra of $\V$ in Figure \ref{fig:smkutn}.
				
				\begin{figure}[h]
					\includegraphics[scale=0.7]{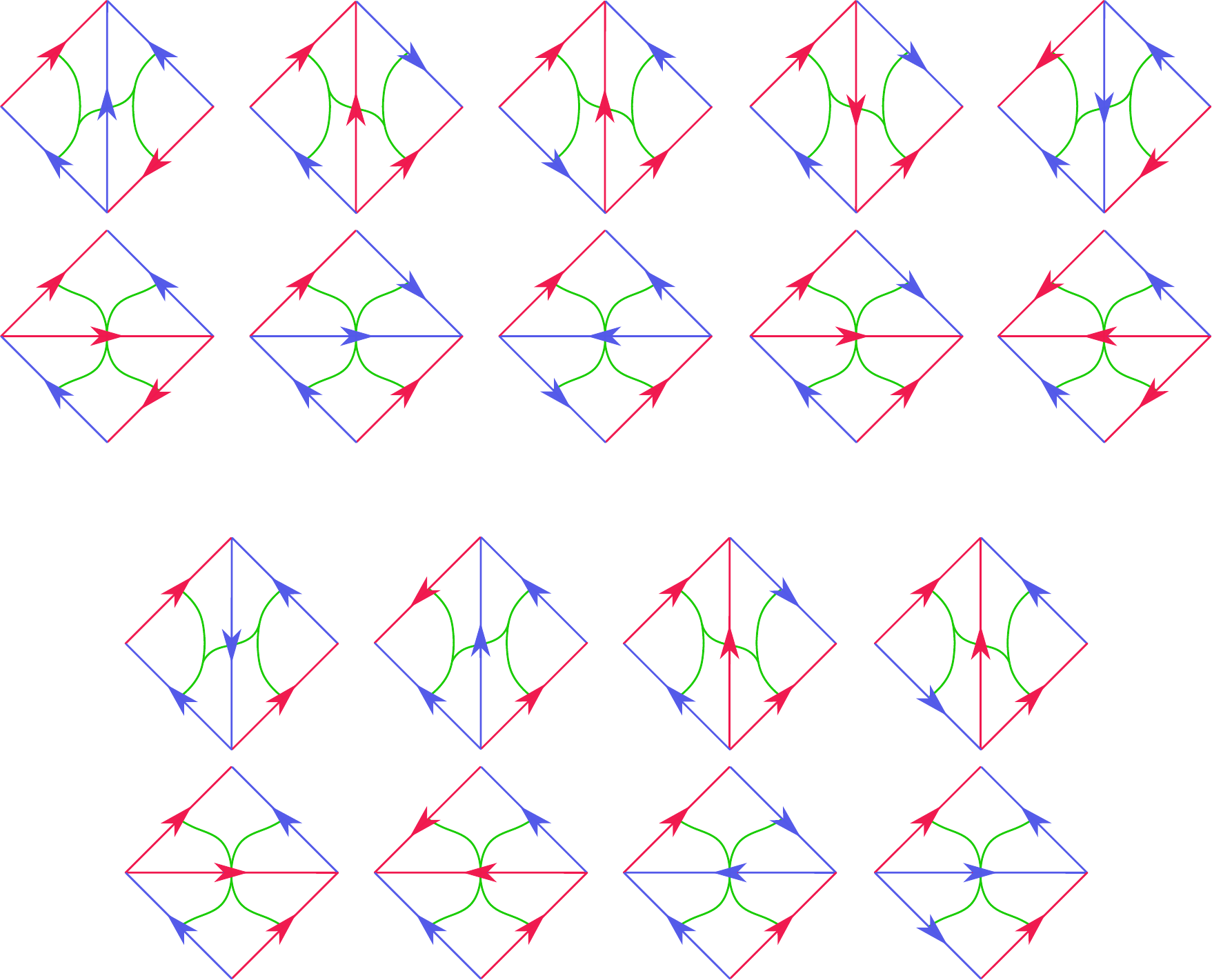}
					\put(-327,295){0}
					\put(-254,295){1}
					\put(-181, 295){2}
					\put(-108, 295){3}
					\put(-35, 295){4}
					\put(-291,138){5}
					\put(-217, 138){6}
					\put(-144, 138){7}
					\put(-70, 138){8}
					\put(-348, 253){$f_2$}
					\put(-312, 253){$f_3$}
					\put(-275, 253){$f_4$}
					\put(-238, 253){$f_5$}
					\put(-202, 253){$f_8$}
					\put(-165, 253){$f_1$}
					\put(-128, 253){$f_0$}
					\put(-92, 253){$f_9$}
					\put(-55, 253){$f_6$}
					\put(-20, 253){$f_{11}$}
					\put(-329, 205){$f_1$}
					\put(-329, 168){$f_0$}
					\put(-256, 205){$f_3$}
					\put(-256, 168){$f_6$}
					\put(-183, 205){$f_2$}
					\put(-183, 168){$f_7$}
					\put(-111, 205){$f_{10}$}
					\put(-110, 168){$f_4$}
					\put(-37, 205){$f_8$}
					\put(-37, 168){$f_5$}
					\put(-314, 97){$f_{12}$}
					\put(-275, 97){$f_7$}
					\put(-240, 97){$f_{16}$}
					\put(-204, 97){$f_{15}$}
					\put(-166, 97){$f_{14}$}
					\put(-129, 97){$f_{10}$}
					\put(-93, 97){$f_{17}$}
					\put(-57, 97){$f_{13}$}
					\put(-294, 47){$f_{13}$}
					\put(-293, 11){$f_{14}$}
					\put(-221, 47){$f_{17}$}
					\put(-220, 11){$f_9$}
					\put(-148, 47){$f_{12}$}
					\put(-148, 11){$f_{15}$}
					\put(-75, 47){$f_{11}$}
					\put(-75, 11){$f_{16}$}
					\caption{Veering triangulation \texttt{jLLAvQQcedehihiihiinasmkutn\_011220000}.}
					\label{fig:smkutn}
				\end{figure}
				
				Let $S_w = 2f_0 + f_2 + f_6 + 2f_7 + 2f_9 + 2f_{11} + f_{12} + f_{16}$. This is a genus two surface with four punctures such that $\Aut^+(\Q_{\V, w} \ | \ \tau_{\V, w}) = \zz/2$; see Figure \ref{fig:smkutn_fiber}. Let $\varrho$ be the generator of this group. Information on the bijection $r^\ro: F_w^+ \rightarrow F_w^-$ determined by~$\ro$ is included in Table \ref{tab:smkutn:regluing}. In this example $M|\Semb$ admits edge product disks --- in Figure 
				\ref{fig:smkutn_fiber} their top bases are shaded yellow and their bottom bases are shaded purple. We can directly check that no edge which is the top base of some edge product disk in $M|\Semb$ is mapped by $\ro$ to an edge which is the bottom base of some edge product disk in $M|\Semb$. This means that $\ro$ misaligns edge product disks. Therefore, by Theorems~\ref{thm:mutant:correct:manifold} and \ref{thm:mutant:veering},~$\V^\ro$ is a veering triangulation of $M^\ro$.  	Using Regina \cite{regina} we can verify that $\V^\ro$ is combinatorially isomorphic to $\V'$, and thus $M^\ro$ is homeomorphic to $M$.
				\begin{table}[h]
					\begin{tabular}{|c||c|c|c|c|c|c|c|c|}
						\hline
						&&&&&&&& \\[-1em]
						$f^+$ & $f_0^+$ & $f_2^+$ & $f_6^+$ & $f_7^+$ & $f_9^+$ & $f_{11}^+$ & $f_{12}^+$ & $f_{16}^+$\\ 
						&&&&&&&& \\[-1em] 
						\hline
						&&&&&&&& \\[-1em]
						$r^\sigma(f^+)$ & $f_0^-$ & $f_{16}^-$ & $f_{12}^-$ & $f_7^-$ & $f_9^-$ & $f_{11}^-$ & $f_6^-$ & $f_2^-$\\
						\hline
					\end{tabular}
					\vspace{0.2cm}
					\caption{The regluing map determined by $\sigma$.}
					\label{tab:smkutn:regluing}\end{table}

				\begin{figure}[h]
					\includegraphics[scale=0.7]{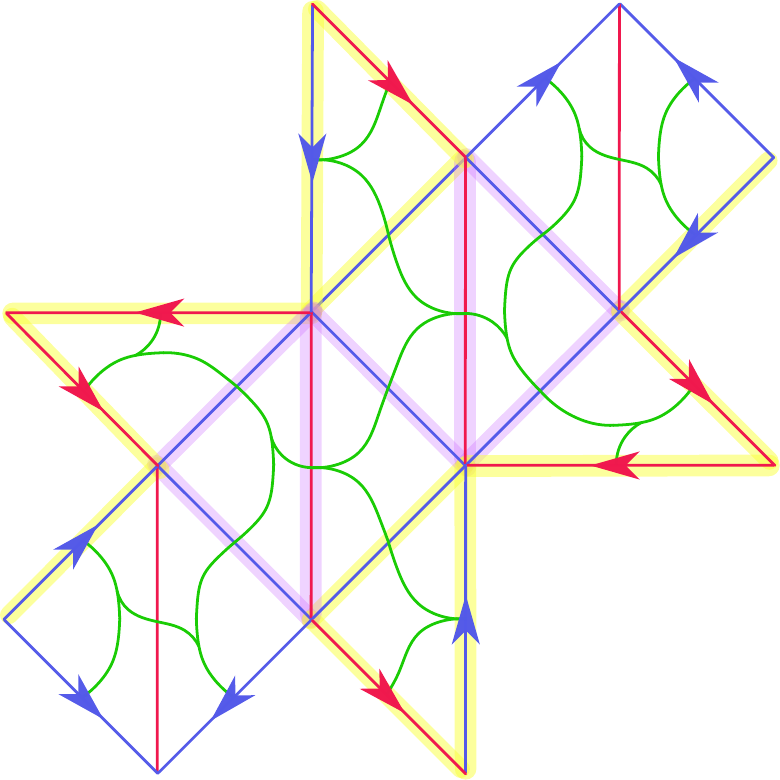}
					\put(-149,29){$f_2$}
					\put(-113,29){$f_6$}
					\put(-119,61){$\check{f_{11}}$}
					\put(-81,61){$\check{f_9}$}
					\put(-87,92){$\check{f_{0}}$}
					\put(-51,92){$\check{f_7}$}
					\put(-57,123){$f_{12}$}
					\put(-22,123){$f_{16}$}
					\put(-132,75){$f_9$}
					\put(-83,123){$f_{11}$}
					\put(-86,29){$f_7$}
					\put(-38,78){$f_0$}
					\put(-15,83){$a$}
					\put(-15,99){$b$}
					\put(-15,146){$c$}
					\put(-54,146){$c$}
					\put(-77,146){$d$}
					\put(-105,123){$b$}
					\put(-128,99){$e$}
					\put(-152,69){$d$}
					\put(-152,49){$f$}
					\put(-151,9){$g$}
					\put(-108,9){$g$}
					\put(-89,9){$a$}
					\put(-59,29){$f$}
					\put(-35,52){$e$}
					\caption{Ideal triangulation $\Q_{\V, w}$ and the stable train track $\tau_{\V, w}$ of a four times punctured genus two surface $S_w$ carried by $\V$. An edge $e$ of $\QVw$ is shaded yellow (respectively, purple) if the edge $e^+$ of $\Q_{\V,w}^+$ (respectively, edge~$e^-$ of $\Q_{\V,w}^-$) is the top (respectively, the bottom) base of an edge product disk in $M|\Semb$. To distinguish between the two copies of $f_i \in F_w$ in $\QVw$ when $w_{f_i}>1$ we denote the lowermost copy of $f_i$ by $\check{f_i}$. Letters $a,b,c,d,e,f, g$ indicate side identifications. The only nontrivial element of $\Aut^+(\Q_{\V, w} \ | \ \tau_{\V, w})$ is the rotation by $\pi$ around the center of the edge between $\check{f_9}$ and $\check{f_0}$. It misaligns edge product disks, because no edge shaded yellow is mapped to an edge shaded purple.}
					\label{fig:smkutn_fiber}
				\end{figure}

				With some choice of basis for $H_2(M, \partial M;\zz)$ the cone $\C(\V)$ is spanned by  $(0, 0, 1), (0, 1, -1),$ $(1, 0, 0)$ and the homology class of $S_w$ is then given by $(1,2,-1)$. In particular, $\Semb$ is a fiber of a fibration of $M$ over the circle. The taut polynomial of $\V$ and its specialization at $\lbrack \Semb \rbrack$ are respectively equal to
				\begin{gather*}\Theta(a,b,c) = a^2b^3c^2 - a^2b^2c^2 - ab^3c^2 - a^2b^2c + ab^2c + abc - bc - a - b + 1\\
					\Theta^{(1,2,-1)}(z) = \Theta(z^1,z^2, z^{-1}) = z^6 - 2z^5 - 2z + 1.\end{gather*}
				It follows from \cite[Theorem 4.2]{McMullen_Teich} and \cite[Theorem 7.1]{LMT} that the stretch factor $\lambda$ of the monodromy $f$ of the fibration with fiber $\Semb$ is equal to the largest real root of $\Theta^{(1,2,-1)}(z)$, that is 
				\[\lambda = \frac{1 + \sqrt{17} + \sqrt{2 (1 + \sqrt{17})}}{4}\approx 2.081.\]
				
				The mutating surface in $\V^\ro$ is a fiber of a fibration of $M$ over the circle with monodromy $\ro f$ and thus the same stretch factor. The fibered faces $\face{F}, \face{F}^\ro$ represented by $\V, \V^\ro$, respectively, must be different because there is at most one veering triangulation associated to a fibered face $\face{F}$ of the Thurston norm ball (zero if the associated circular flow has singular orbits) \cite[Proposition 2.7]{MinskyTaylor}. In this case we can actually deduce a stronger statement, that there is no automorphism $\Phi$ of  $H_2(M, \partial M;\rr)$ that sends $\face{F}$ to~$\face{F}^\ro$. This follows from the fact that $\face{F}$ is a triangle, while $\face{F}^\ro$ is a pentagon.  Thus $\ro f$ and $f$ are not conjugate in the mapping class group of a genus two surface with four punctures.
				
				\pagebreak
				\begin{fact}\label{fact:monodromies:not:conjugate}
					Let $M$ be the complement of the $10^3_{12}$ link.  \nopagebreak
					\begin{itemize}
						\item $M$ fibers in two different ways with fiber being  a genus two surface with four punctures and such that the monodromy of one fibration is obtained from the monodromy of the other fibration by postcomposing it with an involution $\ro$. 
						\item The two fibrations lie over different faces of the Thurston norm ball, and no automorphism of $H_2(M,\partial M;\rr)$ sends one face to the other. Thus the monodromies are not conjugate in the mapping class group of a genus two surface with four punctures.
					\end{itemize}
				\end{fact}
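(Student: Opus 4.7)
My plan is to assemble the explicit computations already performed for the pair $(\V, \V^\ro)$ into a verification of the two bullet points. For the first bullet, the veering triangulation $\V$ is layered (a strictly positive weight system on its branch equations is exhibited above), so by Theorem \ref{thm:LMT:faces} it combinatorially represents a fibered face $\face{F}$ of the Thurston norm ball with $\cone{\face{F}} = \C(\V)$. The class $[\Semb] = (1,2,-1)$ lies in the interior of $\cone{\face{F}}$, so $\Semb$ is a fiber of a fibration of $M$ with pseudo-Anosov monodromy $f$ on a genus-two four-punctured surface, and by \cite[Theorem 7.1]{LMT} the stretch factor of $f$ equals the largest real root $\lambda$ of the specialization of the taut polynomial of $\V$ computed above.

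Next I would pass to the mutant triangulation. The involution $\ro \in \Aut^+(\QVw \mid \tau_{\V,w})$ misaligns edge product disks, which can be read off Figure~\ref{fig:smkutn_fiber} by verifying that no yellow-shaded edge is sent by $\ro$ to a purple-shaded one. Theorems \ref{thm:mutant:correct:manifold} and \ref{thm:mutant:veering} then give that $\V^\ro$ is a veering triangulation of $M^\ro$, and the Regina check identifying $\V^\ro$ with $\V'$ produces a homeomorphism $M^\ro \cong M$. Under this homeomorphism, the image of $\Semb$ is a fiber of a second fibration of $M$ whose monodromy is $\ro \circ f$ up to isotopy, because cutting the mapping torus of $f$ along the fiber and regluing by $\ro$ produces the mapping torus of $\ro f$. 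The stretch factor of this new fibration equals the largest real root of the specialization of the taut polynomial of $\V' \cong \V^\ro$ at the image of $(1,2,-1)$, and the analogous computation yields the same $\lambda$.

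For the second bullet I would argue by contradiction. By \cite[Proposition 2.7]{MinskyTaylor}, each fibered face of the Thurston norm ball is represented by at most one veering triangulation, so the inequality $\V \neq \V'$ (already established by comparing taut signatures) forces the two fibrations to lie over different faces $\face{F}, \face{F}^\ro$. If some automorphism $\Phi$ of $H_2(M, \partial M;\rr)$ carried $\face{F}$ to $\face{F}^\ro$ then these would be isomorphic polytopes; computing them explicitly from $\C(\V)$ and $\C(\V^\ro)$ using \texttt{tnorm} shows that $\face{F}$ is a triangle while $\face{F}^\ro$ is a pentagon, giving the contradiction. The non-conjugacy of $f$ and $\ro f$ in the mapping class group then follows because conjugate monodromies induce a self-homeomorphism of $M$ respecting the fiber-bundle structure, hence a linear automorphism of $H_2(M, \partial M;\rr)$ sending one fibered face to the other.

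The main technical obstacle is justifying equality of the two stretch factors without circularity; the cleanest route is to apply the Teichm\"uller-equals-taut polynomial identification \cite[Theorem 7.1]{LMT} separately to $\V$ and $\V'$ and compare the resulting integer polynomials at the relevant specializations, rather than attempt a direct dynamical comparison of $f$ and $\ro f$, for which equality of dilatations is not automatic from $\ro$ having order two.
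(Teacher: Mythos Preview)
Your argument tracks the paper's own reasoning closely: establish that $\Semb$ is a fiber of $M$ via the veering data for $\V$, pass to $\V^\ro$ using Theorems~\ref{thm:mutant:correct:manifold} and~\ref{thm:mutant:veering} and the Regina identification $\V^\ro\cong\V'$ to get the second fibration with monodromy $\ro f$, invoke \cite[Proposition 2.7]{MinskyTaylor} to separate the faces, and then distinguish the faces combinatorially (triangle versus pentagon) to rule out any linear automorphism carrying one to the other.

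Two small points. First, your justification that $\V$ is layered is incorrect as written: the weight system $w = 2f_0 + f_2 + f_6 + 2f_7 + 2f_9 + 2f_{11} + f_{12} + f_{16}$ is \emph{not} strictly positive (it vanishes on ten of the eighteen faces), so you cannot read layeredness off from it. The paper instead records that $\C(\V)$ is spanned by $(0,0,1),(0,1,-1),(1,0,0)$, a full-dimensional cone with $(1,2,-1)$ in its interior; layeredness is taken as a computational fact from the Census, and then Theorem~\ref{thm:LMT:faces} gives that the face is fibered. You should fix this step. Second, the stretch-factor discussion is not needed for the statement you are proving (the body version of Fact~\ref{fact:monodromies:not:conjugate} does not assert equality of dilatations), so while your remarks on it are sensible, they are extraneous here. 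The paper obtains the triangle/pentagon dichotomy directly from the carried cones $\C(\V)$ and $\C(\V')$ rather than via \texttt{tnorm}, but either route works.
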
	
				\section{Flows representing the same face of the Thurston norm ball}\label{sec:distinct:flows}
				Veering mutants discussed in Sections \ref{subsec:same:face} and \ref{subsubsec:higher:betti}  are combinatorially distinct. 
				If, as in Remark \ref{remark:inverse}, we assume a bijective correspondence between pseudo-Anosov flows, up to topological equivalence, and pairs (veering triangulation, appropriate Dehn filling data) we can immediately deduce that the flows built from the pairs of homeomorphic veering mutants using Theorem \ref{thm:AT} have to be topologically inequivalent. However, since  there is no written proof of this  correspondence, we will prove that the flows are inequivalent using the following lemma.
				
				\begin{lemma}\label{lem:inequivalent:flows}
					Let $\Psi_1, \Psi_2$ be two pseudo-Anosov flows on a closed 3-manifold $N$. If the stable lamination of $\Psi_1$ is transversely orientable, and the stable lamination of $\Psi_2$ is not then $\Psi_1, \Psi_2$ are not topologically equivalent.  Analogous statement holds for the blown-up flows $\Psi_1^\circ$, $\Psi_2^\circ$.
				\end{lemma}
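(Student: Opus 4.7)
The plan is to argue by contraposition. I would show that any topological equivalence $h: N \to N$ between two pseudo-Anosov flows $\Psi_1, \Psi_2$ maps the stable lamination $\mathcal{L}^s_1$ of $\Psi_1$ homeomorphically onto the stable lamination $\mathcal{L}^s_2$ of $\Psi_2$. Since transverse orientability of a codimension-one lamination in an oriented 3-manifold is invariant under ambient homeomorphisms --- a transverse orientation on $\mathcal{L}^s_1$ pushes forward to a transverse orientation on $h(\mathcal{L}^s_1)=\mathcal{L}^s_2$ --- the hypothesized asymmetry in transverse orientability immediately rules out the existence of such an $h$.

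The main step, then, is to verify that $h(\mathcal{L}^s_1) = \mathcal{L}^s_2$. By Definition~\ref{defn:pA}, two orbits of a pseudo-Anosov flow lie in the same leaf of $\mathcal{F}^s_i$ precisely when they are forward asymptotic. Although an orbit equivalence need not intertwine the parametrizations --- one has $h(\Psi_1(x,t)) = \Psi_2(h(x), \sigma(x,t))$ for some continuous time reparametrization $\sigma$ which is orientation-preserving in $t$ --- it does preserve the forward direction of orbits. Continuity of $h$ then implies that pairs of orbits that come arbitrarily close in forward time under $\Psi_1$ map to pairs with the same property under $\Psi_2$, so $h$ sends leaves of $\mathcal{F}^s_1$ to leaves of $\mathcal{F}^s_2$. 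The singular orbits of $\Psi_i$ are ambient-topologically distinguished (they are the isolated closed orbits carrying prong singularities in the local models of Definition~\ref{defn:pA}), hence $h$ maps singular orbits to singular orbits and the operation of splitting $\mathcal{F}^s_i$ along its singular leaves commutes with $h$. This yields $h(\mathcal{L}^s_1) = \mathcal{L}^s_2$, as desired. The blown-up case follows identically with $N$ replaced by $M$: an orbit equivalence between $\Psi_1^\circ, \Psi_2^\circ$ preserves forward asymptoticity of orbits (including of those asymptotic to prong curves on $\partial M$) and so carries the split stable lamination of $\Psi_1^\circ$ to that of $\Psi_2^\circ$.

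The one subtlety I expect to handle carefully is the effect of the time reparametrization $\sigma$ on the notion of forward asymptoticity. The cleanest workaround is to use the parametrization-independent characterization of the stable leaf through an orbit $\gamma$ as the set of orbits whose forward semi-orbits eventually remain arbitrarily close to the forward semi-orbit of $\gamma$ --- a property stated purely in terms of the images of orbits in $N$ (or in $M$), and therefore preserved by any homeomorphism sending oriented orbits to oriented orbits. Everything else is then formal.
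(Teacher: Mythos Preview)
Your argument is correct and follows the same approach as the paper: both argue by contraposition that an orbit equivalence $h$ must carry the stable lamination of $\Psi_1$ onto that of $\Psi_2$, and then observe that transverse orientability is a homeomorphism invariant. You are more careful than the paper in justifying why $h$ preserves stable leaves (via forward asymptoticity and its parametrization-independent reformulation), whereas the paper simply asserts this; conversely, the paper phrases the contradiction concretely via the dichotomy that leaves of $\mathcal{L}^s$ are planes, open annuli, or open M\"obius bands, with M\"obius bands present exactly when the lamination is not transversely orientable---but this is just a restatement of your invariance observation.
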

				\begin{proof}
					If $\Psi_1, \Psi_2$ are topologically equivalent there is a homeomorphism \mbox{$h: M \rightarrow M$} taking oriented orbits of $\Psi_1$ to oriented orbits of $\Psi_2$ (see Definition \ref{defn:top:equiv}). This homeomorphism must take leaves of the stable lamination of $\Psi_1$ to the leaves of the stable lamination of~$\Psi_2$. 
					But the stable lamination of $\Psi_2$ admits M\"obius band leaves, while the stable lamination of $\Psi_1$ have only planar and annular leaves. Therefore $\Psi_1, \Psi_2$ cannot be topologically equivalent.
				\end{proof}
				Thus if a face $\face{F}$ of the Thurston norm ball is combinatorially represented by two veering triangulations one of which is edge-orientable and the other is not, Lemma~\ref{lem:inequivalent:flows} together with Corollary \ref{cor:EO:TO} imply that the flows built out of these veering triangulations have to be topologically inequivalent. 
				\begin{theorem}\label{thm:many:flows}
					A non-fibered face $\face{F}$ of the Thurston norm ball can be dynamically represented by two topologically inequivalent flows.
				\end{theorem}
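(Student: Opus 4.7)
The plan is to deduce the theorem from the examples of homeomorphic veering mutants already constructed in Section~\ref{sec:faces}, combined with the edge-orientability/transverse-orientability dictionary (Corollary~\ref{cor:EO:TO}) and Lemma~\ref{lem:inequivalent:flows}. The strategy is to produce, on the same 3-manifold, two veering triangulations that combinatorially represent the same non-fibered face of the Thurston norm ball and such that one is edge-orientable while the other is not; then the flows built from them via Theorem~\ref{thm:AT} will dynamically represent the same face (by Theorems~\ref{thm:LMT:cones} and~\ref{thm:L:cones}) but cannot be topologically equivalent.

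First I would handle the case with nonempty boundary. Take the pair $\V$ and $\V^{\varrho\sigma}$ from Subsection~\ref{subsec:same:face}, both living on $M = \text{t12488}$, for which $b_1(M)=1$. From the data in Figure~\ref{fig:4mutants} one of these two triangulations is edge-orientable and the other is not. Both are measurable, so by Theorem~\ref{thm:LMT:faces} each combinatorially represents a face of the Thurston norm ball in $H_2(M,\partial M;\rr)$; since $b_1(M)=1$ and neither cone is empty or opposite (Remark~\ref{remark:not:opposite}), both represent the same non-fibered face $\face{F}$ (see Proposition~\ref{prop:two:tris:on:face}). Applying Theorem~\ref{thm:AT} with any admissible Dehn filling slopes (for instance any filling meeting each ladderpole with intersection $\geq 2$) produces pseudo-Anosov flows on closed Dehn fillings of $M$, whose blown-up flows $\Psi^\circ$, $(\Psi^{\varrho\sigma})^\circ$ live on $M$. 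Theorem~\ref{thm:LMT:cones} then gives $\C(\Psi^\circ) = \C(\V) = \cone{\face{F}} = \C(\V^{\varrho\sigma}) = \C((\Psi^{\varrho\sigma})^\circ)$, so both flows dynamically represent $\face{F}$. By Corollary~\ref{cor:EO:TO}, the stable lamination of one of them is transversely orientable and of the other is not, so Lemma~\ref{lem:inequivalent:flows} completes the bounded case.

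For the closed case, the plan is the same but one must additionally arrange the Dehn-filled manifold to be hyperbolic, so that the conclusions of Theorem~\ref{thm:L:cones} apply and we land on a single face of the Thurston norm ball in $H_2(N;\rr)$. The example in Subsection~\ref{subsec:same:face} is unsuitable here because the natural filling produces a BL graph manifold (as the remark preceding this section explains). I would instead use the pair $\V_1, \V_3$ from Subsection~\ref{subsubsec:higher:betti}: they live on the same manifold $M$ with $b_1(M)=2$, are combinatorially non-isomorphic (one edge-orientable and one not, by Figure~\ref{fig:6_mutants}), and after switching coorientations if necessary satisfy $\C(\V_1) = \C(\V_3) = \cone{\face{F}}$ for a non-fibered face $\face{F}$ of the Thurston norm ball. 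I would then pick a common system of Dehn filling slopes $(s_1,\dots,s_k)$ meeting each ladderpole curve in each boundary torus with intersection number $\geq 3$ (so that Theorem~\ref{thm:AT} produces transitive pseudo-Anosov flows whose filling orbits are singular with at least three prongs, as required by Theorem~\ref{thm:L:cones}) and chosen generically so that the closed filling $N$ is hyperbolic, e.g.\ by appealing to Thurston's hyperbolic Dehn surgery theorem. Under these constraints Theorem~\ref{thm:L:cones} supplies a face $\face{F}_N$ of the Thurston norm ball in $H_2(N;\rr)$ with $\cone{\face{F}_N} = \C_N(\V_j) = \C(\Psi_j)$ for $j=1,3$, and equality of these cones follows from $\C_N(\V_1) = \C_N(\V_3)$, which is inherited from $\C(\V_1) = \C(\V_3)$ together with the common choice of filling slopes. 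Edge-orientability distinguishes $\Psi_1$ from $\Psi_3$ via Corollary~\ref{cor:EO:TO} and Lemma~\ref{lem:inequivalent:flows}.

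The main obstacle will be the closed case: ensuring that a common choice of Dehn filling slopes is simultaneously admissible for both veering triangulations (i.e.\ transverse to every ladderpole curve on each cusp, with the correct prong count), makes the resulting closed manifold hyperbolic, and gives $\C_N(\V_1) \neq \emptyset$ so that Theorem~\ref{thm:L:cones} applies nontrivially. The first requirement is a mild intersection condition, the second can be arranged by avoiding finitely many exceptional slopes on each cusp, and the third amounts to picking slopes in the interior of the cone dual to $\C(\V_1) = \C(\V_3)$; all three conditions cut out an open cone of admissible multi-slopes, which is nonempty since the common cone is top-dimensional. Once a slope in this intersection is selected the rest of the argument is immediate from the cited theorems.
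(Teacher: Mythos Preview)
Your approach for the bounded case is correct and matches the paper's proof exactly: use the pair $\V,\V^{\varrho\sigma}$ (or the pair from Subsection~\ref{subsubsec:higher:betti}), invoke Theorem~\ref{thm:AT}, Theorem~\ref{thm:LMT:cones}, Corollary~\ref{cor:EO:TO}, and Lemma~\ref{lem:inequivalent:flows}.

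Your closed case, however, has a genuine gap. The paper explicitly notes that both the Subsection~\ref{subsec:same:face} and Subsection~\ref{subsubsec:higher:betti} pairs mutate along a punctured torus, so Dehn filling along the boundary of the mutating surface yields a toroidal manifold; these examples are therefore abandoned for the closed case. You try to salvage the Subsection~\ref{subsubsec:higher:betti} pair by choosing \emph{other} filling slopes, but your justification is not rigorous. The sentence ``the third amounts to picking slopes in the interior of the cone dual to $\C(\V_1)=\C(\V_3)$; all three conditions cut out an open cone of admissible multi-slopes'' conflates several different objects: filling slopes live in $H_1(\partial M;\rr)$, not in any cone dual to $\C(\V)$, and the condition $\C_N(\V)\neq\emptyset$ forces the slopes to be boundary slopes of some carried class, which is a specific rational subset rather than an open region. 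You would need to exhibit a carried surface~$S$ whose boundary curves on every cusp meet the ladderpole curves of \emph{both} $\V_1$ and $\V_3$ at least three times, and then check that the face $\face{F}_N$ you land on is still non-fibered. None of this is verified, and a genericity appeal does not settle it.

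The paper instead passes to an entirely different pair of measurable veering triangulations on a manifold with $b_1=1$ (taut signatures \texttt{mLvLQLzQQ\ldots} and \texttt{mvLAvMMQQ\ldots}), chosen so that the generator of $H_2(M,\partial M;\zz)$ is represented by a genus-3 surface with four punctures whose stable train track has only once-punctured 4-gon complementary regions. This immediately gives prong count~4 at every cusp for both triangulations, so Theorem~\ref{thm:L:cones} applies and yields a hyperbolic $N$ with a common non-fibered face; edge-orientability then finishes as before. The point is that the closed case requires a concrete example where the prong and nonemptiness conditions are visibly satisfied, not an abstract existence argument.
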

				\begin{proof}
					For the cusped case: apply Lemma \ref{lem:inequivalent:flows} and Corollary \ref{cor:EO:TO} to the blown-up flows built from the pair of homeomorphic veering mutants discussed in Subsection~\ref{subsec:same:face} (for $b_1 = 1$ case) or \ref{subsubsec:higher:betti} (for $b_1 = 2$ case) using Theorem \ref{thm:AT}.
					
					Both these pairs of veering mutants differ by a mutation along a punctured torus. Thus Dehn filling their underlying 3-manifolds along the slopes determined by the boundaries of  these tori yields toroidal  3-manifolds. Consequently, these veering triangulations cannot be used to construct two distinct pseudo-Anosov flows on a closed hyperbolic 3-manifold which represent the same face of the Thurston norm ball.

					To prove the proposition in the closed case we will use a different pair of veering triangulations that represent the same face of the Thurston norm ball. 
					Let $\V_1, \V_2$ denote veering triangulations with taut signatures
					\begin{center}
						\texttt{mLvLQLzQQcghefihljkkllhahxxqnxwbbfj\_022211002221}, \\
						\texttt{mvLAvMMQQecfiikkjjilllfrtdrjfgbbhhd\_201102222211},
					\end{center}
					respectively. Then
					\begin{enumerate}[label=(\alph*)]
						\item $\V_1$, $\V_2$ are two measurable veering triangulations of the same manifold $M$ with  \mbox{$H_1(M;\zz) = \zz \oplus \zz/4$}.
						\item $\V_2$ is edge-orientable, while $\V_1$ is not.
						\item For $i=1,2$ every surface $S_w$ carried by $\V_i$ which represents the generator of $H_2(M, \partial M;\zz)$ has genus 3 and 4 punctures, all complementary regions of $\tau_{\V_i, w}$ are once punctured 4-gons, and $\Aut^+(\Q_{\V_i, w} \ | \ \tau_{\V_i, w}) = 1$.  
					\end{enumerate}
					
					It follows from (a) that, after possibly replacing $\V_1$ by $-\V_1$, $\V_1$ and $\V_2$ represent the same face of the Thurston norm ball in $H_2(M, \partial M;\rr)$. Part (c) and Remark \ref{remark:same:carried:surfaces} imply that both $\V_1$ and $\V_2$ carry a surface $S$ of genus 3 with four punctures such that the complementary regions of the stable train track in~$S$ are all once punctured \mbox{4-gons}. This means that the ladderpole curves of $\V_i$ intersect each boundary component of~$S$ four times. Thus we can use Theorem \ref{thm:L:cones} to deduce that the 3-manifold $N$ obtained from~$M$ by Dehn filling it along the slope determined by the boundary components of~$S$ is hyperbolic.
					
					By Theorem~\ref{thm:AT}, for $i=1,2$ we can use $\V_i$ to construct a transitive pseudo-Anosov flow~$\Psi_i$ on~$N$. Since triangulations $\V_1, \V_2$ represent the same face of the Thurston norm ball in $H_2(M, \partial M;\rr)$,  the flows $\Psi_1, \Psi_2$ represent the same face of the Thurston norm ball in  $H_2(N;\rr)$; see Theorem \ref{thm:L:cones}. The fact that they are not topologically equivalent follows from (b), Lemma \ref{lem:inequivalent:flows} and Corollary \ref{cor:EO:TO}.		
				\end{proof}
			\begin{remark}\label{remark:not:mutants?}
				Part (c) implies that the triangulations $\V_1$, $\V_2$  used to prove Theorem~\ref{thm:many:flows} in the closed case do not not differ by a veering mutation along a carried surface representing the generator of $H_2(M, \partial M;\zz)$. We suspect that they are not veering mutants at all.
				\end{remark}
			It is perhaps important to note that it is also possible that the same face of the Thurston norm ball is combinatorially represented by two distinct veering triangulations with \emph{different} number of tetrahedra. One such pair is given by veering triangulations $\V_1, \V_2$ with the folowing taut signatures: 
			\begin{center} \texttt{lLLvLMQQccdjgkihhijkkqrwsdcfkfjdq\_02221000012}, 
				\\ \texttt{pvLLALLAPQQcdhehlkjmonmoonnwrawwaewaamgwwvn\_122221111122002},
			\end{center}
			respectively. These are two measurable veering triangulations on a 3-manifold $M$ with $b_1(M) = 1$. In their case it is even possible to easily show that $\V_1$, $\V_2$ do not differ by a single mutation with insertion (even a taut one). If they did, then the dual graph of~$\V_2$ would have a subgraph isomorphic to the graph obtained from the dual graph of $\V_1$ by deleting all its edges dual to the faces which have a nonzero weight for some weight system on $\V_1$. It can be directly checked that it does not. 
			
			\begin{fact}\label{fact:not:mutant}
				A non-fibered face of the Thurston norm ball can be combinatorially represented by two distinct veering triangulations which do not differ by a veering mutation or a veering mutation with insertion.\qed
			\end{fact}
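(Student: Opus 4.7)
The plan is to exhibit explicit witnesses and verify the two required properties directly. Take $\V_1$ and $\V_2$ to be the veering triangulations with taut signatures
\begin{center}
\texttt{lLLvLMQQccdjgkihhijkkqrwsdcfkfjdq\_02221000012}\\
\texttt{pvLLALLAPQQcdhehlkjmonmoonnwrawwaewaamgwwvn\_122221111122002}
\end{center}
from the Veering Census, which live on a common 3-manifold $M$ with $b_1(M)=1$. First I would verify, by consulting the Veering Census or solving the branch equations directly, that both $\V_1$ and $\V_2$ are measurable but not layered. By Theorem \ref{thm:LMT:faces} (Landry--Minsky--Taylor), each of them combinatorially represents a non-fibered face of the Thurston norm ball in $H_2(M, \partial M;\rr)$. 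Since $b_1(M)=1$, the Thurston norm ball has only two faces (which are opposite), so after possibly replacing $\V_1$ by $-\V_1$ we have $\C(\V_1) = \C(\V_2)$; see Remark \ref{remark:not:opposite}.

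The core of the argument is showing that $\V_1$ and $\V_2$ are not related by a veering mutation or a veering mutation with insertion. The key observation is combinatorial: if $\V_2$ were obtained from $\V_1$ by a (taut) mutation with insertion along a surface $S_{w}$ carried by $\V_1$ with weight system $w = (w_f)_{f \in F}$, then the cutting operation used to build $\V_1|F_w$ (see Subsection \ref{subsec:cut:tri:versus:cut:manifold}) only severs face identifications dual to faces $f$ with $w_f > 0$. After regluing via $r(\varphi)$, and possibly inserting further veering tetrahedra on top of $F_w^+$ in the sense of Subsection \ref{subsec:generalization}, the resulting triangulation~$\V_2$ has a dual graph that must contain, as a subgraph, the graph obtained from the dual graph of $\V_1$ by deleting precisely the edges dual to faces in $F_w = \lbrace f \in F \ | \ w_f > 0 \rbrace$. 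Therefore, letting $W$ denote the union of $F_w$ over all weight systems $w$ on~$\V_1$, a necessary condition for $\V_2$ to arise from $\V_1$ by any veering mutation or veering mutation with insertion is that the dual graph of $\V_2$ contains, as a subgraph, the graph $G(\V_1)$ obtained from the dual graph of $\V_1$ by removing every edge dual to a face in $W$.

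Thus the verification reduces to a finite, mechanical check. First I would enumerate all nonnegative integral solutions to the branch equations of $\V_1$ (a finite cone computation, since $\V_1$ is measurable and $b_1(M)=1$, giving a one-dimensional solution cone), read off the set $W$, and form the graph $G(\V_1)$ with $12$ vertices and the appropriate surviving edges. Next I would compute the dual graph of $\V_2$ (which has $15$ vertices) directly from its isomorphism signature. Finally I would check by inspection --- or with a small subgraph-isomorphism routine in Regina \cite{regina} --- that $G(\V_1)$ does not embed as a subgraph of the dual graph of $\V_2$. Since this necessary condition fails, $\V_2$ cannot be obtained from $\V_1$ by any veering mutation or veering mutation with insertion, completing the proof.

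The main obstacle is purely bookkeeping: correctly identifying the set $W$ of faces with nonzero weight in some weight system on $\V_1$ and accurately extracting the dual graphs from the two isomorphism signatures. The logical content --- that the surviving portion of the dual graph persists through any mutation with insertion --- is forced by the construction of Subsections \ref{subsec:mutating} and \ref{subsec:generalization}, and so no delicate topology is needed beyond this combinatorial check.
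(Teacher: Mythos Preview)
Your proposal is correct and follows essentially the same route as the paper: the same pair of veering triangulations is used, the same $b_1(M)=1$ argument pins down the face, and the same dual-graph obstruction (delete from the dual graph of $\V_1$ all edges dual to faces with nonzero weight in some weight system, then check this does not embed in the dual graph of $\V_2$) rules out any veering mutation or mutation with insertion. One minor slip: the isomorphism signature beginning with \texttt{l} encodes $11$ tetrahedra, not $12$, so $G(\V_1)$ has $11$ vertices; this does not affect the argument.
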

		
		\begin{remark}\label{remark:counterexamples:to:Barbot}
			In Subsection \ref{subsec:same:face} we constructed a pair of veering mutants $\V$, $\V^{\ro\sigma}$ on the manifold t12488. 
			Let $N$ denote the manifold obtained from t12488 by Dehn filling it along the boundary of the mutating surface. Using Regina \cite{regina} it is possible to verify that $N$ is a graph manifold obtained from the orientable circle bundle~$N_0$ over a \mbox{2-holed} $\rr P^2$ by identifying its two toroidal boundary components. Thus $N$ is a so-called \emph{BL-manifold}, as defined by Barbot in \cite{Barbot-BL}. 
			
			Langevin-Bonatti constructed an Anosov flow on one BL-manifold in  \cite{Bonatti-Langevin}; a description of this flow written in English can be found in \cite{BL-flow-english}. Barbot generalized the construction to most other  BL-manifolds \cite[Theorem A]{Barbot-BL} and called the resulting Anosov flows  \emph{BL-flows}. If a BL-manifold is not a circle bundle then the constructed flow is not $\rr$-covered, because it is not circular but is transverse to a torus.
			
			In Theorem B(2) Barbot claims that all non $\rr$-covered Anosov flows on a fixed BL-manifold which is not a circle bundle are topologically equivalent. This is in contradiction with our results. It follows from Theorem \ref{thm:AT} and Lemma \ref{lem:inequivalent:flows} that~$N$ admits a pair of topologically inequivalent \mbox{BL-flows} --- one constructed from $\V$ and the other constructed from $\V^{\ro\sigma}$. We denote them by $\Psi$ and $\Psi^{\ro \sigma}$, respectively. The flows $\Psi$, $\Psi^{\ro\sigma}$ are constructed from the same semiflow $\Phi_0$ on $N_0$, but --- unsurprisingly, given that $\V$, $\V^{\ro\sigma}$ are mutants --- by gluing the two boundary tori in a different way. More specifically,  if for $i=1,2$ we  choose a basis $(o_i, f_i)$ on the boundary torus $T_i$ of $N_0$ so that $f_i$ is a fiber of a Seifert fibration while $o_i$ corresponds to the boundary of the 2-holed $\rr P^2$ contained in $T_i$, then 
			one gluing $T_1 \rightarrow T_2$ can be represented by a matrix
			\[A = \begin{bmatrix}
				1 & 1\\
				1&0
			\end{bmatrix},\]
		while the other by $-A$. These two gluings result in the same manifold $N$ because~$N_0$ admits an involution which fixes $(o_1, f_1)$ and sends $(o_2, f_2)$ to $(-o_2, -f_2)$. This involution can be obtained as the composition of the reflection across the stable leaf through the periodic orbit of $\Phi_0$ missing $T_i$ and the reflection which fixes every fiber of the Seifert fibration, but reverses their orientation. 
		
		More generally, given any possible gluing $A: T_1 \rightarrow T_2$ the manifolds $N_0/A$ and $N_0/(-A)$ are homeomorphic. However, the homeomorphism does not send the flow lines of one BL-flow to the flow lines of the other. When one gluing produces transversely orientable foliations then the other does not.
			Consequently, contrary to Theorem B(2) of~\cite{Barbot-BL}, on any BL-manifold which is not a circle bundle there are two BL-flows: one whose stable/unstable foliations are transversely orientable and another whose stable/unstable foliations are not transversely orientable. It seems that Barbot erroneously assumed that the stable/unstable foliations of BL-flows are never transversely orientable \cite[p. 786]{Barbot-BL}.
		
		In Remark \ref{remark:many-betti1} we mentioned 79 pairs of veering mutants on manifolds with first Betti number equal to one for which the mutating surface is a punctured torus. In most cases Regina recognizes their appropriate Dehn fillings as  BL-manifolds.
		\end{remark}
			
				\section{Polynomial invariants of veering triangulations}\label{sec:polys}
				In \cite{McMullen_Teich} McMullen introduced a polynomial invariant of fibered faces of the Thurston norm ball called the \emph{Teichm\"uller polynomial}. Recall that associated to a fibered face~$\face{F}$ there is a unique circular flow $\Psi$ \cite[Theorem~7 (stated here as Theorem \ref{thm:F})]{Fried_suspension}. The Teichm\"uller polynomial of $\face{F}$ is a certain polynomial invariant of the module of transversals to the preimage of the stable lamination of $\Psi$ in the maximal free abelian cover of the manifold \cite[Section 3]{McMullen_Teich}. Its main feature is that it can be used to compute the stretch factors of monodromies of all fibrations lying over $\face{F}$ \cite[Theorem 4.2]{McMullen_Teich}.
				
				McMullen asked whether it is possible to define a similar invariant for non-fibered faces. If a non-fibered face is dynamically represented by a pseudo-Anosov flow~$\Psi$, one could try to replicate the definition of the Teichm\"uller polynomial using the stable lamination of $\Psi$.
				Landry-Minsky-Taylor used veering triangulations to devise such a polynomial invariant \cite{LMT}. In fact, they defined  two polynomial invariants of veering triangulations: the \emph{taut polynomial} and the \emph{veering polynomial}. Furthermore, they showed that if a face $\face{F}$ of the Thurston norm ball represented by a veering triangulation~$\V$ is fibered, then the taut polynomial of~$\V$ is equal to the Teichm\"uller polynomial of $\face{F}$ \cite[Theorem~7.1]{LMT}. 
				Therefore the taut polynomial (and its specializations under Dehn fillings) can be seen as a generalization of the Teichm\"uller polynomial to (some) non-fibered faces.

				However, in Section \ref{sec:faces} we showed that a veering triangulation representing a non-fibered face of the Thurston norm ball is not necessarily unique. This means that the taut and veering polynomials of a veering triangulation might actually not be invariants of the face represented by the triangulation.
				
				An algorithm to compute the taut and veering polynomials of a veering triangulation is explained in \cite{Parlak-computation}. A much faster algorithm for the computation of the taut polynomial follows from the fact that it is equal to the Alexander polynomial of the underlying manifold twisted by a certain representation $\omega: \pi_1(M) \rightarrow \zz/2$ \cite[Proposition 5.7]{taut_alex} and can be therefore computed using \emph{Fox calculus}.
				Both algorithms have been implemented by the author, Saul Schleimer, and Henry Segerman; see Veering GitHub \cite{VeeringGitHub}.
				Using  this software we  computed the taut and veering polynomials of the pairs of veering triangulations representing the same face of the Thurston norm ball discussed in Sections \ref{sec:faces} and \ref{sec:distinct:flows}.
				We include this data in Table \ref{tab:polynomials}.  
				\begin{table}[h] 
					\begin{tabular}{ |c|c|}
						\hline
						& \texttt{iLLLPQccdgefhhghqrqqssvof\_02221000} \\\hline  & \\[-1em]
						$\Theta$ & $4(a+1)$\\ 
						$\mathbb{V}$ &$4(a-1)^3(a+1)$\\ \hline
						& \texttt{iLLLPQccdgefhhghhrhajsvss\_02221000} \\ \hline & \\[-1em]
						$\Theta$ & $4(a-1)$\\ 
						$\mathbb{V}$ &$4(a-1)^3(a+1) $\\ \hline
					%
						\hline
						& \texttt{qLLLzvQMQLMkbeeekljjlmljonppphhhhaaahhahhaahha\_0111022221111001}\\ \hline  & \\[-1em]
						$\Theta$ & $a^2b^2 - 2a^2b + 4ab - 4a - 2b + 6 - 2b^{-1} - 4a^{-1} + 4a^{-1}b^{-1}-2a^{-2}b^{-1}+a^{-2}b^{-2}$\\ 
						$\mathbb{V}$ &$0$\\ \hline 
						& \texttt{qLLLzvQMQLMkbeeekljjlmljonppphhhhaaahhahhaahha\_2111200001111221} \\ \hline & \\[-1em]
						$\Theta$ & $a^2b^2 + 2a^2b + 4ab + 4a + 2b + 6 + 2b^{-1} + 4a^{-1} + 4a^{-1}b^{-1}+2a^{-2}b^{-1}+a^{-2}b^{-2}$\\ 
						$\mathbb{V}$ &$0$\\ \hline
					%
						\hline
						& \texttt{mLvLQLzQQcghefihljkkllhahxxqnxwbbfj\_022211002221}\\ \hline & \\[-1em]
						$\Theta$ & $a^7 + a^6 + 2a^5 - 2a^2 - a - 1$\\ 
						$\mathbb{V}$ &$0$\\ \hline
						& \texttt{mvLAvMMQQecfiikkjjilllfrtdrjfgbbhhd\_201102222211} \\ \hline & \\[-1em]
						$\Theta$ & $a^7 - a^6 + 2a^5 + 2a^2 - a + 1$\\ 
						$\mathbb{V}$ &$a^{22} - a^{21} + 2a^{20} + 2a^{17} - a^{16} + a^{15} - a^7 + a^6 - 2a^5 - 2a^2 + a - 1$\\ \hline
					\end{tabular}
					\vspace{0.2cm}
					\caption{The taut and veering polynomials of pairs of veering triangulations representing the same face of the Thurston norm ball discussed in Sections \ref{sec:faces} and \ref{sec:distinct:flows}. $\Theta$ denotes the taut polynomial, $\mathbb{V}$ denotes the veering polynomial.}
					\label{tab:polynomials}
				\end{table}

									\begin{fact}\label{fact:different:polys}
										A non-fibered face of the Thurston norm ball can be combinatorially represented by two distinct veering triangulations with different taut polynomials, and different veering polynomials. 
									\end{fact}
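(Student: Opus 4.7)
The plan is to prove Fact \ref{fact:different:polys} by directly exhibiting a suitable pair of veering triangulations, relying on the examples already constructed in the proof of Theorem \ref{thm:many:flows}. Recall from that proof that the veering triangulations
\[\V_1 =\texttt{mLvLQLzQQcghefihljkkllhahxxqnxwbbfj\_022211002221}\]
and
\[\V_2= \texttt{mvLAvMMQQecfiikkjjilllfrtdrjfgbbhhd\_201102222211}\]
triangulate the same 3-manifold $M$ with $H_1(M;\zz) = \zz \oplus \zz/4$, are both measurable, and (after possibly replacing $\V_1$ by $-\V_1$) combinatorially represent the same non-fibered face of the Thurston norm ball in $H_2(M,\partial M;\rr)$. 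Since $b_1(M)=1$, no subtle change-of-basis issue arises, and this face is genuinely non-fibered because both triangulations are measurable rather than layered (see Theorem \ref{thm:LMT:faces}).

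It therefore suffices to show that the taut polynomials of $\V_1,\V_2$ differ and that their veering polynomials differ. This is a finite computation. The plan is to invoke the computational tools from \cite{Parlak-computation, taut_alex, VeeringGitHub}: the taut polynomial can be computed as a twisted Alexander polynomial via Fox calculus using the orientation character \mbox{$\omega:\pi_1(M)\to \zz/2$} from \cite[Proposition 5.7]{taut_alex}, while the veering polynomial is computed directly from the branch equations of $\V_i$ as explained in \cite{Parlak-computation}. The results of these computations are reported in Table \ref{tab:polynomials} and read, respectively,
\[\Theta_{\V_1} = a^7 + a^6 + 2a^5 - 2a^2 - a - 1, \qquad \Theta_{\V_2} = a^7 - a^6 + 2a^5 + 2a^2 - a + 1,\]
for the taut polynomials, and
\[\mathbb{V}_{\V_1} = 0, \qquad \mathbb{V}_{\V_2} = a^{22} - a^{21} + 2a^{20} + 2a^{17} - a^{16} + a^{15} - a^7 + a^6 - 2a^5 - 2a^2 + a - 1,\]
for the veering polynomials. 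Inspection of these expressions shows $\Theta_{\V_1} \neq \Theta_{\V_2}$ and $\mathbb{V}_{\V_1} \neq \mathbb{V}_{\V_2}$, which is what we need.

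There is essentially no analytic obstacle here; the content of the statement lies in \emph{having the examples}. The nontrivial step, already carried out earlier in the paper, is verifying that $\V_1,\V_2$ combinatorially represent the \emph{same} non-fibered face of the Thurston norm ball — without this coincidence, distinct polynomials would be unsurprising. The sign differences between $\Theta_{\V_1}$ and $\Theta_{\V_2}$ already rule out any possibility that the two taut polynomials agree after a change of variables of the form $a\mapsto \pm a$ induced by a different choice of basis of $H_1(M;\zz)/\mathrm{torsion}$, so the distinction is genuine. I anticipate that the only thing a careful reader might want spelled out is a brief remark explaining why, given $b_1(M)=1$ and the equality $\C(\V_1)=\C(\V_2)$ of carried cones, no ambiguity in the comparison of polynomials can arise — but this is immediate since the only basis change on $H_1(M;\zz)/\mathrm{torsion}\cong\zz$ available is $a\mapsto a^{\pm 1}$.
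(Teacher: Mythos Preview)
Your proposal is correct and follows essentially the same approach as the paper: the paper's proof is literally ``See Table \ref{tab:polynomials}'', and the pair you single out --- the triangulations \texttt{mLvLQLzQQ\ldots} and \texttt{mvLAvMMQQ\ldots} --- is indeed the only pair in that table for which \emph{both} the taut and the veering polynomials differ (the paper notes this explicitly in the paragraph following the Fact). Your write-up simply spells out what the paper compresses into a table reference, and your extra remark ruling out the $a\mapsto a^{\pm 1}$ ambiguity is a reasonable bit of added care.
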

									\begin{proof}
										See Table \ref{tab:polynomials}.\end{proof}
									
									The only pair of veering triangulations from Table \ref{tab:polynomials} which have different both  taut and  veering polynomials consists of triangulations which probably are not veering mutants; see Remark \ref{remark:not:mutants?}. Hence the question still remains whether two homeomorphic veering mutants representing the same face of the Thurston norm ball can have different both taut and veering polynomials. The answer to this question is positive. One such pair consists of veering triangulations
									\begin{center}
										\texttt{mvLLMvQQQegffhijkllkklreuegggvvrggr\_120200111111}\\
										\texttt{mvLLMvQQQegffhjikllkklreuegrrvvrwwr\_120200111111}.
									\end{center}
									They differ by a veering mutation along a four times punctured torus. Their taut polynomials are $8(a+1)$, $8(a-1)$, respectively, and their veering polynomials are $8(a-1)(a+1)^3$, $8(a-1)^3(a+1)$, respectively. 
				
				\section{Further questions}\label{sec:questions}
				\subsection{Operations on flows underlying veering mutations}\label{subsec:speculation}
				Throughout the paper we worked combinatorially with veering triangulations and used existing literature \cite{Tsang-Agol, LMT_flow, LMT} to deduce statements about pseudo-Anosov flows on closed manifolds or their blow-ups on manifolds with toroidal boundary. We have intentionally avoided discussing how are the flows underlying veering mutants related.  A naive expectation would be that the flows differ by a \emph{mutation of (blown-up) pseudo-Anosov flows}. In the closed case this would be a mutation along a surface transverse to a pseudo-Anosov flow whose intersections with the stable and unstable foliations of the flow are invariant under some nontrivial symmetry. Mutating these foliations via this symmetry gives a pair of 2-dimensional singular foliations intersecting along `recombined flow lines'. It remains to find sufficient conditions for a flow along recombined flow lines (a \emph{mutant flow}) to admit a parametrization which makes it pseudo-Anosov. When $\partial M \neq \emptyset$ one could expect a similar operation performed on the stable and unstable laminations of a blown-up pseudo-Anosov flow.
				
				However, examples presented in Subsection \ref{subsec:same:face} suggest that the problem may be more complicated. Namely, let $\V$ be a veering triangulation with taut signature \texttt{iLLLPQccdgefhhghqrqqssvof\_02221000}. We showed that $\V$ admits four veering mutants: $\V, \V^\ro, \V^\sigma, \V^{\ro\sigma}$. Denote by $M$ the manifold underlying $\V$. By \cite[Theorem~5.1]{Tsang-Agol}, there is a transitive Anosov flow $\Psi$ on the manifold obtained from  $M$ by Dehn filling it along the boundary of the mutating surface, and the blown-up  flow $\Psi^\circ$ on $M$. It can  be deduced from Figure \ref{fig:svof_surfaces}(b) that the intersection $\mathcal{L}_{\Psi^\circ, S}$ of the mutating twice punctured torus $S$ with the stable lamination of $\Psi^\circ$ has two closed leaves and the remaining leaves spiral into them; we approximate this lamination in Figure~\ref{fig:svof_lamination}. It is clear from Figure~\ref{fig:svof_lamination}  that $\mathcal{L}_{\Psi^\circ, S}$ is invariant only under the identity and~$\ro\sigma$.
				This means that even though we can mutate the veering branched surface carrying the stable lamination of~$\Psi^\circ$ in four different ways, the lamination itself can only be mutated in two different ways. 
				
				\begin{figure}[h]
					\includegraphics[scale=0.75]{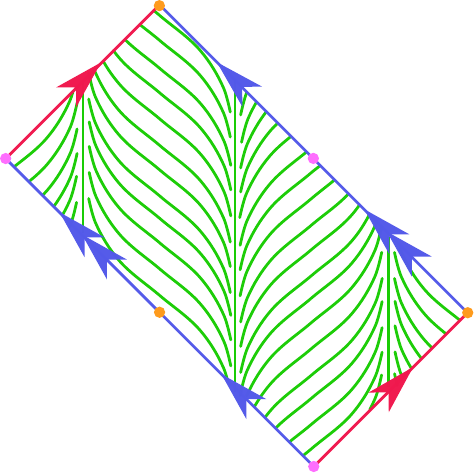}
					\caption{Intersection of the mutating twice punctured torus carried by \texttt{iLLLPQccdgefhhghqrqqssvof\_02221000} with the stable lamination of the underlying blown-up Anosov flow.} 
					\label{fig:svof_lamination}
				\end{figure}

				Working with veering triangulations as opposed to working directly with flows has both advantages and disadvantanges. On one hand, it allowed us to find explicit examples of topologically inequivalent flows on the same manifold which differ by a veering mutation and represent the same face of the Thurston norm ball (Theorem~\ref{thm:many:flows}).  On the other hand, the fact that veering triangulations exist only on hyperbolic 3-manifolds means that if a mutation along a  surface transverse to some (blown-up) pseudo-Anosov flow yields a (blown-up) pseudo-Anosov flow on a non-hyperbolic manifold, there will not be a corresponding mutation on the level of triangulations. This can happen  when we mutate along \mbox{$\varphi \in \Aut^+(\QVw \ | \ \tau_{\V,w})$} which aligns edge product disks; see Proposition \ref{prop:not:hyperbolic}.
				Another obstruction for a veering mutation that would not be an obstruction for a mutation of flows is the `no perfect fits' condition. It is possible that a flow which is without perfect fits relative to a finite collection~$\Lambda$ of closed orbits mutates into a flow which does have perfect fits relative to the recombined collection of orbits $\Lambda^\varphi$. In this case again we do not have a corresponding veering mutation.
				For these reasons, it is still of interest to properly define and study mutations of pseudo-Anosov flows (and possibly other operations underlying veering mutations) without referring to veering triangulations. This would fit into a more general framework of constructing new flows out of old, similarly to the Goodman-Fried surgery \cite{Fried_drill, Goodman}, and Handel-Thurston shearing along tori \cite{HandelThurston}. 
				
				\subsection{The orbit spaces of mutant flows and recognizing mutative flows}
				Associated to a pseudo-Anosov flow $\Psi$ there is a bifoliated plane called the \emph{orbit space} of~$\Psi$ \cite[Proposition 4.1]{Fenley_Mosher}. Suppose that flows $\Psi, \Psi^\varphi$ differ by a mutation along a transverse surface $S$ in the sense introduced in Subsection \ref{subsec:speculation}. If $S$ is a fiber of a fibration over the circle, there is a homeomorphism from the orbit space of $\Psi$ to the orbit space of~$\Psi^\varphi$ which sends foliations of one to the foliations of the other; this follows from the fact these orbit spaces are the universal covers of $S, S^\varphi$ equipped with  the invariant foliations lifted from $S, S^\varphi$, respectively.
				It is not immediately clear how do the orbit spaces differ when $S$ is not a virtual fiber. More generally, 
				it would be advantageous to have an invariant which is equal for flows which are \emph{mutative}, that is differ by a finite number of mutations, and distinguishes flows which are not mutative.

				\subsection{General result on the relationship between two flows representing the same face of the Thurston norm ball}
				We showed that two blown-up Anosov 
				flows representing the same face of the Thurston norm ball can differ by a veering mutation (Subsections~\ref{subsec:same:face} and \ref{subsubsec:higher:betti}). However, we also noted that there are examples of veering triangulations that represent the same face of the Thurston norm ball and do not differ by a veering mutation or even a veering mutation with insertion (Fact \ref{fact:not:mutant}). We have not explained how these veering triangulations, or their underlying flows, are related.
				Ideally, we would like to have a theorem that describes all possible ways in which two distinct flows can represent the same face of the Thurston norm ball. 
				
				\subsection{Homology classes versus free homotopy classes of closed orbits of  flows}
				In recent work Barthelm\'e, Frankel, and Mann found an invariant which distinguishes distinct transitive pseudo-Anosov flows, provided that their orbit spaces satisfy a technical condition called \emph{no tree of scalloped regions}; see \cite[Definition~3.21]{pA-classification}. More precisely, they showed that two such flows $\Psi_1$, $\Psi_2$ on $N$ are isotopically equivalent if and only if the sets $\mathcal{P}(\Psi_1), \mathcal{P}(\Psi_2)$ of unoriented free homotopy classes of their closed orbits are equal, and topologically equivalent if these sets differ by an automorphism of $\pi_1(N)$ \cite[Theorem 1.1]{pA-classification}.
				
				In the proof of Theorem \ref{thm:many:flows} we discussed veering triangulations which, after appropriate Dehn filling, yield topologically inequivalent transitive pseudo-Anosov flows $\Psi_1$, $\Psi_2$ on a closed hyperbolic 3-manifold $N$  representing the same face of the Thurston norm ball in $H_2(N, \rr)$. Using \cite[Proposition 1.2]{pA-classification} it is possible to show that the orbit spaces of $\Psi_1, \Psi_2$ do not have trees of scalloped regions. Thus it follows from \cite[Theorem~1.1]{pA-classification} that $\mathcal{P}(\Psi_1) \neq \Phi \mathcal{P}(\Psi_2)$ for any $\Phi \in \Aut(\pi_1(N))$. On the other hand, we know that the homology classes of closed orbits of $\Psi_1, \Psi_2$ span the same rational cone in $H_1(N;\rr)$. This motivates the question of how exactly do the sets $\mathcal{P}(\Psi_1), \mathcal{P}(\Psi_2)$ differ, not just for these particular flows from the proof of Theorem \ref{thm:many:flows}, but for any two flows which represent the same face of the Thurston norm ball.
				

				\subsection{Many distinct flows representing the same face of the Thurston norm ball in the $b_1(M)=1$ case}
				Suppose that $S$ is a Thurston norm minimizing surface representing a primitive integral class lying at the intersection of two fibered cones $\cone{F_1}$, $\cone{F_2}$. Let $\Psi_1, \Psi_2$ be the circular flows associated to $\face{F_1}$, $\face{F_2}$ as in Theorem~\ref{thm:F}. If the intersections of $S$ with the stable and unstable foliations of $\Psi_1$, $\Psi_2$ are isotopic, we may be able to perform a mutation along $S$ which yields two distinct non-circular flows $\Psi_1^\varphi$, $\Psi_2^\varphi$ representing the same top-dimensional non-fibered face in the mutant manifold. A combinatorial version of this phenomenon occurs for manifolds t12487 and t12488; see Subsection \ref{subsec:same:face}. This leads to a question: given $k>2$ is there a 3-manifold~$M$ with $b_1(M)>2$  such that
				\begin{itemize}
					\item $M$ admits $k$ fibered faces, intersecting at a point $\alpha$, dynamically represented by topologically inequivalent circular flows $\Psi_1, \Psi_2, \ldots, \Psi_k$.
					\item The primitive integral class on $\rr_+ \cdot \alpha$  can be represented by a Thurston norm minimizing surface $S$ such that mutating $M$ along $S$ gives a non-fibered \mbox{3-manifold} $M^\varphi$ with $b_1(M^\varphi) = 1$ and $\lbrack S^\varphi \rbrack \in \C(\Psi_i^\varphi)$ for $i=1,2, \ldots, k$?
				\end{itemize} 
				
			More generally, can a face of the Thurston norm ball be dynamically represented by more than two topologically inequivalent flows? Can it be represented by infinitely many flows?


				\subsection{Veering mutants and hyperbolic geometry}
				Recall that if $\V, \V^\varphi$ are veering mutants then they are both hyperbolic \cite[Theorem 1.5]{veer_strict-angles}.  However, they do not always have the same hyperbolic volume. For instance, the veering triangulation \texttt{gLLPQccdfeffhggaagb\_201022} of the~$6^2_3$ link complement  carries a twice punctured torus with the induced triangulation $\Q_{\V, w}$ and the stable train track $\tau_{\V, w}$ satisfying $\Aut^+(\Q_{\V, w} \ | \ \tau_{\V, w}) \cong \zz/2 \oplus \zz/2$, and such that  elements of $\Aut^+(\Q_{\V, w} \ | \ \tau_{\V, w})$ can be used to construct  veering mutants of the following volumes:
				\begin{table}[h]
					\begin{tabular}{|c|c|}
						\hline
						taut signature & volume \\\hline
						\texttt{gLLPQccdfeffhggaagb\_201022}&5.33348956689812\\\hline 
						\texttt{gLLPQccdfeffhwraarw\_201022}& 5.33348956689812\\\hline
						\texttt{gLLPQbefefefhhxhqhh\_211120}&5.07470803204827\\ \hline
						\texttt{gLLPQbefefefhhhhhha\_011102} &5.07470803204827\\ \hline 		
					\end{tabular}
				\end{table}
				
				Ruberman studied mutations of hyperbolic 3-manifolds and found sufficient conditions for a mutant of a hyperbolic 3-manifold to be a hyperbolic 3-manifold of the same volume. One of his results concerns only mutating via very special types of involutions of certain surfaces \cite[Theorem 1.3]{Ruberman-mutations}, another concerns only mutating along surfaces which are not virtual fibers \cite[Theorem 4.4]{Ruberman-mutations}. Conditions of neither of these theorems are satisfied  when mutating the first triangulation from the table to either the third or the fourth one; the mutating involutions are not of the type required by \cite[Theorem~1.3]{Ruberman-mutations}, and the mutating surface is a fiber of a fibration over the circle. 
				Nonetheless, there are plenty of veering mutants with the same hyperbolic volume. In particular, all non-homeomorphic veering mutants discussed in Section 
				\ref{sec:faces} have the same volume. It would be interesting to know if it is possible to figure out purely combinatorially when does a veering mutation along a carried surface result in a hyperbolic 3-manifold of the same volume. The  relationship between a veering structure and a hyperbolic structure is still not well understood, and perhaps analyzing it in this fairly narrow setup of mutations would give some new insight on the matter.
				

				\color{black}
				\bibliographystyle{abbrv}
				\bibliography{mybib}
				%
			\end{document}